\newtheorem{Theorem}{Theorem}[section]
\newtheorem{Proposition}[Theorem]{Proposition}
\newtheorem{Lemma}[Theorem]{Lemma}
\newtheorem{Corollary}[Theorem]{Corollary}
\newtheorem{Conjecture}[Theorem]{Conjecture}
\theoremstyle{definition}
\newtheorem{Definition}[Theorem]{Definition}
\theoremstyle{definition}
\newtheorem{Remark}[Theorem]{Remark}
\newcommand{\scs}{\scriptstyle}
\newcommand{\qbins}[2]{
\left[ \;
\vcenter{\xy
(0,3)*{\scs #1}; (0,-1)*{\scs #2};
\endxy} \;
 \right]
}
\def\tf{{\tilde{f}}}
\def\tg{{\tilde{g}}}
\def\dotimes{{\stackrel{\dagger}{\otimes}}}
\def\sA{\mathcal{A}}
\def\fC{{\mathfrak{C}}}
\def\fZ{{\mathfrak{Z}}}
\def\MHM{{\operatorname{MHM}}}
\def\HM{{\operatorname{HM}}}
\def\fG{{\sf{G}}}
\def\oZ{\overline{Z}}
\def\d{\dagger}
\def\IC{\mathrm{IC}}
\def\sA{\mathcal{A}}
\def\IC{{\mathrm{IC}}}
\def\tr{{\mathrm{tr}}}
\newcommand{\Z}{\mathbb{Z}}
\newcommand{\C}{\mathbb{C}}
\def\Rees{\mbox{Rees}}
\def\N{{\mathbb{N}}}
\def\gr{{\rm{gr}}}
\def\tgr{\vec{\rm{gr}}}
\def\hi{\hat{i}}
\def\ti{\tilde{i}}
\def\tp{\tilde{p}}
\def\tpi{\tilde{\pi}}
\def\tq{\tilde{q}}
\def\id{{\mathrm{Id}}}
\def\sl{{\mathfrak{sl}}}
\def\g{{\mathfrak{g}}}
\def\l{{\lambda}}
\def\E{{\sf{E}}}
\def\F{{\sf{F}}}
\def\G{{\mathbb{G}}}
\def\T{{\sf{T}}}
\def\sE{{\mathcal{E}}}
\def\sF{{\mathcal{F}}}
\def\sT{{\mathcal{T}}}
\def\sL{{\mathcal{L}}}
\def\sP{{\mathcal{P}}}
\def\O{{\mathcal O}}
\def\P{{\mathbb{P}}}
\def\sQ{{\mathcal{Q}}}
\def\bD{\mathbb{D}}
\def\D{{\mathcal{D}}}
\def\1{\mathbbm{1}}
\def\la{\langle}
\def\ra{\rangle}
\newcommand{\Cone}{\mathrm{Cone}}
\def\dim{{\rm{dim}}}
\newcommand{\K}{\mathcal{K}}
\DeclareMathOperator{\Hom}{Hom}
\DeclareMathOperator{\Ext}{Ext}
\DeclareMathOperator{\End}{End}
\DeclareMathOperator{\Sym}{Sym}
\DeclareMathOperator{\Sp}{Sp}
\DeclareMathOperator{\DR}{DR}
\DeclareMathOperator{\Ind}{Ind}
\renewcommand{\mod}{{\operatorname{-mod}}}
\title[Associated graded of Hodge modules]{Associated graded of Hodge modules and categorical $\sl_2$ actions}
\author{Sabin Cautis}
\email{cautis@math.ubc.ca}
\address{Department of Mathematics\\ University of British Columbia \\ Vancouver BC, Canada}
\author{Christopher Dodd}
\email{csdodd2@illinois.edu}
\address{University of Illinois at Urbana-Champaign \\Urbana Il, USA}
\author{Joel Kamnitzer}
\email{jkamnitz@math.utoronto.ca}
\address{Department of Mathematics \\ University of Toronto \\ Toronto ON, Canada}
\begin{document}

\begin{abstract}
One of the most mysterious aspects of Saito's theory of Hodge modules are the Hodge and weight filtrations that accompany the pushforward of a Hodge module under an open embedding. In this paper we consider the open embedding in a product of complementary Grassmannians given by pairs of transverse subspaces. The push-forward of the structure sheaf under this open embedding is an important Hodge module from the viewpoint of geometric representation theory and homological knot invariants.


We compute the associated graded of this push-forward with respect to the induced Hodge filtration as well as the resulting weight filtration. The main tool is a categorical $\sl_2$ action on the category of $\D_h$-modules on Grassmannians. Along the way we also clarify the interaction of kernels for $\D_h$-modules with the associated graded functor. Both of these results may be of independent interest.

\end{abstract}

\setcounter{tocdepth}{1}

\maketitle
\tableofcontents

\section{Introduction}

\subsection{Mixed Hodge modules and their associated graded}
The theory of mixed Hodge modules was developed by Saito as a $\D$-module analog of the theory of weights for $\ell$-adic sheaves.  Let $ X $ be a smooth proper algebraic variety.  A mixed Hodge module $ M$ on $ X$ gives rise to filtered $ \D_X$-module $ \fG(M)$ and thus to a $ \O_{T^* X}$-module $\gr \fG(M) $.

The most non-trivial aspect of Saito's theory concerns push-forward of mixed Hodge modules along non-proper maps.  For example, suppose that $ j : U \hookrightarrow X $ is the inclusion of an open subset and consider the mixed Hodge module push-forward $ j_* \O_U$.  This results in a subtle filtered structure on $j_* \O_U$ which reflects the singularities of the complement $ X \setminus U $. More precisely, $j_* \O_U$ has a simple description if the complement is simple normal crossing. For more complicated complements on has to blow up along $X \setminus U$ to obtain a simple normal crossing complement and then compute the proper pushforward of $j_* \O_U$. In general this is quite difficult to do in practice. For a more general discussion see, for instance, \cite{Sc}.

In this paper we compute $ \gr(\fG(j_* \O_U)) $ for a natural open embedding $j$ that shows up in geometric representation theory and has connections to other areas, including higher representation theory and homological knot invariants. More specifically, we take $X = \G(k,N) \times \G(N-k, N) $ the product of two complimentary Grassmannians and take $ U $ to be the locus of pairs of subspaces which intersect trivially. Our main result (Theorem \ref{thm:equivopen} and Corollary \ref{cor:main}) identifies $\gr(\fG(j_* \O_U))$ as the pushforward of a line bundle of a certain locally closed subset of $ T^* X$. We also show that the resulting weight filtration on $ \gr(\fG(j_* \O_U))$ is a natural filtration from the coherent sheaves viewpoint, related to the different components of $ T^*\G(k,N) \times_{\sl_N} T^*\G(N-k, N)$.  This computation is quite indirect and uses the theory of categorical $ \sl_2 $ actions.

\subsection{$\sl_2$ actions involving Grassmannians}

Consider the quantum group $ U_q(\sl_2)$ with its usual generators $ E, F, K $.  Going back to work of Beilinson-Lusztig-MacPherson \cite{BLM}, there is a geometric incarnation of $ U_q(\sl_2) $ involving Grassmannians $ \G(k,N) $ over a finite field with $ q^2 $ elements.  In this setup, $ E $ acts on a subspace $ W $ to produce the formal sum of all hyperplanes of $ W $ and $ F $ acts of $ W $ to produce the formal sum of all subspaces in which $ W $ sits as a hyperplane.  This defines a representation of $ U_q(\sl_2)$ on the vector space $ \oplus_{k = 0}^N \C[\G(k,N)] $.

Using Grothendieck's faisceaux-fonctions correspondence, there is a natural categorification of this construction.  Consider the categories $ D_c(\G(k,N)) $ (now using Grassmannians over $\C$) of constructible sheaves.  The natural incidence correspondence $ C(k,N) \subset \G(k,N) \times \G(k-1,N) $ can be used to define a functor $ \E : D_c(\G(k,N)) \rightarrow D_c(\G(k-1,N))$ and a similar functor $ \F $ in the opposite direction.  It is not difficult to prove that these functors satisfy the defining relations of $ U_q(\sl_2)$.  This idea seems to have been known to experts for some time and first appeared in the literature in the work of Zheng (see Theorem 3.3.6 of \cite{Z1}).  By the Riemann-Hilbert correspondence this gives a categorical $ \sl_2$ action on $\D$-modules on Grassmannians.

\subsection{Cotangent bundles to Grassmannians and filtered $\D$-modules}
On the other hand, in \cite{CKL1}, we constructed a categorical $\sl_2$ action on the categories of coherent sheaves on cotangent bundles to Grassmannians. In other words, we defined kernels
$$ \sE \in D(\O_{T^* \G(k,N) \times T^* \G(k-1,N)}) \ \text{ and } \ \sF \in D(\O_{T^* \G(k-1,N) \times T^* \G(k,N)}) $$
using the conormal bundle of $C(k,N)$ and proved that they induce a categorical $\sl_2$ action.

After learning about our work, Roman Bezrukavnikov suggested to us (back in 2008) that we should be able to relate our construction from \cite{CKL1} with Zheng's construction using the machinery of filtered $\D$-modules and Saito's theory of mixed Hodge modules.  In this paper, we carry out this idea.

More precisely, we work with sheaves of $\D_{X,h}$-modules on smooth varieties $ X $, where $ \D_{X,h} $ is the Rees algebra of differential operators on $X$.  This is a sheaf of $\C[h]$-algebras and can be specialized at $ h = 0 $ to obtain $ \pi_* \O_{T^* X} $, the push-forward of the structure sheaf of the cotangent bundle. This gives us an ``associated graded'' functor $\gr: \D_{X,h}\mod \rightarrow \O_{T^* X}\mod$. We study some general properties of this functor in section \ref{sec:2} and in the context of kernels in section \ref{sec:grkernels}. In section \ref{sec:mhm}, we review some results from the theory of mixed Hodge modules.

Working with Hodge modules is useful because we have results, such as the base change theorem, which do not hold for $\D_h$-modules. On the other hand, non-flat pullback (and in particular the tensor product) of Hodge modules is not compatible with the functor $\fG$ from Hodge modules to $\D_h$-modules. In particular, this means that the kernel formalism for Hodge modules is not compatible with $\fG$. For this reason we need to consider both Hodge modules and $\D_h$-modules in this paper.

In section \ref{sec:actionDmod}, we define a categorical $\sl_2$ action on the category of $ \D_h$-modules on products of Grassmannians. The kernels for this action are the $\D_h$-module versions of Zheng's construction (after applying the Riemann-Hilbert correspondence). In section \ref{sec:gradedaction}, we prove that taking the associated graded of this action recovers our action from \cite{CKL1} (up to conjugating by some line bundles).

\subsection{The associated graded of the equivalence}
One of the main applications of categorical $\sl_2$ actions (and the reason we studied them in \cite{CKL1}) is that they can be used to construct equivalences. More precisely, given a categorical $\sl_2$ action on some categories $\D(\l), \l \in \Z$ Chuang-Rouquier \cite{CR} defined an equivalence of categories $\T: \D(\l) \rightarrow \D(-\l) $, which categorifies the quantum Weyl group element of $U_q(\sl_2)$. $\T$ is defined as the iterated cone of the ``Rickard complex''
$$ \F^{(\l)} \rightarrow \F^{(\l+1)} \E \rightarrow \F^{(\l + 2)} \E^{(2)} \rightarrow \cdots $$
If one applies this construction to the categorical $\sl_2$ actions discussed above we obtain equivalences
$$D(\D_{\G(k,N)}\mod) \xrightarrow{\T} D(\D_{\G(N-k,N)}\mod) \ \ \text{ and } \ \
D(\O_{T^* \G(k,N)}\mod) \xrightarrow{\T'} D(\O_{T^* \G(N-k,N)}\mod).$$
Remarkably, both these equivalences have simpler and more explicit descriptions.

On the $\D$-module side the equivalence $ \T $ is given by the kernel $j_* \O_U$ where 
$$U = \{ (V, V') : V \cap V' = 0 \}  \subset \G(k,N) \times \G(N-k,N)$$
 is the open locus discussed above and $j$ is its embedding. This result first appeared (without proof) in \cite{CR}. In the current paper, we give a proof and extend the result to the context of $\D_h$-modules (see Theorem \ref{thm:equivopen}). In particular, we show that $\fG(j_* \O_U)$ is the $\D_h$-module kernel that induces $\T$ and that the associated weight filtration on $\fG(j_* \O_U)$ is the same as the natural one coming from the Rickard complex defining $\T$. This result is related to a similar result (in the context of $\ell$-adic sheaves) due to Webster-Williamson \cite{WW}.

In \cite{C1} we studied the kernel for $\T'$ in the category of coherent sheaves. We proved that it concides with the (underived) push-forward $ R^0 f_* (\sL) $, where $ f $ is the inclusion of a dense, open subset $ \fZ^o \subset T^* \G(k,N) \times_{\sl_N} T^* \G(N-k,N) $ and $ \sL $ is an explicit line bundle.

In Corollary \ref{cor:main}, we prove that there is an isomorphism $\tgr(\fG(j_* \O_U)) \cong R^0 f_*(\sL) $ (here $ \tgr$ denotes the ``directed associated graded'', which is defined in section \ref{se:AssociatedGradedKernels}). This is a purely geometric result, which (as far as we know) can only be deduced using this machinery of categorical $\sl_2$ actions.

\subsection{Generalizations}
There are two possible directions for generalizing this work. In one direction, we replace $\sl_2$ by any simply-laced simple Lie algebra $\g $ and replace $ T^* \G(k,N) $ with the corresponding Nakajima quiver varieties. In \cite{CKL2} we defined a categorical $\g$ action using coherent sheaves on these quiver varieties. On the other hand, Webster \cite{W} (building on Zheng \cite{Z2}), defined a categorical $\g$ action on a quotient category of the category of equivariant $\D$-modules on an affine space related to the quiver. The results of his paper can be generalized by working with equivariant $\D_h$ modules. Again, one can relate the two constructions using the associated graded functor. In this setting, the equivalences $\T$ and $\T'$ are replaced by braid group actions \cite{CK3} (of type $\g$).

On the other hand, we can try to replace $\G(k,N) $ by any arbitrary co-minuscule flag variety $G/P$. In this context we do not expect categorical actions of any Lie algebra. However, we do expect similar equivalences
$$ D(\D_{G/P}\mod) \xrightarrow{\T} D(\D_{G/Q}\mod) \ \ \text{ and } \ \
D(\O_{T^* G/P}\mod) \xrightarrow{\T'} D(\O_{T^* G/Q}\mod)$$
where $G/Q$ denotes the opposite flag variety. We expect that $ \T, \T' $ are both given by natural complexes and by an ``open push-forward'' description, similar to the case of $\G(k,N) $.  In section \ref{sec:cominus}, we propose some precise conjectures along these lines. These equivalences for cominuscule flag varieties are of particular interest to us since they fit into our program for constructing knot homologies \cite{CK2} (that program has only been completed in type A).

\subsection*{Acknowledgements}

We would like to thank Sergey Arkhipov, Roman Bezrukavnikov, Alexander Braverman, Ivan Losev, David Nadler, Raphael Rouquier, Christian Schnell, and Ben Webster for helpful discussions. We also thank the anonymous referee for helpful suggestions.  S.C. was supported by NSERC.  J.K. was supported by a Sloan Fellowship and by NSERC.

\section{Categorical $\sl_2$ actions}
\subsection{Notation} 
In this paper we work over the complex numbers $\C$. Let $[n]=q^{n-1}+q^{n-3}+\dots +q^{1-n}$ and $\qbins{n}{k} = \frac{[n]!}{[k]![n-k]!}$ where $[n]!=[n][n-1]\dots [1]$.

By a graded category we will mean a category equipped with an auto-equivalence $\la 1 \ra$. A graded additive $\C$-linear 2-category is a category enriched over graded additive $\C$-linear categories, that is a 2-category $\K$ such that the Hom categories $\Hom_{\K}(A,B)$ between objects $A$ and $B$ are graded additive $\C$-linear categories (with finite-dimensional Hom spaces) and the composition maps $\Hom_{\K}(A,B) \times \Hom_{\K}(B,C) \to \Hom_{\K}(A,C)$ are graded additive $\C$-linear functors.

Given a 1-morphism $A$ in an additive 2-category $\K$ and a Laurent polynomial $f=\sum f_a q^a \in \N[q,q^{-1}]$ we write $\oplus_f A$ for the direct sum over $a \in \Z$, of $f_a$ copies of $A \la a\ra$.  In particular, if $f = [n]$, then we write $\oplus_{[n]} A$ to denote the direct sum $\oplus_{k=0}^{n-1} A \la n-1-2k \ra$.

An additive category is said to be idempotent complete when every idempotent 1-morphism splits. We say that the additive 2-category $\K$ is idempotent complete when the Hom categories $\Hom_{\K}(A,B)$ are idempotent complete for any pair of objects $A, B$ in $\K$, so that all idempotent 2-morphisms split.

If $\sA$ is an abelian category, then we write $ D(\sA) $ for the bounded derived category of $ \sA$.

\subsection{Categorical actions} \label{se:defstrong}
A {\it categorical $\sl_2$ action} with target $\K$ consists of the following data.

\begin{enumerate}
\item A graded, additive, $\C$-linear, idempotent complete 2-category $\K$ (the grading shift is denoted $\la \cdot \ra$).
\item For each $\l \in \Z$ an object $ \l \in \K $. We write $ \K(\l, \l') = \Hom_{\K}(\l, \l') $ for the resulting Hom categories.
\item 1-morphisms $\sE(\l) \in \K(\l-1, \l+1)$ and $ \sF(\l): \K(\l+1, \l-1)$ for $\l \in \Z$. For convenience we will sometimes omit the $\l$ when its value is irrelevant.
\item 2-morphisms
$$X(\l): \sE(\l) \la -1 \ra \rightarrow \sE(\l) \la 1 \ra \text{ and }
T(\l): \sE(\l+1) \sE(\l-1) \la 1 \ra \rightarrow \sE(\l+1) \sE(\l-1) \la -1 \ra.$$
\end{enumerate}

On this data we impose the following conditions.

\begin{enumerate}
\item The objects $\l \in \K$ are zero for $\l \gg 0$ and $\l \ll 0$ (``integrability'').

\item We have isomorphisms $\sE(\l)_R \cong \sF(\l) \la \l \ra$ and $\sE(\l)_L \cong \sF(\l) \la -\l \ra$.

\item If $ \l \le 0 $ then
$$\sF(\l+1) \sE(\l+1) \cong \sE(\l-1) \F(\l-1) \bigoplus_{[-\l]} \id_{\l} \in \K(\l,\l)$$
while if $\l \ge 0$ then
$$\sE(\l-1) \sF(\l-1) \cong \sF(\l+1) \sE(\l+1) \bigoplus_{[\l]} \id_\l \in \K(\l,\l).$$

\item The $X$s and $T$s satisfy the nil affine Hecke relations:
\begin{enumerate}
\item $T(\l)^2 = 0$
\item $(I  T(\l-1)) \circ (T(\l+1)  I) \circ (I  T(\l-1)) = (T(\l+1) I) \circ (I T(\l-1)) \circ (T(\l+1)  I)$ as endomorphisms of $ \sE(\l+2)\sE(\l)\sE(\l-2)$.
\item $(X(\l+1)  I) \circ T(\l) - T(\l) \circ (I  X(\l-1)) = I = - (I  X(\l-1)) \circ T(\l) + T(\l) \circ (X(\l+1) I)$ as endomorphisms of $ \sE(\l+1)\sE(\l-1) $.
\end{enumerate}

\item For $\l \in \K$ nonzero the endomorphism space $\Hom(\id_\l, \id_\l \la i \ra)$ of the identity 1-morphism $\id_\l \in \K(\l,\l)$ is zero if $i < 0$ and one-dimensional if $i=0$. Moreover, the space of 2-morphisms between any two 1-morphisms is finite dimensional.
\end{enumerate}

\begin{Remark}
The definition above of a categorical $\sl_2$ action is, on the surface, a weaker version of the 2-functor from Lauda's 2-category \cite{Ld} to $\K$. The difference is that it omits certain relations among 2-morphisms involving both $\sE$'s and $\sF$'s. However, by \cite{CL} and also \cite{Br} one can recover the full action of Lauda's category.
\end{Remark}

\begin{Remark}
Typically, the 2-category $ \K $ is the 2-category of ($\C$-linear, additive, graded) categories where the 1-morphisms are functors and the 2-morphisms are natural transformations.  In this paper, $\K$ will be a 2-category of kernels where the 1-morphisms are kernels and the 2-morphisms are morphisms between kernels.
\end{Remark}

The action of the affine nilHecke algebra gives us a direct sum decomposition
$$\sE^r(\l) \cong \bigoplus_{[r]!} \sE^{(r)}(\l)$$
for some new 1-morphisms $\sE^{(r)}(\l) \in \K(\l-r,\l+r)$ (and similarly we get $\sF^{(r)}(\l) \in \K(\l+r,\l-r)$. In most geometric examples (including this paper) these $\sE^{(r)}$ are quite natural and interesting in themselves.

\section{$\D_{X,h}$-modules and the associated graded functor} \label{sec:2}
\subsection{Notation} \label{se:notation}
In this section we assume our varieties are smooth, quasi-projective and defined over $\C$. For a variety $X$, we fix the following notation:
\begin{itemize}
\item $d_X := \dim X $, the dimension,
\item $\O_X$, the structure sheaf,
\item $\Theta_X$, the tangent sheaf, and its exterior powers $ \Theta^i_X $,
\item $\Omega_X$, the cotangent sheaf, and its exterior powers $ \Omega^i_X $,
\item $\omega_X := \Omega^{d_X}_X $,
\item $\D_X $, the sheaf of differential operators.
\end{itemize}
  For any abelian category $ \mathcal A $ (such as $ \O_X\mod$ or $\D_X\mod$), we write $ D(\mathcal A) $ for the bounded derived category of $ \mathcal A $.  All functors will be assumed to be derived unless otherwise noted.

Usually in this paper, we consider $ \O$-modules on cotangent bundles $ T^* X$.  We write $ \O_{T^* X}\mod^{\C^\times} $ for the category of $ \C^\times $-equivariant coherent $ \O_{T^* X}$-modules on $ T^* X $, where $ \C^\times $ acts by scaling the fibres with weight 2.  We write $ \{1 \} : D(\O_{T^*X}\mod^{\C^\times}) \rightarrow D(\O_{T^*X}\mod^{\C^\times}) $ for the functor given by tensoring by the trivial line bundle carrying a $ \C^\times $ action of weight 1.

Recall that $\D_X$ comes equipped with a filtration $F^0 \D_X \subset F^1 \D_X \subset \dots$ by the order of differential operator.  We let $\D_{X,h} := \Rees(\D_X) = \bigoplus_k h^k F^k \D_X$ be the associated Rees algebra.  Thus, $\D_{X,h}$ is a sheaf of graded algebras which contains $\O_X$ in degree zero and $ \Theta_X $ and the parameter $h$ in degree $2$.  For $ \xi \in \Theta_X $ and $ f \in \O_X $, we have the relations $ \xi f - f \xi = h \xi(f) $.  We denote by $\D_{X,h}\mod$ the category of coherent graded $\D_{X,h}$-modules.  We write $ \{1\} $ for the functor of grading shift.

If $(M,F)$ is a $\D_X$-module with a compatible filtration, then one can form the Rees module $\Rees(M) = \bigoplus_k h^k F^k M$ which is a $\D_{X,h}$-module. It is not hard to check that $\Rees(M) \otimes_{\C[h]} \C_1 \cong M$ while $\Rees(M) \otimes_{\C[h]} \C_0 \cong \gr(M)$ is the associated graded of $M$.

In particular, $ \O_X $ is a $ \D_X$-module.  We define a filtration on $ \O_X $ by
 $$
 F^k \O_X = \begin{cases}
 &0 \text{ if } k < 0 \\
  &\O_X \text{ if } k \ge 0
 \end{cases}
 $$
  and we write $ \O_{X,h} := \Rees(\O_X) $.  Of course, $ \O_{X,h}$ is a sheaf of algebras and we have $ \O_{X,h} = \O_X \otimes_\C \C[h] $.  We write $ \O_{X,h}\mod $ for the category of quasi-coherent graded $\O_{X,h}$-modules.  Given any $ \O_X$-module $ \mathcal F $, we will write $ \mathcal F_h := \mathcal F \otimes_\C \C[h] $; this is not a $ \D_{X,h} $-module, just a graded $ \O_{X,h} $-module.
 
\begin{Remark} \label{re:connect}
Given $ M \in \D_{X,h}\mod$, we can form the connection map
\[
\nabla: M \to \Omega_{X,h} \otimes_{\O_{X,h}} M \{2\}.
\]

This is a map of sheaves of graded $ \C[h]$-modules satisfying
\begin{enumerate}
	\item $ \nabla(fm)=hdf \otimes m +f \nabla(m) $ for any $ f \in \O_{X,h} $ and $ m \in M $ (the $h$-Leibniz rule)
\item $\nabla\circ\nabla=0$ (flatness condition)
\end{enumerate}

This sets up an equivalence of categories between $ \D_{X,h}\mod $ and the category of pairs $ (\mathcal F, \nabla) $ where $ \mathcal F \in \O_{X,h}\mod $ and $ \nabla $ is a graded $ \C[h]$-module morphism satisfying the above conditions.
\end{Remark}


Returning to the definition of $\D_{X,h}$ we have that
$$\D_{X,h} \otimes_{\C[h]} \C_0 \cong \pi_{*} \O_{T^* X}$$
(where $\pi: T^*X \rightarrow X$ is the natural projection and $\C_0 = \C[h]/h$). Therefore we have a functor
$$D(\D_{X,h}\mod) \rightarrow D(\pi_{*} \O_{T^*X}\mod), \ \  M \to M \otimes_{\C[h]} \C_0.$$
If $M \in \D_{X,h}\mod $ then $M$ is graded by definition and so $ M \otimes_{\C[h]} \C_0 $ will be a graded $ \pi_* \O_{T^* X} $-module. This is the same as a $ \C^\times$-equivariant $ \O_{T^* X}$-module, since localization gives an equivalence $\pi_{*} \O_{T^*X} \mod \xrightarrow{\sim} \O_{T^*X}\mod$.  Thus, we get a functor
$$\gr: D(\D_{X,h}\mod) \rightarrow D(\O_{T^*X}\mod^{\C^\times}).$$

One also has a similar, but simpler, functor $\D_{X,h}\mod \rightarrow \D_X\mod$ given by $M \rightarrow M \otimes_{\C[h]} \C_1$.  (In this case, setting $ h $ to be 1 turns the grading on $ M $ into a filtration.)

\begin{Remark}
There are three grading shifts that show up in this paper. The first is the internal shift $\{1\}$ mentioned above. The second is the cohomological grading shift $[1]$. The third is $\la 1 \ra := [1]\{-1\}$ which appears naturally when working with the categorical $\sl_2$ action (as well as in other contexts such as $\IC$ Hodge modules).
\end{Remark}

The following derived version of the Nakayama Lemma will be useful.

\begin{Lemma}[Nakayama Lemma]
\label{lem:(Graded-Nakayama-Lemma)}
Let $M$ be a bounded complex of quasi-coherent graded $\O_{X,h}$-modules.
Suppose that, for each $i \in \Z$, the set of non-zero graded components $ \{ j \in \Z :  \mathcal{H}^{i}(M)_{j} \ne 0 \} $ is bounded below.

 Then $M \otimes_{\C[h]} \C = 0$ implies $M=0$. 
\end{Lemma}

\begin{proof}
Since this statement is local we suppose that $X$ is affine and that $M$ is a bounded complex of graded $\O(X)[h]$-modules. If $M$ is nonzero, we may consider $i$, the maximal index for which $\mathcal{H}^{i}(M)$ is nonzero. For this $i$, we have 
\[
\mathcal{H}^{i}(M\otimes_{\C[h]}\C) \cong \mathcal{H}^{i}(M)\otimes _{\C[h]}\C
\]
where the left hand side denotes the derived tensor product and the right hand side denotes the underived tensor product (this is a general fact about the derived tensor product). But the assumption on $\mathcal{H}^{i}(M)$, and the usual graded Nakayama lemma, imply that 
\[
\mathcal{H}^{i}(M)\otimes _{\C[h]}\C \neq 0
\]
contradicting the assumption that $M \otimes_{\C[h]} \C = 0$. Thus in fact $\mathcal{H}^{i}(M)=0$ for all $i$ as required.
\end{proof}

In the rest of this section we will explain how the functor $\gr$ commutes with pullbacks and pushforwards of $\D_h$-modules. These results first appeared in a paper by Laumon \cite{Lm} using the language of exact categories and filtered modules. We reinterpret and reprove these results using the language of $\D_h$-modules. We also discuss Verdier duality and adjunctions in the context of $\D_h$-modules.

\subsection{Pull-back}\label{sec:pull}
We begin with a discussion of  the pullback functor in the setting of $\D$-modules. We consider a map $f: X \rightarrow Y$ between smooth varieties. For a $\D_Y$-module $N$ we have the usual $\O$-module pullback $f^* N$, which is then a $f^* \D_Y$-module.  Since $f$ induces a natural morphism $\Theta_X \to f^{*}\Theta_Y$ the sheaf $f^* N$ acquires an action of $\D_X$. Following \cite[Sect. 1.3]{HTT} we rewrite this functor as
$$f^{*} N \cong f^{*} \D_Y \otimes_{f^{-1}\D_{Y}} f^{-1} N.$$
where the $\D_X$ action on the tensor product is induced from that on $f^{*} \D_Y$.  Here and below, we will use $ \otimes $ to denote the derived functor of tensor product.

We now extend this to $ \D_h$-modules in the obvious way.
\begin{Definition} \label{def:pull}
For $ N \in D(\D_{Y,h})\mod $,  we define the \textbf{pull-back} of $ \D_h$-modules by
$$f^{*} N := f^* \D_{Y,h} \otimes_{f^{-1}\D_{Y,h}} f^{-1} N.$$
where $f^{*}\D_{Y,h}$ is a $\D_{X,h}$ module again by the map $\Theta_X \rightarrow f^* \Theta_Y$. This functor is left exact and cohomologically bounded which allows us to define $f^\dagger: D(\D_{Y,h}\mod) \rightarrow D(\D_{X,h}\mod)$ as
$$f^{\dagger} N := f^{*} N [d_X-d_Y].$$
\end{Definition}
This shift is useful since $f^\d$ appears in the base change formula for $ \D_h$-modules.

To describe what happens when we specialize to $h=0$, consider the following commutative diagram associated to $f: X \rightarrow Y$.
\begin{equation}\label{eq:1}
\xymatrix{
T^* X \ar[d]^{\pi_X} & & \ar[dll]_{p_1} \ar[ll]_{\pi_f} \ar[d]^{p_2} X \times_Y T^* Y \ar[rr]^{p_1} & & X \ar[d]^f \\
X & & T^* Y \ar[rr]^{\pi_Y} & & Y
}
\end{equation}

\begin{Proposition}\label{prop:pull}
For $N \in D(\D_{Y,h}\mod)$ we have $\gr(f^* N) \cong \pi_{f*} p_{2}^{*} (\gr N)$.
\end{Proposition}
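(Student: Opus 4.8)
The plan is to unwind $f^{*}$ as a relative tensor product over the Rees transfer bimodule, observe that setting $h=0$ commutes with this tensor product, and then recognize what comes out as a (Tor-independent) base change along the affine projection $\pi_Y$. The one genuine input is a flatness statement: working in local coordinates on $Y$, the Rees algebra $\D_{Y,h}$ is the free $\O_Y[h]$-module on the monomials in the degree-two generators $h\partial_i$, so $\D_{Y,h}$ — and therefore the transfer bimodule $\D_{X\to Y,h} := f^{*}\D_{Y,h} = \O_X \otimes_{f^{-1}\O_Y} f^{-1}\D_{Y,h}$ — are locally free, in particular flat, over $\C[h]$. Moreover $\D_{Y,h}\otimes_{\C[h]}\C_0 = \pi_{Y*}\O_{T^*Y} = \Sym_{\O_Y}\sT_Y =: \overline{\D}_Y$ and $\D_{X\to Y,h}\otimes_{\C[h]}\C_0 = \O_X\otimes_{f^{-1}\O_Y}f^{-1}\overline{\D}_Y = \Sym_{\O_X} f^{*}\sT_Y =: \overline{\D}_{X\to Y}$, with no higher $\mathrm{Tor}$ over $\C[h]$.

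Next I would commute $\gr = (-)\otimes^L_{\C[h]}\C_0$ past the pullback tensor product. On quasi-projective $Y$ one may resolve $N$ by a bounded-above complex of induced modules $\D_{Y,h}\otimes_{\O_Y}\O_Y(-n)^{\oplus m}$; pulling back gives a complex $G^{\bullet}$ of $f^{-1}\D_{Y,h}$-modules, each flat over $f^{-1}\D_{Y,h}$ and over $\C[h]$, with $G^{\bullet}\simeq f^{-1}N$. Then $\D_{X\to Y,h}\otimes_{f^{-1}\D_{Y,h}}G^{\bullet}$ is a complex of $\C[h]$-flat sheaves computing $f^{*}N$, so $(-)\otimes_{\C[h]}\C_0$ of it is already derived and is applied termwise, yielding $\overline{\D}_{X\to Y}\otimes_{f^{-1}\overline{\D}_Y}(G^{\bullet}\otimes_{\C[h]}\C_0)$. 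Since $G^{\bullet}\otimes_{\C[h]}\C_0$ is a complex of flat $f^{-1}\overline{\D}_Y$-modules representing $f^{-1}(\gr N)$, this produces a canonical isomorphism in $D(\O_{T^*X}\mod^{\C^\times})$
$$\gr(f^{*}N)\ \cong\ \overline{\D}_{X\to Y}\ \otimes^L_{f^{-1}\overline{\D}_Y}\ f^{-1}(\gr N),$$
where $\gr N$ on the right is regarded as a $\pi_{Y*}\O_{T^*Y} = \overline{\D}_Y$-module.

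It remains to identify this geometrically. Since $\pi_Y$ is affine and $\pi_{Y*}\O_{T^*Y}=\Sym_{\O_Y}\sT_Y$ is locally free over $\O_Y$, the cartesian square formed by $p_1,p_2,f,\pi_Y$ in \eqref{eq:1} is Tor-independent, so base change applies: $Lf^{*}R\pi_{Y*}\sF\cong Rp_{1*}Lp_2^{*}\sF$ for every $\sF\in D(\O_{T^*Y}\mod^{\C^\times})$. Using $\overline{\D}_{X\to Y}=p_{1*}\O_{X\times_Y T^*Y}$ and the identity $\overline{\D}_{X\to Y}\otimes^L_{f^{-1}\overline{\D}_Y}(-)\cong \O_X\otimes^L_{f^{-1}\O_Y}(-)=Lf^{*}(-)$ on $\overline{\D}_Y$-modules (which holds because $\overline{\D}_Y$ is locally free over $\O_Y$), the right-hand side above becomes $Rp_{1*}Lp_2^{*}(\gr N)=R\pi_{X*}\bigl(R\pi_{f*}Lp_2^{*}(\gr N)\bigr)$, where $p_1=\pi_X\circ\pi_f$. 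Finally, $\gr$ takes values in $\O_{T^*X}$-modules through the localization equivalence $\pi_{X*}\O_{T^*X}\mod\simeq\O_{T^*X}\mod$, under which $R\pi_{X*}\sG$ corresponds to $\sG$; hence $\gr(f^{*}N)$ corresponds to $R\pi_{f*}Lp_2^{*}(\gr N)=\pi_{f*}p_2^{*}(\gr N)$. All the steps above are $\C^\times$-equivariant, which gives the asserted isomorphism.

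The step that carries the actual content, as opposed to bookkeeping, is the commutation of $(-)\otimes^L_{\C[h]}\C_0$ with the pullback tensor product: it relies on the $\C[h]$-flatness of the transfer bimodule $\D_{X\to Y,h}$ and on the existence of resolutions of $N$ by $\C[h]$-flat induced $\D_{Y,h}$-modules. Without $h$-flatness one would pick up spurious $\mathrm{Tor}^{\C[h]}$-terms and the clean formula would break — this is precisely the sort of subtlety that makes it worthwhile to work with $\D_h$-modules rather than with naive associated gradeds. By comparison, the base-change input is essentially automatic, being a consequence only of a cotangent bundle being affine and flat over its base.
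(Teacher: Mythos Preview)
Your proof is correct and follows essentially the same route as the paper's: unwind $f^*$ as the tensor product over the transfer bimodule, set $h=0$, cancel using that $\Sym_{\O_Y}\sT_Y$ is locally free over $\O_Y$ to reduce to the ordinary $\O$-module pullback $f^*\pi_{Y*}(\gr N)$, and then apply flat base change along $\pi_Y$ to rewrite this as $p_{1*}p_2^*(\gr N)=\pi_{X*}\pi_{f*}p_2^*(\gr N)$. The only difference is one of care: the paper simply writes the chain of isomorphisms, whereas you justify why $(-)\otimes^L_{\C[h]}\C_0$ commutes with the relative tensor product (via $\C[h]$-flatness of $\D_{X\to Y,h}$ and induced resolutions) and why the base change is Tor-independent, points the paper leaves implicit.
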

\begin{proof}
First, we have
\begin{align*}
\pi_{X*} (f^{*} N \otimes_{\C[h]} \C_0)
&\cong \pi_{X*} [ (f^* \D_{Y,h} \otimes_{\C[h]} \C_0) \otimes_{f^{-1} \D_{Y,h} \otimes_{\C[h]} \C_0} (f^{-1}(N) \otimes_{\C[h]} \C_0) ] \\
&\cong f^* \Sym^* T_Y \otimes_{f^{-1} \Sym^* T_Y} f^{-1} \pi_{Y*} (N \otimes_{\C[h]} \C_0) \\
&\cong \O_X \otimes_{f^{-1} \O_Y} f^{-1} \Sym^* T_Y \otimes_{f^{-1} \Sym^* T_Y} f^{-1} \pi_{Y*} (N \otimes_{\C[h]} \C_0) \\
&\cong \O_X \otimes_{f^{-1} \O_Y} f^{-1} \pi_{Y*}(N \otimes_{\C[h]} \C_0) \\
&\cong f^{*} \pi_{Y*} (N \otimes_{\C[h]} \C_0)
\end{align*}
where the second line is a $\Sym^* T_X$-module via the natural map $\Sym^* T_X \rightarrow f^* \Sym^* T_Y$ and, also in the second line and beyond, we think of $f^{-1} \pi_{Y*}(N \otimes_{\C[h]} \C_0)$ as a $f^{-1} \Sym^* T_Y$-module. Next, since $\pi_Y$ is flat, the commutativity of the square in (\ref{eq:1}) implies
$$f^{*} \pi_{Y*} (N \otimes_{\C[h]} \C_0) \cong p_{1*}p_{2}^{*} (N \otimes_{\C[h]} \C_0).$$
The result follows since $p_{1*}=\pi_{X*} \circ \pi_{f*}$.
\end{proof}

\subsection{Push-forward}\label{sec:push}
Now consider a proper morphism $f: X \rightarrow Y$. Let us recall the usual push-forward of a $\D_X$-module $M$
\begin{align}\label{eq:push}
\int_{f} M &:= f_{*} (\D_{Y \leftarrow X} \otimes_{\D_{X}} M) \in D(\D_Y\mod)
\end{align}
where $f_{*}$ is the usual derived pushforward of quasi-coherent $\O$-modules and $\D_{Y \leftarrow X}$ is the transfer bimodule, defined as $f^{*}(\D_Y \otimes_{\O_{Y}} \omega_{Y}^{-1}) \otimes_{\O_{X}} \omega_{X}$ (where $f^{*}$ is the usual pullback).

Note that $\D_{Y \leftarrow X}$ carries the structure of a right $\D_X$-module as follows. $\D_Y$ is naturally a \emph{right} $\D_{Y}$-module by multiplication and so $\D_{Y} \otimes_{\O_{Y}}\omega_{Y}^{-1}$ is a left $\D_{Y}$-module by side-changing. Then $f^{*}(\D_{Y}\otimes_{\O_{Y}}\omega_{Y}^{-1})$ is naturally a left $\D_{X}$-module by pull-back. Thus by side-changing $f^{*}(\D_{Y} \otimes_{\O_{Y}}\omega_{Y}^{-1})\otimes_{\O_{X}} \omega_{X}$ is naturally a right $\D_X$-module. Thus, the tensor product in (\ref{eq:push}) makes sense. Moreover, the left $\D_{Y}$-module structure in (\ref{eq:push}) is inherited from the left $\D_{Y}$-module structure on $\D_{Y}$.

\begin{Definition} \label{def:push}
The sheaf $f^{*}(\D_{Y,h}\otimes_{\O_{Y,h}}\omega_{Y,h}^{-1})\otimes_{\O_{X,h}}\omega_{X,h}$ is naturally a right $\D_{X,h}$-module by the same consideration as above. We denote this module by $\D_{Y \leftarrow X,h}$ and define \textbf{push-forward} of $ \D_h$-modules by
\begin{align}\label{eq:push2}
\int_{f} M &:= f_{*}(\D_{Y \leftarrow X,h}\otimes_{\D_{X,h}} M) \in D(\D_{Y,h}\mod).
\end{align}
This is naturally a left $\D_{Y,h}$-module via the left action of $\D_{Y,h}$ on itself.
\end{Definition}

\begin{Remark} \label{re:PushFunctoriality}
	The push-forward $ \int_f $ is a functor, so it is possible to push-forward morphisms of $ \D_h$-modules.  In fact, this functoriality holds on a sheaf-theoretic level.  Let $ M, N \in \D_{X,h}\mod $.  Then there is a map
	$$f_* \mathcal Hom_{\D_{X,h}}(M,N) \rightarrow \mathcal Hom_{\D_{Y,h}}(\int_f M, \int_f N)
	$$
	To construct this map, we use the composition
	\begin{align*}
	f_* \mathcal Hom_{\D_{X,h}}(M,N) &\rightarrow 
	f_* \mathcal Hom_{f^{-1} \D_{Y,h}}(\D_{Y \leftarrow X,h} \otimes_{\D_{X,h}} M,\D_{Y \leftarrow X,h} \otimes_{\D_{X,h}} N)\\
	&\rightarrow \mathcal Hom_{\D_{Y,h}}(f_*(\D_{Y \leftarrow X,h} \otimes_{\D_{X,h}} M),f_*(\D_{Y \leftarrow X,h} \otimes_{\D_{X,h}} N))
	\end{align*}
	
	\end{Remark}

Now we look at what happens to the push-forward when we specialize to $h=0$. As before we use the diagram
\begin{equation}
\xymatrix{
 X \times_Y T^* Y \ar[d] \ar[rr]^{p_1} & & X \ar[d]^f \\
 T^* Y \ar[rr]^{\pi_Y} & & Y
}
\end{equation}

\begin{Proposition}\label{prop:push}
As $\O_X $-modules we have
$$\D_{Y \leftarrow X,h} \otimes_{\C[h]} \C_0 \cong p_{1*}(\O_{X \times_{Y} T^* Y}) \otimes_{\O_X} \omega_{X/Y}$$
where $\omega_{X/Y}$ denotes $\omega_X \otimes_{\O_X} f^* \omega_{Y}^{-1}$.
\end{Proposition}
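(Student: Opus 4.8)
The plan is to unwind the definition $\D_{Y \leftarrow X, h} = f^{*}(\D_{Y,h} \otimes_{\O_Y} \omega_Y^{-1}) \otimes_{\O_X} \omega_X$ and specialize at $h=0$, reducing everything to a statement about $\Sym^* T_Y$ and the projection $p_1 : X \times_Y T^* Y \to X$. First I would observe that tensoring with $\C_0$ over $\C[h]$ is a right exact operation that only sees the underlying graded $\O$-module structure, so it commutes with the $\O$-module pullback $f^*$ and with $\otimes_{\O_X} \omega_X$. The key input is the isomorphism $\D_{Y,h} \otimes_{\C[h]} \C_0 \cong \pi_{Y*} \O_{T^* Y} \cong \Sym^* T_Y$, already recorded in the text. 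Since $\omega_Y^{-1}$ is a line bundle, $(\D_{Y,h} \otimes_{\O_Y} \omega_Y^{-1}) \otimes_{\C[h]} \C_0 \cong \Sym^* T_Y \otimes_{\O_Y} \omega_Y^{-1}$, and then applying the (underived, or $L$-, see below) pullback $f^*$ and twisting by $\omega_X$ gives
$$ \D_{Y \leftarrow X, h} \otimes_{\C[h]} \C_0 \cong f^* (\Sym^* T_Y) \otimes_{\O_X} \omega_X \otimes_{\O_X} f^* \omega_Y^{-1} = f^*(\Sym^* T_Y) \otimes_{\O_X} \omega_{X/Y}. $$

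Next I would identify $f^*(\Sym^* T_Y)$ with $Rp_{1*}(\O_{X \times_Y T^* Y})$. This is exactly flat base change for the fibre square relating $p_1 : X \times_Y T^* Y \to X$ to $\pi_Y : T^* Y \to Y$ along $f$: since $\pi_Y$ is affine (in fact an affine bundle) we have $R\pi_{Y*}\O_{T^*Y} = \pi_{Y*}\O_{T^* Y} = \Sym^* T_Y$, and base change gives $Rp_{1*}\O_{X\times_Y T^* Y} \cong f^* R\pi_{Y*}\O_{T^* Y} = f^* \Sym^* T_Y$. (One should note $p_1$ is also affine, so this higher pushforward is concentrated in degree zero; this is why the statement can be phrased with an underived twist by $\omega_{X/Y}$.) Combining the two displays yields the claimed isomorphism of $\O_X$-modules.

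The point that needs care — and what I expect to be the main obstacle — is the handling of derived versus underived functors and the compatibility of $\otimes_{\C[h]} \C_0$ with them. Strictly, $\otimes_{\C[h]}^{L} \C_0$ is the derived operation, and it agrees with the underived one only because $\D_{Y,h}$, and hence $\D_{Y\leftarrow X,h}$, is flat (even free) over $\C[h]$; one must check this $\C[h]$-flatness, which follows from the Rees construction since each $h^k F^k \D_Y$ is a locally free $\O_Y$-module (for the $h=0$ specialization it suffices that $\D_{Y,h}$ has no $h$-torsion). Similarly one must be slightly careful that $f^*(\D_{Y,h} \otimes_{\O_Y}\omega_Y^{-1})$ appearing in the definition is the naive $\O$-module pullback (as stated in the text's definition of the transfer bimodule), and that this naive pullback commutes with $-\otimes_{\C[h]}\C_0$ on the nose since the latter is computed termwise in the grading. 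Once these bookkeeping points are settled, the argument is the short chain of isomorphisms above, and I would present it as a two- or three-line computation followed by the base-change invocation. A secondary subtlety is making the $\C^\times$-equivariance / grading explicit if one wants the isomorphism to be $\C^\times$-equivariant, but for the stated $\O_X$-module assertion this can be suppressed.
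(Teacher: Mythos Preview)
Your proposal is correct and follows essentially the same approach as the paper: unwind the definition of $\D_{Y\leftarrow X,h}$, specialize at $h=0$ using $\D_{Y,h}\otimes_{\C[h]}\C_0\cong\pi_{Y*}\O_{T^*Y}$, and then identify $f^*\pi_{Y*}\O_{T^*Y}$ with $Rp_{1*}\O_{X\times_Y T^*Y}$ by base change, the paper carrying this last step out explicitly in the affine local picture via flatness of $\Sym_A(\mathcal T_A)$ over $A$. Your additional remarks on $\C[h]$-flatness and derived versus underived functors are sound but not needed for the paper's formulation.
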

\begin{proof}
We recall that $\D_{Y \leftarrow X,h}=f^{*}(\D_{Y,h} \otimes_{\O_{Y,h}} \omega_{Y,h}^{-1}) \otimes_{\O_{X,h}} \omega_{X,h}$. As above, we have $\D_{Y,h} \otimes_{\C[h]} \C_0 \cong \pi_{Y*} \O_{T^* Y}.$  So it follows that
$$\D_{Y\leftarrow X,h} \otimes_{\C[h]} \C_0 \cong f^*(\pi_{Y*}(\O_{T^* Y}) )\otimes \omega_{X/Y}$$
Since $ \pi_Y $ is flat, we have the base change isomorphism $f^{*}(\pi_{Y*}(\O_{T^* Y})) \cong p_{*}(\O_{X \times_Y T^* Y})$.
\end{proof}

\begin{Corollary}\label{cor:push}
For $M \in D(\D_{X,h}\mod)$ and using the notation from (\ref{eq:1}) we have
$$\gr \left( \int_{f} M \right) \cong (p_2)_* (\pi_f^*  \gr(M) \otimes p_1^* \omega_{X/Y}).$$
\end{Corollary}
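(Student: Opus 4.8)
The plan is to combine Proposition~\ref{prop:push} with a projection-formula computation, exactly as the pullback case (Proposition~\ref{prop:pull}) was handled. Concretely, I would start from the defining formula \eqref{eq:push2} for $\int_f M$ and apply $\pi_{Y*}(- \otimes_{\C[h]} \C_0)$. Since $\pi_Y$ is affine and $Rf_*$ commutes with the (affine) pushforward $\pi_{Y*}$, I can move the specialization $\otimes_{\C[h]}\C_0$ past $Rf_*$, and the main input is understanding $(\D_{Y\leftarrow X,h}\otimes^L_{\D_{X,h}} M)\otimes_{\C[h]}\C_0$. Here one uses that $\C_0$ is perfect over $\C[h]$ (it has the Koszul resolution $[\C[h]\xrightarrow{h}\C[h]]$), so $(-)\otimes_{\C[h]}\C_0$ commutes with the derived tensor product over $\D_{X,h}$; thus this becomes $(\D_{Y\leftarrow X,h}\otimes_{\C[h]}\C_0)\otimes^L_{\D_{X,h}\otimes_{\C[h]}\C_0} (M\otimes_{\C[h]}\C_0)$. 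Now Proposition~\ref{prop:push} identifies the first factor as $Rp_{1*}(\O_{X\times_Y T^*Y})\otimes_{\O_X}\omega_{X/Y}$, and $\D_{X,h}\otimes_{\C[h]}\C_0 \cong \pi_{X*}\O_{T^*X}$, while $M\otimes_{\C[h]}\C_0 \cong \pi_{X*}\gr(M)$ by definition of $\gr$.

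Next I would reorganize this expression using the geometry of diagram~\eqref{eq:1}. Pushing everything down to $Y$ via $Rf_*$ and then identifying with a pushforward along $\pi_Y$: since $p_1 = f\circ (\text{the map }X\times_Y T^*Y\to X)$ and, in diagram~\eqref{eq:1}, $p_1 : X\times_Y T^*Y \to X$ factors the cotangent data, I want to rewrite the tensor product $Rp_{1*}(\O_{X\times_Y T^*Y})\otimes_{\pi_{X*}\O_{T^*X}} \pi_{X*}\gr(M)$ as a pushforward of a sheaf living on $X\times_Y T^*Y$. The key geometric point is that $X\times_Y T^*Y$ maps to $T^*X$ via $\pi_f$ and to $T^*Y$ via $p_2$, and the module $Rp_{1*}\O_{X\times_Y T^*Y}$ as a module over $\pi_{X*}\O_{T^*X} = p_{1*}\pi_{f*}\O_{T^*X}$ is just $p_{1*}$ of the structure sheaf pulled back along $\pi_f$. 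A projection-formula / base-change argument along the (affine, hence nice) maps then gives
\begin{equation*}
Rp_{1*}\O_{X\times_Y T^*Y}\otimes_{\pi_{X*}\O_{T^*X}} \pi_{X*}\gr(M) \;\cong\; Rp_{1*}\bigl(\pi_f^*\gr(M)\bigr),
\end{equation*}
after which, tensoring by $\omega_{X/Y}=\omega_X\otimes f^*\omega_Y^{-1}$ on $X$ — which pulls back to $p_1^*\omega_{X/Y}$ on $X\times_Y T^*Y$ — and applying $Rf_*$ yields $R(f\circ p_1)_*(\pi_f^*\gr(M)\otimes p_1^*\omega_{X/Y})$. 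Finally, using $f\circ p_1 = \pi_Y\circ p_2$ and the equivalence $\pi_{Y*}\O_{T^*Y}\mod \simeq \O_{T^*Y}\mod$ to pass from the $\pi_Y$-pushforward to the $\O_{T^*Y}$-module $\gr(\int_f M)$, I get $\gr(\int_f M)\cong p_{2*}(\pi_f^*\gr(M)\otimes p_1^*\omega_{X/Y})$, which is exactly the claim. (Note all the pushforwards here are derived; the statement as written suppresses the $R$'s per the convention in the excerpt.)

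The step I expect to be the main obstacle is justifying the identification $Rp_{1*}\O_{X\times_Y T^*Y}\otimes^L_{\pi_{X*}\O_{T^*X}}\pi_{X*}\gr(M)\cong Rp_{1*}(\pi_f^*\gr(M))$ — i.e.\ correctly moving the derived tensor product over the sheaf of algebras $\pi_{X*}\O_{T^*X}$ through the affine pushforwards and identifying it as a genuine $\O$-module pullback along $\pi_f$ followed by $p_{1*}$. This requires care because $X\times_Y T^*Y$ need not be $T^*X$ (it is only the base change), so $\pi_f$ is not an isomorphism, and one must check that the relevant Tor's vanish so that the derived tensor product is computed correctly; the cleanest route is again to reduce to affine schemes as in Proposition~\ref{prop:push} and invoke flatness of $\Sym^*\sT$ over $\O$, combined with the projection formula for the affine morphism $p_1$. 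Once that lemma is in place, the rest is bookkeeping with the maps in diagram~\eqref{eq:1} and the definition of $\gr$.
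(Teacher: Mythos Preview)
Your proposal is correct and follows essentially the same route as the paper: specialize at $h=0$, invoke Proposition~\ref{prop:push}, apply a projection-formula step along the affine map $p_1$ to rewrite the tensor as $p_{1*}$ of something involving $\pi_f^*\gr(M)$, and then use $f\circ p_1=\pi_Y\circ p_2$ together with the equivalence $\pi_{Y*}\O_{T^*Y}\mod\simeq\O_{T^*Y}\mod$. The paper's argument is more compressed---it passes from $p_{1*}(\O_{X\times_Y T^*Y})\otimes\omega_{X/Y}\otimes(M\otimes_{\C[h]}\C_0)$ directly to $p_{1*}[p_1^*\omega_{X/Y}\otimes p_1^*(M\otimes_{\C[h]}\C_0)]$ in one line, treating $M\otimes_{\C[h]}\C_0$ as a $\Sym^*T_X$-module on $X$---so the step you flag as the main obstacle is exactly the one the paper handles implicitly via the projection formula for the affine morphism $p_1$.
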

\begin{proof}
Using Proposition \ref{prop:push}, we have
\begin{align*}
\left( \int_{f} M \right) \otimes_{\C[h]} \C_0
&\cong f_{*} (\D_{Y \leftarrow X,h} \otimes_{\D_{X,h}} M) \otimes_{\C[h]} \C_0 \\
&\cong f_{*} [ p_{1*}(\O_{X \times_Y T^* Y}) \otimes_{\O_X} \omega_{X/Y} \otimes_{\O_{X}} (M \otimes_{\C[h]} \C_0) ] \\
&\cong f_* p_{1*} [ p_1^* \omega_{X/Y} \otimes_{\O_{X \times_Y T^* Y}} p_1^*(M \otimes_{\C[h]} \C_0) ]
\end{align*}
where in the second and third lines we view $M \otimes_{\C[h]} \C_0$ as a $\Sym^* T_X$-module on $X$. Thus we get
$$\left( \int_{f} M \right) \otimes_{\C[h]} \C_0 \cong \pi_{Y*} (p_2)_*(\pi_f^* (M \otimes_{\C[h]} \C_0) \otimes_{\O_{X \times_Y T^*Y}} p_1^* \omega_{X/Y}).$$
The result follows.
\end{proof}

\begin{Definition} \label{def:DeltaFun}
If $f: X \rightarrow Y $ is a closed embedding, then we will use the notation $\delta_{X,h} := \int_f \O_{X,h}\{d_X\} $. This is sometimes called the sheaf of \textbf{delta-functions} along $X$.
\end{Definition}

\begin{Corollary} \label{cor:grofDelta}
Assume that $ f : X \rightarrow Y $ is a closed embedding.  Then
$$\gr(\delta_{X,h}) \cong \O_{T^*_X(Y)} \otimes p^* \omega_{X/Y}\{d_X\}$$
where $ T^*_X(Y) \subset (T^* Y)|_X $ denotes the conormal bundle of $X \subset Y$ and $p: T^*_X(Y) \rightarrow X$ is the natural projection.
\end{Corollary}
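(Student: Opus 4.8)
The plan is to apply Corollary \ref{cor:push} to the inclusion $f: X \hookrightarrow Y$ and the module $M = \O_{X,h}$, then simplify each factor. First I would unwind the definitions: by Corollary \ref{cor:push} we have
\[
\gr\left(\int_f \O_{X,h}\right) \cong p_{2*}\left(\pi_f^* \gr(\O_{X,h}) \otimes p_1^* \omega_{X/Y}\right),
\]
where the diagram (\ref{eq:1}) specializes, since $f$ is a closed immersion, to one in which $X \times_Y T^*Y = (T^*Y)|_X$, the map $p_1$ is the bundle projection $(T^*Y)|_X \to X$, the map $p_2$ is the closed immersion $(T^*Y)|_X \hookrightarrow T^*Y$, and $\pi_f: T^*X \to (T^*Y)|_X$ is the surjection of vector bundles over $X$ dual to the inclusion $T^*_X(Y) \hookrightarrow (T^*Y)|_X$ of the conormal bundle (equivalently, $\pi_f$ is the quotient map $(T^*Y)|_X \to T^*X$ — one should be careful which direction the arrow goes; in (\ref{eq:1}) it is $\pi_f: T^*X \leftarrow X\times_Y T^*Y$, so $\pi_f$ is the restriction of $df^*$).

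Next I would compute $\gr(\O_{X,h})$. Since $\O_{X,h}$ is $\O_X$ placed in degree zero with the trivial (zeroth) filtration, $\gr(\O_{X,h}) = \O_X$ viewed as a $\Sym^* T_X$-module via the augmentation $\Sym^* T_X \to \O_X$; equivalently it is the structure sheaf of the zero section $X \subset T^*X$, which is $\pi_{X*}\O_X$ as a $\pi_{X*}\O_{T^*X}$-module. So $\pi_f^* \gr(\O_{X,h})$ is the pullback of the zero section of $T^*X$ under $\pi_f$: scheme-theoretically, the preimage of the zero section of $T^*X$ under the quotient map $(T^*Y)|_X \to T^*X$ is exactly the conormal bundle $T^*_X(Y)$, since $T^*_X(Y) = \ker\big((T^*Y)|_X \to T^*X\big)$. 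Hence $\pi_f^* \gr(\O_{X,h}) \cong \O_{T^*_X(Y)}$ as an $\O_{(T^*Y)|_X}$-module (here I would note that the pullback is underived because the zero section is cut out by a regular sequence, namely a basis of $T_X$, whose pullback remains regular — this transversality is the only place a vanishing argument is needed). Then $p_1^*\omega_{X/Y}$ restricts along $T^*_X(Y) \hookrightarrow (T^*Y)|_X$ to $p^*\omega_{X/Y}$ where $p: T^*_X(Y) \to X$, and pushing forward along $p_2$ (a closed immersion) just records $T^*_X(Y)$ as a subvariety of $T^*Y$. Finally I would carry the grading shift $\{\dim X\}$ through $\gr$, using the stated identity $\gr(M\{1\}) = \gr(M)\{1\}$, to land on $\O_{T^*_X(Y)} \otimes p^*\omega_{X/Y}\{\dim X\}$.

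The main obstacle is bookkeeping rather than anything deep: one must check that the $\Sym^* T_X$-module structure on $\gr(\O_{X,h})$ is indeed the augmentation module so that $\pi_f^*$ of it is supported on (and equals the structure sheaf of) the conormal bundle, and one must verify the Tor-independence/regularity that makes $\pi_f^* \gr(\O_{X,h})$ underived — i.e.\ that no higher Tor sheaves appear. Concretely, working locally where $X = \Spec(A/I)$ sits in $Y = \Spec(A)$ with $I$ generated by a regular sequence $t_1,\dots,t_c$, the module $\gr(\O_{X,h})$ corresponds to $A/I$ as a $\Sym_{A/I}(T_{A/I})$-module via augmentation, and $\pi_f^*$ amounts to $\Sym_A(T_A) \otimes_A (A/I)$ modulo the ideal generated by the images of the $\partial_{t_i}$, which is the coordinate ring of $T^*_X(Y)$; the flatness needed is exactly that $\Sym$ is flat, already used in the proof of Proposition \ref{prop:push}. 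I would also double-check the line bundle twist $\omega_{X/Y}$ and the shift $\{\dim X\}$ against the normalization $\delta_{X,h} = \int_f \O_{X,h}\{\dim X\}$, since an off-by-a-shift error is the most likely slip.
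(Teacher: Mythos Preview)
Your proposal is correct and follows exactly the paper's approach: apply Corollary~\ref{cor:push} and observe that $\pi_f^{-1}$ of the zero section of $T^*X$ is the conormal bundle $T^*_X(Y)$. The paper states this in a single line, while you have (helpfully) spelled out why $\gr(\O_{X,h})$ is the zero section, why the pullback is underived, and tracked the twist and shift, but the argument is the same.
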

\begin{proof}
This follows immediately from Corollary \ref{cor:push} by observing that $ \pi_f^{-1}(X) = T^*_X(Y) $.
\end{proof}

\subsection{Tensor products}\label{sec:tensor}

Fix a variety $X$ as before and let $M,N \in D(\D_{X,h}\mod)$. Then one can consider the (derived) tensor product $M \otimes_{\O_{X,h}} N \in D(\D_{X,h})$ where $\D_{X,h}$ acts on the tensor using the Leibniz rule. Alternatively, we have $M \otimes_{\O_{X,h}} N = \Delta^{*}(M \boxtimes N)$ where $\Delta: X \rightarrow X \times X$ is the diagonal inclusion and $M \boxtimes N$ is the exterior tensor product. We now explain how this tensor product commutes with the associated graded functor.

Denote by $\rho: T^* X \times_X T^* X \rightarrow T^* X$ the map which takes $(p, v_1, v_2) \in T^* X \times_X T^* X$ to $(p, v_1+v_2)$ where $v_1$ and $v_2$ are covectors over $p \in X$. We also have the two natural projections $\pi_1, \pi_2: T^* X \times_X T^* X \rightarrow T^* X$.

\begin{Proposition}\label{prop:assgrtensor} Let $M,N \in D(\D_{X,h}\mod)$. Then
$$\gr (M \otimes_{\O_{X,h}} N) \cong \rho_*(\pi_1^* ( \gr M ) \otimes \pi_2^* (\gr N))  .$$
\end{Proposition}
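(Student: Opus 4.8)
The plan is to present the $\O_{X,h}$-tensor product as an inverse image along the diagonal and then reduce the statement to Proposition \ref{prop:pull}. Recall from the discussion preceding the statement that
$$M \otimes_{\O_{X,h}} N \cong \Delta^*(M \boxtimes N),$$
where $\Delta\colon X \to X \times X$ is the diagonal embedding, $M \boxtimes N \in D(\D_{X \times X, h})$ is the exterior tensor product, and $\Delta^*$ is the pullback functor of Section \ref{sec:pull}. So it is enough to (i) compute the associated graded of an exterior product, and (ii) apply Proposition \ref{prop:pull} to $f = \Delta$, reading off the maps appearing in diagram (\ref{eq:1}).

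For (i), note that $\D_{X \times X, h} \cong \D_{X,h} \boxtimes_{\C[h]} \D_{X,h}$ as sheaves of graded $\C[h]$-algebras, so specialization at $h=0$ is compatible with exterior products (a Künneth/base-change computation along the composite of ring maps $\C[h] \otimes_\C \C[h] \twoheadrightarrow \C[h] \twoheadrightarrow \C_0$); this gives
$$\gr(M \boxtimes N) \cong (\gr M) \boxtimes (\gr N) \in D\bigl(\O_{T^*(X \times X)}\mod^{\C^\times}\bigr)$$
under the canonical identification $T^*(X \times X) \cong T^* X \times T^* X$. This is the one step that is not purely formal; the derived subtleties are controlled by flatness of $\Sym^* T_X$ over $\O_X$, just as in the proofs of Propositions \ref{prop:pull} and \ref{prop:push}.

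For (ii), I would unwind diagram (\ref{eq:1}) with $f = \Delta$ and $Y = X \times X$. A point of $T^*(X \times X)$ lying over a diagonal point $(x,x)$ is precisely a pair $(v_1, v_2)$ of covectors at $x$, so $X \times_{X \times X} T^*(X \times X)$ is canonically the fibre product $T^* X \times_X T^* X$. Under this identification $p_2$ becomes the natural closed embedding $\iota\colon T^* X \times_X T^* X \hookrightarrow T^* X \times T^* X = T^*(X \times X)$, whose composites with the two factor projections are the maps $\pi_1, \pi_2$, while $\pi_f = \pi_\Delta$ is induced by the transpose of $d\Delta\colon T_x X \to T_x X \oplus T_x X$, i.e.\ by the covector addition $(v_1, v_2)\mapsto v_1 + v_2$, which is exactly the map $\rho$. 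Since derived pullback is monoidal, $p_2^*\bigl((\gr M)\boxtimes(\gr N)\bigr) \cong \pi_1^*\gr M \otimes \pi_2^*\gr N$, and Proposition \ref{prop:pull} then yields
$$
\begin{aligned}
\gr\bigl(M \otimes_{\O_{X,h}} N\bigr) &\cong \gr\bigl(\Delta^*(M \boxtimes N)\bigr) \cong \pi_{\Delta*}\, p_2^*\, \gr(M \boxtimes N)\\
&\cong \rho_*\bigl(\pi_1^* \gr M \otimes \pi_2^* \gr N\bigr),
\end{aligned}
$$
as claimed. The main obstacle is step (i), the compatibility of $\gr$ with exterior products; everything afterwards is bookkeeping to identify the maps in (\ref{eq:1}) with $\rho$, $\pi_1$, $\pi_2$.
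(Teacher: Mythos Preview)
Your proof is correct and follows essentially the same approach as the paper: both rewrite the tensor product as $\Delta^*(M \boxtimes N)$, apply Proposition~\ref{prop:pull} with $f = \Delta$, identify $\pi_\Delta$ with $\rho$ via the transpose of the diagonal derivative, and use the compatibility $\gr(M \boxtimes N) \cong \gr M \boxtimes \gr N$ (your step~(i), which the paper invokes in the line ``the second last isomorphism follows from $(M \boxtimes N) \otimes_{\C[h]} \C_0 \cong (M \otimes_{\C[h]} \C_0) \boxtimes (N \otimes_{\C[h]} \C_0)$''). If anything, you are slightly more explicit about the K\"unneth justification for step~(i) than the paper is.
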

\begin{proof}
By Proposition \ref{prop:pull} we have
$$(\Delta^*(M \boxtimes N)) \otimes_{\C[h]} \C_0 \cong \rho_* p_2^* ((M \boxtimes N) \otimes_{\C[h]} \C_0)$$
where we use the natural maps
$$T^* X \xleftarrow{\rho} X_{X \times X} T^*(X \times X) \xrightarrow{p_2} T^* X \times T^* X.$$
To see why the map $\pi_\Delta$ in (\ref{eq:1}) corresponds to $\rho$ note that $\pi_\Delta$ is induced by $T X \rightarrow T (X \times X)|_{\Delta}$ via the diagonal map and it is easy to see that the dual of this map is $\rho$.

Thus we get
\begin{align*}
(M \otimes_{\O_{X,h}} N) \otimes_{\C[h]} \C_0
&\cong (\Delta^* (M \boxtimes N)) \otimes_{\C[h]} \C_0 \\
&\cong \rho_* p_2^* ((M \boxtimes N) \otimes_{\C[h]} \C_0) \\
&\cong \rho_* p_2^* ((M \otimes_{\C[h]} \C_0) \boxtimes (N \otimes_{\C[h]} \C_0)) \\
&\cong \rho_* (\pi_1^* (M \otimes_{\C[h]} \C_0) \otimes \pi_2^* (N \otimes_{\C[h]} \C_0)).
\end{align*}
\end{proof}

We will also find it useful to define the tensor product with respect to $\d$. Namely we define $M \dotimes_{\O_{X,h}} N$ as $\Delta^\d (M \boxtimes N)$. Note that $M \dotimes_{\O_{X,h}} N = M \otimes_{\O_{X,h}} N [- d_X]$. 

\subsection{Projection formula} \label{sec:proj-formula}

Next we give a proof of the projection formula for $\mathcal{D}_h$-modules in the style of \cite[Theorem 2.3.19]{Bj}. 
In addition to the tensor product $M\otimes_{\mathcal{O}_{X,h}}N$
of two left $\mathcal{D}_{X,h}$-modules, if $M'$ is a right $\mathcal{D}_{X,h}$-module and $N$ is a left $ \D_{X,h} $-module,
the tensor product $M' \otimes_{\mathcal{O}_{X,h}}N$ is a right $\mathcal{D}_{X,h}$-module with the formula for the action given by 
\[
(m'\otimes n)\cdot\partial=-m'\partial\otimes n+m'\otimes\partial n
\]
for any derivation $\partial$ on $X$.

\begin{Proposition}\label{prop:projection} 
Let $f:X\to Y$ be proper. Then there is a canonical isomorphism
\[
\int_{f}(f^{*} N \otimes_{\mathcal{O}_{X,h}} M) \cong N \otimes_{\mathcal{O}_{Y,h}}\int_{f} M
\]
for any $M \in D(\D_{X,h}\mod)$ and  $N \in D(\D_{Y,h}\mod)$.
\end{Proposition}
\begin{proof}	
We have 
\begin{align*}
\int_{f}(f^{*}N \otimes_{\mathcal{O}_{X,h}} M) &\cong f_{*}(\mathcal{D}_{Y\leftarrow X,h}\otimes_{\mathcal{D}_{X,h}}(f^{*} N \otimes_{\mathcal{O}_{X,h}}M))
\\
&\cong f_{*}((\mathcal{D}_{Y\leftarrow X,h}\otimes_{\mathcal{O}_{X,h}}f^{*}N)\otimes_{\mathcal{D}_{X,h}}M)
\\
&\cong f_{*}((\omega_{X,h}\otimes_{\mathcal{O}_{X,h}}f^{*}(\omega_{Y,h}^{-1}\otimes_{\mathcal{O}_{Y,h}}\mathcal{D}_{Y,h})\otimes_{\mathcal{O}_{X,h}}f^{*}N)\otimes_{\mathcal{D}_{X,h}}M)
\\
&\cong f_{*}((\omega_{X,h}\otimes_{\mathcal{O}_{X,h}}f^{*}((\omega_{Y,h}^{-1}\otimes_{\mathcal{O}_{Y,h}}\mathcal{D}_{Y,h})\otimes_{\mathcal{O}_{Y,h}}N))\otimes_{\mathcal{D}_{X,h}}M)
\\
&\cong (\mathcal{D}_{Y,h}\otimes_{\mathcal{O}_{Y,h}}N)\otimes_{\mathcal{D}_{Y,h}}\int_{f}M
\\
&\cong N\otimes_{\mathcal{O}_{Y,h}}\int_{f}M
\end{align*}
where the first isomorphism is by definition, the second and fifth are easy tensor-product interchanges (as in \cite{HTT}, Prop 1.5.19), the third is the definition of $\mathcal{D}_{Y\leftarrow X,h}$, and the fourth is commutativity of pullback with tensor product.
\end{proof}

\begin{Lemma}\label{lem:local}
For any $M \in D(\D_{X,h}\mod)$ and  $N \in D(\D_{Y,h}\mod)$ we have a natural isomorphism
\[
f_{*}((\omega_{X,h}\otimes_{\mathcal{O}_{X,h}}f^{*}M)\otimes_{\mathcal{D}_{X,h}}N) \cong (\omega_{Y,h}\otimes_{\mathcal{O}_{Y,h}}M)\otimes_{\mathcal{D}_{Y,h}}\int_{f}N
\]
\end{Lemma}
\begin{proof}
By definition 
\begin{equation}
\begin{aligned} \label{eq:Lemma313}
(\omega_{Y,h}\otimes_{\mathcal{O}_{Y,h}}N)\otimes_{\mathcal{D}_{Y,h}}\int_{f}M &\cong (\omega_{Y,h}\otimes_{\mathcal{O}_{Y,h}}N)\otimes_{\mathcal{D}_{Y,h}}f_{*}(\mathcal{D}_{Y\leftarrow X,h}\otimes_{\mathcal{D}_{X,h}}M) \\
&\cong f_{*}(f^{-1}(\omega_{Y,h}\otimes_{\mathcal{O}_{Y,h}}N)\otimes_{f^{-1} \mathcal{D}_{Y,h}}\mathcal{D}_{Y\leftarrow X,h}\otimes_{\mathcal{D}_{X,h}}M)
\end{aligned}
\end{equation}
where the last isomorphism is the general projection formula for sheaves of rings, for the ringed spaces $(X,f^{-1}\mathcal{D}_{Y,h})$ and $(Y,\mathcal{D}_{Y,h})$ (cf. \cite{Stacks}, Tag 0B54). This applies here because $\mathcal{D}_{Y,h}$ has finite homological dimension (as in \cite{HTT}, appendix D.2), so that the coherent module $N$ is isomorphic to a perfect complex of $\mathcal{D}_{Y,h}$-modules. Since 
\[\mathcal{D}_{Y\leftarrow X,h}\cong\omega_{X,h}\otimes_{f^{-1}\mathcal{O}_{Y,h}}f^{-1}(\D_{Y,h}\otimes_{\mathcal{O}_{Y,h}}\omega_{Y,h}^{-1})
\]
the last line in (\ref{eq:Lemma313}) is isomorphic to 
\[
f_{*}((\omega_{X,h}\otimes_{f^{-1}\mathcal{O}_{Y,h}}f^{-1}N)\otimes_{\mathcal{D}_{X,h}}M)\cong f_{*}((\omega_{X,h}\otimes_{\mathcal{O}_{X,h}}f^{*}N)\otimes_{\mathcal{D}_{X,h}}M)
\]
as claimed.
\end{proof}

\subsection{The relative de Rham complex}
An important technical role will be played by the relative Spencer and de Rham complexes. Throughout this section and the next, let $ f : X \to Y $ be a smooth morphism. 

Since $ f $ is smooth, we have dual short exact sequences
\[
0\to f^{*}\Omega_{Y}\to\Omega_{X} \to\Omega_{X/Y} \to 0 \qquad 0 \to \Theta_{X/Y} \to \Theta_X \to f^* \Theta_Y \to 0 
\]
of locally free sheaves.
As usual, we write $ \Omega_{X/Y}^i  $ (resp. $\Theta_{X/Y}^i$) for the exterior powers of $ \Omega_{X/Y}$ (resp. $\Theta_{X/Y}$). Note that we have $ \omega_{X/Y} \cong \Omega_{X/Y}^{d_X - d_Y} $.

In order to define the de Rham complex, we begin by considering the composition
$$
\nabla_{X/Y}:\mathcal{D}_{X,h}\to \Omega_{X,h} \otimes_{\O_{X,h}} \D_{X,h} \{2\} \to
\Omega_{X/Y,h} \otimes_{\mathcal{O}_{X,h}} \mathcal{D}_{X,h}\{2\}
$$
where the first map is the $h$-connection on $ \D_{X,h} $ given by regarding it as a left $ \D_{X,h}$-module.

\begin{Definition}
We extend $ \nabla_{X/Y} $ to a complex $\DR_{Y}(\mathcal{D}_{X,h})$ of free graded right $ \D_{X,h} $-modules, called the \textbf{relative de Rham complex}, 
$$
\D_{X,h} \to \Omega_{X/Y,h}^{1} \otimes_{\O_{X,h}} \D_{X,h}\{2\}  \to \cdots \to \Omega_{X/Y,h}^{d_X - d_Y} \otimes_{\O_{X,h}} \D_{X,h}  \{2(d_X -d_Y)\}  
$$
where we place $ \D_{X,h} $ in degree 0.

The differential is given by 
\begin{align*}
\Omega_{X/Y,h}^{i} \otimes_{\mathcal{O}_{X,h}} \mathcal{D}_{X,h} & \to \Omega_{X/Y,h}^{i+1} \otimes_{\mathcal{O}_{X,h}} \mathcal{D}_{X,h} \{2 \} \\
(\eta \otimes P) & \mapsto \mu(\eta \otimes \nabla_{X/Y}(P))+h d\eta \otimes P
\end{align*}
where $\mu: \Omega_{X/Y,h}^{i} \otimes_{\O_{X,h}} \Omega_{X/Y,h}^{1} \otimes_{\O_{X,h}} \D_{X,h} \to \Omega_{X/Y,h}^{i+1} \otimes_{\O_{X,h}} \mathcal{D}_{X,h}   $ is multiplication of differential forms. 
\end{Definition}

Now consider the cohomology in the final step of this complex. Recall that inside $\mathcal{D}_{X,h}$ we
have the left ideal generated by $\Theta_{X/Y}\subset\Theta_{X}$
which yields a quotient map 
\[
\mathcal{D}_{X,h}\to\mathcal{D}_{X,h}/\mathcal{D}_{X,h}\cdot\Theta_{X/Y} \cong f^{*}\D_{Y,h}
\]
so that we obtain a map 
\[
\omega_{X/Y,h} \otimes_{\mathcal{O}_{X,h}}
\mathcal{D}_{X,h} \to \omega_{X/Y,h} \otimes_{\mathcal{O}_{X,h}} f^{*}\D_{Y,h}=\mathcal{D}_{Y\leftarrow X,h}
\]

\begin{Lemma} \label{le:deRham}
	This map gives an isomorphism
	$$\DR_{Y}(\mathcal{D}_{X,h})[d_X - d_Y] \{-2(d_X - d_Y)\} \cong \mathcal{D}_{Y\leftarrow X,h} $$
	in the category $ D(\D_{X,h}^{\mathrm{op}}\mod) $.
\end{Lemma}

\begin{proof}
	It suffices to show that the augmented
	de Rham complex obtained by placing $\mathcal{D}_{Y\leftarrow X,h}$
	in degree $d_X- d_Y + 1$ is exact. Since this is
	a complex of graded flat $\mathbb{C}[h]$-modules, all of whose nonzero
	components are concentrated in degree $\geq0$, it suffices to show
	(by Lemma \ref{lem:(Graded-Nakayama-Lemma)}) that the reduction
	mod $h$ is exact.  But this is the usual Koszul resolution argument
	(see \cite{HTT}, Lemma 1.5.27).
\end{proof}

\begin{Corollary}
	For any smooth, proper $ f: X \to Y $, we have 
	$$\int_{f} \O_{X,h} \cong 
	f_{*} \DR_Y(\mathcal{O}_{X,h})[d_{X} - d_Y]\{-2(d_{X} - d_Y)\}
$$
where $ \DR_Y(\O_{X,h}) := \DR_Y(\D_{X,h}) \otimes_{\D_{X,h}} \O_{X,h}$ 
\end{Corollary}

\begin{Remark}
This Corollary shows that our definition of the push-forward differs,
by an equivariant shift, from the usual definition one might encounter
in Hodge theory. Consider the case that $Y$ is a point.  Then this Corollary gives
$$
\int_f \O_{X,h} \cong H^\bullet(X, \Omega_{X,h}^\bullet \{2\bullet-2d\})[d]
$$
where $d = d_X$. 
The lowest term of this complex is $ H^0(X, \mathcal{O}_{X,h}) $ in homological degree $-d$ and equivariant degree $ 2d $, while the highest term is $H^{d}(X, \omega_{X,h})$  in homological degree $d$ and equivariant degree $0$. However, one would typically regard the lowest term as being in homological degree $0$ and the highest term
in degree $2d$. This convention is convenient for our purposes, as it simplifies the formulas for the push-forward map, and Verdier
duality. On the other hand, it does introduce shifts in the adjunction formula for smooth maps (as in Proposition \ref{pr:adj2} below). 
\end{Remark}

\subsection{The relative Spencer complex}
We continue with a smooth morphism $ f : X \to Y $.

\begin{Definition}
The \textbf{relative Spencer complex}, denoted $\Sp_{Y}(\D_{X,h})$,  a complex of free graded left $ \D_{X,h}$-modules. It is given by
$$
\D_{X,h} \otimes_{\O_{X,h}} \Theta_{X/Y,h}^{d_X - d_Y}\{-2(d_X - d_Y)\} \to \cdots \to \D_{X,h} \otimes_{\O_{X,h}} \Theta_{X/Y,h} \{-2\} \to \D_{X,h}
$$
where $ \D_{X,h} $ is placed in homological degree $0$.

The differential 
\[
\mathcal{D}_{X,h}\otimes_{\mathcal{O}_{X,h}}\Theta_{X/Y,h}^{i}\to\mathcal{D}_{X,h}\otimes_{\mathcal{O}_{X,h}}\Theta_{X/Y,h}^{i-1}\{2\}
\]
is given by 
\begin{align*}
d(P\otimes\theta_{1}\cdots\theta_{i})=&\sum_{j}(-1)^{j+1}P \theta_j \otimes\theta_{1}\dots\widehat{\theta}_{j}\cdots\theta_{i} \\
&+\sum_{k<j}(-1)^{k+j}P\otimes h[\theta_{k},\theta_{j}]\cdot\theta_{1}\cdots\widehat{\theta}_{k}\cdots\widehat{\theta}_{j}\cdots\theta_{i}
\end{align*}
\end{Definition}
It is straight forward to verify that $d^2=0$. Note that the image of the map $\mathcal{D}_{X,h}\otimes_{\mathcal{O}_{X,h}}\Theta_{X/Y,h} \to\mathcal{D}_{X,h}$ is precisely the left ideal generated by $\Theta_{X/Y}$. Therefore the cokernel of this map is precisely $ f^{*}\mathcal{D}_{Y,h}$.

\begin{Lemma}
\label{lem:Spencer}Via the above map, we have an isomorphism $$ \Sp_{Y}(\mathcal{D}_{X,h}) \cong f^* \D_{Y,h} $$ 
in $ D(\D_{X,h}\mod) $.
\end{Lemma}

\begin{proof}
The proof is similar to Lemma \ref{le:deRham}.
\end{proof} 

If we consider the case that $ Y $ is a point, we obtain the following.
\begin{Corollary} \label{co:SpPoint}
	We have an isomorphism
$$\Sp(\mathcal{D}_{X,h}) \cong \O_{X,h} $$ 
in $ D(\D_{X,h}\mod) $.
\end{Corollary}

\subsection{Verdier duality}\label{sec:adjoints}

The Verdier duality functor $\bD: D(\D_{X,h}\mod) \rightarrow D(\D_{X,h}\mod)^{\mathrm{op}}$ is defined by
\begin{equation}\label{eq:5}
\bD_X M = \mathcal{H}om_{\D_{X,h}}(M, \D_{X,h}) \otimes_{\O_{X,h}} \omega_{X,h}^{-1} [d_X].
\end{equation}

The following is the main result in this section.

\begin{Theorem}\label{prop:duality}
If $f: X \rightarrow Y$ is a proper morphism then $\int_f \circ \bD_X \cong \bD_Y \circ \int_f$. 
\end{Theorem}

In the language of filtered $\mathcal{D}$-modules, this result appears in \cite{Sa3}. The proof below is inspired by the one in \cite{Sa3}, though technically somewhat different. To proceed, we will first construct a natural morphism 
\[
\tr:\int_{f}\O_{X,h} [d_X-d_Y] \to \O_{Y,h} 
\]
which we use to obtain a natural transformation 
\[
\int_{f}\circ\mathbb{D}_{X}\to\mathbb{D}_{Y}\circ\int_{f}
\]
Using Nakayama's Lemma \ref{lem:(Graded-Nakayama-Lemma)}  it then suffices to check that this map is an isomorphism after applying $\gr$. We shall show that this follows from Grothendieck duality for coherent sheaves. We shall, therefore, make use of several of the basic results of this theory, as exposited in \cite{HartRD, Co}. 

In particular, let us recall that, if $f:X\to Y$ is a proper morphism of smooth varieties, there is a functor $f^{!}: D(\O_Y\mod) \to D(\O_X\mod)$ and a natural isomorphism 
\[
\mathcal{H}om_{Y}(f_{*}M,N) \xrightarrow{\sim} f_{*}\mathcal{H}om_{X}(M,f^{!}N)
\]
for  $M \in D(\O_X\mod), N \in D(\O_Y\mod)$, making $f^{!}$ a right adjoint to $f_{*}$. Taking $M=\omega_{X}[d_{X}]$, $N=\omega_{Y}[d_{Y}]$ and using that $f^!(\omega_Y [d_Y]) \cong \omega_X [d_X]$ we get
\[
\mathcal{H}om_{Y}(f_{*}\omega_{X}[d_X],\omega_{Y}[d_Y])
\cong f_* \mathcal{H}om_{X} (\omega_X [d_X], \omega_X [d_X]) 
\cong f_{*} \O_X.
\]
The canonical section $1\in f_{*}\O_X$ then yields a map of $ \O_Y$ modules
\[
\tr_f:f_{*}\omega_{X}[d_X] \to \omega_{Y}[d_Y]
\]
which is the trace morphism of coherent sheaf theory. We now use this to construct the analogous map for $\mathcal{D}_h$-modules.

\begin{Lemma}
\label{lem:trace}
If $f:X\to Y$ is a proper morphism then there exists a natural map
\[
\tr:\int_{f}\mathcal{O}_{X,h}[d_{X}]\to\mathcal{O}_{Y,h}[d_{Y}]
\]
of left $\mathcal{D}_{Y,h}$-modules. Applying the functor $\text{gr}$ we obtain a natural map 
\[
\gr(\tr):(p_{2})_{*}(\pi_{f}^{*}i_{*}\mathcal{O}_{X}\otimes p_{1}^{*}\omega_{X/Y})[d_{X}] \longrightarrow i_{*}\mathcal{O}_{Y}[d_{Y}]
\]
where $i$ denotes both inclusions $X\to T^{*}X$ and $Y\to T^{*}Y$. 

Furthermore, we can identify $\gr(\tr)$ with the map 
\[
(p_2)_*(\pi_{f}^{*} i_{*}\mathcal{O}_{X}\otimes p_{1}^{*}\omega_{X/Y}) [d_X] \to (p_2)_*(j_{*}\mathcal{O}_{X} \otimes p_{1}^{*}\omega_{X/Y})[d_X] \to i_{*}\mathcal{O}_{Y}[d_Y]
\]
where the first map is induced by the canonical morphism 
$ \pi_{f}^{*}i_{*}\mathcal{O}_{X} =   \pi_f^* (\pi_f)_* j_* \mathcal{O}_{X} \rightarrow j_* \O_X $ 
(where $ j : X \rightarrow T^*Y \times_Y X $) and the second map is induced by the trace map for $p_{2}$, using that $ j_* \O_X = p_2^*  i_* \O_Y $. 
\end{Lemma}

\begin{proof}
Recall that we have the Spencer resolution 
\[
\text{Sp}(\mathcal{D}_{X,h})\to\mathcal{O}_{X,h}
\]
of $\mathcal{O}_{X,h}$. The terms of this complex are $\mathcal{D}_{X,h}\otimes_{\mathcal{O}_{X,h}}\Theta_{X,h}^{k}\{-2k\}$
(in degree $-k$). From
the definition of push-forward one sees directly that 
\[
\int_{f}\mathcal{D}_{X,h}\otimes_{\mathcal{O}_{X,h}}\Theta_{X,h}^{k}\{-2k\} \cong f_{*}(\mathcal{D}_{Y\leftarrow X,h}\otimes_{\mathcal{O}_{X,h}}\Theta_{X,h}^{k})\{-2k\}
\]
By definition $\mathcal{D}_{Y\leftarrow X,h}=f^{*}\mathcal{D}_{Y,h}\otimes_{\mathcal{O}_{X,h}}f^{*}\omega_{Y,h}^{-1}\otimes_{\mathcal{O}_{X,h}}\omega_{X,h}=f^{*}\mathcal{D}_{Y,h}\otimes_{\mathcal{O}_{X,h}}\omega_{X/Y,h}$.
Thus we have 
\[
f_{*}(\mathcal{D}_{Y\leftarrow X,h}\otimes_{\mathcal{O}_{X,h}}\Theta_{X,h}^{k})\{-2k\} \cong
\mathcal{D}_{Y,h}\otimes_{\mathcal{O}_{Y,h}}f_{*}(\omega_{X/Y,h}\otimes_{\mathcal{O}_{X,h}}\Theta_{X,h}^{k})\{-2k\}
\]
We have isomorphisms 
\begin{align*}
&\mathcal{H}om_{\mathcal{D}_{Y,h}}(\mathcal{D}_{Y,h}\otimes_{\mathcal{O}_{Y,h}}f_{*}(\omega_{X/Y,h}\otimes_{\mathcal{O}_{X,h}}\Theta_{X,h}^{k})\{-2k\}[d_X + k],\mathcal{O}_{Y,h}[d_{Y}])
\\
&\cong \mathcal{H}om_{\mathcal{O}_{Y,h}}(f_{*}(\omega_{X/Y,h}\otimes_{\mathcal{O}_{X,h}}\Theta_{X,h}^{k})\{-2k\}[d_X + k],\mathcal{O}_{Y,h}[d_{Y}])
\\
&\cong f_{*}\mathcal{H}om_{\mathcal{O}_{X,h}}(\omega_{X/Y,h}\otimes_{\mathcal{O}_{X,h}}\Theta_{X,h}^{k}\{-2k\}[d_{X}+k],f^{!}\mathcal{O}_{Y,h}[d_{Y}])
\\
&\cong f_{*}\mathcal{H}om_{\mathcal{O}_{X,h}}(\omega_{X/Y,h}\otimes_{\mathcal{O}_{X,h}}\Theta_{X,h}^{k}\{-2k\}[k],\omega_{X/Y,h}) \cong
f_{*}\Omega_{X,h}^{k}\{2k\}[-k]
\end{align*}
where the third to last isomorphism is Grothendieck duality.
Morover, as the Spencer complex is dual to the de Rham complex, the
induced morphism 
\[
d_{k}:f_{*}\Omega_{X,h}^{k}\{2k\}[-k]\to f_{*}\Omega_{X,h}^{k+1}[-k-1]\{2(k+1)\}
\]
is simply the pushforward, under $f$, of the usual de Rham differential;
i.e., we have an isomorphism 
\[
\mathcal{H}om_{\mathcal{D}_{Y,h}}(\int_{f}\mathcal{O}_{X}[d_{X}],\mathcal{O}_{Y}[d_{Y}])\cong f_{*}\DR(\O_{X,h})
\]
When $k=0$ the section $1\in\mathcal{O}_{X,h}$ yields a canonical
section of $H^{0}(Y,f_{*}\DR(\mathcal{O}_{X,h}))$, and hence a canonical
element of ${\displaystyle \mathcal{H}om_{\mathcal{D}_{Y,h}}(\int_{f}\mathcal{O}_{X,h}[d_{X}],\mathcal{O}_{Y,h}[d_Y])}$,
which we define to be the trace map $ \tr$. 

Let $$ \gr(\tr)  : (p_2)_*( \pi_f^* i_* \O_X \otimes p_1^* \omega_{X/Y})[d_X] \rightarrow i_* \O_Y[d_Y]$$
be the result of applying the functor $\gr$ to the above $ \tr $ and using Corollary \ref{cor:push}.

Consider the following diagram
\begin{equation} \label{eq:doublesquare}
\xymatrix{
(p_2)_*( \pi_f^* i_* \O_X \otimes p_1^* \omega_{X/Y})[d_X] \ar[r]^-{\gr(\tr)} & i_* \O_Y[d_Y] \\
(p_2)_*( \pi_f^* \O_{T^* X} \otimes p_1^* \omega_{X/Y})[d_X] 	\ar[u] \ar[r]^-{\tr_{p_2}} & \O_{T^* Y}[d_Y] \ar[u] \ar@{=}[d]\\
\pi_Y^* f_* \omega_{X/Y} [d_X] \ar[r]^-{\pi_Y^*\tr_f} \ar[u]^\sim & \pi_Y^* \O_Y [d_Y]
}
\end{equation}
where the upper vertical arrows are the natural surjections, the bottom left vertical arrow is the base change isomorphism and the two lower horizontal arrows come from the trace morphism for $\O$-modules. 

The above construction of $ \tr $ implies that the outer square of (\ref{eq:doublesquare}) commutes.  On the other hand, the lower square commutes by the compatibility of Grothendieck duality with (flat) base change
over the square 
\begin{equation*}
\xymatrix{
	\ar[d]^{p_2} X \times_Y T^* Y \ar[rr]^{p_1} & & X \ar[d]^f \\
	T^* Y \ar[rr]^{\pi_Y} & & Y
}
\end{equation*}

Thus, we deduce the commutativity of the upper square of (\ref{eq:doublesquare}), which implies the final claim of the Lemma.

\end{proof}

\begin{Corollary}
Let $M\in D(\mathcal{D}_{X,h}\mod)$. The trace map induces
an isomorphism 
\[
\Phi':f_{*}\mathcal{H}om_{\mathcal{D}_{X,h}}(M,\mathcal{O}_{X,h})[d_{X}] \xrightarrow{\sim} \mathcal{H}om_{\mathcal{D}_{Y,h}}(\int_{f}M,\mathcal{O}_{Y,h})[d_{Y}]
\]
\end{Corollary}

\begin{proof}
In more detail, the morphism is defined by 
\[
f_{*}\mathcal{H}om_{\mathcal{D}_{X,h}}(M,\mathcal{O}_{X,h})[d_{X}]\to\mathcal{H}om_{\mathcal{D}_{Y,h}}(\int_{f}M,\int_{f}\mathcal{O}_{X,h})[d_{X}]\to\mathcal{H}om_{\mathcal{D}_{Y,h}}(\int_{f}M,\mathcal{O}_{Y,h})[d_{Y}]
\]
where the last map is induced by the trace.  By passing to an open affine, it suffices to check the corresponding statement where $ \mathcal Hom$ is replaced by $\Hom $.

By the graded Nakayama lemma, it suffices to prove that this map is an isomorphism after
applying $\text{gr}$; if we let $M_{0}=\text{gr}(M)$ then we are
considering the chain of maps 
\begin{equation} \label{eq:wantIso}
\begin{gathered}
\Hom_{\mathcal{O}_{T^{*}X}}(M_{0},i_{*}\mathcal{O}_{X})[d_{X}]\to \Hom_{\mathcal{O}_{T^{*}Y}}((p_2)_*(\pi_{f}^{*}M_{0}\otimes p_{1}^{*}\omega_{X/Y}),(p_2)_*(\pi_{f}^{*}i_{*}\mathcal{O}_{X}\otimes p_{1}^{*}\omega_{X/Y}))[d_{X}]
\\
\longrightarrow \Hom_{\mathcal{O}_{T^{*}Y}}((p_2)_*(\pi_{f}^{*}M_{0}\otimes p_{1}^{*}\omega_{X/Y}),i_{*} \mathcal{O}_{Y})[d_{Y}]
\end{gathered}
\end{equation}
where the second map is obtained by applying $\gr(\tr)$. 

Using that fact that $\pi_{f} \circ j = i$ and adjunction for the map $\pi_f$, we have that the map 
\begin{equation} \label{eq:haveIso1}
\Hom_{\mathcal{O}_{T^{*}X}}(M_{0},i_{*}\mathcal{O}_{X})\to\Hom_{\mathcal{O}_{X\times_{Y}T^{*}Y}}(\pi_{f}^{*}M_{0},\pi_{f}^{*}i_{*}\mathcal{O}_{X})
\to\Hom_{\mathcal{O}_{X\times_{Y}T^{*}Y}}(\pi_{f}^{*}M_{0},j_{*} \mathcal{O}_{X})
\end{equation}
is an isomorphism.  By Grothendieck duality, the following composition is an isomorphism
\begin{equation} \label{eq:haveIso2}
\begin{gathered}
\Hom_{\mathcal{O}_{X\times_{Y}T^{*}Y}}(\pi_{f}^{*}M_{0},j_{*}\mathcal{O}_{X})[d_X]
\rightarrow \Hom_{\mathcal{O}_{T^{*}Y}}((p_2)_*(\pi_{f}^{*}M_{0}\otimes p_{1}^{*}\omega_{X/Y}),(p_2)_*(j_* \mathcal{O}_{X} \otimes p_{1}^{*}\omega_{X/Y}))[d_X] \\
\rightarrow \Hom_{\mathcal{O}_{T^{*}Y}}((p_2)_*(\pi_{f}^{*}M_{0}\otimes p_{1}^{*}\omega_{X/Y}),i_{*}\mathcal{O}_{Y})[d_Y]
\end{gathered}
\end{equation}
 On the other hand, by the lemma above, $\gr(\tr)$
agrees with the composition 
\[
(p_2)_*(\pi_{f}^{*}i_{*}\mathcal{O}_{X}\otimes p_{1}^{*}\omega_{X/Y})[d_X] \to (p_2)_*(j_{*}\mathcal{O}_{X} \otimes p_{1}^{*}\omega_{X/Y})[d_X] \to i_{*}\mathcal{O}_{Y}[d_Y]
\]
This implies that the map (\ref{eq:wantIso}) is the composition of (\ref{eq:haveIso1}) and (\ref{eq:haveIso2}).  Since these latter two maps are isomorphisms, we deduce that (\ref{eq:wantIso}) is an isomorphism, completing the proof. 
\end{proof}

\begin{proof}[Proof of Theorem \ref{prop:duality}]

By the projection formula, Proposition \ref{prop:projection}, we have 
	\[
	\int_{f}f^{*}\mathcal{D}_{Y,h} \cong  \bigl(\int_{f}\O_{X,h}\bigr)\otimes_{\O_{Y,h}}\mathcal{D}_{Y,h}
	\]		
	Thus the trace map induces a map 
	\begin{equation}
	\label{eq:intfToD}
	\int_{f} f^* \D_{Y,h} [d_X]\to \O_{Y,h}[d_Y] \otimes_{\O_{Y,h}}\mathcal{D}_{Y,h} \cong \mathcal{D}_{Y,h}[d_Y]
	\end{equation}
	Now let $M\in D(\mathcal{D}_{X,h}\mod)$.
	Then 
	\[
	\int_{f}\mathbb{D}_{X}M \cong f_{*}(\mathcal{H}om_{\mathcal{D}_{X,h}}(M,f^{*}\mathcal{D}_{Y,h}))\otimes_{\O_{Y,h}}\omega_{Y,h}^{-1}[d_X]
	\]
	while 
	\[
	\mathbb{D}_{Y}\int_{f}M \cong \mathcal{H}om_{\mathcal{D}_{Y,h}}(\int_{f}M,\mathcal{D}_{Y,h})\otimes_{\O_{Y,h}}\omega_{Y,h}^{-1}[d_Y]
	\]
Thus, by Remark \ref{re:PushFunctoriality} and (\ref{eq:intfToD}), we have morphisms
\begin{align*}
	\int_{f}\mathbb{D}_{X}M &\cong f_* \mathcal Hom_{\D_{X,h}}(M, f^* \D_{Y,h}[d_X]) \otimes \omega_{Y,h}^{-1} \\
	&\rightarrow \mathcal Hom_{\D_{Y,h}}(\int_f M, \int_f f^* \D_{Y,h}[d_X]) \otimes \omega_{Y,h}^{-1}\\
	&\rightarrow \mathcal Hom_{\D_{Y,h}}(\int_f M, \D_{Y,h}[d_Y]) \otimes \omega_{Y,h}^{-1} \cong \mathbb D_Y \int_f M
\end{align*}
We would like to prove that the resulting composition is an isomorphism.  To the map 
\[
\Phi: f_{*}(\mathcal{H}om_{\mathcal{D}_{X,h}}(M,f^{*}\mathcal{D}_{Y,h})[d_X] \to \mathcal{H}om_{\mathcal{D}_{Y,h}}(\int_{f}M,\mathcal{D}_{Y,h})[d_{Y}]
\]
we may apply the functor $\otimes_{\mathcal{D}_{Y,h}} \mathcal{O}_{Y,h}$. One verifies easily that the result is the isomorphism
\[
\Phi':f_{*}\mathcal{H}om_{\mathcal{D}_{X,h}}(M,\mathcal{O}_{X,h})[d_{X}] \xrightarrow{\sim}
\mathcal{H}om_{\mathcal{D}_{Y,h}}(\int_{f}M,\mathcal{O}_{Y,h})[d_{Y}]
\]
of the previous corollary. Since $\mathcal{O}_{Y,h} \cong \mathcal{D}_{Y,h}/I$, where $I$ is an ideal sheaf consisting of positively graded elements, the (proof of) the graded Nakayama lemma shows that $\Phi$ must be an isomorphism as well.  
\end{proof}

\begin{Corollary}\label{cor:D}
We have $\bD_X \O_{X,h} \cong \O_{X,h} \{2d_X\}$. Moreover, for a closed subvariety $X \subset Y$ we have $ \bD_Y (\delta_{X,h}) = \delta_{X,h}$.
\end{Corollary}
\begin{proof}
Using the Spencer resolution $ \Sp(\D_{X,h}) \rightarrow \O_{X,h} $ we have
$$ \bD_X \Sp(\D_{X,h}) = \Hom_{\D_{X,h}}( \Sp(\D_{X,h}), \D_{X,h}) \otimes_{\O_{X,h}} \omega_{X,h}^{-1} [d_X]. $$
Since $ \Sp(\D_{X,h}) $ is a complex of free $ \D_{X,h} $-modules, we find that $ \Hom(\Sp(\D_{X,h}), \D_{X,h}) $ is the complex of right $ \D_{X,h}$-modules
$$
\D_{X,h} \rightarrow \Omega^1_{X,h} \otimes_{\O_{X,h}} \D_{X,h} \{2\} \rightarrow \cdots \rightarrow \Omega^{d_X}_{X,h} \otimes_{\O_{X,h}} \D_{X,h} \{2d_X\}
$$
Then applying $ \otimes_{\O_{X,h}} \omega_{X,h}^{-1}[d_X] $ and using $ \Omega_{X,h}^i \otimes_{\O_{X,h}} \omega_{X,h}^{-1} = \Theta_{X,h}^{d_X - i} $  we deduce that $ \bD_X \Sp(\D_{X,h}) = \Sp(\D_{X,h})\{2d_X\}$.

The second assertion the follows from the first together with Proposition \ref{prop:duality}.
\end{proof}

\begin{Proposition} \label{pr:adj1}
Let $ f : X \rightarrow Y $ be a proper morphism.  Let $ M \in D(\D_{X,h}\mod) $ and $ N \in D(\D_{Y,h}\mod)$.  We have a natural isomorphism
$$
\Hom_{\D_{Y,h}}(\int_f M, N) \cong \Hom_{\D_{X,h}}(M, f^\dagger N)
$$
\end{Proposition}
\begin{proof} 
This  follows in a completely formal way from the isomorphism of Proposition \ref{prop:duality} -- cf. the proof of \cite[Corollary 2.7.3]{HTT}. 
\end{proof}

To end this section, we would like to record a compatibility of the Verdier duality theorem (Proposition \ref{pr:adj1}) proved above with the Grothendieck duality theorem for $\mathcal{O}$-modules, which essentially follows from our method of construction.

Now,  let $\mathcal{F}$ be a coherent sheaf on $X\times \mathbb{A}^1$. Following M. Saito \cite{Sa3}, we denote by $\Ind \mathcal{F} $ the coherent left $\mathcal{D}_{X,h}$-module $(\mathcal{F} \otimes_{\O_{X,h}} \mathcal{D}_{X,h}) \otimes \omega_{X,h}^{-1}$; this operation defines an exact functor $\Ind$ from coherent sheaves on $X\times \mathbb{A}^1$ to $\mathcal{D}_{X,h}$-modules. 

By adjuction for tensor products we have
\begin{equation} \label{eq:adjInd}
\mathcal{H}om_{\mathcal{D}_{X,h}}(\Ind\mathcal{F},N) \xrightarrow{\sim} \mathcal{H}om_{\mathcal{O}_{X,h}}(\mathcal{F},N\otimes \omega_{X,h}) 
\end{equation}

If $f:X\to Y$ is a proper morphism then a quick computation shows that
\[
\int_{f} \Ind \mathcal{F}  \cong \Ind f_{*}\mathcal{F}
\]

where on the right hand side we have implicitly derived the (exact) functor $\Ind$.

We now show that the adjuction morphisms for $\D_h$-modules are compatible with the ones for $\mathcal{O}$-modules under the induction functor and the above isomorphism. 

\begin{Proposition}
Let $f:X \to Y$ be proper, and let $\mathcal{F}$ be a coherent sheaf on $X$. Then the following diagram commutes:
$$\xymatrix{
\Hom_{\mathcal{D}_{X,h}}(\Ind\mathcal{F},f^{\dagger}N) \ar[r] \ar[d] & \Hom_{\mathcal{O}_{X,h}}(\mathcal{F},f^{*}N\otimes \omega_{X,h})[d_X-d_Y] \ar[d] \\
\Hom_{\mathcal{D}_{Y,h}}(\int_f\Ind\mathcal{F},N) \ar[r] & \Hom_{\mathcal{O}_{Y,h}}(f_{*}\mathcal{F},N \otimes \omega_{Y,h})
}$$
here, the vertical arrows are given by Verdier and Grothendieck duality, respectively, and the horizontal arrows are given by adjunction for tensor products. 
\end{Proposition}

\begin{proof}
We consider the diagram
\begin{equation} \label{eq:PentSquareTri}
\xymatrix{
\Hom_{\D_{X,h}}(\Ind \mathcal F, f^*N) \ar[rr] \ar[d] & & \Hom_{\O_{X,h}}(\mathcal F, f^*N \otimes \omega_{X,h}) \ar[d] \\
\Hom_{\D_{Y,h}}(\int_f \Ind \mathcal F, N \otimes \int_f \O_{X,h}) \ar[r] \ar[d] & \Hom_{\O_{Y,h}}(f_* \mathcal F, N \otimes \int_f \O_{X,h} \otimes \omega_{Y,h}) \ar[d] & \Hom_{\O_{Y,h}}(\mathcal F, N \otimes f_* \omega_{X,h}) \ar[l] \ar[dl] \\
\Hom_{\D_{Y,h}}(\int_f \Ind \mathcal F, N)[d_Y - d_X] \ar[r] & \Hom_{\O_{Y,h}}(\mathcal F, N \otimes \omega_{Y,h})[d_Y - d_X] & 
}
\end{equation}	
Here the rightward pointing horizonal morphisms come from the adjunction between induction and restriction (\ref{eq:adjInd}), the upper vertical arrows come from the functoriality of push forward and the projection formulae (for both $ \D_h $ and $ \O_h$-modules), and the lower vertical arrows come from the trace morphism constructed in Lemma \ref{lem:trace}.  The leftward pointing horizonal arrow comes from the morphism of $ \O_{X,h}$-modules, $ \omega_{X/Y,h} \rightarrow \D_{Y \leftarrow X, h} $ and using that $\int_f \O_{X,h} = f_*  \D_{Y \leftarrow X, h} $, while the diagonal arrow comes from the trace morphism for $ \O$-modules.

The upper pentagon of (\ref{eq:PentSquareTri}) commutes by unpacking the definitions.  The bottom left square of (\ref{eq:PentSquareTri}) commutes by the naturality of the adjunction (\ref{eq:adjInd}).  

Finally we consider the bottom right triangle of (\ref{eq:PentSquareTri}).  From Lemma \ref{lem:trace}, the trace morphism is characterized by the commutativity of the square
$$
\xymatrix{
	\D_{Y,h} \otimes f_* \omega_{X,h}[d_X] \ar[r] \ar[d] & \int_f \O_{X,h} \otimes \omega_{Y,h}[d_X] \ar[d] \\
	\D_{Y,h} \otimes \omega_{Y,h}[d_Y] \ar[r] & \O_{Y,h} \otimes \omega_{Y,h}[d_Y]
}
$$
where the left vertical arrow is the trace morphism for $ \O$-modules and the right vertical arrow is the one for $ \D_h$-modules.  Mapping $ f_* \omega_{X,h} $ into the top left corner implies the commutativity of the bottom right triangle of (\ref{eq:PentSquareTri}).

Thus we conclude that (\ref{eq:PentSquareTri}) commutes, which implies the desired statement after shifting by $ d_X - d_Y $.

\end{proof}

\begin{Corollary} \label{cor:explicit-adjunction}
Let $f:X \to Y$ be a smooth immersion, and let $M$ be a coherent $\mathcal{D}_{X,h}$-module and $N$ a coherent $\mathcal{D}_{Y,h}$-module. Suppose that the canonical bundles of $X$ and $Y$ are trivial, and that $X$ is defined by an ideal sheaf $\mathcal{I}$. In this case, a map $\varphi: M \to f^{\dagger}N$ is simply a map $\varphi: M \to H^0(f^\d N) = \operatorname{ker}(\mathcal{I})\subset N$. Then the adjoint map 
\[
\int_{f}M \cong f^*\mathcal{D}_{Y,h} \otimes_{\mathcal{D}_{X,h}} M \to N
\]
is given by $P \otimes m \to P \cdot \varphi(m)$ for any section $P \in f^*\mathcal{D}_{Y,h}$. 
\end{Corollary}

\begin{proof}
If $M$ is an induced module, this follows immediately from the previous proposition. In general, any coherent $\mathcal{D}_{X,h}$-module $M$ is a quotient of an induced module, since by the definition of coherence, there is some coherent $\mathcal{O}$ sub-module $\mathcal{F} \subset M$ such that $\mathcal{D}_{X,h} \cdot \mathcal{F} = M$; which implies that the induced map $\Ind\mathcal{F} \to M$ is onto. But then the corollary for $M$ follows from the corollary for $\Ind\mathcal{F}$ by considering the composed map 
\[
\Ind\mathcal{F} \to M \to f^{\dagger}N
\]
since the functor $\int_{f}$ is exact. 
\end{proof}

\begin{Corollary} \label{cor:adjuction-for-induced}
Let $f:X \to Y$ be a smooth immersion, let $\mathcal{F}$ be a coherent sheaf on $X \times \mathbb{A}^1$ and let $M=\Ind \mathcal{F}$. Then the adjunction $M \to H^{0} (f^{\dagger} \int_{f}M)$ is an inclusion.  More precisely, after localizing so that the canonical bundles of $X$ and $Y$ are trivial, it is the map 
\[
\mathcal{F} \otimes \mathcal{D}_{X,h} \to \mathcal{F} \otimes f^*\mathcal{D}_{Y,h} 
\]
which takes $s \otimes P$ to $s \otimes P\cdot1$ (where $P\cdot1$ denotes the action of $P \in \mathcal{D}_{X,h}$ on $1\in f^*\mathcal{D}_{Y,h}$). 
\end{Corollary}

\begin{proof}
Consider the commutative diagram of the previous proposition where $N= \int_{f}M$:
$$\xymatrix{
\mathcal{H}om_{\mathcal{D}_{X,h}}(\Ind\mathcal{F},f^{\dagger} \int_{f}\Ind\mathcal{F}) \ar[r] \ar[d] & \mathcal{H}om_{\mathcal{O}_{X,h}}(\mathcal{F},f^{*} \int_{f} \Ind\mathcal{F}\otimes \omega_{X,h})[d_X-d_Y] \ar[d] \\
\mathcal{H}om_{\mathcal{D}_{Y,h}}(\int_f\Ind\mathcal{F},\int_f\Ind\mathcal{F}) \ar[r] & \mathcal{H}om_{\mathcal{O}_{Y,h}}(f_{*}\mathcal{F},(\Ind f_* \mathcal{F}) \otimes \omega_{Y,h})
}$$
The morphism in question is the unique map in the top left corner which maps to the identity morphism in the bottom left corner. Moving over to the bottom right corner; since 
\[
(\Ind f_*\mathcal{F}) \otimes \omega_{Y,h} \cong f_*\mathcal{F} \otimes_{\mathcal{O}_{Y,h}} \mathcal{D}_{Y,h}
\]
one sees directly that the identity map corresponds to the inclusion $f_*\mathcal{F} \to f_*\mathcal{F} \otimes_{\mathcal{O}_{Y,h}} \mathcal{D}_{Y,h}$ taking a section $s \to s \otimes 1$.  

To compute the corresponding map in the top right corner, we need a bit of Grothendieck duality for a closed immersion (as in \cite[section 3.7]{HartRD}). Localizing if necessary, we may assume that the ideal of $X$ in $Y$ is given by a length $ r $ regular sequence (where $ r = d_Y - d_X$) and that the canonical bundles of $X$ and $Y$ are trivial.

Then we have an isomorphism 
 $$H^r(f^{*}(f_*\mathcal{F} \otimes_{\mathcal{O}_{Y,h}} \mathcal{D}_{Y,h})) \cong \mathcal F \otimes f^* \D_{Y,h}$$
 and the map in the top right corner is the natural inclusion $ \mathcal F \rightarrow \mathcal F 
\otimes f^* \D_{Y,h} $.  Passing to the top left using the adjunction (\ref{eq:adjInd}) gives the desired result.
\end{proof}

\subsection{Adjunction for a smooth morphism} \label{sec:adjoints}
Our goal in this section is to prove 

\begin{Proposition} \label{pr:adj2}
Let $ f : X \rightarrow Y $ be a smooth and proper morphism.  Let $ M \in D(\D_{X,h}\mod) $ and $ N \in D(\D_{Y,h}\mod)$.  We have a natural isomorphism
$$ \Hom_{\D_{X,h}}(f^* N [-(d_X-d_Y)]\{2(d_X-d_Y)\}, M) \cong \Hom_{\D_{Y,h}}(N, \int_f M). $$
\end{Proposition}

We will prove this in Lemmas \ref{lem:1} and \ref{lem:2} below. Our argument is an adaptation of a well-known strategy for $\mathcal{D}$-modules.

As in section \ref{se:notation}, the \emph{abelian} category of quasi-coherent $\mathcal{D}_{X,h}$-modules is identified with the \emph{abelian} category of quasi-coherent $ \O_{X,h}$-modules with flat $h$-connection.

In this language, we may define a natural left exact functor 
\[
f_{\star}:\D_{X,h}\mod \to \D_{Y,h}\mod
\]
as follows. If $\sF$ is an $ \O_{X,h}$-module with flat $h$-connection we denote by $\nabla_{X/Y}$ the composition
\[
 \sF \to  \Omega_{X,h}^{1} \{2\}  \otimes_{\O_{X,h}} \sF \to \Omega_{X/Y,h}^{1} \{2\} \otimes_{\O_{X,h}} \sF .
\]
This gives $\mathcal{F}$ the structure of an $h$-connection over
$Y$, meaning that the map $\nabla_{X/Y}$ satisfies the $h$-Leibniz rule
and the flatness condition. 

Consider the subsheaf $\mathcal{F}^{\nabla_{X/Y}}\subset\mathcal{F}$
of local sections which are annihilated by $\nabla_{X/Y}$. Restricting
$\nabla$ to this subsheaf therefore yields 
\[
\nabla:\sF^{\nabla_{X/Y}}\to  f^{*}\Omega_{Y,h}^{1} \{2\} \otimes_{\O_{Y,h}} \sF^{\nabla_{X/Y}}
\]
Applying the sheaf push-forward yields a map 
\[
\nabla_{Y}:f_{*}(\mathcal{F}^{\nabla_{X/Y}})\to \Omega_{Y,h}^{1} \{2\} \otimes_{\mathcal{O}_{X,h}} f_{*}(\mathcal{F}^{\nabla_{X/Y}})
\]
which is easily seen to be a flat $h$-connection on $f_{*}(\mathcal{F}^{\nabla_{X/Y}})$. This defines the functor $f_{\star}$. 

It will also be useful to write this functor in the language of $\mathcal{D}_h$-modules. For this we note that, by an easy local computation, the functor 
\[
\mathcal{F} \to \mathcal{F}^{\nabla_{X/Y}}
\]
is isomorphic to the functor 
\[
\mathcal{F} \to \mathcal{H}om_{\mathcal{D}_{X,h}}(f^{*}\mathcal{D}_{Y,h},\mathcal{F})
\]

This implies that the sheaf $\mathcal{F}^{\nabla_{X/Y}}$ admits the structure of a sheaf of $f^{-1}\mathcal{D}_{Y,h}$-modules, and if $\mathcal{F}$ is a coherent $\mathcal{D}_{X,h}$-module, then $\mathcal{F}^{\nabla_{X/Y}}$ is a coherent (i.e., locally finitely presented) sheaf of $f^{-1}\mathcal{D}_{Y,h}$-modules. The same is true for the corresponding derived functor. 

Now suppose that $f$ is also proper. Since the functor $f_{\star}$ is the composition of $\mathcal{F} \to \mathcal{F}^{\nabla_{X/Y}}$  with $f_*$, and since the derived functor of $f_{*}$ takes the bounded derived category of coherent $f^{-1}\mathcal{D}_{Y,h}$-modules to the bounded derived category of $\mathcal{D}_{Y,h}$-modules, we see that there is a derived functor of $f_{\star}$, which we also denote 
\[
f_{\star}: D(\mathcal{D}_{X,h}-mod) \to D(\mathcal{D}_{Y,h}-mod)
\]
which is simply the composition of the derived functor $\mathcal{F} \to \mathcal{H}om_{\mathcal{D}_{X,h}}(f^{*}\mathcal{D}_{Y,h},\mathcal{F})$ with the derived functor of $f_{*}$. 


\begin{Lemma}\label{lem:1}
\label{lem:two-pushforwards} 
For a smooth, proper morphism $f: X \to Y$ as above, the functor
\[
f_{\star}:D(\text{qcoh}(\mathcal{D}_{X,h}))\to D(\text{qcoh}(\mathcal{D}_{Y,h}))
\]
is isomorphic to $\int_f [-(d_X-d_Y)] \{2(d_X-d_Y)\}$. 
\end{Lemma}

\begin{Remark}
If one is willing to work with the derived categories of quasi-coherent sheaves, then one can remove the condition that $f$ be proper in this lemma. 
\end{Remark}

\begin{proof}
By Lemma \ref{le:deRham}, we have 
a free resolution of $\mathcal{D}_{Y\leftarrow X,h}$ in the category
of $\mathcal{D}^{\mathrm{op}}_{X,h}$ modules. Thus for any $M\in D(\text{qcoh}(\mathcal{D}_{X,h}))$
we have isomorphisms 
\[
\mathcal{D}_{Y\leftarrow X,h}\otimes_{\mathcal{D}_{X,h}}M \cong \DR_{Y}(\mathcal{D}_{X,h})\otimes_{\mathcal{D}_{X,h}}M[d_X - d_Y]\{-2(d_Y - d_X)\}
\]
\[
 \cong \mathcal{H}om_{\mathcal{D}_{X,h}}(\Sp_{Y}(\mathcal{D}_{X,h}),M)[d_X - d_Y]\{-2(d_X - d_Y)\}
\]
in $D(\text{qcoh}(\mathcal{D}_{X,h}))$. Here we have used the isomorphisms
\[
\mathcal{H}om_{\mathcal{D}_{X,h}}(\mathcal{D}_{X,h}\otimes_{\mathcal{O}_{X,h}}\Theta_{X/Y,h}^{i},M) \cong \Omega_{X/Y,h}^{i}\otimes_{\mathcal{O}_{X,h}} M \cong \DR_{Y}^{i}(\mathcal{D}_{X,h})\otimes_{\mathcal{D}_{X,h}}M.
\]
Now by Lemma \ref{lem:Spencer}, $\Sp_{Y}(\mathcal{D}_{X,h})$ is a resolution of $f^{*}\mathcal{D}_{Y,h}$
which by modules which are locally free in the category of $\mathcal{D}_{X,h}$-modules.

Therefore, if $M\in D(\text{qcoh}(\mathcal{D}_{X,h}))$, we have 
\[
\mathcal{H}om_{\mathcal{D}_{X,h}}(\Sp_{Y}(\mathcal{D}_{X,h}),M) \cong \mathcal{H}om_{\mathcal{D}_{X,h}}(f^{*}\mathcal{D}_{Y,h},M)
\]
Thus, combining these isomorphisms, we conclude that
\begin{equation}
\label{eq:intHom}
\int_f M = f_* \mathcal Hom_{\D_{X,h}}(f^* \D_{Y,h}, M)[d_X - d_Y]\{-2(d_X - d_Y)\}
\end{equation}

Finally, we recall that, as discussed above, on the \emph{abelian} category of quasi-coherent $\mathcal{D}_{X,h}$-modules, the functor $M\to\mathcal{H}om_{\mathcal{D}_{X,h}}(f^{*}\mathcal{D}_{Y,h},M)$ is the functor taking $M$ to its subsheaf of local sections which are annihilated by the action of $\Theta_{X/Y}$ (which is precisely $M^{\nabla_{X/Y}}$). Therefore the functor $f_{\star}$ is the composition of the left exact functors $f_{*}$ and $\mathcal{H}om_{\mathcal{D}_{X,h}}(f^{*}\mathcal{D}_{Y,h},M)$, hence its derived functor is given by the composition
\[
M\to f_{*}\mathcal{H}om_{\mathcal{D}_{X,h}}(f^{*}\mathcal{D}_{Y,h},M)
\]
where $f_{*}$ and $\mathcal{H}om$ now denote the derived functors. Combining with (\ref{eq:intHom}) gives the desired result.
\end{proof}

\begin{Lemma}\label{lem:2}
\label{lem:Naive-adjunction}The functor $f_{\star}$ is right adjoint
to $f^{*}$. 
\end{Lemma}
\begin{proof}
The functor $f^{*}$ is defined as
the composition of $f^{-1}$ and $f^{*}D_{Y,h}\otimes_{f^{-1} \mathcal{D}_{Y,h}}$.
In the course of the previous proof, we have seen that the functor
$f_{\star}$ is the composition of $\mathcal{H}om_{\mathcal{D}_{X,h}}(f^{*}\mathcal{D}_{Y,h},)$
and $f_{*}$. So the result follows from the general fact that $(f^{-1},f_{*})$
and 
\[
(f^{*}D_{Y,h}\otimes_{f^{-1} \mathcal{D}_{Y,h}},\mathcal{H}om_{\mathcal{D}_{X,h}}(f^{*}\mathcal{D}_{Y,h},))
\]
are adjoint pairs of functors. 
\end{proof}

\subsection{Base change}


\begin{Proposition}\label{prop:basechange}
Consider a fiber product diagram 
$$\xymatrix{
Y \times_X Z \ar[r]^{\tf} \ar[d]^{\tg} & Z \ar[d]^{g} \\
Y \ar[r]^f & X
}$$
where all varieties are smooth, $f,g$ are proper maps and $Y \times_X Z$ is of the expected dimension. We have an isomorphism $\int_{\tf} \tg^\d \cong g^\d \int_f$ of functors $ D(\D_{Y,h}\mod) \to D(\D_{Z,h}\mod)$. 
\end{Proposition}

\begin{proof}
	We will prove that the following composition of adjunctions
	\begin{equation}\label{eq:naturalmap}
	\int_{\tf} \tg^\d \to \int_{\tf} \tg^\d f^\d \int_f =  \int_{\tf}  \tf^\d g^\d \int_f \to g^\d \int_f
	\end{equation}
	is an isomorphism. 
	
	We can factor $f$ as $Y \xrightarrow{id \times f} Y \times X \to X$ where the second map is the projection. Subsequently we can assume that $f$ is either a projection or an embedding (and likewise for $g$). The case of the projection follows easily as in the proof of the base change formula for $\D$-modules in \cite[Theorem 1.7.3]{HTT}. So we will assume from now on that both $f$ and $g$ are embeddings. 
	
	We can also assume that all the varieties are affine.  Moreover assume that $Y $ is a divisor defined by a function $ t \in \O(X) $ and that $ Z $ is codimension $ m $ (we can simply iterate to obtain the case where $ Y $ is of higher codimension).  Choose a vector field $ \partial_t $ on $ X $ such that $ \partial_t(t) = 1 $.
	
	Finally, because the varieties are affine, it suffices to check the isomorphism for the object $ \D_{Y,h} $.
	
	We have
	$$
	\int_f \D_{Y,h} = \D_{Y,h} \otimes_\C \C[\partial_t]
	$$
	and so 
	$$
	f^\d \int_f \D_{Y,h} =  \D_{Y,h} \otimes_\C \C[\partial_t] \xrightarrow{t} \D_{Y,h} \otimes_\C \C[\partial_t]
	$$
	where the complex is placed in degrees $ 0 $ and $ 1 $.
	
	By Corollary \ref{cor:adjuction-for-induced}, the adjuction map $ \D_{Y,h} \rightarrow f^\d \int_f \D_{Y,h}  $ is given by the natural map \begin{equation}
	\label{eq:firstfirstAdjunction}
	 \D_{Y,h} \rightarrow \D_{Y,h} \otimes_\C \C[\partial_t]
	 \end{equation}
	
	On the other hand, the pullbacks to $ Z $ and $Y \cap Z $ are exact and we have 
	$$
	g^\d \int_f \D_{Y,h} = \O_{Z,h} \otimes_{\O_{X,h}} \D_{Y,h} \otimes_\C \C[\partial_t] [-m]
	$$
	and
	$$ \tilde g^\d \D_{Y,h} = \O_{Y \cap Z,h} \otimes_{\O_{Y,h}} \D_{Y,h} [-m]
	$$
	
	Also,
	$$
	\tilde g^\d f^\d \int_f \D_{Y,h} =  \O_{Y \cap Z,h} \otimes_{\O_{Y,h}} (\D_{Y,h} \otimes_\C \C[\partial_t] \xrightarrow{t} \D_{Y,h} \otimes_\C \C[\partial_t]) [-m]
	$$
	
	From (\ref{eq:firstfirstAdjunction}), the adjuction map $ \tilde g^\d \D_{Y,h} \rightarrow \tilde g^\d f^\d \int_f \D_{Y,h} = \tilde f^\d g^\d \int_f \D_{Y,h} $ becomes the inclusion 
	\begin{equation}
	\label{eq:firstAdjunction}
	\O_{Y \cap Z,h} \otimes_{\O_{Y,h}} \D_{Y,h} [-m] \rightarrow  \O_{Y \cap Z,h} \otimes_{\O_{Y,h}} \D_{Y,h} \otimes_\C \C[\partial_t] [-m]
	\end{equation}
	
	The natural isomorphism $ \tilde g^\d f^\d \cong \tilde f^\d g^\d $ is given by the isomorphism $\O_{Y \cap Z,h} \otimes_{\O_{Y,h}} \D_{Y,h} \cong \O_{Z,h} \otimes_{\O_{X,h}} \D_{Y,h} $ and we have
	$$
	\tilde f^\d g^\d \int_f \D_{Y,h} = 
	(\O_{Z,h} \otimes_{\O_{X,h}} \D_{Y,h} \otimes_\C \C[\partial_t] \xrightarrow{t} \O_{Z,h} \otimes_{\O_{X,h}}  \D_{Y,h} \otimes_\C \C[\partial_t]) [-m]
	$$
	
	By Corollary \ref{cor:explicit-adjunction}, the adjunction map 
	$ \int_{\tilde f} \tilde f^\d g^\d \int_f \D_{Y,h} \rightarrow g^\d \int_f \D_{Y,h} $
	is given by
	\begin{equation}
	\label{eq:secondAdj}
	\int_{\tilde f} (\O_{Z,h} \otimes_{\O_{X,h}} \D_{Y,h} \otimes_\C \C[\partial_t] \xrightarrow{t} \O_{Z,h} \otimes_{\O_{X,h}}  \D_{Y,h} \otimes_\C \C[\partial_t]) [-m]\rightarrow \O_{Z,h} \otimes_{\O_{X,h}} \D_{Y,h} \otimes_\C \C[\partial_t] [-m]
	\end{equation}
	(projection out of the first term in the complex on the left).
	
	Composing (\ref{eq:firstAdjunction}) and (\ref{eq:secondAdj}), we see that the resulting map  $ \int_{\tilde f} \tilde g^\d \D_{Y,h} \rightarrow g^\d \int_f \D_{Y,h}$ becomes the natural isomorphism
	$$
	\int_{\tilde f} \O_{Y \cap Z,h} \otimes_{\O_{Y,h}} \D_{Y,h}  [-m] \rightarrow \O_{Y \cap Z,h} \otimes_{\O_{Y,h}} \D_{Y,h} \otimes_\C \C[\partial_t]  [-m] 
	$$
	
\end{proof}

\begin{Corollary} \label{cor:Deltas}
	Suppose that $Y$ and $Z$ are two smooth subvarieties of $X$ which intersect transversally.
	Then we have $\delta_{Y,h} \dotimes_{\O_{X,h}} \delta_{Z,h} \cong \delta_{Y \cap Z,h} \la - d_X \ra$.
\end{Corollary}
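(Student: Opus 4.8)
The plan is to compute both sides via the associated graded functor and the formulas established earlier in this section, then compare. Recall $\delta_{Y,h} = \int_{i_Y} \O_{Y,h}\{\dim Y\}$ where $i_Y : Y \hookrightarrow X$, and by Corollary~\ref{cor:grofDelta} we have $\gr(\delta_{Y,h}) \cong \O_{T^*_Y(X)} \otimes p_Y^* \omega_{Y/X}\{\dim Y\}$, and similarly for $Z$. By Proposition~\ref{prop:assgrtensor} applied to the $\d$-version (using $M \dotimes_{\O_{X,h}} N = M \otimes_{\O_{X,h}} N[-\dim X]$ and $\la 1 \ra = [1]\{-1\}$), we get
$$
\gr(\delta_{Y,h} \dotimes_{\O_{X,h}} \delta_{Z,h}) \cong \rho_*\bigl(\pi_1^* \gr(\delta_{Y,h}) \otimes \pi_2^* \gr(\delta_{Z,h})\bigr)[-\dim X]\{\dim Y + \dim Z\}.
$$

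First I would identify the scheme-theoretic intersection inside $T^*X \times_X T^*X$. The pullback $\pi_1^* \O_{T^*_Y(X)}$ is supported on $T^*_Y(X) \times_X T^*X$ and $\pi_2^* \O_{T^*_Z(X)}$ on $T^*X \times_X T^*_Z(X)$; their tensor product is supported on $T^*_Y(X) \times_X T^*_Z(X)$, which lies over $Y \cap Z$. The key geometric input is transversality of $Y$ and $Z$: this guarantees that over a point of $Y \cap Z$ the conormal spaces $T^*_Y(X)$ and $T^*_Z(X)$ meet only at the origin inside $T^*_{(Y\cap Z)}(X)^\perp$-sense, so that the fibre product $T^*_Y(X) \times_X T^*_Z(X)$ is a vector bundle over $Y \cap Z$ of rank $\mathrm{codim}(Y) + \mathrm{codim}(Z) = \mathrm{codim}(Y \cap Z)$, and the derived tensor product is underived (the relevant Tor-vanishing is exactly transversality). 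Then $\rho$ restricted to this locus is a closed immersion onto $T^*_{Y \cap Z}(X)$: indeed transversality means the sum map $(v_1, v_2) \mapsto v_1 + v_2$ identifies $T^*_Y(X)|_{Y\cap Z} \oplus T^*_Z(X)|_{Y\cap Z}$ with the conormal bundle $T^*_{Y\cap Z}(X)$. So the right-hand side becomes $\O_{T^*_{Y\cap Z}(X)}$ tensored with an appropriate line bundle, with a grading shift.

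Next I would track the line bundles and shifts. On the locus $T^*_{Y\cap Z}(X)$, the line bundle contributions assemble to $p^*(\omega_{Y/X}|_{Y\cap Z} \otimes \omega_{Z/X}|_{Y\cap Z})$, and one checks using the transverse-intersection short exact sequence of normal bundles $0 \to N_{Y\cap Z / X} \to N_{Y/X}|_{Y\cap Z} \oplus N_{Z/X}|_{Y\cap Z} \to (\text{trivial correction}) \to 0$ — more precisely $N_{Y/X}|_{Y \cap Z} \oplus N_{Z/X}|_{Y\cap Z} \cong N_{Y\cap Z/X} \oplus (N_{Y\cap Z/X})$ is not quite right; rather transversality gives $N_{Y/X}|_{Y\cap Z} \cong N_{Y\cap Z/X} / N_{Z/X}|_{Y\cap Z}$ type relations that combine to $\omega_{Y/X} \otimes \omega_{Z/X}|_{Y\cap Z} \cong \omega_{(Y\cap Z)/X}$. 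Comparing with $\gr(\delta_{Y\cap Z,h}) \cong \O_{T^*_{Y\cap Z}(X)} \otimes p^* \omega_{(Y\cap Z)/X}\{\dim(Y\cap Z)\}$ from Corollary~\ref{cor:grofDelta}, and using $\dim Y + \dim Z - \dim X = \dim(Y \cap Z)$ under transversality to match the internal $\{\cdot\}$ shifts, the cohomological $[-\dim X]$ shift is absorbed into the $\la -\dim X\ra = [-\dim X]\{\dim X\}$ on the right once the internal shifts are reconciled. This shows $\gr$ of both sides agree as $\C^\times$-equivariant complexes.

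Finally I would upgrade the equality of associated gradeds to an isomorphism of $\D_{X,h}$-modules. Both $\delta_{Y,h} \dotimes \delta_{Z,h}$ and $\delta_{Y\cap Z,h}\la -\dim X\ra$ are $\D_{X,h}$-modules flat over $\C[h]$ (being Rees modules of honest filtered $\D$-modules — $\delta_Y \otimes \delta_Z \cong \delta_{Y\cap Z}$ is the classical fact), so a morphism between them that is an isomorphism after $\otimes_{\C[h]}\C_0$ is an isomorphism by Nakayama / the graded version of the local criterion for flatness. To produce the morphism in the first place, I would use the classical isomorphism $\delta_Y \dotimes_{\O_X} \delta_Z \cong \delta_{Y\cap Z}$ of $\D_X$-modules (transverse intersection) together with compatible filtrations, or equivalently construct the map directly at the $\D_h$ level via the projection formula for $\int_{i_Y}$ and the transversal base change; the point is that $h$-flatness then forces it to be iso.

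The main obstacle I anticipate is the second step: verifying carefully that transversality makes the relevant derived tensor product underived and that $\rho$ is a closed immersion onto the conormal bundle of the intersection with the correct scheme structure — i.e., the clean geometric identification $T^*_Y(X) \times_X T^*_Z(X) \xrightarrow{\ \sim\ } T^*_{Y\cap Z}(X)$ via addition of covectors. Once that and the bookkeeping of $\omega_{Y/X}\otimes\omega_{Z/X}|_{Y\cap Z} \cong \omega_{(Y\cap Z)/X}$ are in hand, everything else is formal from the propositions already proved.
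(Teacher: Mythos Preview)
Your main computation --- applying Proposition~\ref{prop:assgrtensor} and Corollary~\ref{cor:grofDelta}, then using transversality to identify $\rho\bigl(\pi_1^{-1}(T^*_Y X) \cap \pi_2^{-1}(T^*_Z X)\bigr)$ with $T^*_{Y\cap Z} X$ and to match the line bundles $\omega_{Y/X}\otimes\omega_{Z/X}|_{Y\cap Z}\cong\omega_{(Y\cap Z)/X}$ --- is exactly the paper's argument. The difference is in how you conclude. The paper uses this $\gr$ computation (together with the evident check at $h\neq 0$) only to deduce that the derived tensor product $\delta_{Y,h}\otimes^L_{\O_{X,h}}\delta_{Z,h}$ is concentrated in cohomological degree zero; once that is established, the underived tensor product is identified directly and the $[\cdot]$, $\{\cdot\}$ shifts are traced through. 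You instead propose to construct a map of $\D_h$-modules and check it is an isomorphism by Nakayama after applying $\gr$. That is a valid strategy, but your construction of the map is left vague --- and note that the ``transversal base change'' you invoke is delicate for $\D_h$-modules, since Kashiwara's theorem fails in that setting (as the paper remarks after Theorem~\ref{th:KashforHodge}). More importantly, your assertion that the left-hand side is a priori ``a Rees module of an honest filtered $\D$-module'' is precisely the content of the paper's degree-zero concentration step; it follows from the $\gr$ computation you carried out, but you should make that deduction explicit rather than assume it.
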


\begin{proof}
	Consider the diagram
	$$
	\xymatrix{
		Y \cap Z \ar^{\tilde \Delta}[r] \ar^{\tilde i}[d] & Y \times Z \ar^i[d] \\
		X \ar^{\Delta}[r] & X \times X
	}
$$
Then we have 
\begin{align*}
\delta_{Y,h} \dotimes_{\O_{X,h}} \delta_{Z,h} &= \Delta^\d \int_i \O_{Y \times Z,h} \{ d_Y + d_Z\} \\
&\cong \int_{\tilde i} \tilde \Delta^\d \O_{Y \times Z, h} \{d_Y + d_Z \} \\
&\cong \int_{\tilde i} \O_{Y \cap Z, h}[d_{Y \cap Z} -d_Y - d_Z] \{d_Y + d_Z \} 
\end{align*}
where in the second line we use Proposition \ref{prop:basechange}.  The result follows since $ d_{Y \cap Z} = d_Y + d_Z - d_X $.
\end{proof}
\begin{Corollary}\label{cor:pullbackdelta}
	Suppose $f: X \rightarrow Y$ is a morphism between smooth varieties and $Z \subset Y$ a smooth, closed subvariety. If $f^{-1}(Z) \subset X$ is smooth of the expected dimension then
	$$f^\d(\delta_{Z,h}) \cong \delta_{f^{-1}(Z),h} \la d_X - d_Y \ra .$$
\end{Corollary}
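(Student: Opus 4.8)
The plan is to construct a natural comparison morphism $\beta\colon f^{\d}(\delta_{Z,h})\to \delta_{f^{-1}(Z),h}\la\dim X-\dim Y\ra$ and to show it is an isomorphism by specializing at $h=0$, where the statement reduces to the transversality computation of conormal bundles already implicit in Proposition \ref{prop:pull} and Corollary \ref{cor:grofDelta}. Write $W:=f^{-1}(Z)$, let $i_Z\colon Z\hookrightarrow Y$ and $i_W\colon W\hookrightarrow X$ be the inclusions, and $g:=f|_W\colon W\to Z$, so that $f\circ i_W=i_Z\circ g$ forms a Cartesian square. Since $f^{\d}$, $\int_{i_Z}$, $\int_{i_W}$ and $\gr$ are all local on their targets, we may shrink $X$ (and $Y$) and assume that $Z$, hence $W$, is closed.

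First I would identify the right-hand side. The formal ``exchange'' morphism built from the transfer bimodules exactly as in the classical $\D$-module case gives a map $f^{\d}\int_{i_Z}\O_{Z,h}\to \int_{i_W}g^{\d}\O_{Z,h}$. Here $g^{*}\O_{Z,h}\cong\O_{W,h}$ is the Rees module of the trivial filtration on $\O_W$, and --- crucially --- there are no higher derived terms: by the hypothesis that $W$ is smooth of the expected dimension, $f^{*}$ of the ideal of $Z$ is locally generated by a regular sequence cutting out $W$; equivalently, $f$ is transverse to $Z$. Hence $g^{\d}\O_{Z,h}\cong\O_{W,h}[\dim W-\dim Z]$ and, for the same reason, $\int_{i_W}g^{\d}\O_{Z,h}$ is a Kashiwara pushforward of the $\C[h]$-flat module $\O_{W,h}[\dim W-\dim Z]$, hence itself $\C[h]$-flat and concentrated in a single cohomological degree; the same regular-sequence argument shows $f^{\d}(\delta_{Z,h})$ is $\C[h]$-flat and concentrated in a single cohomological degree. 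Tracking internal shifts --- $\delta_{Z,h}=\int_{i_Z}\O_{Z,h}\{\dim Z\}$ and $\int_{i_W}\O_{W,h}=\delta_{W,h}\{-\dim W\}$ --- and using $\dim W-\dim Z=\dim X-\dim Y$ together with $\la 1\ra=[1]\{-1\}$, the target of the exchange morphism (twisted by $\{\dim Z\}$) rewrites as $\delta_{W,h}\la\dim X-\dim Y\ra$; this defines $\beta$.

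Next I would check $\beta$ is an isomorphism. For finitely generated graded $\C[h]$-modules that are $\C[h]$-flat, a morphism is an isomorphism as soon as it is one after $\otimes_{\C[h]}\C_0$: the cokernel vanishes mod $h$, hence vanishes by graded Nakayama (as $h$ has positive degree), and then flatness forces the kernel to vanish mod $h$, hence to vanish. So it suffices to prove $\gr(\beta)$ is an isomorphism. By Proposition \ref{prop:pull} and Corollary \ref{cor:grofDelta},
\[
\gr\bigl(f^{\d}\delta_{Z,h}\bigr)\;\cong\;\pi_{f*}\,p_2^{*}\bigl(\O_{T^{*}_{Z}(Y)}\otimes p^{*}\omega_{Z/Y}\bigr)\{\dim Z\}[\dim X-\dim Y],
\]
with $\pi_f,p_1,p_2$ as in diagram (\ref{eq:1}). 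Now $p_2^{-1}(T^{*}_{Z}(Y))=W\times_{Z}T^{*}_{Z}(Y)$, and transversality of $f$ to $Z$ says precisely that $\pi_f$ restricts to an isomorphism of this onto the conormal variety $T^{*}_{W}(X)$, and moreover that $g^{*}$ of the conormal bundle of $Z$ in $Y$ is the conormal bundle of $W$ in $X$, whence $g^{*}\omega_{Z/Y}\cong\omega_{W/X}$ (each being the determinant of the respective normal bundle). Therefore the displayed expression equals $\bigl(\O_{T^{*}_{W}(X)}\otimes p^{*}\omega_{W/X}\bigr)\{\dim Z\}[\dim X-\dim Y]$, which, using $\dim Z=\dim W-(\dim X-\dim Y)$, is exactly $\gr\bigl(\delta_{W,h}\la\dim X-\dim Y\ra\bigr)$ by Corollary \ref{cor:grofDelta}. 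Finally, by naturality of the exchange morphism, $\gr(\beta)$ is the evident base-change morphism of coherent sheaves on $T^{*}X$, which under these identifications is pushforward along the isomorphism $\pi_f|_{p_2^{-1}(T^{*}_{Z}(Y))}$, hence an isomorphism. This finishes the proof.

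The main obstacle is the technical point in the second paragraph: that $f^{\d}(\delta_{Z,h})$ (equivalently, source and target of $\beta$) is $\C[h]$-flat and lives in a single cohomological degree. This is exactly where the hypothesis that $f^{-1}(Z)$ be smooth of the expected dimension is essential: it makes $f^{*}$ of the defining ideal a regular sequence so that no derived terms appear, whereas without it the pullback acquires higher cohomology. The rest is the transversality identification of $\pi_f$ on conormal loci together with the bookkeeping of the line bundles $\omega_{Z/Y},\omega_{W/X}$ and of the three shifts $\{\cdot\}$, $[\cdot]$, $\la\cdot\ra$. One could alternatively bypass the abstract exchange morphism by computing $f^{*}\delta_{Z,h}$ directly in local coordinates $y_1,\dots,y_c$ on $Y$ with $Z=V(y_1,\dots,y_c)$ and coordinates on $X$ containing $f^{*}y_1,\dots,f^{*}y_c$ (which extend to a coordinate system precisely by transversality), presenting $\delta_{Z,h}$ as a cyclic $\D_{Y,h}$-module and pulling back its Koszul resolution; the regular-sequence hypothesis again guarantees exactness off the single degree.
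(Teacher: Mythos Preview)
Your proof is correct. The paper's argument is shorter but relies on the same underlying mechanism: it reduces to Corollary~\ref{cor:Deltas} by viewing $f^{*}$ through the graph embedding, applying that corollary to the transverse pair $\Gamma_f$ and $X\times Z$ inside $X\times Y$ to obtain $f^{*}\O_{Z,h}\cong\O_{f^{-1}(Z),h}$, and then tracking the shifts. Since Corollary~\ref{cor:Deltas} is itself proved by checking concentration in a single cohomological degree via the associated graded (exactly the transversality-of-conormals computation you carry out), you are in effect reproving that step directly in the pullback setting rather than invoking it through the graph trick.

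What your approach buys is explicitness: you produce a genuine comparison morphism $\beta$ and a clean graded-Nakayama argument reducing to $h=0$, whereas the paper is terser about why the two sides are isomorphic once both are known to sit in degree zero. What the paper's approach buys is economy --- once Corollary~\ref{cor:Deltas} is in hand, the present corollary is a two-line consequence, and the graph trick is a standard way to reduce statements about pullbacks to statements about tensor products. The one place where your argument is slightly more delicate than you indicate is the claim that $\gr(\beta)$ coincides with the evident base-change map of coherent sheaves; this is true, but verifying the compatibility of the $\D_h$-module exchange morphism with $\gr$ requires unwinding the transfer bimodules on both sides. An alternative that sidesteps this is to note that both source and target are, locally, cyclic $\D_{X,h}$-modules with the same annihilator, so any nonzero map between them (and $\beta$ is visibly nonzero on generators) is an isomorphism.
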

\begin{proof}
	The expected dimension hypothesis implies that $f^* \O_{Z,h} \cong \O_{f^{-1}(Z),h}$ (apply Corollary \ref{cor:Deltas} to the graph of $f$ and $X \times f^{-1}(Z)$ inside $X \times Y$). So we have
	\begin{align*}
	f^\d(\delta_{Z,h})
	&\cong f^* \O_{Z,h} [d_X - d_Y] \{d_Z\} \\
	&\cong \O_{f^{-1}(Z),h}\{\dim(f^{-1}(Z))\} [d_X-d_Y] \{-d_X+d_Y\} \\
	&\cong \delta_{f^{-1}(Z),h} \la d_X - d_Y \ra
	\end{align*}
	where we used that $\dim(f^{-1}(Z)) = d_X-d_Y+d_Z$.
\end{proof}

\section{Kernels and the associated graded functor}\label{sec:grkernels}

In this section we recall and develop the theory of kernels for $\O_X$ and $\D_{X,h}$ modules.

\subsection{Kernels for $\O_X$-modules}\label{sec:kerO}

First recall the formalism of kernels for $\O_X$-modules. Let $X, Y$ be two smooth varieties. A kernel is any object $\sP \in D(\O_{X \times Y}\mod)$. It defines the associated integral (or Fourier-Mukai) transform
\begin{equation*}
\begin{aligned}
\Phi_\sP : D(\O_X\mod) &\rightarrow D(\O_Y\mod) \\ M &\mapsto {\pi_2}_* (\pi_1^* (M) \otimes \sP)
\end{aligned}
\end{equation*}
where $\pi_1$ and $\pi_2$ are the projections from $X \times Y$ to $X$ and $Y$ respectively. 

We can express composition of functors in terms of their kernels. If $X,Y,Z $ are varieties and $\Phi_\sP : D(\O_X\mod) \rightarrow D(\O_Y\mod),  \Phi_\sQ : D(\O_Y\mod) \rightarrow D(\O_Z\mod) $ then $ \Phi_\sQ \circ \Phi_\sP $ is induced by the kernel
\begin{equation*}
\sQ * \sP := {\pi_{13}}_*(\pi^*_{12}(\sP) \otimes \pi^*_{23}(\sQ))
\end{equation*}
where $*$ is called the convolution product. We now describe the analogous formalism for $\D_h$-modules.

\subsection{Kernels for $\D_{X,h}$-modules}\label{sec:kerD}
Consider two smooth and proper varieties $X$ and $Y$ and an object $\sP \in D(\D_{X \times Y,h}\mod)$. Such a kernel induces a functor $\Phi_{\sP}: D(\D_{X,h}\mod) \rightarrow D(\D_{Y,h}\mod)$ via
$$\Phi_\sP(M) = \int_{p_2} p_1^\d(M) \dotimes_{\O_{X \times Y,h}} \sP $$
where $M \in D(\D_{X,h}\mod)$. As above, if $\sP \in D(\D_{X \times Y,h}\mod)$ and $\sQ \in D(\D_{Y \times Z,h}\mod)$ then we can define
$$\sQ * \sP := \int_{p_{13}} p^\d_{12}(\sP) \dotimes_{\O_{X \times Y \times Z,h}} p^\d_{23}(\sQ).$$
The following result follows from a formal argument which is exactly the same as in the case of quasi-coherent $\O$-modules.

\begin{Lemma}
If $\sP \in D(\D_{X \times Y,h}\mod)$ and $\sQ \in D(\D_{Y \times Z,h}\mod)$ then $\Phi_{\sQ} \circ \Phi_{\sP} \cong \Phi_{\sQ * \sP}$.
\end{Lemma}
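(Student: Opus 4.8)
The plan is to prove the composition-of-kernels formula $\Phi_{\sQ} \circ \Phi_{\sP} \cong \Phi_{\sQ * \sP}$ by a chase through the relevant projections, exactly parallel to the classical Fourier--Mukai argument, with the only new ingredient being bookkeeping for the $\D_h$-module operations $\int_f$, $f^\d$ and $\dotimes$. First I would set up notation: write $X, Y, Z$ for the three varieties, let $p_{12}, p_{13}, p_{23}$ be the projections from $X \times Y \times Z$, and $p_1, p_2, p_3$ the projections onto the single factors, and similarly for the two-factor products. I would spell out both sides: $(\Phi_{\sQ} \circ \Phi_{\sP})(M) = \int_{p_3^{YZ}} (p_2^{YZ})^\d \left( \int_{p_2^{XY}} (p_1^{XY})^\d M \dotimes \sP \right) \dotimes \sQ$, while $\Phi_{\sQ * \sP}(M) = \int_{q_2} q_1^\d M \dotimes \left( \int_{p_{13}} p_{12}^\d \sP \dotimes p_{23}^\d \sQ \right)$ where $q_1, q_2$ are the projections from $X \times Z$. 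The goal is to show these agree functorially in $M$.

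The key steps, in order, are the following. Step one: use base change to move the inner pushforward $\int_{p_2^{XY}}$ past the pullback $(p_2^{YZ})^\d$, producing a pushforward along $X \times Y \times Z \to Y \times Z$ of a pullback to $X \times Y \times Z$; here I would invoke the base-change property for $\D_h$-module pushforward, which holds because we are working with $\D_h$-modules as the paper emphasizes (this is precisely the reason the paper works with $f^\d$). Step two: repeatedly use the projection formula $\int_f(A \dotimes f^\d B) \cong (\int_f A) \dotimes B$ and the functoriality $f^\d g^\d \cong (gf)^\d$, $\int_f \int_g \cong \int_{fg}$ to collapse all the iterated pullbacks and pushforwards into a single pushforward from $X \times Y \times Z$ down to $Z$ (or, on the other side, from $X \times Y \times Z$ down to $X \times Z$ and then to $Z$) of a single $\dotimes$ of three terms $p_1^\d M \dotimes p_{12}^\d \sP \dotimes p_{23}^\d \sQ$. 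Step three: check that both the left-hand side and the right-hand side, after these manipulations, reduce to $\int_{p_3^{XYZ}} \left( (p_1^{XYZ})^\d M \dotimes p_{12}^\d \sP \dotimes p_{23}^\d \sQ \right)$, using that $\dotimes$ is associative and commutative (up to the shift $[-\dim]$ recorded after Proposition~\ref{prop:assgrtensor}, which must be tracked) and that $p_3^{XYZ} = q_2 \circ p_{13}$. One should verify the shift conventions in $\dotimes$ and in $p_1^\d$ balance on both sides.

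The main obstacle I expect is not conceptual but technical: ensuring that the base-change theorem is actually available for $\D_h$-modules in the generality needed (arbitrary morphisms, not just proper or flat ones), and that the projection formula and the Künneth-type identity $p_{12}^\d \sP \dotimes p_{23}^\d \sQ$ behave as expected with the $\dagger$-conventions. The paper signals that base change is one of the facts that motivates the $\D_h$/Hodge-module framework, so I would either cite the relevant statement (Laumon \cite{Lm}, or the standard $\D$-module reference \cite{HTT} adapted to the Rees setting) or note that each of these identities can be checked after specializing $h$ to $1$ (recovering ordinary $\D$-modules, where all of this is classical) and at $h = 0$ (where it follows from the $\O$-module statements via the $\gr$ compatibilities of Propositions~\ref{prop:pull}, \ref{prop:push} and \ref{prop:assgrtensor}), together with a flatness-over-$\C[h]$ argument. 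Since the statement is explicitly flagged as following from ``a formal argument which is exactly the same as in the case of quasi-coherent $\O$-modules,'' I would keep the write-up short: state that the argument is the standard one and only highlight the two or three places where the $\D_h$ bookkeeping (the degree shifts and the $\dagger$-pullback) differs from the $\O$-module case.
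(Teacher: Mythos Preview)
Your plan is correct and is precisely the ``formal argument which is exactly the same as in the case of quasi-coherent $\O$-modules'' that the paper invokes in lieu of a proof; the paper gives no further details, so your outline is in fact more explicit than what appears there.

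One correction, however: you write that ``the paper signals that base change is one of the facts that motivates the $\D_h$/Hodge-module framework'' and that base change ``holds because we are working with $\D_h$-modules.'' This is a misreading. The paper says the opposite: general base change holds for mixed Hodge modules (Theorem~\ref{thm:basechangeforHodge}) but \emph{fails} for $\D_h$-modules in general (see the Remark after Theorem~\ref{th:KashforHodge} and the discussion in the introduction). What saves you here is not the $\D_h$ framework but the specific shape of the diagram: the only base change you need is along a smooth projection $Y \times Z \to Y$, and smooth (indeed flat) base change for $\int$ and $(\,\cdot\,)^\d$ is elementary and holds already at the level of $\D_h$-modules, parallel to the $\O$-module case. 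So drop the appeal to the general base-change theorem and instead note explicitly that all the squares arising are of projection type, where the formal identities (base change, projection formula, composition of $\int$ and of $(\,\cdot\,)^\d$) go through exactly as for quasi-coherent sheaves. Your fallback specialization argument (check at $h=0$ and $h=1$) is not needed and, as stated, would not suffice on its own.
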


\subsection{The associated graded of kernels} \label{se:AssociatedGradedKernels}

For a kernel $\sP \in D(\D_{X \times Y,h}\mod)$ we define the ``directed'' associated graded by
\begin{equation}\label{eq:tgr}
\tgr(\sP) := (1 \times \iota)^* \gr(\sP) \otimes \omega_Y [-d_X] \in D(\O_{T^* X \times T^* Y}\mod)
\end{equation}
where $\iota: T^* Y \rightarrow T^* Y$ is the involution acting by multiplication by $(-1)$ on the fibres and $\omega_{Y}$ is the canonical bundle pulled back from $Y$ via the natural projection $T^* X \times T^* Y \rightarrow Y$. We say ``directed'' because $\tgr(\sP)$ depends on whether we think of $\sP$ as an object on $X \times Y$ or $Y \times X$.

\begin{Proposition}\label{prop:1} Let $X,Y,Z$ be smooth proper varieties with $\sP \in D(\D_{X \times Y,h}\mod)$ and $\sQ \in D(\D_{Y \times Z,h}\mod)$. Then $\tgr(\sQ * \sP) \cong \tgr(\sQ) * \tgr(\sP)$.
\end{Proposition}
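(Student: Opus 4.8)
The plan is to reduce the statement to the compatibility results for pullback, pushforward, and $\dotimes$-tensor product already established in Section \ref{sec:2}, exactly as one proves associativity of convolution in the $\O$-module setting. Write out both sides: by definition $\sQ * \sP = \int_{p_{13}} p_{12}^\d(\sP) \dotimes_{\O_{X \times Y \times Z,h}} p_{23}^\d(\sQ)$, and $\tgr(\sQ * \sP)$ applies $(1 \times \iota)^*$, a canonical-bundle twist, and a cohomological shift. On the other side, $\tgr(\sQ) * \tgr(\sP)$ is the $\O$-module convolution over $T^*X \times T^*Y \times T^*Z$ of the two directed associated gradeds. I would expand the $\O$-module convolution and then transport each operation through $\gr$ using Proposition \ref{prop:pull} (for $p_{12}^\d$, $p_{23}^\d$, and the pullbacks hidden inside $p_{13}$), Corollary \ref{cor:push} (for $\int_{p_{13}}$), and Proposition \ref{prop:assgrtensor} (for $\dotimes$); the goal is to show the resulting $\O_{T^*X \times T^*Z}$-module agrees, after keeping track of all shifts and the sign involution, with the naive convolution of $\tgr(\sQ)$ and $\tgr(\sP)$.

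Concretely, the key steps are: (1) apply $\gr$ to $p_{12}^\d \sP$ and $p_{23}^\d \sQ$ using Proposition \ref{prop:pull}, noting that for the projections $p_{12}, p_{23}$ the relevant correspondence diagram (\ref{eq:1}) degenerates — $X \times Y \times Z \times_{X \times Y} T^*(X \times Y) \cong T^*X \times T^*Y \times Z$ and similarly for $p_{23}$ — so that $\gr(p_{12}^\d \sP) \cong \pi_{12}^* \gr(\sP)$ up to the expected shift, where here $\pi_{12}$ is the obvious projection $T^*X \times T^*Y \times T^*Z \to T^*X \times T^*Y$ (and the fibre of $\pi_f$ there is a point because the conormal directions in the $Z$-factor are zero); (2) apply Proposition \ref{prop:assgrtensor} to the $\dotimes$ over $X\times Y\times Z$, which produces a pushforward along $\rho: T^*(X\times Y\times Z)\times_{X\times Y\times Z}T^*(X\times Y\times Z)\to T^*(X\times Y\times Z)$, but because the two inputs $p_{12}^\d\sP$ and $p_{23}^\d\sQ$ have associated gradeds supported on $T^*X\times T^*Y\times\{0\}_Z$ and $\{0\}_X\times T^*Y\times T^*Z$ respectively, the fibre-sum map $\rho$ restricted to their product is an isomorphism onto $T^*X\times T^*Y\times T^*Z$, so this step just gives the naive tensor product $\pi_{12}^*\gr\sP\otimes\pi_{23}^*\gr\sQ$ on $T^*X\times T^*Y\times T^*Z$; (3) apply Corollary \ref{cor:push} to $\int_{p_{13}}$, where the relevant correspondence is $T^*X\times T^*Y\times T^*Z \xleftarrow{\pi_{p_{13}}} (X\times Y\times Z)\times_{X\times Z}T^*(X\times Z) \to T^*X\times T^*Z$ — the middle space is $T^*X\times Y\times T^*Z$ pulled back, and the fibre of $\pi_{p_{13}}$ over it imposes that the $Y$-covector vanish, i.e. it picks out the fibre product over $T^*Y$ along $\pi_1^{\operatorname{graph}}$ — reproducing precisely $\pi_{13*}(\pi_{12}^*(-)\otimes\pi_{23}^*(-))$ with the $T^*Y$-factors matched; (4) collect the $\omega_{X/Y}$-type relative-dualizing-sheaf twists from each application of Corollary \ref{cor:push}/\ref{prop:pull} and check they assemble into exactly the $\omega$-twists built into the two factors $\tgr(\sQ), \tgr(\sP)$ and the one in $\tgr(\sQ*\sP)$; (5) track the involution $\iota$ and verify that the sign on fibre coordinates is compatible — the $(1\times\iota)^*$ on the $T^*Z$ factor of $\sQ*\sP$ matches the $(1\times\iota)^*$ on the $T^*Z$ factor of $\sQ$, and the $\iota$ on the internal $T^*Y$ factors cancels between the "$R$-adjoint-like" appearances in $\sP$ and $\sQ$ (this is the reason the construction had to be \emph{directed}).

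The main obstacle I expect is step (4)–(5): bookkeeping of the canonical-bundle twists $\omega_{X/Y}$, $\omega_Y$, the shifts $[-\dim(X)]$ hidden in $\tgr$ versus the $[\dim X - \dim Y]$ in $f^\d$ and the $\dim$-shift in $\dotimes$, and especially verifying that the sign involution $\iota$ threads through the fibre-sum map $\rho$ correctly. None of the individual ingredients is hard — each is a direct citation of an earlier result — but getting the composite to come out to literally $\tgr(\sQ)*\tgr(\sP)$ with no residual shift or twist requires care. A clean way to organize this is to first prove the two "degenerate pullback" and "conormal-support forces $\rho$ to be an isomorphism" lemmas as standalone observations, so that the main computation becomes a transparent chain of isomorphisms; the sign/twist reconciliation can then be done once at the end by comparing with the already-known $\O$-module statement from Section \ref{sec:kerO}.
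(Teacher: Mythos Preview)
Your overall strategy --- reduce to Propositions \ref{prop:pull}, \ref{prop:assgrtensor}, and Corollary \ref{cor:push} --- is exactly the paper's, but step (2) contains a genuine error that breaks the argument.

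You claim that, because $\gr(p_{12}^\d\sP)$ is supported on $T^*X\times T^*Y\times 0_Z$ and $\gr(p_{23}^\d\sQ)$ on $0_X\times T^*Y\times T^*Z$, the fibre-sum map $\rho$ restricted to the support of $\pi_1^*\gr(p_{12}^\d\sP)\otimes\pi_2^*\gr(p_{23}^\d\sQ)$ is an isomorphism onto $T^*(X\times Y\times Z)$. This is false. A point of that support has the form $\big((\alpha,\beta_1,0),(0,\beta_2,\gamma)\big)$ with $\alpha\in T_x^*X$, $\beta_1,\beta_2\in T_y^*Y$, $\gamma\in T_z^*Z$; its image under $\rho$ is $(\alpha,\beta_1+\beta_2,\gamma)$. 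The fibre over any point is therefore a full $T_y^*Y$, not a point, so $\rho_*$ is not trivial and you cannot pass directly to the naive tensor $\pi_{12}^*\gr\sP\otimes\pi_{23}^*\gr\sQ$ on $T^*X\times T^*Y\times T^*Z$.

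This is not a bookkeeping slip: it is exactly the mechanism that produces the internal $\iota$. In the correct computation the two $Y$-covectors $\beta_1,\beta_2$ are carried along separately --- the domain of the maps $q_{12},q_{23},q_{13}$ is $T^*X\times(T^*Y\times_Y T^*Y)\times T^*Z$, and $q_{13}=1\times\rho_Y$. Only after applying Corollary \ref{cor:push} for $\int_{p_{13}}$, which involves base change along the zero-section $T^*(X\times Z)\times Y\hookrightarrow T^*(X\times Y\times Z)$, does one impose $\beta_1+\beta_2=0$; the resulting antidiagonal inclusion $\ti_{13}:(p,v)\mapsto(p,-v,v)$ is precisely what makes $q_{12}\circ\ti_{13}=(1\times\iota)\pi_{12}$ while $q_{23}\circ\ti_{13}=\pi_{23}$. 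Your step (5) asserts that the internal $\iota$'s ``cancel between the $R$-adjoint-like appearances'', but under your step (2) no such $\iota$ ever arises --- there is only a single $T^*Y$ factor left --- so step (5) has nothing to act on. To repair the argument you must retain both $Y$-covectors through the tensor step and collapse them only via the base change coming from the $p_{13}$-pushforward; this is where the auxiliary fibre squares become unavoidable.
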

\begin{proof}
We will write $d_X,d_Y,d_Z$ for the dimensions of $X,Y,Z$. We begin by first calculating $\gr(p_{12}^\d (\sP) \dotimes_{\O_{X \times Y \times Z,h}} p_{23}^\d (\sQ))$. By Proposition \ref{prop:assgrtensor} we have that
$$\gr(p_{12}^\d(\sP) \otimes_{\O_{X \times Y \times Z,h}} p_{23}^\d(\sQ)) \cong \rho_* \pi_1^* (\gr(p_{12}^\d \sP)) \otimes \pi_2^*( \gr(p_{23}^\d \sQ))$$
where
$$\pi_1, \pi_2, \rho: T^*(X \times Y \times Z) \times_{X \times Y \times Z} T^*(X \times Y \times Z) \rightarrow T^*(X \times Y \times Z)$$
are the natural maps as in section \ref{sec:tensor}.

Now by Proposition \ref{prop:pull} we have $\gr(p_{12}^\d \sP) \cong i_{12*} \pi_{12}^* (\gr \sP)[d_Z]$ where $i_{12}$ and $\pi_{12}$ are the natural inclusion and projection maps $T^*(Y \times X \times Z) \xleftarrow{i_{12}} T^*(X \times Y) \times Z \xrightarrow{\tpi_{12}} T^*(X \times Y)$. Similarly $\gr(p_{23}^\d \sP) \cong i_{23*} \tpi_{23}^* (\gr \sP)[d_X]$ so we get
$$\gr(p_{12}^\d(\sP) \dotimes_{\O_{X \times Y \times Z,h}} p_{23}^\d(\sQ)) \cong \rho_*(\pi_1^* i_{12*} \tpi_{12}^* (\gr \sP) \otimes \pi_2^* i_{23*} \tpi_{23}^* (\gr \sQ)) [-d_Y].$$

Next we take a fibre product to get
\begin{equation*}
\xymatrix{
[T^* (X \times Y) \times Y] \times_{X \times Y \times Z} T^*(X \times Y \times Z) \ar[r]^{\ti_{12}} \ar[d]^{\tpi_1} & T^*(X \times Y \times Z) \times_{X \times Y \times Z} T^*(X \times Y \times Z) \ar[d]^{\pi_1} \\
T^* (X \times Y) \times Z \ar[r]^{i_{12}} & T^*(X \times Y) \times T^* Z
}
\end{equation*}
Since $\pi_1$ is flat we get that $\pi_1^* i_{12*} (\cdot) \cong \ti_{12*} \tpi_1^* (\cdot)$. Similarly we get $\pi_2^* i_{23*} (\cdot) \cong \ti_{23*} \tpi_2^* (\cdot)$. Putting this together gives
\begin{align*}
\gr(p_{12}^\d(\sP) \dotimes_{\O_{X \times Y \times Z,h}} p_{23}^\d(\sQ))
\cong& \rho_* (\ti_{12*} \tpi_1^* \tpi_{12}^* (\gr \sP) \otimes \ti_{23*} \tpi_2^* \tpi_{23}^* (\gr \sQ)) [-d_Y] \\
\cong& \rho_* \ti_{12*} (\tpi_1^* \tpi_{12}^* (\gr \sP) \otimes \ti_{12}^* \ti_{23*} \tpi_2^* \tpi_{23}^* (\gr \sQ)) [-d_Y].
\end{align*}

To compute $\ti_{12}^* \ti_{23*}(\cdot)$ we use the following fibre product
\begin{equation*}
\xymatrix{
[T^*(X \times Y) \times Z] \times_{X \times Y \times Z} [X \times T^*(Y \times Z)] \ar[d]^{\hi_{23}} \ar[r]^{\hi_{12}} & T^*(X \times Y \times Z) \times_{X \times Y \times Z} [X \times T^*(Y \times Z)] \ar[d]^{\ti_{23}} \\
[T^* (X \times Y) \times Z] \times_{X \times Y \times Z} T^*(X \times Y \times Z) \ar[r]^{\ti_{12}} & T^*(X \times Y \times Z) \times_{X \times Y \times Z} T^*(X \times Y \times Z).
}
\end{equation*}
Since the images of $\ti_{12}$ and $\ti_{23}$ meet transversely we have $\ti_{12}^* \ti_{23*} (\cdot) \cong \hi_{23*} \hi_{12}^* (\cdot)$ and so we get
\begin{align*}
\gr(p_{12}^\d(\sP) \dotimes_{\O_{X \times Y \times Z,h}} p_{23}^\d(\sQ))
&\cong \rho_* \ti_{12*} (\tp_1^* \tpi_{12}^* (\gr \sP) \otimes \hi_{23*} \hi_{12}^* \tpi_2^* \tpi_{23}^* (\gr \sQ)) [-d_Y] \\
&\cong \rho_* \ti_{12*} \hi_{23*} (\hi_{23}^* \tpi_1^* \tpi_{12}^* (\gr \sP) \otimes \hi_{12}^* \tpi_2^* \tpi_{23}^* (\gr \sQ)) [-d_Y].
\end{align*}
Now $\tpi_{12} \circ \tp_1 \circ \hi_{12}$ is precisely the projection
$$q_{12}: [T^*(X \times Y) \times Z] \times_{X \times Y \times Z} [X \times T^*(Y \times Z)] \rightarrow T^*(X \times Y)$$
onto the first factor while $\tpi_{23} \circ \tp_2 \circ \hi_{23}$ is the projection
$$q_{23}: [T^*(X \times Y) \times Z] \times_{X \times Y \times Z} [X \times T^*(Y \times Z)] \rightarrow T^*(Y \times Z).$$
Also
$$q_{13} := \rho \circ \ti_{12} \circ \hi_{23}: T^*(X \times Z) \times [T^* Y \times_{Y} T^* Y] \rightarrow T^*(X \times Z) \times T^* Y$$
is just the map $1 \times \rho_Y$ where $\rho_Y: T^* Y \times_{Y} T^* Y \rightarrow T^* Y$ is the map from section \ref{sec:tensor}. So we get
$$\gr(p_{12}^\d(\sP) \dotimes_{\O_{X \times Y \times Z,h}} p_{23}^\d(\sQ)) \cong q_{13*}(q_{12}^*(\gr \sP) \otimes q_{23}^*(\gr \sQ)) [-d_Y].$$

Finally, we have
\begin{align*}
\tgr(\sQ * \sP)
\cong& (1 \times \iota)^* \gr(p_{13*}(p_{12}^\d(\sP) \dotimes_{\O_{X \times Y \times Z,h}} p_{23}^\d(\sQ))) \otimes \omega_Z [-d_X] \\
\cong& (1 \times \iota)^* \tpi_{13*} (i_{13}^* (\gr(p_{12}^\d(\sP) \dotimes_{\O_{X \times Y \times Z,h}} p_{23}^\d(\sQ))) \otimes \omega_{Y}) \otimes \omega_{Z} [-d_X]
\end{align*}
where $i_{13}$ and $\tpi_{13}$ are the natural maps $T^*(X \times Y \times Z) \xleftarrow{i_{13}} T^*(X \times Z) \times Y \xrightarrow{\tpi_{13}} T^*(X \times Z)$. Using the above result this gives
$$\tgr(\sQ * \sP) \cong (1 \times \iota)^* \tpi_{13*} (i_{13}^* q_{13*}(q_{12}^*(\gr \sP) \otimes q_{23}^*(\gr \sQ)) \otimes \omega_{Y}) \otimes \omega_{Z} [-d_X-d_Y].$$
To compute $i_{13}^* q_{13*} (\cdot)$ we use the following fibre diagram
\begin{equation*}
\xymatrix{
T^* (X \times Z) \times T^* Y \ar[rr]^{\ti_{13}} \ar[d]^{\tq_{13}} & & T^*(X \times Z) \times [T^* Y \times_{Y} T^* Y]  \ar[d]^{q_{13}} \\
T^*(X \times Z) \times Y \ar[rr]^{i_{13}} & & T^*(X \times Z) \times T^* Y
}
\end{equation*}
where $\ti_{13}$ is induced by the map $T^* Y \rightarrow T^* Y \times_{Y} T^* Y$ which takes $(p,v) \mapsto (p,-v,v)$. Since $q_{13}$ is flat we get $i_{13}^* q_{13*} (\cdot) \cong \tq_{13*} \ti_{13}^* (\cdot)$ and hence
$$\tgr(\sQ * \sP) \cong (1 \times \iota)^* \tpi_{13*} (\tq_{13*} (\ti_{13}^* q_{12}^* (\gr \sP) \otimes \ti_{13}^* q_{23}^* (\gr \sQ)) \otimes \omega_{Y} ) \otimes \omega_{Z} [-d_X-d_Y].$$
Now $q_{12} \ti_{13}: T^*(X \times Y \times Z) \rightarrow T^*(X \times Y)$ is equal to $(1 \times \iota) \pi_{12}$ and $q_{23} \ti_{13} = \pi_{23}$ where $\pi_{ij}$ are the natural projections. Also, $\tpi_{13
} \tq_{13} = \pi_{13}$ and so we get
\begin{eqnarray*}
\tgr(\sQ * \sP)
&\cong& (1 \times \iota)^* \pi_{13*}(\pi_{12}^* (1 \times \iota)^* (\gr \sP) \otimes \pi_{23}^* (\gr \sQ) \otimes \omega_{Y}) \otimes \omega_{Z} [-d_X-d_Y] \\
&\cong& \pi_{13*}((1 \times 1 \times \iota)^* \pi_{12}^*((1 \times \iota)^* (\gr \sP) \otimes \omega_{Y} [-d_X]) \otimes (1 \times 1 \times \iota)^* \pi_{23}^* (\gr \sQ) \otimes \omega_{Z} [-d_Y]) \\
&\cong& \pi_{13*}(\pi_{12}^* ((1 \times \iota)^* (\gr \sP) \otimes \omega_{Y} [-d_X]) \otimes \pi_{23}^*((1 \times \iota)^* (\gr \sQ) \otimes \omega_{Z} [-d_Y])) \\
&\cong& \pi_{13*}(\pi_{12}^*(\tgr \sP) \otimes \pi_{23}^*(\tgr \sQ)) \\
&\cong& \tgr(\sQ) * \tgr(\sP).
\end{eqnarray*}
\end{proof}

\section{Mixed Hodge modules}\label{sec:mhm}

We will need some results from Saito's theory of pure and mixed Hodge modules \cite{Sa1,Sa2} which we now recall.  For any smooth variety $X$, Saito constructed abelian categories of polarizable pure Hodge modules of weight $ i $, $\HM(X,i) $,  and polarizable mixed Hodge modules, $\MHM(X)$. We have an exact forgetful functor to filtered $\D_X$-modules and subsequently a functor $\fG: \MHM(X) \rightarrow \D_{X,h}\mod$. This functor is exact because for any morphism in $\MHM$, the associated morphism of filtered $\D$-modules is strict with respect to the filtration. We denote by $\{1\} : \MHM(X) \rightarrow \MHM(X)$ the functor which shifts the filtration. This implies that $\fG(M\{1\}) = \fG(M)\{1\}$ for any $M \in \MHM(X)$.

An object $M \in \MHM(X)$ is equipped with an increasing filtration, denoted $W_i(M)$, called the {\it weight filtration}. The subquotients $ \gr_i^W(M) = W_i(M)/W_{i-1}(M) $ lie in the category $\HM(X,i) $.

Saito shows in \cite{Sa2} that for any morphism $f: X \rightarrow Y$ there exist associated functors $f_*: D(\MHM(X)) \rightarrow D(\MHM(Y))$ and $f^!: D(\MHM(Y)) \rightarrow D(\MHM(X))$.

The functor $ f_* $ is compatible with the push-forward of $ \D_{X,h}$-modules for proper $ f $ and is compatible with the push-forward of $ \D_X$-modules for any $ f $ (the latter result follows immediately from the definitions).
\begin{Theorem} \cite[Thm. 4.3]{Sa2} \label{thm:pushHodge}
If $f: X \rightarrow Y$ is proper, then
$$\fG \circ f_* = \int_f \circ \; \fG: D(\MHM(X)) \rightarrow D(\D_{Y,h}\mod).$$
\end{Theorem}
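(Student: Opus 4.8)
The plan is to reduce the statement to Saito's strictness theorem for the filtered direct image, after identifying the $\D_h$-module push-forward of Section~\ref{sec:push} with the Rees module of Saito's filtered push-forward.

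First I would unwind the two sides. A mixed Hodge module $M \in \MHM(X)$ comes with an underlying filtered $\D_X$-module $(M, F_\bullet)$, and $\fG(M)$ is by definition the Rees module $\Rees(M, F_\bullet)$, a coherent $\D_{X,h}$-module. Saito's functor $f_*$ on $D(\MHM(X))$ is set up so that the underlying $\D_Y$-module complex of $f_* M$ is the ordinary $\int_f$ of the underlying $\D_X$-module, and the Hodge filtration is the one induced from the filtered (relative) de Rham complex; concretely this filtration is built from the filtered transfer bimodule $(\D_{Y\leftarrow X}, F_\bullet)$, whose Rees bimodule is exactly the $\D_{Y\leftarrow X,h}$ of Section~\ref{sec:push}. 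Thus, at the level of formulas, $\fG(f_* M)$ should be the Rees module of the filtered complex $Rf_*((\D_{Y\leftarrow X},F)\otimes^L_{\D_X}(M,F))$, while $\int_f \fG(M) = Rf_*(\D_{Y\leftarrow X,h}\otimes^L_{\D_{X,h}}\Rees(M,F))$.

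Second I would prove that the Rees functor commutes with the two operations $\otimes^L$ and $Rf_*$ occurring here. The tensor-product compatibility is essentially formal: $(\D_{Y\leftarrow X}, F_\bullet)$ is filtered-locally-free over $(\O_X, F)$, and $M$ (underlying a mixed Hodge module) admits a filtered locally free resolution --- e.g.\ the filtered Spencer resolution --- which Rees-es to a $\D_{X,h}$-locally-free resolution of $\fG(M)$; hence the two derived tensor products agree after Rees-ing. The $Rf_*$ compatibility is the crux: the Rees functor from filtered complexes to $\C[h]$-graded complexes is exact on strict complexes but not in general, so one needs the filtered complex $Rf_*((\D_{Y\leftarrow X},F)\otimes^L(M,F))$ to be \emph{strict}. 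This strictness --- equivalently, that the filtered higher direct images of the de Rham complex have no $h$-torsion after Rees-ing, i.e.\ the relevant $E_1$-degeneration --- is exactly Saito's fundamental theorem underlying the construction of $f_*$ on $\MHM$; properness of $f$ supplies the needed boundedness and coherence. Granting it, $Rf_*$ commutes with the Rees functor.

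Finally I would assemble these ingredients: the identification of transfer bimodules, the tensor-product compatibility, and the strictness/$Rf_*$ compatibility together give a natural isomorphism $\fG \circ f_* \cong \int_f \circ \fG$ on $D(\MHM(X))$, the naturality being automatic since every identification used is functorial and, as already noted in this section, morphisms in $\MHM$ are strict so $\fG$ is exact. The genuinely hard ingredient is Saito's strictness theorem itself, whose proof uses polarizations, the Hard Lefschetz package, and induction on dimension; in the present context it is appropriate simply to invoke \cite{Sa2} rather than reprove it, and everything else amounts to translating ``filtered $\D$-module'' into ``$\D_h$-module'' via the Rees construction and checking compatibility with the transfer-bimodule and base-change identifications of Section~\ref{sec:push}.
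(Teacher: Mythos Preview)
Your proposal is correct, and in fact it is considerably more detailed than what the paper does: the paper gives no proof at all, but simply cites the result as \cite[Thm.~4.3]{Sa2}. Your outline --- identifying $\fG$ with the Rees functor, matching $\D_{Y\leftarrow X,h}$ with the Rees bimodule of the filtered transfer bimodule, and reducing everything to Saito's strictness theorem for proper push-forward --- is exactly the content behind that citation, so there is no discrepancy in approach; you have just unpacked what the citation means.
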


\begin{Proposition} \label{prop:pushHodgeD}
If $ f : X \rightarrow Y $ is any morphism, then for $ M \in D(\MHM(X)) $,
$$\fG(f_*(M)) \otimes_{\C[h]} \C_1 = \int_f \bigl( \fG(M) \otimes_{\C[h]} \C_1 \bigr)$$
\end{Proposition}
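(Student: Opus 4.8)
The plan is to recognize that tensoring a Rees construction with $\C_1$ simply undoes the Rees construction and forgets the filtration, so that $\fG(-)\otimes_{\C[h]}\C_1$ is nothing but the forgetful functor to underlying $\D_X$-modules; the proposition then reduces to the standard fact that Saito's direct image on $D(\MHM)$ lifts the $\D$-module direct image $\int_f$.

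Concretely, I would first observe that $\fG$ factors through the category of filtered $\D_X$-modules (composing the forgetful functor from $\MHM(X)$ to filtered $\D_X$-modules with the Rees functor), and that $\Rees(M,F)\otimes_{\C[h]}\C_1\cong M$ as $\D_X$-modules. Hence $\fG(-)\otimes_{\C[h]}\C_1$ equals the forgetful functor $u\colon D(\MHM(X))\to D(\D_X\mod)$ which remembers only the underlying complex of $\D_X$-modules, discarding both the Hodge and the weight filtrations. Under this identification the asserted equality reads $u(f_*M)\cong\int_f u(M)$, where $\int_f$ on the right is the ordinary $\D$-module direct image. One small bookkeeping point: I read the right-hand side $\int_f\fG(M)\otimes_{\C[h]}\C_1$ as $\int_f(\fG(M)\otimes_{\C[h]}\C_1)$, and that this agrees with $(\int_f\fG(M))\otimes_{\C[h]}\C_1$ holds because Rees modules are $h$-torsion-free, hence $\C[h]$-flat, and $\C_1=\C[h]/(h-1)$ is a perfect $\C[h]$-complex, so $Rf_*$ commutes with $-\otimes_{\C[h]}\C_1$ by the projection formula.

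To establish $u(f_*M)\cong\int_f u(M)$ I would factor $f$. Using the graph $\Gamma_f\colon X\hookrightarrow X\times Y$ (a closed, hence proper, embedding since $Y$ is separated), a smooth compactification $X\hookrightarrow\overline{X}$, and the proper projection $\overline{p}\colon\overline{X}\times Y\to Y$, we may write $f=\overline{p}\circ(j\times\id_Y)\circ\Gamma_f$ with $j\times\id_Y\colon X\times Y\hookrightarrow\overline{X}\times Y$ an open embedding. Both $f_*$ on $D(\MHM)$ and $\int_f$ on $\D$-modules are functorial along this decomposition, so it suffices to treat (a) proper morphisms and (b) open embeddings. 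Case (a) is Theorem \ref{thm:pushHodge} (which gives $\fG\circ g_*\cong\int_g\circ\fG$ for proper $g$) post-composed with $-\otimes_{\C[h]}\C_1$. Case (b) is the underlying-$\D$-module part of Saito's construction of the direct image along an open embedding in \cite{Sa2}: by construction the underlying $\D$-module of $j_*M$ is $\int_j$ of the underlying $\D$-module of $M$, the subtlety of that construction lying entirely in the accompanying Hodge and weight filtrations.

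I expect no essential obstacle here: as remarked just before Theorem \ref{thm:pushHodge}, the statement is immediate from the definitions once one unwinds that $\fG(-)\otimes_{\C[h]}\C_1$ is the forgetful functor. The only things requiring (routine) verification are the two points flagged above — that $\fG(-)\otimes_{\C[h]}\C_1$ really discards all filtration data, and that the order of $\int_f$ and $-\otimes_{\C[h]}\C_1$ is immaterial by $\C[h]$-flatness of Rees modules.
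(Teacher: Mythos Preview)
Your proposal is correct and essentially matches the paper's approach: the paper gives no formal proof, merely remarking (just before Theorem~\ref{thm:pushHodge}) that compatibility with the $\D_X$-module pushforward ``follows immediately from the definitions,'' which is precisely your observation that $\fG(-)\otimes_{\C[h]}\C_1$ is the forgetful functor to underlying $\D$-modules and that Saito's $f_*$ lifts $\int_f$ at that level. Your factorization of $f$ into a proper map followed by an open embedding is more explicit than strictly needed---Saito's construction already ensures the compatibility for arbitrary $f$ directly---but it is a correct way to verify the claim.
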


\begin{Remark}
The pullback $ f^!$ of mixed Hodge modules is similarly compatible with the pullback $ f^\dagger $ of $ \D$-modules (though we will not directly use this fact in the paper).
\end{Remark}

\subsection{Structure theorem and decomposition theorem}

An object $ M \in \HM(X,w)$ has strict support $ Z \subseteq X $ if it has no non-zero sub or quotient object supported on proper closed subvarieties of $ Z $.  From the definition of $ \HM(X,w)$, we see that every object $ M \in \HM(X,w) $ admits a decomposition $ M = \oplus_{Z \subseteq X} M_Z $, where $ M_Z $ has strict support on $Z$ (see \cite[section 12]{Sc}).

The following structure theorem appears in \cite{Sa2} (see \cite[Theorem 15.1]{Sc}).

\begin{Theorem} \label{th:structure}
Let $ \mathcal V $ be a variation of Hodge structure of weight $ w $ on a smooth open subset $ U \subset Z $.  Then $ \mathcal V $ extends uniquely to a Hodge module on $ X $ of weight $ w + d_Z$ with strict support $ Z$.  Moreover, every Hodge module with strict support $ Z $ is obtained this way.
\end{Theorem}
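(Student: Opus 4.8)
This is Saito's structure theorem, so the ``proof'' I would give is really an outline of the argument of \cite{Sa1,Sa2} (see also \cite[\S15]{Sc}); there is no short path. The plan is to argue by induction on $d := \dim Z$, the base case $d = 0$ being the statement that a (polarizable) Hodge structure is the same thing as a (polarizable) Hodge module supported at a point.

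For uniqueness and for the last sentence I would first reduce everything to the underlying $\D_X$-modules. If $M \in \HM(X,w)$ has strict support $Z$, then by definition it has no nonzero sub or quotient supported on a proper closed subvariety of $Z$, so the underlying regular holonomic $\D_X$-module is the intermediate (minimal) extension $j_{!*}$ of a flat connection on the smooth open locus $U \subseteq Z$ over which $M$ is smooth; restricting the Hodge and weight filtrations to $U$ exhibits that connection as an admissible variation of Hodge structure $\mathcal V$, whose weight the normalization forces to be $w - d$. So the real content is: (i) the filtered structure on $M$ is determined by $\mathcal V$, and (ii) every such $\mathcal V$ arises. For (i) I would work locally on $X$, fix a function $g$ cutting out $Z \setminus U$, and invoke Saito's compatibility of the Hodge filtration with the Kashiwara--Malgrange $V$-filtration along $g$ together with strictness: this recovers $F_\bullet M$ from $F_\bullet \mathcal V$ near $\{g = 0\}$, and running over a family of such $g$ covering $Z \setminus U$ pins down $F_\bullet M$ everywhere; purity then forces the weight filtration.

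For existence I would have to build the Hodge and weight filtrations on $j_{!*}(\mathcal V)$. Working locally and choosing $g$ as above, the nearby- and vanishing-cycle functors $\psi_g$ and $\phi_{g,1}$ applied to the prospective extension have support of dimension $< d$, so by the inductive hypothesis they already carry Hodge module structures. Saito's definition of $\HM(X,w)$ is set up precisely so that one can then (a) define $F_\bullet$ on $j_{!*}(\mathcal V)$ via the $V$-filtration, (b) take the weight filtration to be the relative monodromy filtration, and (c) verify the axioms — goodness of $F_\bullet$, strict compatibility with $V$, and existence of a polarization inherited from that of $\mathcal V$ — using the inductively known structure on $\psi_g,\phi_g$ together with Saito's gluing/descent. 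The output lands in $\HM(X, w + d)$, the shift by $d = \dim Z$ being the usual $\IC$-sheaf normalization that makes the underlying perverse sheaf the intersection complex of the local system on $U$.

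The hard part is exactly (c): showing that the inductively defined class of filtered $\D$-modules is stable under $\psi_g$ and $\phi_g$ for \emph{all} $g$, and that the locally defined filtrations glue to a global object independent of the choices. This stability-and-gluing is the technical core of \cite{Sa1,Sa2}, and I would not attempt to reprove it — I would cite it. Everything else (the uniqueness reduction, the weight bookkeeping, and the identification of the generic restriction of a Hodge module with an admissible variation) is comparatively formal once that input is in hand.
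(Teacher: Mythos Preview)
Your outline is a faithful sketch of Saito's argument, but note that the paper does not prove this theorem at all: it is stated with the preamble ``The following structure theorem appears in \cite{Sa2} (see \cite[Theorem 15.1]{Sc})'' and is simply quoted as a black box to be used later. So there is nothing to compare your proposal against --- you have written substantially more than the paper does, and what you wrote is an accurate summary of the inductive strategy in \cite{Sa1,Sa2}. For the purposes of this paper, a one-line citation is all that is expected.
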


In particular, if we start with the constant rank 1 variation of Hodge structure on an open subset of $ Z$ of weight $ -d_Z$, we obtain a Hodge module on $ Z $ which we denote by $ \IC_{Z,m} $. We define $ \IC_{Z,h} = \fG(\IC_{Z,m}) $.  In the case, when $ Z $ is smooth, we will write $ \delta_{Z,m} $ for $ \IC_{Z,m} $ --- note that in this case $ \IC_{Z,h} = \delta_{Z,h} $, by Theorem \ref{thm:pushHodge}, so the notation is consistent.

Saito also proved the following decomposition theorem for projective morphisms.

\begin{Theorem} \cite[Section 5]{Sa1} \label{th:decompHodge}
If $ M \in \HM(X,w) $ and $ f : X \rightarrow Y $ is projective, then each $ H^i(f_*(M)) $ lies in $\HM(Y,w + i)$.  Moreover, $ f_*(M) = \oplus_i H^i(f_*(M))[-i] $.
\end{Theorem}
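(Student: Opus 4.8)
This statement is one of the cornerstones of Saito's theory, so in practice the plan is simply to cite \cite{Sa1}; let me nonetheless indicate the shape of the argument and where the real difficulty sits. A first reduction is to the case that $M$ has strict support on an irreducible closed subvariety $Z \subseteq X$: by the decomposition by strict support recalled just before Theorem \ref{th:structure}, we may write $M = \bigoplus_Z M_Z$ with $M_Z$ of strict support $Z$, and since both $f_*$ and the cohomology functors $H^i(-)$ commute with this finite direct sum it suffices to treat each summand separately.

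For the weight assertion, one invokes the stability of weights built into Saito's construction: for a projective (in fact any) morphism $f$, the functor $f_*$ on $D(\MHM)$ preserves the notions ``weight $\le w$'' and ``weight $\ge w$'' with the usual cohomological shift. Applying this to a pure object $M$ of weight $w$, which is simultaneously of weight $\le w$ and of weight $\ge w$, forces $H^i(f_*M)$ to be pure of weight $w+i$, i.e. to lie in $\HM(Y,w+i)$. This is part of the inductive definition of $\MHM$ and of the weight filtration, and I would not attempt to unwind it here.

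The genuinely hard input is the hard Lefschetz theorem for Hodge modules: choosing the class $\ell \in H^2(X)$ of an $f$-ample line bundle, cup product with $\ell^i$ must be an isomorphism $H^{-i}(f_*M) \xrightarrow{\ \sim\ } H^i(f_*M)$ of pure Hodge modules on $Y$. This is the step I expect to be the main obstacle, and it is the reason the theorem is deep rather than formal: Saito establishes it by a long simultaneous induction on dimension that intertwines the very construction of the categories $\HM(X,w)$, their behaviour under nearby and vanishing cycles, the existence and compatibility of polarizations, and a reduction to the case of a Lefschetz pencil (ultimately to curves). I would simply invoke this result.

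Granting hard Lefschetz, the splitting $f_*M \cong \bigoplus_i H^i(f_*M)[-i]$ follows by the formal Deligne argument: the operator $\ell$ acts on $\bigoplus_i H^i(f_*M)$ by morphisms in $\HM(Y,-)$ and makes it a Lefschetz $\mathbb{C}[\ell]$-module, which one uses to produce a (non-canonical) section of the relevant truncation maps in $D(\MHM(Y))$, exactly as in the topological or $\ell$-adic decomposition theorem, but now legitimately inside an abelian category because each $H^i(f_*M)$ is an object of $\HM(Y,w+i)$. Finally, since polarizable pure Hodge modules form a semisimple abelian category, each $H^i(f_*M)$ decomposes further as a direct sum of simple objects of the form $\IC_{Z',m}$ (twisted by variations of Hodge structure), which yields the full statement.
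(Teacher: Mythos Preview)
Your proposal is correct, and in fact considerably more detailed than the paper: the paper gives no proof at all for this theorem, simply citing \cite[Section 5]{Sa1} and moving on. Your outline of the reduction by strict support, the weight-stability argument, hard Lefschetz as the deep input, and Deligne's formal splitting argument is an accurate summary of how Saito's proof is structured, so there is nothing to correct here.
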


\begin{Corollary} \label{cor:small}
If $ f : X \rightarrow Y $ is a small resolution then $ f_* (\O_{X,m}) \cong \IC_{Y, m} $.
\end{Corollary}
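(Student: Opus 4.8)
\emph{Proof strategy.} This is the Hodge-module refinement of the classical statement that a small resolution carries the constant sheaf to the intersection cohomology complex, and the plan is to derive it from the decomposition and structure theorems, feeding in the smallness hypothesis through the underlying perverse sheaf. First I would unwind the definitions: $\delta_{X,m} = \IC_{X,m}$ is the extension of the constant rank one variation of Hodge structure of weight $-\dim(X)$, so by Theorem \ref{th:structure} it lies in $\HM(X,0)$. Applying the decomposition theorem (Theorem \ref{th:decompHodge}) to the projective morphism $f$ gives $f_*(\delta_{X,m}) \cong \bigoplus_i H^i(f_*(\delta_{X,m}))[-i]$ with $H^i(f_*(\delta_{X,m})) \in \HM(Y,i)$, and the strict support decomposition splits each $H^i$ as $\bigoplus_{Z \subseteq Y}(H^i)_Z$. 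Since $f$ is birational it restricts to an isomorphism over a dense open $U \subseteq Y$, and restriction to $U$ commutes with the proper pushforward, so $f_*(\delta_{X,m})|_U \cong \delta_{U,m}$ --- a single Hodge module in cohomological degree $0$. Hence $H^i(f_*(\delta_{X,m}))|_U = 0$ for $i \ne 0$ and $H^0(f_*(\delta_{X,m}))|_U \cong \delta_{U,m}$, so by the uniqueness clause of Theorem \ref{th:structure} the strict-support-$Y$ part of $H^0$ is $\IC_{Y,m}$, while that of $H^i$ for $i \ne 0$ vanishes. This reduces the claim to showing $f_*(\delta_{X,m}) \cong \IC_{Y,m} \oplus C$ has $C = 0$, where $C$ is a direct sum of shifts of pure Hodge modules each with strict support a proper closed subvariety of $Y$.

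The vanishing of $C$ is where smallness enters. I would pass to underlying perverse sheaves via the faithful exact functor $\mathrm{rat}$ sending a mixed Hodge module to its underlying perverse sheaf (and a complex to its underlying constructible complex); since the Hodge-module pushforward refines the topological one, $\mathrm{rat}(f_*(\delta_{X,m})) \cong Rf_*(\mathbb{Q}_X[\dim(X)])$. The dimension estimates defining a small resolution, namely $\dim\{\, y \in Y : \dim f^{-1}(y) \ge d \,\} < \dim(Y) - 2d$ for all $d \ge 1$, are exactly the support conditions making $Rf_*(\mathbb{Q}_X[\dim(X)])$ a perverse sheaf, and together with its Verdier self-duality they also give the cosupport conditions; as it restricts to $\mathbb{Q}_U[\dim(X)]$ over the connected dense open $U$, it must coincide with the simple perverse sheaf $\IC_Y$, which lives in a single perverse degree. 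Because $\mathrm{rat}$ is faithful and sends no nonzero object of $\MHM(Y)$ to zero, every summand of $f_*(\delta_{X,m})$ other than $\IC_{Y,m}$ has vanishing underlying perverse sheaf and is therefore zero. Hence $C = 0$ and $f_*(\delta_{X,m}) \cong \IC_{Y,m}$.

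I expect the main obstacle to be bookkeeping rather than ideas: keeping the weight, Tate-twist and shift conventions ($\{1\}$, $[1]$, $\langle 1 \rangle$) consistent, and importing cleanly the classical small-resolution computation for $\IC$. If one prefers to argue internally rather than cite that computation, one can instead rule out $C$ directly by a support estimate: a strict-support-$Z$ summand of some $H^i$ would, upon applying $i_Z^*$, force $\mathcal{H}^\bullet(f_*(\delta_{X,m}))$ to be nonzero along $Z$ in a cohomological range incompatible with the smallness estimate and its Verdier dual (using $\bD\, \IC_{X,m} \cong \IC_{X,m}$ up to a Tate twist, the Hodge analogue of Lemma \ref{lem:duality}). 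I would mention this alternative but carry out the perverse-sheaf version as the shorter route.
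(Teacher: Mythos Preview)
Your proof is correct and follows essentially the same route as the paper: use the decomposition and structure theorems to write $f_*(\delta_{X,m})$ as a direct sum, then pass to an underlying classical category via a faithful (conservative) forgetful functor and invoke the standard small-resolution result there to kill all extraneous summands. The only cosmetic difference is that you forget to perverse sheaves via $\mathrm{rat}$ and use $Rf_*(\mathbb{Q}_X[\dim X])\cong \IC_Y$, whereas the paper forgets to $\D$-modules via $\fG(\cdot)\otimes_{\C[h]}\C_1$ and uses $\int_f \O_X \cong \IC_Y$; these are equivalent through Riemann--Hilbert, and both hinge on the same classical input.
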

\begin{proof}
It suffices to show that the left hand side is indecomposable (because of Theorems \ref{th:structure} and \ref{th:decompHodge}). Suppose it is decomposable. Then $\fG(f_* \O_{X,m}) \otimes_{\C[h]} \C_1$ is decomposable (here we use that $\fG(\cdot)$ is $\C[h]$-torsion free). But
$\fG(f_* \O_{X,m}) \otimes_{\C[h]} \C_1 \cong \int_f \O_X$ by Proposition \ref{prop:pushHodgeD}. The right hand side is just $\IC_Y$ which is indecomposable (contradiction).
\end{proof}

\subsection{Kashiwara's theorem and base change}
Finally, we recall the base change theorem in the category $\MHM$. If $i: Y \rightarrow X$ is an inclusion of smooth varieties then \cite[Formula 4.24]{Sa2} states that $i_*: \MHM(Y) \rightarrow \MHM_Y(X)$ is an equivalence of categories, where the right hand side denotes the full subcategory of $\MHM(X)$ consisting of objectes which are set-theoretically supported on $Y$.

\begin{Theorem} \label{th:KashforHodge}
If $i: Y \rightarrow X$ is an inclusion of smooth varieties then the functor $i^* = i^!$, when restricted to $\MHM_Y(X)$, is the inverse to $i_*$.
\end{Theorem}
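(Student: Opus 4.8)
The plan is to deduce this from the classical Kashiwara theorem for $\D_X$-modules, using that the functor $u$ to underlying $\D_X$-modules, $u(M) := \fG(M)\otimes_{\C[h]}\C_1$, is exact and faithful, hence conservative (it reflects isomorphisms, even on bounded derived categories). Recall Kashiwara's theorem: for a closed embedding $i : Y \hookrightarrow X$ of smooth varieties, $\int_i$ is fully faithful with essential image the complexes of $\D_X$-modules whose cohomology is supported on $Y$, and the adjunction unit realizes an isomorphism $\id \xrightarrow{\sim} i^\dagger \circ \int_i$ (see \cite[Section 1.6]{HTT}). I would prove the statement in three steps: $i^! \circ i_* \cong \id$, then $i^* \circ i_* \cong \id$ via Verdier duality, then the conclusion using the equivalence $i_* : \MHM(Y) \xrightarrow{\sim} \MHM_Y(X)$ of \cite[Formula 4.24]{Sa2}.

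For the first step I would take the unit $\eta : \id \to i^! i_*$ of the adjunction $(i_*, i^!)$ on mixed Hodge modules and show $\eta_M$ is an isomorphism for every $M \in D(\MHM(Y))$. Since $i$ is proper, $u \circ i_* \cong \int_i \circ u$ (Proposition \ref{prop:pushHodgeD}), and Saito's $i^!$ underlies the $\D$-module functor $i^\dagger$, so $u \circ i^! \cong i^\dagger \circ u$; because Saito's six operations are constructed compatibly with those on $\D$-modules, these identifications intertwine the respective adjunction units. Hence $u(\eta_M)$ is identified with the $\D_X$-module unit $u(M) \to i^\dagger \int_i u(M)$, which is an isomorphism by Kashiwara's theorem. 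Conservativity of $u$ then forces $\eta_M$ to be an isomorphism, so $i^! \circ i_* \cong \id$ on $D(\MHM(Y))$.

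For the second step, $i$ being proper gives $\bD_X \circ i_* \cong i_* \circ \bD_Y$ (the $\MHM$-analogue of Lemma \ref{lem:duality}), which combined with $i^* \cong \bD_Y \circ i^! \circ \bD_X$ and $\bD^2 \cong \id$ yields $i^* i_* \cong \bD_Y \, i^! \, \bD_X \, i_* \cong \bD_Y \, i^! \, i_* \, \bD_Y \cong \bD_Y \bD_Y \cong \id$. For the conclusion: by \cite[Formula 4.24]{Sa2}, $i_* : \MHM(Y) \to \MHM_Y(X)$ is an equivalence of abelian categories; since support is stable under sub-, quotient- and extension-objects, $\MHM_Y(X)$ is a Serre subcategory of $\MHM(X)$, and the equivalence upgrades to one between $D(\MHM(Y))$ and the full subcategory of $D(\MHM(X))$ of complexes with cohomology in $\MHM_Y(X)$. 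The two steps above exhibit $i^!$ and $i^*$ as left inverses to $i_*$, so each, restricted to $\MHM_Y(X)$, is isomorphic to the inverse of this equivalence; in particular both are $t$-exact with image in $\MHM(Y)$, they agree, and they invert $i_*$, which is the claim.

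The step I expect to be the real content is the one used in the first step: that $u$ carries the adjunction \emph{unit} of $(i_*, i^!)$ on $\MHM$ — not merely the objects $i_* M$ and $i^! N$ — to the $\D$-module unit, which is exactly what lets conservativity close the argument. This is part of the compatibility of Saito's functorial formalism with the $\D$-module one in \cite{Sa2}; it should be invoked with care. A secondary point is the derived upgrade of Formula 4.24, valid because $\MHM_Y(X)$ is a Serre subcategory all of whose objects lie in the essential image of $i_*$.
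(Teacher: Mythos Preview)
Your proof is correct but takes a genuinely different route from the paper's. The paper argues entirely within the $\MHM$ formalism using the open--closed localization triangles from \cite[Section 4.4]{Sa2}: with $j : U = X \setminus Y \hookrightarrow X$ the open complement, the triangle $j_! j^! M \to M \to i_* i^* M$ shows that $M \xrightarrow{\sim} i_* i^* M$ for any $M \in \MHM_Y(X)$ (since $j^! M = 0$), and then full faithfulness of $i_*$ from \cite[Formula 4.24]{Sa2} gives $i^* i_* N \xrightarrow{\sim} N$; the dual triangle $i_* i^! M \to M \to j_* j^! M$ handles $i^!$ in the same way. Your argument instead descends to the classical $\D$-module Kashiwara theorem via the conservativity of the forgetful functor $u$, which requires knowing that $u$ intertwines not just the functors $i_*, i^!$ but the adjunction \emph{unit} as well --- a point you rightly flag as the real content. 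This is true (Saito's six-functor package is built as a lift of the one for perverse sheaves, hence for regular holonomic $\D$-modules), so there is no gap, but the paper's approach sidesteps this appeal entirely: the localization triangles are already part of Saito's formalism, so the argument never leaves $\MHM$ and is correspondingly shorter. Your route has the merit of making transparent \emph{why} the result holds --- it is forced by the $\D$-module statement plus faithfulness --- whereas the paper's is more self-contained.
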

\begin{proof}
Let $j: U \subset X$ denote the complement to $Y$. According to \cite[Section 4.4]{Sa2} we have an exact triangle for any $M$ in $D(\MHM(X))$
$$j_!j^{!}(M) \to M \to i_*i^*(M)$$
which implies that the natural map $M \to i_*i^*(M)$ is an isomorphism for any $M \in \MHM_Y(X)$.  Now, if we take any $N$ in $\MHM(Y)$, the above implies that we have an isomorphism
$$i_*i^*i_*(N) \to i_*(N)$$
which means, since $i_*$ is fully faithful, that the natural map $i^*i_*(N) \to N$ is an isomorphism. This proves the assertion for $i^*$. The proof for $i^!$ is the same but with the above exact triangle replaced by its dual
$$i_*i^!(M) \to M \to j_*j^{!}(M).$$
\end{proof}

\begin{Remark}
The theorem above holds for $\D$-modules (this is Kashiwara's theorem) but does not hold for $\D_h$-modules. One reason is that (in general) the analogue of Theorem \ref{thm:pushHodge} does not hold for pullbacks (meaning that pullback does not commute with $\fG$).
\end{Remark}

\begin{Theorem} \label{thm:basechangeforHodge}
In the fiber product diagram below we have $g^{!} f_* \cong f'_* (g')^!$.
$$
\xymatrix{
Y \times_X Z \ar[rrr]^{g'} \ar[d]_{f'} & & & Y \ar[d]^f \\
Z \ar[rrr]^g & & & X }$$
\end{Theorem}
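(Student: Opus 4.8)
The plan is to deduce base change for mixed Hodge modules from the known structural results already recalled: Kashiwara's theorem (Theorem \ref{th:KashforHodge}), the existence of the six functors $f_*, f^!$ on derived categories of mixed Hodge modules (Saito), and their compatibility with the underlying $\D$-module functors. The statement $g^! f_* \cong f'_* (g')^!$ in the cartesian square is the Hodge-theoretic refinement of smooth (or arbitrary) base change, and the usual strategy is to factor the morphisms into pieces for which base change is either built into Saito's formalism or can be checked by hand.

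\begin{proof}[Proof sketch]
First I would reduce to the case where $g$ is a closed immersion and the case where $g$ is smooth, since any morphism $g: Z \rightarrow X$ factors as a closed immersion $Z \hookrightarrow Z \times X$ (the graph) followed by the smooth projection $Z \times X \rightarrow X$, and base change squares compose. For $g$ smooth, Saito's theory gives $g^! \cong g^* [2 \dim(Z/X)] \{ \text{twist} \}$ and smooth base change $g^* f_* \cong f'_* (g')^*$ is part of the formalism of \cite{Sa2} (it can be checked on the level of the underlying $\D_h$-modules using Proposition \ref{prop:pull} and Corollary \ref{cor:push}, the key point being that all the relevant pushforwards along $f$ and $f'$ are computed by the same $Rf_*$ after flat base change, together with strictness). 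So the content is the case of a closed immersion $g: Z \hookrightarrow X$.

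For $g$ a closed immersion, I would use the exact triangle $j_! j^! (M) \rightarrow M \rightarrow i_* i^* (M)$ from \cite{Sa2}, section 4.4 (already invoked in the proof of Theorem \ref{th:KashforHodge}), where $i = g$ and $j$ is the complementary open immersion, applied to $M = f_*(N)$ for $N \in D^b(\MHM(Y))$. Comparing this with the image under $f_*$ of the analogous triangle on $Y$ (using the cartesian square and its complement), and using that $f_*$ commutes with the open pushforward $j_*$ and the open pullback $j^!$ on the complement (flat base change, or rather the already-established smooth case applied to the open immersions), one identifies $i^* f_*(N)$ with $f'_* (i')^* (N)$ where $i': f^{-1}(Z) \rightarrow Y$ is the base change of $i$. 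Then $i^! f_*(N) \cong f'_* (i')^! (N)$ follows by applying Verdier duality $\bD$ (Lemma \ref{lem:duality} and its Hodge-module analogue) to the $i^*$ statement, since $i^! = \bD i^* \bD$, $f_* = \bD f_* \bD$ for proper... but $f$ need not be proper, so instead I would run the dual exact triangle $i_* i^!(M) \rightarrow M \rightarrow j_* j^!(M)$ directly, exactly as in the proof of Theorem \ref{th:KashforHodge}.

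The main obstacle I expect is handling the case where $f$ is \emph{not} proper: then $f_*$ does not commute with $\fG$ as a functor to $\D_h$-modules (only with $\fG$ followed by $\otimes_{\C[h]} \C_1$, by Proposition \ref{prop:pushHodgeD}), so one cannot simply check the isomorphism after passing to $\D_h$- or $\D$-modules and invoke a faithfulness/conservativity statement. One must argue entirely inside $D^b(\MHM)$. The cleanest route is to not prove base change by hand at all but to cite it from Saito's work — base change is established in \cite{Sa2} as part of the construction of the functors $f_*, f^!, f_!, f^*$ on $D^b(\MHM)$ — and use the excerpt's material (Kashiwara's theorem, the recollement triangles, Verdier duality) only to the extent needed to phrase it in the form $g^! f_* \cong f'_* (g')^!$. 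If instead a self-contained argument is wanted, the delicate point is the compatibility of the non-proper pushforward $f_*$ (built by Saito via compactification and the behaviour of the weight filtration) with restriction to the closed subvariety $Z$; this requires knowing that Saito's $f_*$ is independent of the chosen compactification and commutes with the localization triangle, which is exactly where the real work in \cite{Sa2} lies.
\end{proof}
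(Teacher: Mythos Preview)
Your approach is essentially correct and close in spirit to the paper's, but you have overcomplicated it. The paper's proof is a single sentence: the argument is identical to the $\D$-module proof in \cite[Sec.~1.7]{HTT}, and it goes through verbatim in $D^b(\MHM)$ because the only ingredients used there are Kashiwara's theorem (your Theorem~\ref{th:KashforHodge}) and the adjunction/localization triangles $i_*i^!\to\id\to j_*j^!$ and $j_!j^!\to\id\to i_*i^*$, both of which Saito supplies. No appeal to $\D_h$-modules, strictness, or compatibility with $\fG$ is needed; the argument is purely formal inside $D^b(\MHM)$.

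Two concrete points. First, your extended worry about non-proper $f$ is misplaced: you yourself note that one must ``argue entirely inside $D^b(\MHM)$'', and that is exactly what the paper does---there is no step that requires passing to $\D$- or $\D_h$-modules, so Propositions~\ref{prop:pull}, \ref{prop:pushHodgeD} and Corollary~\ref{cor:push} are irrelevant here. Second, the standard route in \cite{HTT} factors $f$ (not $g$) through its graph, reducing to the cases $f$ a closed immersion (handled by Kashiwara) and $f$ a projection (handled by the projection formula/compatibility of $\boxtimes$ with pushforward). Your factorization of $g$ also works, but then the ``smooth base change'' step for $g$ a projection still needs justification inside $\MHM$; it is cleaner to follow \cite{HTT} and let Kashiwara do the work on the $f$ side.
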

\begin{proof}
The proof of this is the same as for $\D$-modules (see \cite[Sec. 1.7]{HTT}) and relies only on Kashiwara's theorem (Theorem \ref{th:KashforHodge}) and the adjunction triangles for pushforwards and pullbacks.
\end{proof}

\section{The $\sl_2$ action on $\D_h$-modules}\label{sec:actionDmod}

\subsection{The action}
We will now define a categorical $\sl_2$ action. Fix an integer $N$ and define a 2-category $ \K_\D $ with objects $ \l = -N, -N+2, \dots, N-2, N $ and 1-morphism categories $\K_\D(\l, \l') := D(\D_{\G(k,N) \times \G(k',N),h}) $ where $ k,k' $ are related to $ \l, \l' $ by $ \l = N -2k, \l' = N - 2k' $. We will also write $d_k := k(N-k) = \dim (\G(k,N))$.

We have the following correspondence
\begin{equation*}
\G(k,N) = \{0 \subset V \subset \C^N \} \xleftarrow{q_1} C(\l) := \{0 \subset V' \subset V \subset \C^N \} \xrightarrow{q_2} \{0 \subset V' \subset \C^N \} = \G(k-1,N)
\end{equation*}
where $q_1$ and $q_2$ forget $V'$ and $V$ respectively and $\l=N-2k+1 = d_k-d_{k-1}$. These can be used to define kernels
\begin{align*}
& \sE(\l) := \delta_{C(\l),h} \la d_k \ra \in D(\D_{\G(k,N) \times \G(k-1,N),h}\mod) \\
& \sF(\l) := \delta_{C(\l),h} \la d_{k-1} \ra \in D(\D_{\G(k-1,N) \times \G(k,N),h}\mod).
\end{align*}

\begin{Lemma}\label{lem:adjoint}
\begin{enumerate}
\item We have isomorphisms of functors 
$$ \Phi_{\sE(\l)} \cong \int_{q_2} q_1^*(\cdot) \quad \text{ and } \quad \Phi_{\sF(\l)} \cong \int_{q_1} q_2^*(\cdot).$$

\item The left and right adjoints of $ \Phi_{\sE(\l)} $ are given by the kernels
$\sF(\l) \la -\l \ra $ and $\sF(\l) \la \l \ra.$
\end{enumerate}
\end{Lemma}

\begin{proof}
\begin{enumerate}
\item Let $ M \in D(\D_{\G(k,N), h}\mod) $.  Consider the closed embedding $$ q : C(\l) \hookrightarrow \G(k,N) \times \G(k-1,N) $$ Note that $ q_1 = p_1 \circ q $ and $ q_2 = p_2 \circ q $ and recall that by definition $ \delta_{C(\l),h} = \int_q \O_{C(\l),h} \{ \dim C(\l) \} $.  Applying the projection formula (Proposition \ref{prop:projection}) to $ q $, we have
\begin{align*}
\Phi_{\sE(\l)}(M) &= \int_{p_2} (p_1^\dagger(M) \dotimes_{\O_{\G(k,N) \times \G(k-1,N),h}} \int_q \O_{C(\l), h} \{\dim C(\l)\} \la d_k \ra) \\
&\cong \int_{p_2} \int_q  (q^\dagger p_1^\dagger(M) \dotimes_{\O_{C(\l),h}} \O_{C(\l), h} \{\dim C(\l)\} \la d_k \ra) \\
&\cong \int_{p_2} \int_q  (q^\dagger p_1^\dagger(M) [-\dim C(\l)]  \{\dim C(\l)\} \la d_k \ra) \\
&\cong \int_{q_2}  q_1^\dagger(M) \la d_k  - \dim C(\l)  \ra = \int_{q_2}  q_1^*(M).
\end{align*}
The second isomorphism is similar.

\item
We begin with the left adjoint.  Let $M \in D(\D_{\G(k,N),h}\mod), N \in D(\D_{\G(k-1,N),h}\mod) $. 

Since $ q_1 $ is proper and $ q_2$ is smooth, by  Propositions \ref{pr:adj1} and \ref{pr:adj2}, we deduce that
\begin{align*}
 \Hom_{\D_{\G(k,N),h}}(\int_{q_1} q_2^\dagger (N), M) &\cong \Hom_{\D_{C(\l),h}}(q_2^\dagger(N), q_1^\dagger(M)) \\
&\cong \Hom_{\D_{\G(k-1, N),h}}(N , \int_{q_2} q_1^\dagger(M)\la 2(d_{k-1} - \dim C(\l)) \ra)
\end{align*}
Since $ \l = d_k -d_{k-1} $, this is equivalent to
$$
\Hom_{\D_{\G(k,N),h}}(\int_{q_1} q_2^\dagger(N)\la d_{k-1} - \dim C(\l) - \l \ra, M) \cong \Hom_{\D_{\G(k-1,N),h}}(N, \int_{q_2} q_1^\dagger(M)\la d_k - \dim C(\l) \ra)
$$
which implies the result by part (i). 

The right adjoint is similar.
\end{enumerate}
\end{proof}

\begin{Remark}
In general there is no nice expression in terms of kernels for the left and right adjoints of a functor. The situation in Lemma \ref{lem:adjoint} is special since the functor corresponding to the kernel is a pullback followed by a pushforward via smooth maps.
\end{Remark}

\subsection{Proof of the commutation relation}

To prove the main $\sl_2$ commutation relation we first need the following fact about small resolutions of $ GL_N $ orbit closures in products of Grassmannians. This is closely related to a result of Zelevinsky \cite{Ze}. Fix two integers $k, l $ with $ k \le l $. The $ GL_N $ orbits in $\G(k,N) \times \G(l,N) $ are the strata $Z_s := \{ (V, V') : \dim V \cap V' = s \}$, where $ s = \max(0, k+l-N), \dots, k $. We have $\oZ_s = Z_s \cup \dots \cup Z_k $. One can define a resolution $\pi: P_s \rightarrow \oZ_s$ where
$$P_s := \{(V, V', V'') \in \G(k,N) \times \G(l,N) \times \G(s, N) : V'' \subset V \cap V' \}.$$
and $\pi$ forgets $V''$.

\begin{Proposition} \label{pr:smallres}
If $k+l\le N$ then the map $\pi: P_s \rightarrow \oZ_s $ is a small resolution.
\end{Proposition}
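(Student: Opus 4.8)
The plan is to prove smallness by analyzing the fibers of $\pi: P_s \rightarrow \oZ_s$ over each stratum $Z_t$ with $s \le t \le k$. Recall that a proper birational map $\pi$ is \emph{small} if for every $c > 0$ the locus $\{y : \dim \pi^{-1}(y) \ge c\}$ has codimension $> 2c$ in the target. Since $\pi$ forgets $V''$, the fiber over a point $(V, V') \in Z_t$ is exactly $\{V'' \in \G(s,N) : V'' \subset V \cap V'\}$, and since $\dim(V \cap V') = t$ this fiber is the Grassmannian $\G(s, t)$, of dimension $s(t-s)$. So the fibers are irreducible and the fiber dimension depends only on which stratum $(V,V')$ lies in; in particular $\pi$ is an isomorphism over the open stratum $Z_s$ (where the fiber is a point), hence birational, and it is clearly proper since $\G(s,N)$ is proper.

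With the fiber dimensions in hand, the smallness condition becomes a purely combinatorial inequality. First I would compute $\dim Z_t$: a standard count gives $\dim Z_t = \dim \oZ_s - (\text{something growing in } t-s)$; concretely $\dim \G(k,N)\times\G(l,N) = k(N-k) + l(N-l)$ and $\codim_{\oZ_s} Z_t = (t-s)(N-k-l+t+s)$ — this is exactly the Zelevinsky-type computation, and one checks it by choosing a convenient normal form for a pair $(V,V')$ with $\dim V \cap V' = t$. Then the smallness requirement over $Z_t$ (for $t > s$, so that the fiber is positive-dimensional) reads
$$2 \dim \pi^{-1}(Z_t\text{-point}) < \codim_{\oZ_s} Z_t, \quad \text{i.e.} \quad 2 s(t - s) < (t-s)(N - k - l + t + s).$$
Dividing by $t - s > 0$, this is $2s < N - k - l + t + s$, equivalently $s + k + l - N < t$. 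Since $t \ge s+1$, it suffices to have $s + k + l - N \le s$, i.e. $k + l \le N$ — which is exactly the hypothesis. (One should double check the boundary bookkeeping: when $k+l-N < 0$ the relevant range of $s$ starts at $0$, and the inequality $s+k+l-N < t$ holds for all $t > s$ in the relevant range; when $k + l = N$ one needs $t > s$, which holds on every stratum strictly below the open one.)

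The key steps in order: (1) identify $\pi^{-1}(V,V') \cong \G(s, \dim V\cap V')$, deduce properness and birationality and hence that $\pi$ is a resolution (one should also note $P_s$ is smooth, being a Grassmannian bundle over $\oZ_s$'s smooth strata — or more cleanly, $P_s$ fibers over $\G(s,N)$ with fiber a product of sub/quotient Grassmannians, so it is a smooth projective variety); (2) compute $\dim P_s = \dim\oZ_s$ and the codimension of each orbit $Z_t$ in $\oZ_s$ via an explicit normal form / orbit-dimension count; (3) assemble the inequality $2s(t-s) < (t-s)(N-k-l+t+s)$ and reduce it to $k + l \le N$. I expect step (2) — the precise orbit dimension count and getting all the boundary cases of $s$ and $t$ right — to be the main bookkeeping obstacle; the geometry itself (step 1) and the final inequality (step 3) are routine once the dimensions are pinned down. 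An alternative to step (2) is to invoke Zelevinsky's description of these orbit closures and their resolutions \cite{Ze} directly, which would shortcut the normal-form computation, but it is cleaner and self-contained to do the count by hand here.
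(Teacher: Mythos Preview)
Your proposal is correct and follows essentially the same approach as the paper: identify the fiber over a point of $Z_t$ as $\G(s,t)$ with dimension $s(t-s)$, compute $\dim Z_s - \dim Z_t = (t-s)(N-k-l+s+t)$, and reduce the smallness inequality $2s(t-s) < (t-s)(N-k-l+s+t)$ to $k+l \le N$. Your treatment is in fact slightly more careful about the boundary case $k+l = N$ and about the smoothness of $P_s$ than the paper's own proof.
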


\begin{proof}
Since $P_s$ is an iterated bundle of Grassmannians it must be smooth. It is clear that $\pi$ is one-to-one over $Z_s$ which means that $\pi$ is a resolution. It remains to show that $\pi$ is small.

The relevant strata are the $ Z_t \subset \overline{Z_s} $ for $ t > s $.  An elementary computation (based on $ \dim (Z_s) = \dim (P_s) $ and the description of $ P_s $ as an iterated Grassmannian bundle) shows that
$$ \dim (Z_s) - \dim (Z_t) = (t-s)(s+t+N-k-l).$$
On the other hand, for a point $ (V, V') \in Z_t$, we see that $ \pi^{-1}(V,V') $ is the Grassmannian of $s$-dimensional subspaces of $ V \cap V'$.  Since $ \dim V \cap V' = t$, we see that $ \dim \pi^{-1}(V,V') = s(t-s)$. Since $k+l \le N $ and $ t < s $, we see that $ 2s < s+t +N -k -l$ and conclude that
$$2 \dim \pi^{-1}(V,V') < \dim (Z_s) - \dim (Z_t)$$
which proves that $\pi$ is small.
\end{proof}

\begin{Proposition}\label{prop:sl2Grass}
We have the following relations
\begin{enumerate}
\item $\sF(\l+1) * \sE(\l+1) \cong \sE(\l-1) * \sF(\l-1) \bigoplus_{[-\l]}  \delta_{\Delta,h} \la d_k \ra$ if $\l \le 0$,
\item $\sE(\l-1) * \sF(\l-1) \cong \sF(\l+1) * \sE(\l+1) \bigoplus_{[\l]} \delta_{\Delta,h} \la d_k \ra $ if $\l \ge 0$
\end{enumerate}
inside $D(\D_{\G(k,N) \times \G(k,N),h}\mod)$, where $\l = N-2k$.
\end{Proposition}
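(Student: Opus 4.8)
The plan is to compute both convolutions as pushforwards of structure sheaves from the two natural resolutions of the orbit closure $\oZ_{k-1} \subset \G(k,N) \times \G(k,N)$ and then feed in the smallness of Proposition \ref{pr:smallres} and Saito's decomposition theorem; I describe the argument for (1), i.e. $\l \le 0$, since (2) is symmetric. First I would unwind $\sF(\l+1) * \sE(\l+1) = \int_{p_{13}} \bigl( p_{12}^\d \sE(\l+1) \dotimes p_{23}^\d \sF(\l+1) \bigr)$ on $\G(k,N) \times \G(k-1,N) \times \G(k,N)$. The preimages $p_{12}^{-1}(C(\l+1)) = C(\l+1) \times \G(k,N)$ and $p_{23}^{-1}(C(\l+1)) = \G(k,N) \times C(\l+1)$ are smooth of the expected dimension, so Corollary \ref{cor:pullbackdelta} identifies $p_{12}^\d \sE(\l+1)$ and $p_{23}^\d \sF(\l+1)$ with the corresponding $\delta$-modules up to explicit shifts. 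Next I would check that these two subvarieties meet transversally, with scheme-theoretic intersection the smooth variety $P_{k-1} = \{ (V_1, V', V_2) : V' \subseteq V_1 \cap V_2,\ \dim V' = k-1 \}$, so that Corollary \ref{cor:Deltas} gives $p_{12}^\d \sE(\l+1) \dotimes p_{23}^\d \sF(\l+1) \cong \delta_{P_{k-1},h} \la d_k \ra$ after bookkeeping of shifts. Since $p_{13}$ restricts on $P_{k-1}$ to a proper map $\pi_P \colon P_{k-1} \to \G(k,N)^2$ factoring through $\oZ_{k-1}$, this yields $\sF(\l+1) * \sE(\l+1) \cong \int_{\pi_P} \delta_{P_{k-1},h} \la d_k \ra$; the same computation gives $\sE(\l-1) * \sF(\l-1) \cong \int_{\pi_Q} \delta_{Q,h} \la d_k \ra$ with $Q = \{ (V_1, W, V_2) : V_1, V_2 \subseteq W,\ \dim W = k+1 \}$ and $\pi_Q \colon Q \to \G(k,N)^2$ factoring through $\oZ_{k-1}$.

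\textbf{Smallness and the decomposition theorem.} Taking orthogonal complements identifies $\pi_Q$ with the resolution $P_{N-k-1} \to \oZ_{N-k-1}$ of Proposition \ref{pr:smallres} for $\G(N-k,N)^2$, which is small precisely when $2(N-k) \le N$, i.e. when $\l \le 0$. So by Corollary \ref{cor:small} and Theorem \ref{thm:pushHodge}, $\sE(\l-1) * \sF(\l-1) \cong \fG(\pi_{Q*} \delta_{Q,m}) \la d_k \ra \cong \IC_{\oZ_{k-1},h} \la d_k \ra$. For $\sF(\l+1) * \sE(\l+1)$ I would instead use Theorem \ref{thm:pushHodge} together with the decomposition theorem (Theorem \ref{th:decompHodge}): $\pi_{P*} \delta_{P_{k-1},m}$ is a finite direct sum of shifts of semisimple Hodge modules on $\G(k,N)^2$ supported on $\oZ_{k-1}$. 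Since $\oZ_{k-1}$ has only the dense $GL_N$-orbit $Z_{k-1}$ and the diagonal $\Delta$, and $\Delta \cong \G(k,N)$ is a single $GL_N$-orbit, $GL_N$-equivariance and the structure theorem (Theorem \ref{th:structure}) force $\pi_{P*} \delta_{P_{k-1},m} \cong \IC_{\oZ_{k-1},m} \oplus \bigoplus_j \delta_{\Delta,m} \la a_j \ra$: the $\IC_{\oZ_{k-1},m}$ occurs once and in degree $0$ because $\pi_P$ is an isomorphism over $Z_{k-1}$, and the $\Delta$-part is a sum of Tate twists of $\delta_{\Delta,m}$ because the relevant stalks (see below) are one-dimensional in each degree.

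\textbf{Pinning down the extra summands and finishing.} To determine $\{a_j\}$ I would compare stalks at a point $s \in \Delta$. The fibre $\pi_P^{-1}(s)$ is the space of hyperplanes in the common $k$-dimensional subspace, i.e. $\P^{k-1}$, while $\pi_Q^{-1}(s)$ is the space of $(k+1)$-dimensional subspaces containing it, i.e. $\P^{N-k-1}$. Hence the stalk of $\sF(\l+1) * \sE(\l+1)$ at $s$ is a shift of $H^*(\P^{k-1})$ and the stalk of $\sE(\l-1) * \sF(\l-1) \cong \IC_{\oZ_{k-1},h}\la d_k\ra$ at $s$ is the same shift of $H^*(\P^{N-k-1})$, both one-dimensional in each nonzero degree. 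Subtracting, the $\bigoplus_j \delta_{\Delta,m} \la a_j \ra$-part contributes $k - (N-k) = -\l$ classes, occupying a contiguous block of cohomological degrees abutting the block of $\IC_{\oZ_{k-1},h}\la d_k\ra$; tracking this block through the $[\cdot]$, $\{\cdot\}$ and $\la\cdot\ra$ shifts, and using that $\sF(\l+1) * \sE(\l+1)$ is $\bD$-self-dual up to shift (Lemma \ref{lem:duality}) so that the block is centred, identifies the extra part as exactly $\bigoplus_{[-\l]} \delta_{\Delta,h} \la d_k \ra$. This proves (1); (2), i.e. $\l \ge 0$, follows symmetrically (now $\pi_P$ is the small resolution and $\pi_Q$ the non-small one), or by applying (1) to the complementary Grassmannian $\G(N-k,N)$.

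\textbf{Main obstacle.} The conceptual input is all in hand --- Proposition \ref{pr:smallres}, Corollary \ref{cor:small}, and Saito's theorems --- so the real work, and the step I expect to be most delicate, is the bookkeeping: carrying the three interlocking grading shifts correctly through the pullbacks, $\dotimes$'s and pushforwards of the $\D_h$-formalism so that the centre of the $\delta_\Delta$-block comes out precisely at $\la d_k \ra$. A secondary (standard, but not purely formal) point is the transversality of the two incidence subvarieties appearing in the first step.
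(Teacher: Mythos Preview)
Your approach is essentially the paper's: both compute the two convolutions as proper pushforwards of $\delta$-sheaves from the two natural incidence varieties (via Corollaries \ref{cor:pullbackdelta} and \ref{cor:Deltas}), identify one as a small resolution giving $\IC_{\oZ_{k-1},h}$ (via Proposition \ref{pr:smallres} and Corollary \ref{cor:small}), and then analyse the non-small one with the decomposition and structure theorems, comparing the two over the diagonal. The only cosmetic difference is that the paper writes out case (ii) and you write out case (i); the paper also packages the non-small pushforward computation as a separate Lemma (Lemma \ref{lem:pushforward}), working with $\Delta^{!}$ rather than a point-stalk.

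There is, however, one genuine slip in your final step. You assert that the stalk of $\sF(\l+1)*\sE(\l+1)$ at $s\in\Delta$ is ``a shift of $H^*(\P^{k-1})$'' and the stalk of $\sE(\l-1)*\sF(\l-1)\cong\IC_{\oZ_{k-1},h}\la d_k\ra$ is ``the \emph{same} shift of $H^*(\P^{N-k-1})$''. This is not correct: the base-change computation (as carried out in Lemma \ref{lem:pushforward}) shows that $\Delta^{!}$ applied to the two pushforwards produces $\bigoplus_{[k]}\delta_{\G(k,N),m}\la -(N-k)\ra$ and $\bigoplus_{[N-k]}\delta_{\G(k,N),m}\la -k\ra$ respectively. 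The shifts differ by $\la -\l\ra$, precisely because the inclusions of the diagonal fibres into $P_{k-1}$ and $Q$ have different codimensions (the fibres $\P^{k-1}$ and $\P^{N-k-1}$ have different dimensions even though $\dim P_{k-1}=\dim Q$). This asymmetry is exactly what makes the difference a \emph{single} contiguous block $\bigoplus_{[-\l]}\delta_{\G(k,N),m}$. If the shifts were equal, as you assume, the difference $[k]-[N-k]$ would instead be two symmetric blocks at the ends, and your self-duality argument would then only confirm that symmetry without collapsing them into $\bigoplus_{[-\l]}$. So self-duality is not a substitute here for tracking the shift; the clean fix is to compute $\Delta^{!}$ of both pushforwards explicitly (this is what Lemma \ref{lem:pushforward} does), after which the subtraction gives the answer directly and no appeal to self-duality is needed.
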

\begin{proof}
We will prove that case $\l \ge 0$ (the case $\l \le 0$ is the same). First, we have
\begin{align*}
\sF(\l+1) * \sE(\l+1)
&\cong \int_{p_{13}} p_{12}^\d \delta_{C(\l+1),h} \dotimes p_{23}^\d \delta_{C(\l+1),h} \la d_k+d_{k-1} \ra \\
&\cong \int_{p_{13}} \delta_{p_{12}^{-1}(C(\l+1)),h} \la d_k \ra \dotimes \delta_{p_{23}^{-1}(C(\l+1)),h} \la d_k \ra \la d_k+d_{k-1} \ra \\
&\cong \int_{p_{13}} \delta_{P_{k-1},h} \la d_k \ra
\end{align*}
where $P_{k-1} = \{0 \subset V' \subset V,V'' \subset \C^N\}$ with $\dim(V')=k-1, \dim(V/V')=\dim(V''/V')=1$. Note that the third isomorphism follows from Corollary \ref{cor:Deltas} and a dimension count (the tensor product in the second line is happening in the triple product which has dimension $2d_k+d_{k-1}$). 

Since $p_{13}$ forgets $V'$ the image $p_{13}(P_{k-1})$ is equal to $\overline{Z_{k-1}} = \{0 \subset V,V'' \subset \C^N: \dim(V \cap V'') \ge k-1\}$. By Proposition \ref{pr:smallres} the map $\pi: P_{k-1} \rightarrow \overline{Z_{k-1}} $ is small. Subsequently, by Corollary \ref{cor:small} we get $\int_{p_{13}} \delta_{P_{k-1},h} = \IC_{\overline{Z_{k-1}},h}$ and thus $\sF(\l+1) * \sE(\l+1) \cong \IC_{\overline{Z_{k-1}},h} \la d_k \ra$.

On the other hand, a similar argument shows
$$\sE(\l-1)*\sF(\l-1) \cong \int_{p_{13}} \delta_{P',h} \la d_k \ra$$
where $P' := \{0 \subset V,V'' \subset V' \subset \C^N \}$ with $\dim(V')=k+1, \dim(V'/V)=\dim(V'/V'')=1$. The pushforward $\int_{p_{13}} \delta_{P',h}$ (which is now more difficult to calculate because the map is no longer small) is computed in Lemma \ref{lem:pushforward}. The result follows.
\end{proof}

\begin{Lemma}\label{lem:pushforward}
Let $P' = \{0 \subset V,V'' \subset V' \subset \C^N \} \subset \G(k,N) \times \G(k+1,N) \times \G(k,N)$ and consider the projection $p_{13}$ which forgets $V'$ as in the proof of Proposition \ref{prop:sl2Grass}. Then
$$\int_{p_{13}} \delta_{P',h} \cong \IC_{\overline{Z_{k-1}},h} \bigoplus_{[N-2k]} \int_{\Delta} \delta_{\G(k,N),h} .$$
\end{Lemma}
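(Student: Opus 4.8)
The plan is to lift the computation to Saito's category of mixed Hodge modules --- where the decomposition theorem, the structure theorem, and base change are available --- and then transport the result back to $\D_h$-modules via $\fG$.

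Write $\pi := p_{13}|_{P'} : P' \to \overline{Z_{k-1}}$. A dimension count shows $\pi$ is a resolution of $\overline{Z_{k-1}}$: over the dense stratum $Z_{k-1}$ the space $V'$ is forced to equal $V + V''$, so $\pi$ is an isomorphism there, while over the diagonal $\Delta = Z_k$ the fibre is $\{V' : V \subset V',\ \dim V' = k+1\} \cong \p^{N-k-1}$; it is small precisely when $2(N-k-1) < N-1$, i.e. when $\l = N-2k \le 0$. Since Lemma \ref{lem:pushforward} is applied in the case $\l \ge 0$ of Proposition \ref{prop:sl2Grass}, we may assume $2k \le N$, and then $\pi$ is in general not small --- this is the source of the extra summands. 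As $\pi$ is projective and $\delta_{P',h} = \fG(\delta_{P',m})$, Theorem \ref{thm:pushHodge} gives $\int_{p_{13}}\delta_{P',h} \cong \fG(\pi_*\delta_{P',m})$; and since $\fG$ is exact, commutes with $\la\cdot\ra$ and $\{\cdot\}$, and (again by Theorem \ref{thm:pushHodge}, $\Delta$ being proper) commutes with $\int_\Delta$, it suffices to prove in $D(\MHM(\G(k,N)\times\G(k,N)))$ the identity
$$\pi_*\delta_{P',m} \cong \IC_{\overline{Z_{k-1}},m} \;\bigoplus_{[N-2k]}\; \int_\Delta \delta_{\G(k,N),m}.$$
Now $P'$ is an iterated Grassmannian bundle, hence smooth, so $\delta_{P',m}$ is pure; by Theorem \ref{th:decompHodge} $\pi_*\delta_{P',m}$ is a direct sum of shifts of pure Hodge modules, and by Theorem \ref{th:structure} each summand is an intersection cohomology Hodge module whose strict support is a $\GL_N$-invariant irreducible closed subvariety of $\overline{Z_{k-1}}$. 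The only two such subvarieties are $\overline{Z_{k-1}}$ itself and $\Delta = Z_k$.

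To pin down the $\overline{Z_{k-1}}$-supported part, restrict along the open inclusion $j : Z_{k-1}\hookrightarrow\overline{Z_{k-1}}$: since $\pi$ is an isomorphism over $Z_{k-1}$, $j^*\pi_*\delta_{P',m} \cong \delta_{Z_{k-1},m}$, which sits in cohomological degree $0$ with multiplicity one. As a Hodge module with strict support $\overline{Z_{k-1}}$ is determined by its restriction to the dense open $Z_{k-1}$ (Theorem \ref{th:structure}), this forces the $\overline{Z_{k-1}}$-supported part to be exactly one unshifted copy of $\IC_{\overline{Z_{k-1}},m}$; hence $\pi_*\delta_{P',m} \cong \IC_{\overline{Z_{k-1}},m} \oplus i_{\Delta*}M_\Delta$, where $i_\Delta : \Delta\hookrightarrow\overline{Z_{k-1}}$ is the inclusion and $M_\Delta$ is a bounded complex of Hodge modules on $\G(k,N)$. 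It remains to identify $M_\Delta$. Applying the costalk $i_\Delta^!$ and using Kashiwara's theorem (Theorem \ref{th:KashforHodge}) in the form $i_\Delta^! i_{\Delta*}M_\Delta \cong M_\Delta$, we see that $M_\Delta$ is the summand of $i_\Delta^!\pi_*\delta_{P',m}$ complementary to $i_\Delta^!\IC_{\overline{Z_{k-1}},m}$. Both of these I would compute by base change (Theorem \ref{thm:basechangeforHodge}). For the first, $\pi^{-1}(\Delta)$ is the $\p^{N-k-1}$-bundle $\{(V,V') : V \subset V',\ \dim V' = k+1\}$ over $\G(k,N)$, a smooth subvariety of $P'$ of codimension $k$; base change along the resulting cartesian square together with Leray--Hirsch gives $i_\Delta^!\pi_*\delta_{P',m} \cong \bigoplus_{[N-k]}\delta_{\G(k,N),m}\la -k\ra$ (up to a fixed overall shift). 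For the second, since $2k\le N$, Proposition \ref{pr:smallres} provides a small resolution $\rho : P_{k-1} = \{(V_1,V_2,W) : W\subset V_1\cap V_2,\ \dim W = k-1\} \to \overline{Z_{k-1}}$, so $\IC_{\overline{Z_{k-1}},m}\cong\rho_*\delta_{P_{k-1},m}$ by Corollary \ref{cor:small}; here $\rho^{-1}(\Delta)$ is the $\p^{k-1}$-bundle $\{(V,W) : W\subset V,\ \dim W = k-1\}$ over $\G(k,N)$, a smooth subvariety of $P_{k-1}$ of codimension $N-k$, and the same argument gives $i_\Delta^!\IC_{\overline{Z_{k-1}},m}\cong\bigoplus_{[k]}\delta_{\G(k,N),m}\la -(N-k)\ra$ (with the same overall shift). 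The second expression is a sub-sum of the first, so $M_\Delta$ is their difference; and the Laurent-polynomial identity $q^{-k}[N-k] - q^{-(N-k)}[k] = [N-2k]$ (which requires $N\ge 2k$) identifies this difference with $\bigoplus_{[N-2k]}\delta_{\G(k,N),m}$. Applying $\fG$ then yields the claim.

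The mathematical inputs --- the decomposition and structure theorems, Zelevinsky's small resolution (Proposition \ref{pr:smallres}), and base change --- are all in place, so I expect the main obstacle to be the bookkeeping of the three kinds of shifts (the cohomological $[\cdot]$, the internal $\{\cdot\}$, and the Hodge--Tate twists) through the two base-change computations and through Leray--Hirsch, arranged so that the difference of $\bigoplus_{[N-k]}$ and $\bigoplus_{[k]}$ collapses exactly to $\bigoplus_{[N-2k]}$; note that $[N-k]-[k]\ne[N-2k]$ as Laurent polynomials, and the cancellation is possible only once the internal shifts $\la -k\ra$ and $\la -(N-k)\ra$ --- coming from the codimensions $k$ and $N-k$ of the bundle fibres inside the two resolutions --- have been inserted. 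A secondary point to verify is that the relevant fibres really are Zariski-locally-trivial projective bundles, so that Leray--Hirsch applies and no nontrivial variations of Hodge structure enter $M_\Delta$.
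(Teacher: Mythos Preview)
Your proposal is correct and follows essentially the same route as the paper: both lift to mixed Hodge modules, apply the decomposition and structure theorems to split $\pi_*\delta_{P',m}$ into a $\overline{Z_{k-1}}$-supported piece (identified as $\IC_{\overline{Z_{k-1}},m}$ via the open stratum) and a diagonal-supported piece, and then compute the latter by applying $\Delta^!$ and base-changing through the two resolutions $P'$ and $P_{k-1}$ to obtain the identity $q^{-k}[N-k] - q^{-(N-k)}[k] = [N-2k]$. Your anticipation of the shift bookkeeping and the need for Zariski-local triviality of the projective bundles is well-placed; the paper handles both points at the same level of detail you do.
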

\begin{proof}
We will show the corresponding result
\begin{equation}\label{eq:toshow}
p_{13*} (\delta_{P',m}) \cong \IC_{\overline{Z_{k-1}},m} \bigoplus_{[N-2k]} \Delta_* (\delta_{\G(k,N),m})
\end{equation}
at the level of MHM. The result then follows by applying $\fG$ and using that $\fG$ commutes with proper pushforward.

The decomposition theorem for Hodge modules tells us that
\begin{equation}\label{eq:4}
p_{13*} (\delta_{P',m}) \cong \oplus_i H^i(p_{13*} (\delta_{P',m}))[-i]
\end{equation}
and the structure theorem tells us that
\begin{equation}
H^i(p_{13*} (\delta_{P',m})) = N_i \oplus D_i
\end{equation}
where $N_i $ has strict support $ \overline{Z_{k-1}} $ and $ D_i $ has strict support the diagonal $ \G(k,N)$ (these are the only possible strict supports because the whole situation is $ GL_N $ equivariant and the image of $ p_{13} $ is $\overline{Z_{k-1}} $).

Since $p_{13*} $ is an isomorphism over $ Z_{k-1}$, the structure theorem for Hodge modules shows us that $ N_i = 0 $ for $ i \ne 0 $ and that $ N_0 = \IC_{\overline{Z_{k-1}},m}$. Thus we conclude that
$$ p_{13*} (\delta_{P',m}) = \IC_{\overline{Z_{k-1}},m} \oplus_i D_i[-i] $$
where each $ D_i $ has strict support along the diagonal.  We will now consider the base change over the diagonal. Using the following commutative diagram
$$\xymatrix{
Q := \{0 \subset V=V'' \subset V' \subset \C^N\} \ar[r]^-{\tilde{i}} \ar[d]^{\widehat{\Delta}} & \G(k,N) \times \G(k+1,N) \ar[d]^{\widetilde{\Delta}} \ar[r]^-{\tp_{13}} & \G(k,N) \ar[d]^{\Delta} \\
P' \ar[r]^-{i} & \G(k,N) \times \G(k+1,N) \times \G(k,N) \ar[r]^-{p_{13}} & \G(k,N) \times \G(k,N)}$$
together with the base change formula we have
\begin{align*}
\Delta^! p_{13*} i_{*} \delta_{P',m}
&\cong \tp_{13*} \widetilde{\Delta}^! i_{*} \delta_{P',m} \cong (\tp_{13})_{*} {\tilde{i}_*} \widehat{\Delta}^! \delta_{P',m} \\
&\cong \tp_{13*} {\tilde{i}_*} \delta_{Q,m} \langle -k \rangle \\
&\cong \bigoplus_{[N-k]} \delta_{\G(k,N),m} \la -k \ra .
\end{align*}
where the third isomorphism follows from the Hodge module analogue of Corollary \ref{cor:pullbackdelta} and the last isomorphism uses the fact that $\tp_{13} \circ \tilde{i}: Q \rightarrow \G(k,N)$ is a $\P^{N-k-1}$ fibration and gives a constant variation of Hodge structure with fibre $ H^* (\P^{N-k-1}) $.  Note that as a Hodge structure,
$$ H^* (\P^r) = \C[-r]\{r\} \oplus \C[2-r]\{r-2\} \cdots \oplus \C[r]\{-r\} = \bigoplus_{[r+1]} \C  $$

A similar analysis shows that $\Delta^! p_{13*} i_* \delta_{P_{k-1},m} \cong \bigoplus_{[k]} \delta_{\G(k,N),m} \la -N+k \ra $. Since $p_{13*} \delta_{P_{k-1},m} = \IC_{\overline{Z_{k-1},m}}$ we also have
$$\Delta^! \IC_{\overline{Z_{k-1},m}} \cong \bigoplus_{[k]} \delta_{\G(k,N),m} \la -N+k \ra.$$

Thus, we conclude that
$$
\bigoplus_{[N-k]} \delta_{\G(k,N),m} \la -k \ra = \bigoplus_{[k]} \delta_{\G(k,N),m} \la -N+k \ra \oplus_i D_i[-i]
$$
where each $ D_i $ has strict support along the diagonal.  Thus we conclude that
$$
\oplus_i D_i[-i] = \bigoplus_{[N-2k]} \delta_{\G(k,N),m}
$$
The relation in (\ref{eq:toshow}) now follows.
\end{proof}

\subsection{Divided powers}\label{sec:divided}

It is interesting in this case to identify explicitly the divided powers $\sE^{(r)}$ and $\sF^{(r)}$ for $r \in \N$. Consider the following correspondences
\begin{equation*}
\G(k,N) = \{0 \subset V \subset \C^N \} \xleftarrow{p_1} C^r(\l) := \{0 \subset V' \subset V \subset \C^N \} \xrightarrow{p_2} \{0 \subset V' \subset \C^N \} = \G(k-r,N)
\end{equation*}
where $\l=N-2k+r$ (these generalize $C(\l)$ from above). It turns out that
\begin{align*}
& \sE^{(r)}(\l) := \delta_{C^r(\l),h} \la d_k \ra \in D(\D_{\G(k,N) \times \G(k-r,N),h}\mod) \\
& \sF^{(r)}(\l) := \delta_{C^r(\l),h} \la d_{k-r} \ra \in D(\D_{\G(k-r,N) \times \G(k,N),h}\mod).
\end{align*}
By a similar argument as in the last section we have
$$\sE^{(r_2)} * \sE^{(r_1)} \cong \int_{p_{13}} \delta_{U,h} \la d_k \ra$$
where $U := \{0 \subset V'' \subset V' \subset V \subset \C^N\}$ with $\dim(V)=k+r_1, \dim(V/V')=r_1, \dim(V'/V'')=r_2$.
Since $p_{13}$ forgets $V'$ we have $p_{13}(U) = C^{r_1+r_2}$ where the projection $U \rightarrow p_{13}(U)$ is a $\G(r_1,r_1+r_2)$-bundle. Thus
$$\int_{p_{13}} \delta_{U,h} \cong \bigoplus_{\qbins{r_1+r_2}{r_1}} \delta_{C^{r_1+r_2},h}$$
since, as a Hodge structure,
$$H^* (\G(r_1,r_1+r_2)) = \bigoplus_{\qbins{r_1+r_2}{r_1}} \C.$$
Thus we end up with
$$\sE^{(r_2)} * \sE^{(r_1)} \cong \bigoplus_{\qbins{r_1+r_2}{r_1}} \delta_{C^{r_1+r_2},h} \la d_k \ra \cong \bigoplus_{\qbins{r_1+r_2}{r_1}} \sE^{(r_1+r_2)}.$$

\subsection{Action of the nil affine Hecke algebra}

This action can be obtained by using the main result of \cite{C2}. However, there is a more direct proof available using the fact that at the level of constructible sheaves this action is well known and relatively straight-forward to check. Via the Riemann-Hilbert correspondence one also gets an action in the context of $\D$-modules. We now explain how to lift this to an action on $\D_h$-modules.

The key to doing this is the fact that, for a smooth, proper variety $X$, we have
$$\Ext_{\D_{X,h}}^k(\O_{X,h}, \O_{X,h}) \cong \bigoplus_{i+j=k} H^{i,j}(X) \otimes_{\C} \C[h] \{2i\}.$$
As in the case of standard $\D$-modules (when $h=1$) one can see this using the Spencer resolution from Corollary \ref{co:SpPoint}
$$0 \rightarrow \D_{X,h} \otimes_{\O_{X,h}} \bigwedge^{d_X} \Theta_{X,h} \{-2 d_X\} \rightarrow \dots \rightarrow \D_{X,h} \otimes_{\O_{X,h}} \Theta_{X,h} \{-2\} \rightarrow \D_{X,h} \rightarrow \O_{X,h} \rightarrow 0$$
In particular, if $H^{0,2}(X)=0=H^{2,0}(X)$ then the space of maps
$$\O_{X,h} \to \O_{X,h}[2]\{-2\}$$
can be identified with $H^{1,1}(X)$.

In our case, this gives us that the space of maps
$$\delta_{C(\l),h} \to \delta_{C(\l),h} [2]\{-2\}$$
can be identified with $H^{1,1}(C(\l))$. In particular, this allows us to define
$$X(\l): \delta_{C(\l),h} \to \delta_{C(\l),h} [2]\{-2\}$$
by lifting the $X(\l): \delta_{C(\l)} \to \delta_{C(\l)} [2]$ which we have when $h=1$. In the same way we can define $T(\l)$. The fact that they satisfy the nil affine Hecke relations is then immediate because they satisfy these relations when $h=1$.

\section{Associated graded of the $\sl_2$ action}\label{sec:gradedaction}

Consider the 2-category $ \K_\O $ with the same objects as $ \K_\D $ but morphism categories
$$ \K_\O(\l, \l') = D(\O_{T^* \G(k,N) \times T^* \G(k',N)}\mod^{\C^\times})$$
where $\l=N-2k,\l'=N-2k'$ as before. We have a functor
$$ \tgr : D(\D_{\G(k,N) \times \G(k',N),h}\mod) \rightarrow D(\O_{T^* \G(k,N) \times T^* \G(k',N)}\mod^{\C^\times})$$
and, by Proposition \ref{prop:1}, these functors combine to give an associated graded 2-functor $ \gr : \K_\D \rightarrow \K_\O$. Thus, the categorical $ \sl_2$ action defined in the previous section with target $ \K_\D $ gives rise to a categorical $ \sl_2$ action with target $ \K_\O $.  In this section, we will compute this action explicitly.

We will compute $\tgr(\delta_{C^r(\l),h})$ where $C^r(\l)$ is the correspondence from section \ref{sec:divided} (technically we only need the case $r=1$ but it is interesting to compute this for all $r$). Recall that
$$T^* \G(k,N) = \{(X,V): X \in \End(\C^N), X(V) = 0, X(\C^N) \subset V\}$$
and likewise for $T^* \G(k-r,N)$. Moreover, the conormal bundle of $C^r(\l) \subset \G(k,N) \times \G(k-r,N)$ can be identified with
$$\fC^{r}(\l) := \{(X,V,V'): 0 \subset V' \subset V \subset \C^N, X(\C^N) \subset V, X(V) \subset V', X(V')=0 \}.$$

\begin{Proposition}
As a kernel inside $T^* \G(k,N) \times T^* \G(k-r,N)$ we have
$$\tgr(\delta_{C^r(\l),h}) \cong \O_{\fC^r(\l)} \otimes \det(V)^{-k+r} \otimes \det(V')^{k} [-d_k] \{r(k-r) + d_k \}$$
while, as a kernel inside $T^* \G(k-r,N) \times T^* \G(k,N)$, we have
$$\tgr(\delta_{C^r(\l),h}) \cong \O_{\fC^r(\l)} \otimes \det(V'/V)^{N-k} \otimes \det(\C^N/V')^{-r} [-d_{k-r}] \{r(N-k)+ d_{k-r}\}$$
using the usual convention that in the second case $V'$ is the vector bundle on $T^* \G(k,N)$.
\end{Proposition}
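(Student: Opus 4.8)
The plan is to reduce both assertions to Corollary~\ref{cor:grofDelta} together with a computation of canonical bundles on the Grassmannian bundle $C^r(\l)$. First I would write $\delta_{C^r(\l),h}=\int_{j}\O_{C^r(\l),h}\{\dim C^r(\l)\}$ for the closed embedding $j\colon C^r(\l)\hookrightarrow\G(k,N)\times\G(k-r,N)$, where $C^r(\l)$ is smooth (an iterated Grassmannian bundle) of dimension $\dim C^r(\l)=d_k+r(k-r)=d_{k-r}+r(N-k)$. Corollary~\ref{cor:grofDelta} then gives
$$\gr(\delta_{C^r(\l),h})\cong\O_{T^*_{C^r(\l)}(\G(k,N)\times\G(k-r,N))}\otimes p^*\omega_{C^r(\l)/(\G(k,N)\times\G(k-r,N))}\{\dim C^r(\l)\},$$
which is a sheaf in a single cohomological degree. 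To obtain $\tgr$ one applies, according to (\ref{eq:tgr}), the involution $1\times\iota$, tensors by $\omega_{\G(k-r,N)}$, and shifts by $[-d_k]$ (and, for the second assertion, viewing $\delta_{C^r(\l),h}$ as a kernel on $\G(k-r,N)\times\G(k,N)$, one instead tensors by $\omega_{\G(k,N)}$ and shifts by $[-d_{k-r}]$).

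Next I would deal with the involution. Inside $T^*\G(k,N)\times T^*\G(k-r,N)$, the conormal bundle of $C^r(\l)$ is precisely $\fC^r(\l)$ after $1\times\iota$ has been applied to one cotangent factor: unwinding the conormal conditions exhibits a single endomorphism $X$ appearing with opposite signs on the two factors, and the $\iota$ in (\ref{eq:tgr}) is exactly what makes the two signs agree, producing the description of $\fC^r(\l)$ given above. Since the conditions defining $\fC^r(\l)$ are linear in $X$, that variety is conic, so $(1\times\iota)^*\O_{\fC^r(\l)}\cong\O_{\fC^r(\l)}$; moreover the line bundles $\omega_{C^r(\l)/(\G(k,N)\times\G(k-r,N))}$ and $\omega_{\G(k-r,N)}$ are pulled back along $\fC^r(\l)\to C^r(\l)\to\G(k,N)\times\G(k-r,N)$, hence $\iota$-invariant. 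This yields
$$\tgr(\delta_{C^r(\l),h})\cong\O_{\fC^r(\l)}\otimes\omega_{C^r(\l)/(\G(k,N)\times\G(k-r,N))}\otimes\omega_{\G(k-r,N)}[-d_k]\{\dim C^r(\l)\}.$$

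It then remains to compute the canonical bundles. Using the tautological sequences and the Grassmannian-bundle structure $C^r(\l)\to\G(k,N)$ with fibre $\G(k-r,k)$, one finds $\omega_{C^r(\l)/\G(k,N)}\cong\det(V)^{r-k}\otimes\det(V')^{k}$, together with $\omega_{\G(k,N)}\cong\det(V)^{N}$ and $\omega_{\G(k-r,N)}\cong\det(V')^{N}$ (all modulo the trivial line $\det\C^N$), hence $\omega_{C^r(\l)/(\G(k,N)\times\G(k-r,N))}\cong\det(V)^{r-k}\otimes\det(V')^{k-N}$. Substituting gives $\O_{\fC^r(\l)}\otimes\det(V)^{r-k}\otimes\det(V')^{k}[-d_k]\{d_k+r(k-r)\}$, which is the first assertion. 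For the second, the identical steps (now tensoring by $\omega_{\G(k,N)}\cong\det(V)^N$ and shifting by $[-d_{k-r}]$) give $\det(V)^{N+r-k}\otimes\det(V')^{k-N}$, which one rewrites as $\det(V/V')^{N-k}\otimes\det(\C^N/V)^{-r}$ and then relabels $V\leftrightarrow V'$ according to the stated convention, using $\dim C^r(\l)=d_{k-r}+r(N-k)$.

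The only part carrying genuine content, as opposed to bookkeeping, is getting the canonical-bundle twists exactly right: one must combine the relative canonical produced by Corollary~\ref{cor:grofDelta} with the extra factor $\omega_Y$ built into $\tgr$, be careful about the trivialization of $\det\C^N$, and — for the second formula — convert correctly between the $(\det V,\det V')$-presentation and the $(\det(V/V'),\det(\C^N/V))$-presentation before applying the convention swap. The sign matching is dictated by (\ref{eq:tgr}) and the shift identities $d_k+r(k-r)=d_{k-r}+r(N-k)$ are elementary, so the difficulty lies entirely in this line-bundle arithmetic.
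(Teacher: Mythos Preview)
Your proposal is correct and follows essentially the same route as the paper: apply Corollary~\ref{cor:grofDelta} to get $\gr(\delta_{C^r(\l),h})$ as $\O$ of the conormal bundle twisted by $\omega_{C^r(\l)/(\G(k,N)\times\G(k-r,N))}$, compute the canonical bundles, then tensor by the appropriate $\omega$ and shift as in the definition of $\tgr$. You are in fact more careful than the paper about the role of $(1\times\iota)^*$ in identifying the conormal bundle with $\fC^r(\l)$; the paper silently writes $\O_{\fC^r(\l)}$ from the start, whereas you correctly note that the sign flip in $\iota$ is what aligns the two copies of $X$. The only cosmetic difference is that the paper keeps explicit track of the $\det(\C^N)$ factors (which cancel) while you work modulo that trivial line bundle throughout.
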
\label{prop:assgradedsl2}

Here and below, $ V, V', \C^N $ denote vector bundles whose fibres are these vector spaces.  Note that $ \C^N $ is trivial as a vector bundle, but carries a non-trivial $ \C^\times $-action.

\begin{proof}
First we compute $\gr(\delta_{C^r(\l),h})$. Using Corollary \ref{cor:grofDelta} we find that
$$\gr(\delta_{C^r(\l),h}) \cong \O_{\fC^r(k,N)} \otimes \omega_{X/Y} \{\dim(C^r(\l)\}$$
where $X = C^r(\l)$, $Y = \G(k,N) \times \G(k-r,N)$ and $\dim(C^r(\l)) = r(k-r) + d_k$. Moreover, it is a standard exercise to calculate that
\begin{align*}
\omega_{\G(k,N)} &\cong \det(V)^N \otimes \det(\C^N)^{-k} \\
\omega_{\fC^r(k,N)} &\cong \det(V')^k \otimes \det(V)^{N-k+r} \otimes \det(\C^N)^{-k}.
\end{align*}
It then follows that
\begin{align*}
\gr(\delta_{C^r(\l),h})
&\cong \O_{\fC^r(\l)} \otimes \omega_{\fC^r(\l)} \otimes \omega_{\G(k,N) \times \G(k-r,N)}^\vee \{r(k-r) + d_k\} \\
&\cong \O_{\fC^r(\l)} \otimes \det(V)^{-k+r} \det(V')^{k-N} \det(\C^N)^{k-r} \{r(k-r) + d_k\}.
\end{align*}
Thus, as a kernel inside $T^* \G(k,N) \times T^* \G(k-r,N)$ we have
\begin{align*}
\tgr(\delta_{C^r(\l),h})
&\cong \gr(\delta_{C^r(\l),h}) \otimes \omega_{\G(k-r,N)} [-d_k] \\
&\cong \O_{\fC^r(\l)} \otimes \det(V)^{-k+r} \det(V')^k [-d_k] \{r(k-r) + d_k\}
\end{align*}
while as a kernel inside $T^* \G(k-r,N) \times T^* \G(k,N)$ we have
\begin{align*}
\tgr(\delta_{C^r(\l),h})
& \cong \gr(\delta_{C^r(\l),h}) \otimes \omega_{\G(k,N)} [-d_{k-r}] \\
& \cong \O_{\fC^r(\l)} \otimes \det(V)^{k-N} \det(V)^{N-k+r} \det(\C^N)^r [-d_{k-r}] \{r(N-k)+d_{k-r}\}.
\end{align*}
This agrees with what we wanted to show.
\end{proof}

\begin{Corollary}\label{cor:sl2action}
The associated graded of the categorical $\sl_2$ action with target $\K_\D$ is the action with target $\K_\O$ and kernels
\begin{align*}
& \tgr(\sE^{(r)}(\l)) = \O_{\fC^r(\l)} \otimes \det(V)^{-k+r} \otimes \det(V')^{k} \{r(k-r)\} \in D(\O_{T^* \G(k,N) \times T^* \G(k-r,N)}\mod^{\C^\times}) \\
& \tgr(\sF^{(r)}(\l)) = \O_{\fC^r(\l)} \otimes \det(V'/V)^{N-k} \otimes \det(\C^N/V')^{-r} \{r(N-k)\} \in D(\O_{T^* \G(k-r,N) \times T^* \G(k,N)}\mod^{\C^\times})
\end{align*}
where $\l = N-2k+r$.
\end{Corollary}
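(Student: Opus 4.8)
The plan is to read this off from Proposition \ref{prop:assgradedsl2} together with some bookkeeping of grading shifts. I would first recall that, by Proposition \ref{prop:1} (and Corollary \ref{cor:grD} for compatibility with adjoints), the functor $\tgr$ is a 2-functor $\K_\D \to \K_\O$ respecting all the relevant structure, so it does carry the categorical $\sl_2$-action on $\K_\D$ to a categorical $\sl_2$-action on $\K_\O$, which is the statement made just before the corollary. I would also note that $\tgr$ commutes with the shift $\la 1 \ra = [1]\{-1\}$: this is immediate from the defining formula \eqref{eq:tgr}, since $\gr$ commutes with both $[1]$ and $\{1\}$.

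It then remains only to identify the kernels of this action, and this I would do by direct substitution. By section \ref{sec:divided} we have $\sE^{(r)}(\l) = \delta_{C^r(\l),h}\la d_k\ra$ and $\sF^{(r)}(\l) = \delta_{C^r(\l),h}\la d_{k-r}\ra$, so compatibility of $\tgr$ with shifts gives
$$\tgr(\sE^{(r)}(\l)) \cong \tgr(\delta_{C^r(\l),h})\,[d_k]\{-d_k\}, \qquad \tgr(\sF^{(r)}(\l)) \cong \tgr(\delta_{C^r(\l),h})\,[d_{k-r}]\{-d_{k-r}\},$$
where in the first case $\tgr(\delta_{C^r(\l),h})$ is read as a kernel on $T^*\G(k,N) \times T^*\G(k-r,N)$ and in the second on $T^*\G(k-r,N) \times T^*\G(k,N)$. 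Plugging in the two formulas of Proposition \ref{prop:assgradedsl2}, the cohomological shift $[-d_k]$ (resp. $[-d_{k-r}]$) occurring there cancels the $[d_k]$ (resp. $[d_{k-r}]$) above, while the internal shifts combine to $\{r(k-r)+d_k-d_k\}=\{r(k-r)\}$ (resp. $\{r(N-k)+d_{k-r}-d_{k-r}\}=\{r(N-k)\}$). This yields exactly the two displayed kernels.

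I do not expect any genuine obstacle: all of the geometric content is already contained in Proposition \ref{prop:1} and, above all, in Proposition \ref{prop:assgradedsl2}, and what is left is a short manipulation. The only points requiring care are the interplay of the three grading shifts $[1]$, $\{1\}$, $\la 1 \ra$, and the bookkeeping convention recorded in Proposition \ref{prop:assgradedsl2} for which Grassmannian each tautological or quotient bundle is regarded as living on in the two parallel descriptions of a single kernel.
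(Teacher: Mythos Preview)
Your proposal is correct and matches the paper's intended argument: the paper states the corollary without proof, treating it as an immediate consequence of the preceding proposition together with the definitions of $\sE^{(r)}(\l)$ and $\sF^{(r)}(\l)$ from section~\ref{sec:divided}, which is precisely the substitution and shift-cancellation you carry out.
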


\begin{Remark}
In \cite{CKL1} we defined another categorical $\sl_2$ action with target $ \K_\O $.  The kernels which induce this action are not exactly the same as those from Corollary \ref{cor:sl2action} but the difference is only some line bundles which can essentially be accounted for by conjugation. On the other hand, the categorical $\sl_2$ action defined in \cite{CK4} does agree with the one above, after restricting to the open subsets $T^* \G(k,N) \subset Y(k,N-k)$ and matching up the notation ({\it cf.} Appendix A.2 of \cite{CK4}).
\end{Remark}

\section{The equivalence $\T$}\label{sec:T}

In this section we study the equivalence
$$\T: D(\D_{\G(k,N),h}\mod) \rightarrow D(\D_{\G(N-k,N),h}\mod)$$
induced by the categorical $\sl_2$ action from section \ref{sec:actionDmod}. For notational simplicity we assume $2k \le N$. For $s=0,\dots,k$ define
$$\Theta^s := \sF^{(N-k-s)} * \sE^{(k-s)} \in D(\D_{\G(k,N) \times \G(N-k,N),h}\mod).$$
From these we can define Rickard's complex $\Theta^{k}\la -k \ra \rightarrow \dots \rightarrow \Theta^1 \la -1 \ra \rightarrow \Theta^0$ where the differential $\Theta^s \la -s \ra \rightarrow \Theta^{s-1} \la -s+1 \ra$ is given by the composition
$$\sF^{(N-k-s)} * \sE^{(k-s)} \la -s \ra \rightarrow  \sF^{(N-k-s)} * \sF * \sE * \sE^{(k-s)} \la N-3s+1 \ra \rightarrow \sF^{(N-k-s+1)} * \sE^{(k-s+1)} \la -s+1 \ra$$
where the first arrow uses the adjunction and the second map consists of two projections.

\begin{Theorem}
The complex $\Theta^*$ has a unique right convolution $\sT := \Cone(\Theta^*)$ which induces an equivalence $\T: D(\D_{\G(k,N),h}\mod) \xrightarrow{\sim} D(\D_{\G(N-k,N),h}\mod)$.
\end{Theorem}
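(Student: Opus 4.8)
The plan is to obtain the statement as an application of the general theory of categorical $\sl_2$ actions, so that no essentially new geometric input is required beyond what was established in Section~\ref{sec:actionDmod}. Recall from Chuang--Rouquier \cite{CR} and from \cite{CKL1} that attached to any categorical $\sl_2$ action in the sense of Section~\ref{se:defstrong} there is, for each weight $\l$, the Rickard complex of $1$-morphisms $\Theta^k\la -k\ra\to\cdots\to\Theta^1\la -1\ra\to\Theta^0$ (built from the divided powers $\sE^{(a)},\sF^{(b)}$ exactly as in Section~\ref{sec:T}); this complex admits a right convolution, the convolution is unique up to isomorphism, and the resulting $1$-morphism induces an equivalence $\D(\l)\xrightarrow{\sim}\D(-\l)$ categorifying the quantum Weyl group element of $U_q(\sl_2)$. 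So the proof reduces to checking that the data $(\K_\D,\{\l\},\sE,\sF,X,T)$ of Section~\ref{sec:actionDmod} is a bona fide categorical $\sl_2$ action, and then quoting this theorem with $\l=N-2k$, which reads $\D(\l)=D(\D_{\G(k,N),h}\mod)$ and $\D(-\l)=D(\D_{\G(N-k,N),h}\mod)$.

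Verifying the axioms is where I would concentrate. The $2$-category $\K_\D$ is graded (by $\{1\}$ and the cohomological shift $[1]$, hence by $\la 1\ra$), additive and $\C$-linear by construction; composition of $1$-morphisms is the convolution $*$ of Section~\ref{sec:kerD}, which is associative and unital with unit $\delta_{\Delta,h}$ via the Fourier--Mukai formalism, and it is idempotent complete because the bounded derived categories of coherent graded $\D_h$-modules in play are Karoubian. The integrability condition is immediate since $\G(k,N)=\emptyset$ unless $0\le k\le N$. The adjunction isomorphisms are Lemma~\ref{lem:adjoint}, the $\sl_2$ commutation relation is Proposition~\ref{prop:sl2Grass}, and the nil affine Hecke relations for $X(\l),T(\l)$ were checked in Section~\ref{sec:actionDmod} using the Lefschetz operators and the reduction of \cite{C2}. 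The only remaining point is the finiteness and normalisation condition on $2$-morphisms: this reduces to the computation of $\Hom(\delta_{\Delta,h},\delta_{\Delta,h}\la i\ra)$ in $D(\D_{\G(k,N)\times\G(k,N),h}\mod)$, which recovers (a graded piece of) the Hodge cohomology of $\G(k,N)$; since $\G(k,N)$ is smooth, proper, with cohomology of Hodge--Tate type and no odd cohomology, this space vanishes for $i<0$ and is the one-dimensional space $H^0(\O_{\G(k,N)})$ for $i=0$. Hom-finiteness for the remaining $1$-morphisms of geometric origin follows from properness of the Grassmannians and of the incidence correspondences $C^r(\l)$ (if one wants the axiom literally, one restricts $\K_\D$ to the full sub-$2$-category generated by $\sE,\sF$ and the identities).

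Once all the axioms are in place, the arguments of \cite{CR,CKL1} apply essentially verbatim and produce both the unique right convolution $\sT=\Cone(\Theta^*)$ and the fact that $\T=\Phi_\sT$ is an equivalence. I expect the main friction to be structural rather than geometric: confirming that the $2$-category of $\D_h$-module kernels genuinely satisfies the background hypotheses baked into the definition of a categorical action (idempotent completeness, Hom-finiteness, and the normalisation $\Hom(\id_\l,\id_\l\la i\ra)=0$ for $i<0$ and $\cong\C$ for $i=0$), since these are not automatic for triangulated categories and are precisely where properness of the Grassmannian and purity of its cohomology enter. By contrast, the uniqueness of the convolution is formal, following from the vanishing of certain negative $\Hom$ groups between the terms $\Theta^s$, which is a consequence of the categorical $\sl_2$ relations together with that normalisation rather than of any further geometry.
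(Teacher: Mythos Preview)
Your proposal is correct and matches the paper's own approach: the paper simply states that the theorem follows from \cite{CR,CKL1}, i.e., from the general machinery of categorical $\sl_2$ actions applied to the action constructed in Section~\ref{sec:actionDmod}. Your write-up is more explicit than the paper in that you walk through the verification of the axioms (adjunction via Lemma~\ref{lem:adjoint}, the commutator via Proposition~\ref{prop:sl2Grass}, the nilHecke relations via the argument with Lefschetz operators and \cite{C2}, and the normalisation of $\End(\id_\l)$), but this is exactly the intended reduction.
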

\begin{proof}
Uniqueness and existence of the convolution in this context follows from Lemma \ref{lem:convolution}. The fact it gives an equivalence is Theorem 2.8 of \cite{CKL1} (this result is analogous to the main result of \cite{CR}).
\end{proof}
\begin{Remark} Convolutions of complexes in a triangulated category were introduced by Orlov \cite{O}. They are defined by an iterated cone construction. We choose to work with right convolutions, meaning that we start the iterated cones on the right.
\end{Remark}
\begin{Remark}
The complex $\Theta^*$ is actually the dual of the complex we usually considered in (for instance) \cite{CKL1}. The complex in those cases is $\Theta^0 \la -k \ra \rightarrow \dots \rightarrow \Theta^{k-1} \la -1 \ra \rightarrow \Theta^k$.
\end{Remark}

For $s = 0, \dots, k$ recall the locus $Z_s \subset \G(k, N) \times \G(N-k, N)$ consisting of pairs $(V, V')$ such that $\dim (V \cap V') = s$ (notice that $Z_0$ is an open). Note that $ \dim (Z_s) = 2d_k - s^2 $. The following is the main result in this section.

\begin{Theorem} \label{thm:equivopen}
We have $\sT \cong \fG(j_* \delta_{Z_0,m}) \la d_k \ra$ where $j: Z_0 \rightarrow \G(k,N) \times \G(N-k, N) $ is the open embedding. Moreover, the weight filtration on $\fG(j_* \delta_{Z_0,m}) \la d_k \ra$ agrees with the filtration on $\sT$ coming from Rickard's complex.
\end{Theorem}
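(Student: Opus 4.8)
\emph{Overview and the terms.} The plan is to lift all of Rickard's complex to the category of mixed Hodge modules, to compute its convolution there by a stratum-by-stratum dévissage, and to transport everything back through the exact, faithful functor $\fG$ (a morphism of mixed Hodge modules vanishes as soon as the underlying morphism of $\D_h$-modules does); the weight statement then follows from uniqueness of the weight filtration. Throughout one uses that here $k+(N-k)=N$, so Proposition \ref{pr:smallres} applies to the orbit closures $\overline{Z_s}\subset\G(k,N)\times\G(N-k,N)$. First I would identify the terms: exactly as in the proof of Proposition \ref{prop:sl2Grass}, $\Theta^s=\sF^{(N-k-s)}*\sE^{(k-s)}$ is the pushforward along $p_{13}$ of the $\delta$-module (up to shift) of an iterated Grassmannian bundle $P_s$, and $p_{13}$ realizes the small resolution $\pi_s\colon P_s\to\overline{Z_s}$. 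By Corollary \ref{cor:small}, $\pi_{s*}\delta_{P_s,m}\cong\IC_{\overline{Z_s},m}$, so applying $\fG$ and Theorem \ref{thm:pushHodge} and tracking the shifts (as in Section \ref{sec:divided}) yields $\Theta^s\cong\fG(\IC_{\overline{Z_s},m})\la a_s\ra$ for an explicit $a_s$; in particular $\Theta^0$ is, up to shift, the structure $\D_h$-module of all of $\G(k,N)\times\G(N-k,N)$ and $\Theta^k$ is, up to shift, $\delta_{Z_k,h}$.

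\emph{Lifting the complex and computing its convolution.} Next I would lift the differentials. Each $\Theta^s\la-s\ra\to\Theta^{s-1}\la-s+1\ra$ is a composite of adjunction (co)units for the $\sl_2$ kernels and of projections onto summands in the decompositions of Proposition \ref{prop:sl2Grass}; the (co)units are $\fG$ of morphisms in $D(\MHM)$ since the kernels are $\fG$ of $\delta$-Hodge modules and Verdier duality on $\MHM$ is compatible with $\fG$ (Lemma \ref{lem:duality}, Proposition \ref{prop:grD}), and the projections are $\fG$ of the canonical projections produced by Theorems \ref{th:structure} and \ref{th:decompHodge} --- indeed those decompositions were already obtained in $\MHM$ in the proof of Lemma \ref{lem:pushforward}. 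Faithfulness of $\fG$ then forces $d^{s-1}\circ d^s=0$ in $\MHM$ from the known relation on the $\D_h$ side \cite{CR,CKL1}, giving a complex $\Theta^*_m$ in $C^b(\MHM)$ with $\fG(\Theta^*_m)=\Theta^*$ and $s$-th term $\IC_{\overline{Z_s},m}$ up to shift. I would then exhibit $j_*\delta_{Z_0,m}\la d_k\ra$ as a right convolution of $\Theta^*_m$: on the open stratum only $\Theta^0_m$ survives, so $j^*$ of any convolution is $\delta_{Z_0,m}$ up to shift, and one checks that the cone of the adjunction map to $j_*\delta_{Z_0,m}\la d_k\ra$, which is supported on $\overline{Z_1}$, restricts to $0$ on every orbit $Z_s$. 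For this one computes the restriction of each $\IC_{\overline{Z_t},m}$ to $Z_s$ from the small resolution $\pi_t$, whose fibre over a point of $Z_s$ is $\G(t,s)$ with $H^*(\G(t,s))=\bigoplus_{\qbins{s}{t}}\C$ as a Hodge structure, and the alternating contributions of the $\Theta^s_m$ telescope just as in the proof of Lemma \ref{lem:pushforward} (the mixed-Hodge-module refinement of the Webster--Williamson computation \cite{WW}). Applying $\fG$, which is exact and hence preserves iterated cones, gives that $\fG(j_*\delta_{Z_0,m})\la d_k\ra$ is a right convolution of $\Theta^*$; by uniqueness of the convolution this equals $\sT$, which is the first assertion.

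\emph{The weight filtration.} A right convolution carries the canonical increasing filtration coming from the truncations of the complex, whose $s$-th graded piece is $\Theta^s_m\la-s\ra$ up to a cohomological shift; transported through $\fG$ this is precisely the filtration on $\sT$ coming from Rickard's complex, hence a filtration on $\fG(j_*\delta_{Z_0,m})\la d_k\ra$. By the first part its graded pieces are the pure Hodge modules $\IC_{\overline{Z_s},m}$ up to $\la\cdot\ra$-shift, and a bookkeeping of those shifts shows they are pure of strictly increasing weight in $s$ (and all in one cohomological degree, consistent with $j$ affine so that $j_*\delta_{Z_0,m}$ is a single Hodge module). Since a mixed Hodge module admits a unique finite increasing filtration whose successive subquotients are semisimple and pure of strictly increasing weight --- the weight filtration --- this filtration must be the weight filtration of $j_*\delta_{Z_0,m}\la d_k\ra$; applying $\fG$, which carries the weight filtration of a mixed Hodge module to the weight filtration of the associated $\D_h$-module, completes the proof.

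\emph{Main obstacle.} The substance lies in the convolution computation in $\MHM$: recognizing that the intersection Hodge modules $\IC_{\overline{Z_s},m}$, glued along the Rickard differentials, reassemble into $j_*\delta_{Z_0,m}$. This requires precise control of the restriction of each $\IC_{\overline{Z_t},m}$, with its Hodge and weight structure, to each orbit $Z_s$, together with the verification that the differentials induce exactly the isomorphisms that make the Grassmannian-fibre cohomology contributions cancel in pairs, leaving only the $*$-extension of $\delta_{Z_0,m}$; Lemma \ref{lem:pushforward} is the first instance of this cancellation, and the general case is a Hodge-theoretic upgrade of the argument of Webster and Williamson.
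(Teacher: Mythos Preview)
Your approach is genuinely different from the paper's, and the difference is instructive. You propose to lift Rickard's complex to $\MHM$ and then compute its convolution there via a stratum-by-stratum dévissage (the Webster--Williamson style argument). The paper runs the logic in the opposite direction: it first computes the weight filtration of $j_*\delta_{Z_0,m}$ directly (Proposition~\ref{prop:grj} and Corollary~\ref{cor:grj}), using only the structure and decomposition theorems together with a point-count via Lemma~\ref{lem:fibre} to pin down which $\IC_{\oZ_s,m}$ appear and with what shifts. Then, having independently established that $\fG(j_*\delta_{Z_0,m})$ is the convolution of a complex with terms $\IC_{\oZ_s,h}\la -s\ra$, the paper invokes Lemma~\ref{lem:j*} --- a $\Hom$-space computation proved entirely with the categorical $\sl_2$ machinery on the $\D_h$ side --- to see that the differentials in any such complex are unique up to scalar, hence this complex \emph{must} be Rickard's. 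No lift of the Rickard differentials to $\MHM$ is ever constructed.

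This matters because your lifting step has a real obstruction. You argue that adjunction units and the projections from Proposition~\ref{prop:sl2Grass} are $\fG$ of morphisms in $\MHM$, but the paper explicitly warns (introduction and Section~\ref{sec:mhm}) that the kernel formalism for Hodge modules is \emph{not} compatible with $\fG$, because non-flat pullback --- and hence the tensor product and convolution of kernels --- does not commute with $\fG$. So the $\D_h$-level adjunction maps for the kernels $\sE^{(r)},\sF^{(r)}$ are not a priori images of $\MHM$ morphisms in any evident way. Your convolution computation also needs more than the telescoping of $i_\ell^!\IC_{\oZ_s,m}$ (which is Lemma~\ref{lem:fibre}): you must know that the specific differentials you have written down induce the right isomorphisms between these pieces, and you acknowledge this as the ``main obstacle'' without resolving it. The paper's route buys exactly this: by proving $\Hom(\IC_{\oZ_{i+1},h}\la -1\ra,\IC_{\oZ_i,h})\cong\C[h]$ and that off-diagonal $\Hom$'s are $\C[h]$-torsion (Lemma~\ref{lem:j*}), it never needs to identify or lift the differentials --- uniqueness does the work. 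If you want to salvage your direction, the cleanest fix is to borrow Lemma~\ref{lem:j*}: once you know the $\Hom$-space between consecutive terms is rank one over $\C[h]$, any nonzero $\MHM$ map (which you can produce, say, from the extension class in the weight filtration, or as a boundary map) must hit a nonzero multiple of the Rickard differential under $\fG$, and then your argument goes through.
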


\begin{Remark}
Here Saito's pushforward of Hodge modules is very important in the statement of the theorem.  In particular, $\fG(j_* \delta_{Z_0,m}) $ and $ \int_j \delta_{Z_0,h} $ are \textit{not} isomorphic as $ \D_h $-modules.
\end{Remark}

By Proposition \ref{prop:pushHodgeD}, we immediately deduce the following corollary which is used in the work of Bezrukavnikov-Losev \cite{BL}.
\begin{Corollary}
We have $\sT \otimes_{\C[h]} \C_1 \cong \int_j \O_{Z_0} [d_k]$.
\end{Corollary}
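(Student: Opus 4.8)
The plan is to deduce this statement directly from Theorem \ref{thm:equivopen} by specializing $h$ to $1$; the only work is to keep track of how the shifts behave under $-\otimes_{\C[h]}\C_1$. Recall from Section \ref{sec:2} that the functor $-\otimes_{\C[h]}\C_1\colon D(\D_{X,h}\mod)\to D(\D_X\mod)$ converts the grading on a $\D_{X,h}$-module into a filtration on the underlying $\D_X$-module, so that after applying it the internal grading shift $\{1\}$ becomes trivial, while the cohomological shift $[1]$ is preserved. Since $\la 1\ra=[1]\{-1\}$, it follows that $(M\la d_k\ra)\otimes_{\C[h]}\C_1\cong (M\otimes_{\C[h]}\C_1)[d_k]$ for any $M\in D(\D_{X,h}\mod)$.

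First I would invoke Theorem \ref{thm:equivopen} to write $\sT\cong\fG(j_*\delta_{Z_0,m})\la d_k\ra$ and apply $-\otimes_{\C[h]}\C_1$ to both sides, obtaining
$$\sT\otimes_{\C[h]}\C_1\cong\bigl(\fG(j_*\delta_{Z_0,m})\otimes_{\C[h]}\C_1\bigr)[d_k].$$
Next I would apply Proposition \ref{prop:pushHodgeD} to the open embedding $j\colon Z_0\to\G(k,N)\times\G(N-k,N)$ and the Hodge module $\delta_{Z_0,m}\in\MHM(Z_0)$, which gives
$$\fG(j_*\delta_{Z_0,m})\otimes_{\C[h]}\C_1\cong\int_j\bigl(\fG(\delta_{Z_0,m})\otimes_{\C[h]}\C_1\bigr),$$
using that the $\D_h$-module pushforward $\int_j$ is compatible with the base change $\C[h]\to\C_1$. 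Finally, since $Z_0$ is smooth, $\delta_{Z_0,m}$ is the Hodge module attached to the trivial rank-one variation of Hodge structure, so its underlying $\D_{Z_0}$-module is $\O_{Z_0}$; equivalently $\fG(\delta_{Z_0,m})=\O_{Z_0,h}\{\dim Z_0\}$, whence $\fG(\delta_{Z_0,m})\otimes_{\C[h]}\C_1\cong\O_{Z_0}$. Substituting yields $\sT\otimes_{\C[h]}\C_1\cong\int_j\O_{Z_0}[d_k]$, as claimed.

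The only point requiring care is the bookkeeping of the grading shifts against $-\otimes_{\C[h]}\C_1$, which is routine; all the substantive content is already packaged in Theorem \ref{thm:equivopen} together with Proposition \ref{prop:pushHodgeD} — the latter being exactly the statement that, after passing to honest $\D$-modules, Saito's pushforward of Hodge modules along the non-proper map $j$ agrees with the naive $\D$-module pushforward $\int_j$. I therefore anticipate no genuine obstacle in this argument.
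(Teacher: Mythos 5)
Your proposal is correct and is exactly the paper's argument: the paper deduces this corollary "immediately" from Theorem \ref{thm:equivopen} together with Proposition \ref{prop:pushHodgeD}, which is precisely the combination you use. Your bookkeeping of the shifts (that $\la d_k\ra$ becomes $[d_k]$ after setting $h=1$, since $\{1\}$ only shifts the filtration) and the identification $\fG(\delta_{Z_0,m})\otimes_{\C[h]}\C_1\cong\O_{Z_0}$ are both as intended.
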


\subsection{Filtrations and iterated cones}

The main step in proving Theorem \ref{thm:equivopen} is showing that the associated graded pieces in the weight filtration of $\fG(j_* \delta_{Z_0,m})$ are the same as the terms in Rickard's complex used to define $\sT$ as an iterated cone. To do this we first recall a few generalities regarding filtrations and iterated cones.

Consider an object $\sA$ equipped with an increasing filtration
$$0 \subset W_0 \sA \subset W_1 \sA \subset \dots \subset W_k \sA = \sA$$
where we denote the subquotients by $\gr_i \sA := W_i \sA / W_{i-1} \sA$. If $k=1$ then we have the exact triangle
$$\gr_0 \sA \rightarrow \sA \rightarrow \gr_1 \sA$$
where $\gr_0 \sA = W_0 \sA$. We can rewrite this triangle as $\sA \cong \Cone(\gr_1 \sA [-1] \rightarrow \gr_0 \sA)$. If $k > 1$ then we can repeat this argument to find that
\begin{equation}\label{eq:cone}
\sA \cong \Cone \left( \gr_k \sA [-k] \rightarrow \gr_{k-1} \sA [-k+1] \rightarrow \dots \rightarrow \gr_1 \sA [-1] \rightarrow \gr_0 \sA \right).
\end{equation}
where the expression on the right is an iterated cone. Iterated cones, also called convolutions, were introduced by Orlov \cite{O}. The definition is quite general and applies to any complex
$$A_k \xrightarrow{f_k} A_{k-1} \rightarrow \dots \rightarrow A_1 \xrightarrow{f_1} A_0$$
of objects in a triangulated category. The convolution of a complex $(A_\bullet,f_\bullet)$ may not exist and may not be unique. However, there are simple homological conditions under which both existence are assured.

\begin{Lemma}\cite[Prop.8.3]{CK1}\label{lem:convolution}
Let $(A_\bullet, f_\bullet)$ be a complex in an abstract triangulated category.
\begin{enumerate}
\item If $\Hom(A_{i+j+1} [j], A_i) = 0 $ for all $ i \ge 0, j \ge 1 $, then any two convolutions of $ (A_\bullet, f_\bullet) $ are isomorphic.
\item If $\Hom(A_{i+j+2} [j], A_i) = 0 $ for all $ i \ge 0, j \ge 1 $, then $(A_\bullet, f_\bullet)$ has a convolution.
\end{enumerate}
\end{Lemma}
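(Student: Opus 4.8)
The plan is to prove both parts by induction on the length $k$ of the complex, using the recursive description of a convolution as a Postnikov system of iterated mapping cones. Recall that a right convolution of $A_k \xrightarrow{f_k} \cdots \xrightarrow{f_1} A_0$ is assembled from objects $C_0 = A_0$, $C_1 = \Cone(f_1)$ and, for $2 \le m \le k$, a morphism $\phi_m$ from $A_m$ to a shift of $C_{m-1}$ which, under the connecting morphism of the triangle defining $C_{m-1}$, restricts to $f_m$, together with $C_m := \Cone(\phi_m)$ up to the appropriate cohomological shift; the convolution is the object $C_k$ at the end of the tower. Thus everything reduces to two assertions: (a) the lift $\phi_m$ can be chosen at every stage (this gives existence, part (ii)), and (b) the resulting object is, up to isomorphism, independent of all the choices made (this gives uniqueness, part (i)). Both are governed by the obstruction theory of the lifts $\phi_m$, and the two $\Hom$-vanishing hypotheses are exactly calibrated to annihilate the obstruction and ambiguity groups that arise.

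For existence I would delete the leftmost term, apply the inductive hypothesis to $A_{k-1} \to \cdots \to A_0$ (which inherits hypothesis (ii) verbatim) to obtain a Postnikov system $C_0, \dots, C_{k-1}$, and then attempt to adjoin $A_k$. Rotating the defining triangles of $C_1, \dots, C_{k-1}$ one after another — equivalently, applying the octahedral axiom to peel off one stratum at a time — one finds that the composite of $f_k$ with the previous lift is forced to lie in the kernel of a string of maps, hence in the image of a direct sum of groups of the form $\Hom(A_k[\,k-i-2\,], A_i)$ with $0 \le i \le k-3$. Writing $j = k-i-2 \ge 1$ this is $\Hom(A_{i+j+2}[j], A_i)$, which vanishes by (ii); therefore the obstruction is automatically zero, \emph{regardless of the choices made at earlier stages}, so $\phi_k$ exists and $C_k := \Cone(\phi_k)$, suitably shifted, is a convolution.

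For uniqueness, given two convolutions realised by Postnikov systems $\{C_m\}$ and $\{C'_m\}$, I would construct compatible isomorphisms $\psi_m \colon C_m \xrightarrow{\sim} C'_m$ by induction on $m$, with $\psi_0 = \id_{A_0}$. Given $\psi_{m-1}$, the two maps $\psi_{m-1}[m-1]\circ\phi_m$ and $\phi'_m$ from $A_m$ to the relevant shift of $C'_{m-1}$ both restrict to $f_m$ on the top stratum, so their difference dies there and, unwinding the triangles as before, lies in the image of a sum of groups $\Hom(A_m[\,m-i-1\,], A_i)$ with $0 \le i \le m-2$, i.e.\ $\Hom(A_{i+j+1}[j], A_i)$ with $j \ge 1$; this vanishes by (i). Hence $\psi_{m-1}[m-1]\circ\phi_m = \phi'_m$, so $\id_{A_m}$ and $\psi_{m-1}[m-1]$ extend to a morphism of the triangles defining $C_m$ and $C'_m$, and the usual triangulated five-lemma forces the third component $\psi_m$ to be an isomorphism. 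Because $\psi_m$ itself is not unique (its ambiguity is a group that need not vanish), the induction must carry the entire compatible tower, not merely the object $C_k$ — this is the one point requiring genuine care.

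The step I expect to be the main obstacle is the precise bookkeeping in the two ``unwinding'' arguments: identifying exactly which groups $\Hom(A_a[b], A_c)$ the obstruction class (for existence) and the difference class (for uniqueness) live in, and checking that the shifts $b$ line up with the $+2$ (respectively $+1$) gap built into the hypotheses. This is most cleanly done either by iterated use of the octahedral axiom to strip off one stratum of the Postnikov system at a time, or by running the spectral sequence of the filtered object $C_k$ (respectively of a morphism between two such filtered objects). Once these groups are correctly pinned down, the remaining arguments are formal manipulations with distinguished triangles.
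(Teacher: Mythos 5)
Your proof is correct and is essentially the argument behind the cited reference [CK1, Prop.~8.3] (the paper itself only cites this result rather than reproving it): one builds the Postnikov tower inductively, the obstruction to adjoining $A_m$ landing --- after the relation $f_{m-1}\circ f_m=0$ kills the top graded piece --- in groups $\Hom(A_{i+j+2}[j],A_i)$ with $j\ge 1$, while the ambiguity in matching two towers lands in groups $\Hom(A_{i+j+1}[j],A_i)$ with $j\ge 1$. Your index bookkeeping checks out, and your remark that the uniqueness induction must carry the entire compatible tower rather than just the final object is precisely the point requiring care.
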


\subsection{The associated graded of the weight filtration} \label{se:weightgraded}

Consider the weight filtration $W$ on $j_* \delta_{Z_0,m}$ and the associated graded
$$\gr^W_s(j_* \delta_{Z_0,m}) := W_s(j_* \delta_{Z_0,m})/W_{s-1}(j_* \delta_{Z_0,m}).$$

\begin{Proposition}\label{prop:grj}
We have $\fG(\bigoplus_s \gr^W_s(j_* \delta_{Z_0,m})) \otimes_{\C[h]} \C_1 \cong \bigoplus_s \IC_{\oZ_s}$.
\end{Proposition}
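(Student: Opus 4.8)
The plan is to reduce the statement to a multiplicity computation and then carry that out via Euler characteristics of stalks along the orbit strata $Z_s$.

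\emph{Step 1: reduction to multiplicities.} The stabilizer in $\GL_N$ of a pair $(V,V')$ with $V\cap V'=0$ is $\GL_k\times\GL_{N-k}$, which is reductive, so by Matsushima's criterion $Z_0\cong\GL_N/(\GL_k\times\GL_{N-k})$ is affine; hence $j$ is an affine open embedding and $j_*\delta_{Z_0,m}$ is a single mixed Hodge module. Its weight-graded pieces $\gr^W_s(j_*\delta_{Z_0,m})$ are pure, hence semisimple in $\MHM(X)$, so the exact functor $\fG(-)\otimes_{\C[h]}\C_1$ --- which assigns to a mixed Hodge module its underlying $\D_X$-module (Section~\ref{sec:2}) --- turns $\bigoplus_s\gr^W_s(j_*\delta_{Z_0,m})$ into a \emph{semisimple} $\D_X$-module whose class in the Grothendieck group equals that of the underlying $\D_X$-module of $j_*\delta_{Z_0,m}$, which is $\int_j\O_{Z_0}$ by Proposition~\ref{prop:pushHodgeD}. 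Now $\G(k,N)\times\G(N-k,N)$ is a union of the finitely many $\GL_N$-orbits $Z_t$ ($0\le t\le k$), and the stabilizer of any pair $(V,V')$ is connected (in a compatible basis it is a product of general linear groups extended by a unipotent group), so each $Z_t$ is simply connected and the only $\GL_N$-equivariant simple $\D_X$-module supported on $\oZ_t$ is $\IC_{\oZ_t}$. Writing $[\int_j\O_{Z_0}]=\sum_{t=0}^k c_t[\IC_{\oZ_t}]$ with $c_t\in\Z_{\ge0}$, the Proposition becomes the assertion that $c_t=1$ for every $t$.

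\emph{Step 2: the multiplicity count.} Fix $s$ and a point $x=(V,V')\in Z_s$, and apply $i_x^*$ followed by the Euler characteristic $\chi=\sum_j(-1)^j\dim$. Since $k+(N-k)=N$, Proposition~\ref{pr:smallres} gives a small resolution $\pi_t\colon P_t\to\oZ_t$, so $\IC_{\oZ_t}$ is $R\pi_{t*}\underline{\C}_{P_t}[\dim Z_t]$ with $\pi_t^{-1}(x)=\G(t,V\cap V')\cong\G(t,s)$; as $\G(t,s)$ has only even cohomology and $\dim Z_t=2d_k-t^2$, this gives $\chi(i_x^*\IC_{\oZ_t})=(-1)^t\binom{s}{t}$. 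On the other hand $\int_j\O_{Z_0}$ corresponds to $Rj_*\underline{\C}_{Z_0}[\dim Z_0]$, whose stalk at $x$ is $R\Gamma(B_x\cap Z_0)[\dim Z_0]$ for a small ball $B_x\ni x$. An elementary computation --- writing nearby $V,V'$ as graphs of small linear maps from $V_0,V_0'$ into fixed complements, one sees that $V\cap V'$ is the kernel of an induced map between two $s$-dimensional spaces, and this identifies a transverse slice to $Z_s$ at $x$ with a neighbourhood of $0$ in $\mathrm{Mat}_{s\times s}(\C)$ on which $Z_r$ is the locus of matrices of rank exactly $s-r$ --- shows $B_x\cap Z_0\simeq\GL_s(\C)\simeq U(s)$. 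Since $\dim Z_0=2d_k$ is even, $\chi(i_x^*\int_j\O_{Z_0})=\chi(U(s))=\prod_{i=1}^s(1-1)$, which is $1$ for $s=0$ and $0$ otherwise. Comparing the two computations yields the triangular system
\[
\sum_{t=0}^{s}(-1)^t\binom{s}{t}\,c_t=\begin{cases}1,&s=0,\\0,&s\ge1,\end{cases}\qquad s=0,1,\dots,k,
\]
with invertible diagonal entries $(-1)^s$; since $\sum_{t=0}^{s}(-1)^t\binom{s}{t}=(1-1)^s$, its unique solution is $(c_0,\dots,c_k)=(1,\dots,1)$. Hence $c_t=1$ for all $t$, proving the Proposition.

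\emph{The main obstacle} is the local-structure claim in Step 2 --- that a transverse slice to $Z_s$ is the rank-stratified matrix variety $\mathrm{Mat}_{s\times s}(\C)$; this is elementary linear algebra but must be set up with care, and everything else is bookkeeping with the small resolutions of Proposition~\ref{pr:smallres} and the topology of $U(s)$. (An alternative to the Euler-characteristic count is to identify $\int_j\O_{Z_0}$ on this slice with the localization $\O_{\mathrm{Mat}_{s\times s}}[\det^{-1}]$ and invoke its known multiplicity-free composition series, whose factors are the intersection-cohomology $\D$-modules of the rank loci and reflect the simple roots $-1,\dots,-s$ of the Bernstein--Sato polynomial of $\det$; but the route above uses only results already in this paper.)
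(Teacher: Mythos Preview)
Your proof is correct and follows the same architecture as the paper's: reduce to showing each multiplicity is $1$, restrict to a point $x_\ell\in Z_\ell$ on each stratum, and solve the resulting triangular system $\sum_t(-1)^t\binom{\ell}{t}c_t=\delta_{\ell,0}$ using the small resolutions of Proposition~\ref{pr:smallres} to compute the stalks of the $\IC_{\oZ_t}$. The one genuine difference is in how you compute the stalk of $\int_j\O_{Z_0}$ at $x_\ell$. You take the $*$-restriction, identify a transverse slice to $Z_\ell$ with the rank stratification of $\mathrm{Mat}_{\ell\times\ell}(\C)$, and use $\chi(U(\ell))=0$ for $\ell\ge1$; this is what you flag as the ``main obstacle''. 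The paper instead takes the $!$-restriction $i_\ell^\dagger$ and invokes base change for the open embedding $j$: since $x_\ell\notin Z_0$ for $\ell>0$, the fibre product is empty and $i_\ell^\dagger\int_j\delta_{Z_0}=0$ immediately, with no local model required. Both routes feed the same Euler-characteristic identity into the same triangular system, but the paper's choice of $i^!$ over $i^*$ eliminates your obstacle entirely; it is worth noting as a simplification of your argument.
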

\begin{proof}
Each $\gr^W_s(j_* \delta_{Z_0,m})$ is a polarizable pure Hodge module of weight $s$. Therefore,
$$\fG(\gr^W_s(j_* \delta_{Z_0,m})) \otimes_{\C[h]} \C_1$$
is isomorphic to a direct sum of IC $\D$-modules (cf. \cite{PS},  Theorem 14.37). Because everything is $GL_N$-equivariant, these must be the IC objects of $GL_N $ orbits. As all these orbits are simply connected, the only local systems arising must be trivial. Thus we obtain
\begin{equation} \label{eq:Phi}
\fG(\bigoplus_s \gr^W_s(j_* \delta_{Z_0,m})) \otimes_{\C[h]} \C_1 \cong \bigoplus_s \IC_{\oZ_s}^{\oplus f_s}
\end{equation}
for some $f_s \in \N$. It remains to show that all $f_s=1$.

Fix $ 0 \le \ell \le k$ and pick a point $ x_\ell \in Z_\ell$ and let $ i_\ell :\{ x_\ell \} \rightarrow \G(k,N) \times \G(N-k,N) $ denote the inclusion of this point. Using base change for $\D$-modules and Proposition \ref{prop:pushHodgeD} we have
\begin{equation}\label{eq:A}
i_\ell^{\d}(\fG(j_* \delta_{Z_0,m}) \otimes_{\C[h]} \C_1) \cong i_\ell^{\d} \int_j \delta_{Z_0} \cong \begin{cases} \C[-2d_k] & \text{ if $ \ell = 0 $} \\ 0 & \text{ otherwise } \end{cases}
\end{equation}

On the other hand, applying $i_\ell^{\d}$ to both sides of \eqref{eq:Phi}, we obtain
\begin{align}\label{eq:B}
i_\ell^{\d} \left( \bigoplus_s \fG(\gr^W_s(j_* \delta_{Z_0,m})) \otimes_{\C[h]} \C_1 \right)
&\cong \bigoplus_s i_\ell^{\d} \IC_{\oZ_s}^{\oplus f_s} \\
\nonumber &\cong \bigoplus_s \bigoplus_{\qbins{\ell}{s}} \C^{\oplus f_s} [s\ell-2d_k]
\end{align}
where the second isomorphism follows from Lemma \ref{lem:fibre} (restricted to $h=1$). Taking Euler characteristics gives
$$\sum_s (-1)^{s\ell} f_s (-1)^{s(\ell-s)} \binom{\ell}{s}  = \begin{cases} 1 & \text{ if $ \ell = 0 $} \\ 0 & \text{ otherwise } \end{cases}$$
The upper-triangularity of this system of relations above means that there is at most one set of solutions $f_s$. Since
$$\sum_{s} (-1)^s \binom{\ell}{s} = \begin{cases} 1 & \text{ if $ \ell = 0 $} \\ 0 & \text{ otherwise} \end{cases}$$
it follows that $f_s = 1$.
\end{proof}

\begin{Corollary}\label{cor:grj}
For each $ s = 0,\dots, k $, we have $\gr^W_s(j_* \delta_{Z_0,m}) \cong \IC_{\oZ_s,m} \{s\}$ and for $ p> s$, $\gr^W_p( j_* \delta_{Z_0,m}) = 0 $.
\end{Corollary}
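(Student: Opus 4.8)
The plan is to upgrade Proposition \ref{prop:grj}, which identifies the underlying $\D$-modules of the weight-graded pieces $\gr^W_s(j_*\delta_{Z_0,m})$, to a statement about Hodge modules by bookkeeping weights. First I would note that, by definition of the weight filtration, each $\gr^W_s(j_*\delta_{Z_0,m})$ is a polarizable pure Hodge module of weight $s$; by the structure theorem (Theorem \ref{th:structure}) it decomposes according to strict support, and since the whole situation is $\GL_N$-equivariant the strict supports must be orbit closures, i.e.\ among the $\oZ_t$, with only trivial local systems occurring because the orbits are simply connected. Hence each summand of $\gr^W_s$ is a Tate twist of some $\IC_{\oZ_t,m}$, and purity of weight $s$ forces every such twist to be $\{s\}$ (since $\IC_{\oZ_t,m}$ has weight $0$). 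Applying $\fG(-)\otimes_{\C[h]}\C_1$ and comparing with Proposition \ref{prop:grj}, each $\IC_{\oZ_t}$ occurs exactly once in total over all $s$; so there is an injection $\sigma$ of $\{0,\dots,k\}$ into $\Z_{\geq 0}$ with $\gr^W_{\sigma(t)}(j_*\delta_{Z_0,m})\cong \IC_{\oZ_t,m}\{\sigma(t)\}$ and $\gr^W_p=0$ for $p\notin\mathrm{im}(\sigma)$. The whole content then reduces to showing $\sigma=\mathrm{id}$, which in particular yields $\gr^W_p=0$ for $p>k$.

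For $\sigma(0)=0$ this is the standard fact that $W_0(j_*\delta_{Z_0,m})$ is the intermediate extension $j_{!*}\delta_{Z_0,m}=\IC_{X,m}=\IC_{\oZ_0,m}$ (and $W_{-1}=0$). For the deeper strata I would argue locally. Factor $j$ as $Z_0=U_0\hookrightarrow U_1\hookrightarrow\cdots\hookrightarrow U_k=X$ with $U_s=X\setminus\oZ_{s+1}$, so that each step $U_{s-1}\hookrightarrow U_s$ is the open complement of the closed stratum $Z_s\subset U_s$, of codimension $s^2$. By $\GL_N$-equivariance the behaviour of $j_*\delta_{Z_0,m}$ near a point of $Z_s$ is governed by a normal slice; a direct computation of relative positions of pairs of subspaces identifies the transverse slice to $Z_s$ in $X$ with the space $\mathrm{Mat}_{s\times s}$ of $s\times s$ matrices, with the induced stratification the rank stratification ($Z_t$ corresponding to $\{\mathrm{rank}=s-t\}$), so that $Z_0$ becomes $\GL_s=\{\det\neq 0\}$ and $j$ becomes locally the open embedding $\GL_s\hookrightarrow\mathrm{Mat}_{s\times s}$. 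It then suffices to compute the weight filtration of the mixed Hodge module pushforward of $\delta_{\GL_s,m}$ along the complement of the determinant hypersurface: I claim its graded pieces are $\IC_{\{\mathrm{rk}\le s-i\},m}\{i\}$ in weight $i$ for $i=0,\dots,s$. This can be obtained by induction on $s$, or deduced from the known structure of $\O_{\mathrm{Mat}_{s\times s}}(\ast\det)$ as a filtered $\D$-module (the Bernstein--Sato polynomial of $\det$ is $\prod_{i=1}^s(x+i)$, whose roots $-1,\dots,-s$ being $s$ consecutive integers forces exactly $s+1$ weight steps). Transporting back through the slice with $s=t$, the summand of $j_*\delta_{Z_0,m}$ with strict support $\oZ_t$ lands in $\gr^W_t$ with Tate twist $\{t\}$, so $\sigma=\mathrm{id}$ and no weight exceeds $k$.

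The main obstacle is precisely this last, local computation: the identification of the slice to $Z_s$ with the rank-stratified matrix space \emph{compatibly} with the sheaf $j_*\delta_{Z_0,m}$, together with the determination in $\MHM$ of the weight filtration of $\O_{\mathrm{Mat}_{s\times s}}(\ast\det)$. The crucial and least formal point is that the weight of the piece supported on $Z_s$ jumps by exactly $1$ relative to the generic stratum --- so that weights run through $0,1,\dots,k$ --- rather than by the codimension $s^2$; this "consecutive integers" phenomenon is what the minuscule geometry (equivalently, the tame Bernstein--Sato polynomial of $\det$) supplies, and it is the step I expect to require the most care. An alternative that sidesteps the determinantal calculation would be to run the slice argument entirely inside the Grassmannian world and induct on $N$, but one is still left controlling the new weights introduced at each stratum $Z_s$, so the same issue resurfaces.
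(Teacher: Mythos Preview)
Your reduction to determining the injection $\sigma$ is exactly the paper's, but the paper computes $\sigma$ by a much shorter and entirely self-contained method, avoiding slices and the determinantal calculation you flag as the obstacle. Having already established Lemma~\ref{lem:fibre}, which computes $i_\ell^{!}\,\IC_{\oZ_s,m}$ as a mixed Hodge structure (not merely as a $\D$-module), the paper simply reruns the fibre-at-$x_\ell$ argument from Proposition~\ref{prop:grj} in $\MHM$: writing $M_{\oZ_s}=\IC_{\oZ_s,m}\{e_s\}$ for unknown integers $e_s$, applying $i_\ell^{!}$ and taking Poincar\'e polynomials (now tracking the internal grading via $q$) yields
\[
\sum_s (-1)^{e_s}\, q^{\,s\ell - 2d_k - e_s}\,\qbins{\ell}{s}
\;=\; q^{-2d_k}\,\delta_{\ell,0},
\]
and the standard identity $\sum_s (-1)^s q^{s(\ell-1)}\qbins{\ell}{s}=\delta_{\ell,0}$ forces $e_s=s$. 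Since $\IC_{\oZ_s,m}\{s\}$ is pure of weight~$s$, it must sit in $\gr^W_s$, and no weight exceeds~$k$. What each approach buys: the paper's needs nothing beyond the small resolutions $P_s\to\oZ_s$ and MHM base change already set up; yours explains conceptually \emph{why} the weights are consecutive (via the tame $b$-function of $\det$) and would survive in the cominuscule setting of Section~\ref{sec:cominus}, where the small resolutions $P_s$ are no longer available but slices to orbits may still be tractable.
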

\begin{proof}
By Saito's results each $\gr^W_p(j_* \delta_{Z_0,m})$ is a pure polarizable Hodge module of weight $ p $.  By the structure theorem, we have
$$ \oplus_p \gr^W_s(j_* \delta_{Z_0,m}) = \oplus_t M_{\oZ_t} $$
where $ M_{\oZ_t} $ has strict support on $\oZ_t$.  On the other hand, from Proposition \ref{prop:grj}, we know that the underlying $\D$-module of $ \oplus_s \gr^W_s(j_* \delta_{Z_0,m}) $ is a direct sum of IC objects one for each stratum.  Thus if we restrict to an open subset of $ \oZ_0$, we see that $ M_{\oZ_0} $ must restrict to a constant rank 1 variation of Hodge structure (since the direct sum of the IC $ \D$-modules restricts to a trivial rank 1 flat connection).  Thus $M_{\oZ_0} = \IC_{\oZ, m}\{e_0\} $ for some integer $ e_0 $.  We then remove this direct summand and continue.  In this way, we conclude that $ M_{\oZ_s} = \IC_{\oZ_s,m}\{e_s\}$ for some integers $ e_s$.

To determine the integers $ e_s $, we just repeat the computation from the proof of Proposition \ref{prop:grj} except working in the category of mixed Hodge modules.  This gives us the equation

$$\sum_s (-1)^{e_s}  q^{s\ell -2d_k- e_s} \qbins{\ell}{s}  = \begin{cases} q^{-2d_k} & \text{ if $ \ell = 0 $} \\ 0 & \text{ otherwise } \end{cases}$$

The identity
\begin{equation*}
\sum_{s} (-1)^s q^{s \ell-s} \qbins{\ell}{s} = \begin{cases} 1 & \text{ if $ \ell = 0 $} \\ 0 & \text{ otherwise} \end{cases}
\end{equation*}
thus implies that  $e_s = s$.

Hence we see that
$$\oplus_p \gr^W_p(j_* \delta_{Z_0,m}) = \oplus_{s=0}^k \IC_{\oZ_s, m} \{s \} $$
Since for each $ p $,  there is at most one summand of weight $ p $ on the right hand side, the result follows.

\end{proof}

\begin{Lemma}\label{lem:fibre}
We have $i_\ell^! \IC_{\oZ_s,m} \cong \bigoplus_{\qbins{\ell}{s}} \C \la s\ell-2d_k \ra$.
\end{Lemma}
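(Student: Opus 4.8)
The plan is to realize $\IC_{\oZ_s,m}$ through the small resolution $\pi : P_s \to \oZ_s$ and then apply proper base change. Since the two Grassmannians here are complementary we have $k+(N-k)=N$, so Proposition \ref{pr:smallres} applies and $\pi$ is a small resolution; hence Corollary \ref{cor:small} gives $\IC_{\oZ_s,m} \cong \pi_*(\delta_{P_s,m})$. If $\ell < s$ then $x_\ell \notin \oZ_s$, so $i_\ell^!\IC_{\oZ_s,m}=0$, matching $\qbins{\ell}{s}=0$; thus we may assume $s \le \ell \le k$.

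Next I would apply the base change theorem for Hodge modules (Theorem \ref{thm:basechangeforHodge}) to the fibre square obtained by pulling $\pi$ back along $i_\ell : \{x_\ell\} \hookrightarrow \oZ_s$. Writing $F := \pi^{-1}(x_\ell)$, with $j : F \hookrightarrow P_s$ the inclusion and $\overline{\pi} : F \to \{x_\ell\}$ the projection (note $\pi$ is proper, $P_s$ and $\oZ_s$ being projective), base change yields $i_\ell^!\IC_{\oZ_s,m} \cong \overline{\pi}_*\, j^!(\delta_{P_s,m})$.

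It remains to identify the two factors. Writing $x_\ell = (V,V') \in Z_\ell$, the fibre is $F = \{V'' : V'' \subset V\cap V',\ \dim V'' = s\} = \G(s, V\cap V') \cong \G(s,\ell)$, a smooth projective variety of dimension $s(\ell - s)$; in particular it is a smooth closed subvariety of the smooth variety $P_s$ of codimension $c = \dim P_s - \dim F = (2d_k - s^2) - s(\ell-s) = 2d_k - s\ell$. For the $!$-pullback of the constant Hodge module along a closed embedding of smooth varieties one has $j^!(\delta_{P_s,m}) \cong \delta_{F,m}\la -c\ra$ — this is exactly the computation of $\widehat{\Delta}^!(\delta_{P',m})$ carried out in the proof of Lemma \ref{lem:pushforward}. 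Finally $\overline{\pi}_*(\delta_{F,m}) = \bigoplus_{\qbins{\ell}{s}}\C$ as a Hodge structure, using the identity $H^*(\G(s,\ell)) = \bigoplus_{\qbins{\ell}{s}}\C$ recorded in section \ref{sec:divided}. Combining these, $i_\ell^!\IC_{\oZ_s,m} \cong \bigoplus_{\qbins{\ell}{s}}\C\la -c\ra = \bigoplus_{\qbins{\ell}{s}}\C\la s\ell - 2d_k\ra$, as claimed.

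The only delicate point is the shift: one must check that $j^!$ of the constant Hodge module contributes precisely $\la -c\ra = \la s\ell - 2d_k\ra$ (rather than an independent combination of a cohomological shift and a Tate twist) and that pushing $\delta_{F,m}$ down to a point introduces no further overall shift beyond the Poincar\'e polynomial $\qbins{\ell}{s}$. Both are forced by the normalizations fixed in the paper — in particular $\la 1 \ra = [1]\{-1\}$ and the convention $\delta_{Z,m} = \IC_{Z,m}$ — and amount to precisely the bookkeeping already done in the proof of Lemma \ref{lem:pushforward}; everything else here is formal.
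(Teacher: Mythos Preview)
Your proof is correct and follows essentially the same approach as the paper: realize $\IC_{\oZ_s,m}$ via the small resolution $\pi: P_s \to \oZ_s$, apply proper base change for Hodge modules over $x_\ell$, identify the fibre with $\G(s,\ell)$, and use the known Hodge structure $H^*(\G(s,\ell)) = \bigoplus_{\qbins{\ell}{s}} \C$ together with the codimension shift. The paper's version compresses your two steps $j^!(\delta_{P_s,m}) \cong \delta_{F,m}\la -c\ra$ and $\overline{\pi}_*(\delta_{F,m})$ into a single line, but the argument is identical.
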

\begin{proof}
Recall the variety
$$P_s = \{ (V'', V, V') \in \G(s, N) \times \G(k,N) \times \G(N-k, N): V'' \subset (V \cap V')\}.$$
The natural map $P_s \rightarrow \G(k,N) \times \G(N-k, N) $ has image $\oZ_s$ and by Proposition \ref{pr:smallres} the map $\pi_s: P_s \rightarrow \oZ_s$ is a small resolution. This means that $\IC_{\oZ_s,m} = \pi_{s*} \delta_{P_s,m}$. Applying base change we get $i_\ell^{!} \IC_{\oZ_s,m} = \pi_{s*} \delta_{\pi_s^{-1}(x_\ell),m} \la \dim \pi_s^{-1}(x_\ell) - \dim P_s \ra$. Note that
$$ \dim (\pi_s^{-1}(x_\ell)) - \dim (P_s) = s(\ell-s) - (2d_k - s^2) = s\ell - 2d_k.$$
Now $\pi_s^{-1}(x_\ell) \cong \G(s,\ell)$ and thus
$$i_\ell^! \IC_{\oZ_s,m} \cong H^* (\G(s,\ell)) \la s\ell-2d_k \ra \cong \bigoplus_{\qbins{\ell}{s}} \C \la s\ell -2d_k \ra.$$
\end{proof}

\subsection{Proof of Theorem \ref{thm:equivopen}}

\begin{Proposition} \label{prop:ThetaIC}
We have $\Theta^s \cong \IC_{\oZ_s,h} \la d_k \ra$.
\end{Proposition}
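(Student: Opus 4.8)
The plan is to compute $\Theta^s = \sF^{(N-k-s)} * \sE^{(k-s)}$ directly as a pushforward of a delta-module along a projection that forgets an intermediate subspace, exactly as in the proof of Proposition \ref{prop:sl2Grass} and the divided-powers calculation of section \ref{sec:divided}. First I would write down the correspondence variety: composing $\sE^{(k-s)}$ (the kernel on $C^{k-s} \subset \G(k,N) \times \G(s,N)$) with $\sF^{(N-k-s)}$ (the kernel on $C^{N-k-s} \subset \G(s,N) \times \G(N-k,N)$) via convolution gives, using Corollary \ref{cor:Deltas} and a dimension count, $\Theta^s \cong \int_{p_{13}} \delta_{P,h} \la d_k \ra$ where $P = \{0 \subset W \subset V, V' \subset \C^N : \dim W = s, \dim V = k, \dim V' = N-k\}$ and $p_{13}$ forgets the subspace $W$. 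The image of $p_{13}$ is precisely $\oZ_s = \{(V,V') : \dim(V \cap V') \ge s\}$, and $P = P_s$ is exactly the resolution variety appearing in Proposition \ref{pr:smallres} and Lemma \ref{lem:fibre}.

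Next I would invoke Proposition \ref{pr:smallres}: since $2k \le N$ we have $k + (N-k) = N \le N$, so the hypothesis $k+l \le N$ is satisfied and the map $\pi_s = p_{13}|_P : P_s \rightarrow \oZ_s$ is a small resolution. By Corollary \ref{cor:small}, $\int_{p_{13}} \delta_{P_s,h} \cong \IC_{\oZ_s,h}$. Tracking the grading shift from the definitions of $\sE^{(k-s)}, \sF^{(N-k-s)}$ (and the $\la d_k \ra$ coming out of the convolution via Corollary \ref{cor:Deltas}, just as in Proposition \ref{prop:sl2Grass}), this yields $\Theta^s \cong \IC_{\oZ_s,h} \la d_k \ra$, as claimed.

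The only real care needed is bookkeeping of the grading shifts $\la \cdot \ra$ through the convolution $*$: one must verify that the shifts $\la d_k \ra$ from $\sE^{(k-s)}$ (which lives on $\G(k,N) \times \G(s,N)$, shifted by $d_k$) and $\la d_{N-k} \ra = \la d_k \ra$ from $\sF^{(N-k-s)}$, together with the internal $\dotimes$ shift and the $[-d_Y]$ appearing in convolution on the middle Grassmannian $\G(s,N)$, combine with the dimension identities $\dim C^{k-s} = (k-s)(N-s) + \cdots$ to leave exactly $\la d_k \ra$ after the pushforward of the delta-module along the small map. This is the step I would expect to be the main (if routine) obstacle — it is the same kind of calculation carried out implicitly in section \ref{sec:divided}, and the key simplification is that $\pi_s$ being small (Proposition \ref{pr:smallres}) means the pushforward is concentrated in a single cohomological degree with no extra summands, so no $[r]$-type multiplicities intervene. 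Everything else follows formally from Corollary \ref{cor:Deltas}, Corollary \ref{cor:small}, and the structure/decomposition theorems for Hodge modules.
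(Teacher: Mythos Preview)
Your proposal is correct and follows essentially the same route as the paper's own proof: compute the convolution $\sF^{(N-k-s)} * \sE^{(k-s)}$ as $\int_{p_{13}} \delta_{P_s,h} \la d_k \ra$ via the reasoning of Proposition~\ref{prop:sl2Grass} and Corollaries~\ref{cor:Deltas}, \ref{cor:pullbackdelta}, then invoke Proposition~\ref{pr:smallres} and Corollary~\ref{cor:small}. One small slip in your bookkeeping: the shift carried by $\sF^{(N-k-s)}$ is $\la d_s \ra$ (the dimension of the source Grassmannian $\G(s,N)$), not $\la d_{N-k} \ra$; nonetheless the total shift after the pullbacks and $\dotimes$ still collapses to $\la d_k \ra$ exactly as in Proposition~\ref{prop:sl2Grass}, so the conclusion is unaffected.
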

\begin{proof}
Applying similar reasoning as in the proof of Proposition \ref{prop:sl2Grass} we see that
$$\sF^{(N-k-s)} * \sE^{(k-s)} = \int_\pi \delta_{P_s,h} \la d_k \ra.$$
By Proposition \ref{pr:smallres}, we know that $ \int_\pi \delta_{P_s,h} = \IC_{\oZ_s,h}$. The result follows.
\end{proof}

We will now complete the proof of Theorem \ref{thm:equivopen}.   Recall that $\sT$ is the right convolution of $\Theta^k \la -k \ra \rightarrow \dots \rightarrow \Theta^0$.  By Proposition \ref{prop:ThetaIC}, we can rewrite this complex (up to an overall $\la d_k \ra$) as
$$
\IC_{\oZ_k,h} \la -k \ra \rightarrow \dots \rightarrow \IC_{\oZ_1,h} \la -1 \ra \rightarrow \IC_{\oZ_0,h}
$$
By Lemma \ref{lem:j*} below, we know that the differentials in this complex are unique (up to scalar).

 Denote $U_s := Z_0 \cup Z_1 \dots \cup Z_s$ and let $j_s: Z_0 \rightarrow U_s$ be the natural inclusion (note that $j_k = j$).  We will show by induction that $\fG(j_{s*} \delta_{Z_0,m})$ is the iterated cone of a complex
\begin{equation}\label{eq:complex}
\IC_{\oZ_s,h} \la -s \ra \rightarrow \dots \rightarrow \IC_{\oZ_1,h} \la -1 \ra \rightarrow \IC_{\oZ_0,h}
\end{equation}
restricted to $U_s$. The base case is trivial. By Corollary \ref{cor:grj} we know that
$$\bigoplus_i \fG(\gr_i^W (j_{s*} \delta_{Z_0,m})) \cong \bigoplus_{i=1}^s \IC_{\oZ_s,h} \{s\}.$$
Moreover, the weight filtration of $j_{s*} \delta_{Z_0,m}$ restricted to $U_{s-1}$ agrees with the weight filtration of $j_{(s-1)*} \delta_{Z_0,m}$. These two facts together with the induction hypothesis imply that $\fG(j_{s*} \O_{Z_0,m})$ is isomorphic to the iterated cone of a complex of the form
$$\IC_{\oZ_{s-1},h} \la -s+1 \ra  \rightarrow \dots \rightarrow \IC_{\oZ_i,h} \la -i \ra  \oplus \IC_{\oZ_s,h} [-i] \{s\} \rightarrow \dots \rightarrow \IC_{\oZ_0,h}$$
restricted to $U_s$, for some $i$.
However, if $i \ne s$ then by Lemma \ref{lem:j*} all the maps to or from $\IC_{\oZ_s,h}[-i]\{s\}$ would be $\C[h]$-torsion and hence $\fG(j_{s*} \delta_{Z_0,m}) \otimes_{\C[h]} \C_1 \cong j_{s*} \O_{Z_0}$ would be decomposable (contradiction). It follows that $i=s$ and hence $\fG(j_{s*} \delta_{Z_0,m})$ is of the form (\ref{eq:complex}).

\begin{Lemma}\label{lem:j*}
We have that
\begin{itemize}
\item $\Hom_{\C[h]}(\IC_{\oZ_j,h} \{j\} [-l], \IC_{\oZ_i,h} \{i\})$ is $\C[h]$-torsion if $l \ne j-i$ and
\item $\Hom_{\C[h]}(\IC_{\oZ_{i+1},h} \la -1 \ra, \IC_{\oZ_i,h}) \cong \C[h]$.
\end{itemize}
\end{Lemma}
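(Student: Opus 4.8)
The plan is to compute these $\Hom$-spaces by reducing to the constructible side and using the small resolutions of Proposition~\ref{pr:smallres}. The basic input is that $\IC_{\oZ_s,h} = \fG(\IC_{\oZ_s,m})$ is a Rees module of a filtered $\D$-module, hence $\C[h]$-flat and $\C[h]$-torsion free; so any $\Hom$-space between shifts of these objects is a finitely generated, torsion-free — hence free — graded $\C[h]$-module, and it is enough to compute its rank, which we may do after inverting $h$ or after setting $h=1$. In either case we land in a category of (internally graded) $\D_X$-modules, and under Riemann--Hilbert the object $\IC_{\oZ_s,h}$ goes to the simple perverse sheaf $\IC_{\oZ_s}$ equipped with its Hodge grading; so the problem becomes the identification, together with its weight grading, of $\Ext^{\bullet}_{D^b_c}(\IC_{\oZ_j}, \IC_{\oZ_i})$.

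By Proposition~\ref{pr:smallres} the maps $\pi_s\colon P_s\to\oZ_s$ are small, so $\IC_{\oZ_s}\cong a_{s*}\C_{P_s}[\dim\oZ_s]$ where $a_s\colon P_s\to\G(k,N)\times\G(N-k,N)$ is the natural proper map. Adjunction together with base change along the Cartesian square defining $W := P_j \times_{\G(k,N)\times\G(N-k,N)} P_i$ gives $R\Hom(\IC_{\oZ_j},\IC_{\oZ_i}) \cong R\Gamma(W,\omega_W)[-\dim\oZ_j-\dim\oZ_i]$, so $\Ext^l_{D^b_c}(\IC_{\oZ_j},\IC_{\oZ_i}) \cong H_{\dim\oZ_j+\dim\oZ_i-l}(W)$ (ordinary homology, as $W$ is proper). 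Concretely $W = \{(V'',V''',V,V') : V'',V'''\subseteq V\cap V',\ \dim V''=j,\ \dim V'''=i\}$; it maps onto $\oZ_{\max(i,j)}$ with Grassmannian fibres, its dense stratum is a single Grassmannian bundle over the open orbit, so $W$ is irreducible, and its homology is of Tate type concentrated in even degrees (this can be written out from the induced stratification). An elementary dimension count shows $2\dim W < \dim\oZ_j+\dim\oZ_i - l$ once $|j-i|\ge 2$ and $l\ge 1$, so in that range the homology group — hence $\Ext^l$ — vanishes; when $|j-i|\le 1$ one reads off the answer directly, and for $j=i+1$, $l=1$ the relevant group is the top homology $H_{2\dim W}(W)=\C$.

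To pass from these constructible statements to the stated $\C[h]$-module statements one tracks the Tate weight: each $\Ext^l_{D^b_c}(\IC_{\oZ_j},\IC_{\oZ_i})$ is pure of a single weight, and matching that weight against the Tate twists $\{j\}, \{i\}$ and the cohomological shift $[-l]$ shows that the component selected by $\Hom_{\C[h]}(\IC_{\oZ_j,h}\{j\}[-l],\IC_{\oZ_i,h}\{i\})$ is $\C[h]$-torsion unless $l=j-i$ (equivalently: a morphism between the pure Hodge modules $\IC_{\oZ_j,m}\{j\}[-l]$ and $\IC_{\oZ_i,m}\{i\}$ must preserve weights), which is the first bullet. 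For the second bullet, $\la -1\ra = [-1]\{1\}$ and $j=i+1$, so the relevant group is (a grading shift of) $\Ext^1_{D^b_c}(\IC_{\oZ_{i+1}},\IC_{\oZ_i}) \cong H_{2\dim W}(W) = \C$; together with the torsion-freeness noted above this gives $\Hom_{\C[h]}(\IC_{\oZ_{i+1},h}\la -1\ra,\IC_{\oZ_i,h}) \cong \C[h]$, the rank being $1$ and not $0$ precisely because of the fundamental class of the irreducible variety $W$.

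The step I expect to be the main obstacle is this last piece of bookkeeping: matching the internal $h$-grading on $\Ext^{\bullet}_{\D_h}$ (equivalently the Hodge filtration grading) with the Tate weight on the constructible $\Ext$ and with the shift conventions $\{\cdot\}, [\cdot], \la\cdot\ra$, so that precisely the degree $l=j-i$ survives and the answer there is free of the right rank. A route that avoids constructible sheaves entirely: by Proposition~\ref{prop:ThetaIC} we have $\IC_{\oZ_s,h}\la d_k\ra \cong \sF^{(N-k-s)} * \sE^{(k-s)}$, so these are $\Hom$-spaces between $1$-morphisms in the categorical $\sl_2$-action $\K_\D$; by the defining relations (equivalently, by Lauda's presentation of the $2$-morphism spaces) they are given by graded pieces of cohomology rings of partial flag varieties, from which both statements follow — the combinatorial input being the same explicit (co)homology computation.
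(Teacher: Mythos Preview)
Your main approach via the constructible side has a genuine gap: the assertion that $\Hom$-spaces between Rees modules are automatically $\C[h]$-torsion-free is unjustified and in fact false in this setting. Torsion-freeness of the objects does not propagate to derived Hom's between them; indeed the paper itself observes (in the nilHecke discussion of section~\ref{sec:actionDmod}) that $\C[h]$-torsion is present in $\End^2(\sE\sE)$, which is a Hom space of exactly the same kind. Without torsion-freeness your second-bullet argument only shows that the Hom has $\C[h]$-rank~$1$, not that it is isomorphic to $\C[h]$. For the first bullet one could in principle drop this claim and argue that rank zero implies torsion, but then you must honestly track the internal $\{\cdot\}$-grading through specialization. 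Here you conflate two different gradings: the shift $\{\cdot\}$ records the Hodge \emph{filtration}, not the weight, and shifting the Hodge filtration does not change the weight of a Hodge module. So the sentence ``a morphism between the pure Hodge modules $\IC_{\oZ_j,m}\{j\}[-l]$ and $\IC_{\oZ_i,m}\{i\}$ must preserve weights'' does not give the constraint $l=j-i$; both sides are pure of weight~$0$ regardless of the $\{\cdot\}$-shifts. What you would actually need is the Hodge-filtration degree of $\Ext^l_{D^b_c}(\IC_{\oZ_j},\IC_{\oZ_i})$, and you have not computed this. Your dimension bound, moreover, only treats $|j-i|\ge 2$ and $l\ge 1$, not the full range $l\ne j-i$.

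The paper's proof is precisely your ``alternative route'': identify $\IC_{\oZ_s,h}\la d_k\ra$ with $\sF^{(N-k-s)}*\sE^{(k-s)}$ via Proposition~\ref{prop:ThetaIC}, then use adjunction and the categorical $\sl_2$ commutator to reduce the Hom to a direct sum of terms $\Hom^*_{\C[h]}(\id_\mu,\id_\mu)$. Modulo $\C[h]$-torsion this endomorphism space is a sum of copies of $\C[h]\la a\ra$, and since $\la a\ra=[a]\{-a\}$ one reads off immediately that the original Hom is torsion unless $l=j-i$. For the second bullet the reduction lands entirely in $\Hom^{\le 0}_{\C[h]}(\id_\mu,\id_\mu)$ with exactly one degree-zero term, and there one has on the nose $\Hom^0=\C[h]$ and $\Hom^{<0}=0$ (these are ordinary Hom's of sheaves in nonpositive degree, so no torsion subtlety arises). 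This route sidesteps both the torsion-freeness issue and the Hodge/weight bookkeeping entirely.
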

\begin{proof}
Using Proposition \ref{prop:ThetaIC} we can identify $\IC_{\oZ_i,h}$ with $\sF^{(N-k-i)} * \sE^{(k-i)}$ (up to a shift by $\la d_k \ra$ which does not depend on $i$ and we will ignore). On the other hand, by applying adjunction and the $\sl_2$ commutator relation repeatedly one can decompose
\begin{equation}\label{eq:6}
\Hom_{\C[h]}(\sF^{(N-k-j)} * \sE^{(k-j)} \{j\} [-l], \sF^{(N-k-i)} * \sE^{(k-i)} \{i\})
\end{equation}
as a direct sum of terms of the form $\Hom_{\C[h]}(\id_{\mu},\id_{\mu} [l]\{i-j\} \la a \ra)$ for various $\mu$ and $a \in \Z$. On the other hand
$$\Hom_{\C[h]}^*(\id_\mu,\id_\mu) = \Hom_{\C[h]}^*(\int_{\Delta} \delta_{\G(k,N),h}, \int_{\Delta} \delta_{\G(k,N),h})$$
which, using adjunction, is (modulo $\C[h]$-torsion) isomorphic to a direct sum of terms of the form $\C[h] \la a \ra$. Since $\la a \ra = [a]\{-a\}$, (\ref{eq:6}) is $\C[h]$-torsion unless $l+i-j=0$. This proves the first assertion.

The second assertion is proved similarly by using the $\sl_2$ commutator relation to simplify the Hom space (see for instance \cite{CKL1}). In the end we end up with a direct sum of terms of the form $\Hom_{\C[h]}^*(\id_\mu,\id_\mu)$ where $* \le 0$ (with only one term where $*=0$). The result then follows since
$$\Hom_{\C[h]}^*(\id_\mu,\id_\mu) \cong \begin{cases} \C[h] & \text{ if } * = 0 \\ 0 & \text{ if } * < 0 \end{cases}$$
\end{proof}

\begin{Remark}
It is possible to show directly that $j_* \delta_{Z_0,m}$ is a kernel which induces an equivalence without relating it to a categorical $\sl_2$ action. This fact is well-known to experts though we were not able to find a proof in the literature. On the other hand, this still does not immediately imply that $\fG(j_* \delta_{Z_0,m})$ induces an equivalence between categories of $\D_h$-modules.
\end{Remark}

\begin{Remark}
Instead of $j_* \delta_{Z_0,m}$ we can equally well have considered $j_! \delta_{Z_0,m}$. In this case one can show that $\sT^{-1} \cong \fG(j_! \delta_{Z_0,m}) \la d_k \ra$.
\end{Remark}

\section{Associated graded of $\T$}\label{sec:grT}

In Theorem \ref{thm:equivopen} we saw that the equivalence $\T$ is induced by $\sT \cong \fG(j_* \O_{Z_0,m}) \la d_k \ra$. The associated graded $\tgr(\fG(j_* \O_{Z_0,m}))$ can be described as the convolution of the complex $\tgr(\Theta^*)$ or equivalently, coming from the categorical $\sl_2$ action defined in Corollary \ref{cor:sl2action}.

More precisely, recall from \cite{C1} the locus inside $T^* \G(k,N) \times T^* \G(N-k,N)$ given by
$$\fZ(k,N) := \{(X,V,V'): \dim(V)=k, \dim(V')=N-k, X(\C^N) \subset V \cap V', X(V)=X(V')=0 \}$$
where $V,V' \subset \C^N$ and $X \in \End(\C^N)$. This is just the fiber product of $T^* \G(k,N)$ and $T^* \G(N-k,N)$ over their common affinization. This variety consists of $k+1$ irreducible components $\fZ_s(k,N)$ where $s=0, \dots, k$ and $\fZ_s(k,N)$ is the locus where
$$\dim(\ker X) \ge N-s \ \ \text{ and } \ \ \dim(V \cap V') \ge s.$$
Note that $\fZ_s$ is the closure of the conormal bundle to $Z_s$.

\begin{Proposition}\label{prop:grtheta}
For each $\Theta^s$ we have
$$\tgr(\Theta^s) \cong \widetilde{\O}_{\fZ_s(k,N)} \otimes \det(V)^{-s} \otimes \det(V')^{N-s} \otimes \det(\C^N)^{-N+k+s} \{d_k-s^2\}$$
where $\widetilde{\O}_{\fZ_s(k,N)}$ denotes the normalized structure sheaf.
\end{Proposition}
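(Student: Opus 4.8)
The plan is to compute $\tgr(\Theta^s)$ directly from the convolution formula, using the explicit kernels of Corollary~\ref{cor:sl2action}. Since $\Theta^s = \sF^{(N-k-s)} * \sE^{(k-s)}$, Proposition~\ref{prop:1} gives
$$\tgr(\Theta^s) \cong \tgr(\sF^{(N-k-s)}) * \tgr(\sE^{(k-s)}),$$
a convolution over $T^*\G(s,N)$. Writing $W$ for the tautological rank $s$ subbundle on the middle factor $\G(s,N)$ and plugging in Corollary~\ref{cor:sl2action}, the two factors are $\O_{\fC^{k-s}} \otimes \det(V)^{-s} \otimes \det(W)^{k}\{(k-s)s\}$ on $T^*\G(k,N) \times T^*\G(s,N)$ and $\O_{\fC^{N-k-s}} \otimes \det(V'/W)^{k} \otimes \det(\C^N/V')^{-(N-k-s)}\{(N-k-s)k\}$ on $T^*\G(s,N) \times T^*\G(N-k,N)$.

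Next I would analyse the convolution geometrically. By definition it is the pushforward along the projection $\pi_{13}$ of the tensor product of the two pulled-back kernels on the triple cotangent product, so it is supported on the fibre product $\widetilde{\fZ}_s := \fC^{k-s} \times_{T^*\G(s,N)} \fC^{N-k-s}$. The input I would use (all of it already implicit in \cite{CKL1, C1}, where these convolutions underlie the coherent categorical $\sl_2$ action) is: (i) $\widetilde{\fZ}_s$ is smooth of dimension $2d_k$; (ii) the two pulled-back structure sheaves intersect without higher Tor, so their tensor product is $\O_{\widetilde{\fZ}_s}$; (iii) $\pi_{13}$ maps $\widetilde{\fZ}_s$ properly and birationally onto $\fZ_s(k,N)$, an isomorphism over the dense conormal bundle to $Z_s$ (on which $\dim(V \cap V') = s$ forces $W = V \cap V'$, so the middle datum $W$ is recovered from the image); and (iv) the pushforward $\pi_{13*}\O_{\widetilde{\fZ}_s}$ is concentrated in cohomological degree zero and equals the normalized structure sheaf $\widetilde{\O}_{\fZ_s(k,N)}$. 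Granting (i)--(iv), the projection formula then reduces the problem to descending the combined line bundle twist and assembling the grading shift.

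The final step is the bookkeeping of line bundles and shifts. Pulled back to $\widetilde{\fZ}_s$, the two twists multiply to $\det(V)^{-s} \otimes \det(W)^{k} \otimes \det(V'/W)^{k} \otimes \det(\C^N/V')^{-(N-k-s)}$. The key observation is that the two copies of $\det(W)$ cancel: from $0 \to W \to V' \to V'/W \to 0$ one has $\det(W)^{k}\otimes\det(V'/W)^{k} \cong \det(V')^{k}$, so the product collapses to $\det(V)^{-s}\otimes\det(V')^{N-s}\otimes\det(\C^N)^{-N+k+s}$ --- a line bundle in $V$, $V'$ and $\C^N$ only, hence pulled back from $T^*\G(k,N) \times T^*\G(N-k,N)$, which is exactly the twist in the statement. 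The grading shifts add to $\{(k-s)s + (N-k-s)k\} = \{d_k - s^2\}$, and the formula follows.

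I expect the main obstacle to be (iv): showing that $\pi_{13*}\O_{\widetilde{\fZ}_s}$ has no higher cohomology and is precisely the normalization (rather than $\O_{\fZ_s(k,N)}$), which is the only genuinely non-formal step and reflects the fact that $\fZ_s(k,N)$ is singular and non-normal. I would handle it either by exhibiting $\widetilde{\fZ}_s$ as a resolution of $\fZ_s(k,N)$ factoring through its normalization together with a rational-singularities-type vanishing argument, or --- more in keeping with the rest of the paper --- by identifying the convolution with the coherent composition $\sF^{(N-k-s)} * \sE^{(k-s)}$ already computed in \cite{CKL1, C1}, whose kernels differ from those of Corollary~\ref{cor:sl2action} only by line bundles that can be carried unchanged through the convolution.
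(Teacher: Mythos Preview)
Your proposal is correct and follows essentially the same strategy as the paper: apply Proposition~\ref{prop:1} to split the convolution, insert the kernels from Corollary~\ref{cor:sl2action}, and then identify the resulting coherent convolution. The paper's proof is much terser, deferring the entire coherent computation (your steps (i)--(iv) together with the line-bundle bookkeeping) to Proposition~A.7 of \cite{CK4}, whereas you carry out the line-bundle and grading-shift calculation explicitly and isolate the pushforward step (iv) as the only non-formal ingredient; your fallback suggestion of citing the already-computed coherent composition is precisely what the paper does, only it points to \cite{CK4} rather than \cite{CKL1,C1}.
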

\begin{Remark} We expect that $\fZ_s(k,N)$ is normal in which case $\widetilde{\O}_{\fZ_s(k,N)} = \O_{\fZ_s(k,N)}$.
\end{Remark}
\begin{proof}
By Proposition \ref{prop:1} we have
$$\tgr(\sF^{(N-k-s)} * \sE^{(k-s)}) \cong \tgr(\sF^{(N-k-s)}) * \tgr(\sE^{(k-s)}).$$
Both these terms are identified in Corollary \ref{cor:sl2action} and they agree with the categorical $\sl_2$ action from \cite{CK4}. The result now follows from Proposition A.7 of \cite{CK4} keeping in mind that $\fZ_s(k,N)$ corresponds to $Z_{k-s}(k,N-k)$ in the notation from \cite{CK4}.
\end{proof}

Thus, $\tgr(\sT)$ is the convolution of the complex made up of $\tgr(\Theta^s)$ which are described in Proposition \ref{prop:grtheta}. Although this description of $\tgr(\sT)$ is quite useful to work with we now give another description which more closely resembles the definition of $\sT$ as the pushforward $j_* \O_{Z_0,m}$. It would be interesting to relate directly these two descriptions of $\sT$ and $\tgr(\sT)$.

We begin by defining open subschemes $\fZ_s^o(k,N) \subset \fZ_s(k,N)$ by the condition
$$\dim(\ker X) + \dim(V \cap V') \le N+1.$$
We denote by $\fZ^o(k,N) \subset \fZ(k,N)$ the union of all $\fZ_s^o(k,N)$ and by $f: \fZ^o \hookrightarrow \fZ$ their inclusion. The advantage of $\fZ^o$ is that it avoids the more complicated singularities of $\fZ$ but is big enough that the complement $\fZ \setminus \fZ^o$ has codimension at least two.

\begin{Lemma}\cite[Lemma 3.4]{C1}
The intersection $D^o_{s,+}(k,N) := \fZ_s^o(k,N) \cap \fZ_{s+1}(k,N)$ is a Cartier divisor in $\fZ_s^o(k,N)$ and corresponds to the locus where $\dim \ker(X) = N-s$ and $\dim(V \cap V') = s+1$.
\end{Lemma}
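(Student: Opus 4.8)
I would split the statement into its two parts — the set-theoretic description and the Cartier property — and treat them in turn.

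First I would pin down the locus by intersecting the defining inequalities. Recall that $\fZ_s(k,N)$ is the locus with $\dim\ker X\ge N-s$ and $\dim(V\cap V')\ge s$, that $\fZ_s^o(k,N)$ adds the condition $\dim\ker X+\dim(V\cap V')\le N+1$, and that $\fZ_{s+1}(k,N)$ is cut out by $\dim\ker X\ge N-s-1$ together with $\dim(V\cap V')\ge s+1$. On the intersection, the inequality $\dim(V\cap V')\ge s+1$ together with $\dim\ker X\ge N-s$ and $\dim\ker X+\dim(V\cap V')\le N+1$ forces $\dim(V\cap V')=s+1$ and $\dim\ker X=N-s$; conversely every triple satisfying these two equalities lies in both $\fZ_s^o$ and $\fZ_{s+1}$. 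So, as sets, $D^o_{s,+}(k,N)=\{(X,V,V')\in\fZ_s^o(k,N):\dim\ker X=N-s,\ \dim(V\cap V')=s+1\}=\{(X,V,V')\in\fZ_s^o(k,N):\dim(V\cap V')\ge s+1\}$, the last equality because on $\fZ_s^o$ the single inequality $\dim(V\cap V')\ge s+1$ already forces both equalities. A routine dimension count then gives $\dim D^o_{s,+}(k,N)=2d_k-1$, i.e. codimension one.

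For the Cartier property the plan is to exhibit a local equation, and the key observation — which is exactly why one truncates to $\fZ^o$ — is that $X$ has locally constant rank $s$ near $D^o_{s,+}$. Concretely, $Y:=\{(X,V,V')\in\fZ_s^o(k,N):\dim\ker X\ge N-s+1\}$ is closed (upper semicontinuity of kernel rank) and disjoint from $D^o_{s,+}$, so $\mathcal{U}:=\fZ_s^o(k,N)\setminus Y$ is an open neighbourhood of $D^o_{s,+}$ on which $\operatorname{rk}(X)=s$ identically. On $\mathcal{U}$ set $W:=\operatorname{im}(X)$, a rank-$s$ subbundle of the trivial bundle $\underline{\C}^N$, contained in $V\cap V'$ since $X(\C^N)\subseteq V$ and $X(\C^N)\subseteq V'$. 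As $X$ kills $V$ it induces a surjection $\bar X\colon\underline{\C}^N/V\twoheadrightarrow W$, whose kernel $\ker\bar X$ is a rank-$(N-k-s)$ subbundle of $\underline{\C}^N/V$; and $V'\hookrightarrow\underline{\C}^N\twoheadrightarrow\underline{\C}^N/V$ kills $V\cap V'\supseteq W$, hence descends to a map $\beta\colon V'/W\to\underline{\C}^N/V$. Because $\bar X\circ\beta$ is induced by $X|_{V'}=0$, one has $\operatorname{im}(\beta)\subseteq\ker\bar X$, so $\beta$ corestricts to $\beta'\colon V'/W\to\ker\bar X$, a map between bundles of the same rank $N-k-s$. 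I would then take $\det\beta'$, a section of the line bundle $\det(\ker\bar X)\otimes\det(V'/W)^{-1}$ on $\mathcal{U}$, as the local equation: since $\ker\beta'=\ker\beta=(V\cap V')/W$, its zero locus is exactly $\{\dim(V\cap V')=s+1\}=D^o_{s,+}$, and it is not identically zero (at the generic point of $\fZ_s^o(k,N)$, which lies in $\mathcal{U}$, one has $\dim(V\cap V')=s$, so $W=V\cap V'$ and $\beta'$ is an isomorphism). As $\mathcal{U}$ is integral, $\det\beta'$ is a non-zero-divisor, so it defines an effective Cartier divisor on $\mathcal{U}$ with support $D^o_{s,+}$; being Cartier is local and $\mathcal{U}$ is a neighbourhood of $D^o_{s,+}$, so $D^o_{s,+}$ is Cartier in $\fZ_s^o(k,N)$.

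The step I expect to be the main obstacle is matching the scheme structures: the construction above produces a Cartier divisor supported on $D^o_{s,+}$, and to see it equals the intersection scheme $\fZ_s^o\cap\fZ_{s+1}$ one must know that this divisor — equivalently the equation $\det\beta'$ — is reduced, i.e. that $\fZ_s^o(k,N)$ is regular along $D^o_{s,+}$. I would establish this through an explicit local model for $\fZ_s(k,N)$: present it (essentially) as the zero scheme of a tautological section of a vector bundle over the smooth partial flag variety $\{(V,V',W):W\subseteq V\cap V',\ \dim W=s\}$, and run the Jacobian criterion to check that the singular points of this model are precisely those with $X=0$ and $\dim(V\cap V')\ge s+1$, which violate $\dim\ker X+\dim(V\cap V')\le N+1$ and hence lie outside $\fZ^o$. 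Once that is in place, everything else — the semicontinuity of kernel rank, the linear-algebra identifications of $W$, $\bar X$ and $\beta$, and the dimension count — is routine.
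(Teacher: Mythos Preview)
The paper does not prove this lemma; it is stated with a citation to \cite[Lemma~3.4]{C1} and no argument is given here. So there is no proof in the present paper to compare against, and your proposal must be judged on its own merits.

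Your argument is sound in outline. The set-theoretic identification is correct and cleanly done: the inequalities from $\fZ_s$, $\fZ_{s+1}$ and the open condition on $\fZ^o$ pin down $\dim\ker X=N-s$ and $\dim(V\cap V')=s+1$ exactly as you say. The construction of the local equation $\det\beta'$ is also correct: on the locus where $\operatorname{rk}(X)=s$, the image $W=\operatorname{im}(X)$ is a subbundle of $V\cap V'$, the induced map $\beta'\colon V'/W\to\ker\bar X$ is between bundles of equal rank $N-k-s$, and its degeneracy locus is precisely where $\dim(V\cap V')>s$. This already exhibits an effective Cartier divisor supported on $D^o_{s,+}$.

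You are right to flag the reducedness issue as the nontrivial step. What you need is that $\fZ_s^o(k,N)$ is smooth (not just that it is regular along $D^o_{s,+}$; in fact smoothness of the whole open piece is what the truncation to $\fZ^o$ is designed to achieve). The approach you sketch---realising $\fZ_s$ via a tautological section over the partial flag variety $\{W\subset V\cap V'\}$ and checking the Jacobian criterion fails only where $X=0$ and $\dim(V\cap V')\ge s+1$---is the standard one and works. Once smoothness of $\fZ_s^o$ is established, every codimension-one integral subscheme is automatically Cartier and your $\det\beta'$ cuts it out with multiplicity one (since at a generic point of $D^o_{s,+}$ the map $\beta'$ drops rank by exactly one), so the Cartier divisor you produce agrees with the reduced intersection $\fZ_s^o\cap\fZ_{s+1}$. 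One small caution: you should also note that $\fZ_{s+1}\cap\fZ_s^o$ is itself reduced as a scheme-theoretic intersection, or else phrase the lemma as asserting that the reduced locus is Cartier; in \cite{C1} the variety $\fZ_s^o$ is shown to be smooth, which settles this.
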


\begin{Proposition}
On $\fZ^o(k,N)$ there exists a line bundle $\sL(k,N)$ uniquely determined by its restriction
$$\sL(k,N)|_{\fZ_s^o(k,N)} \cong \O_{\fZ_s^o(k,N)}([D^o_{s,+}(k,N)]) \otimes \det(V)^{-s} \otimes \det(V')^{N-s} \otimes \det(\C^N)^{-N+k+s} \{-s(s-1)\}$$
to each component of $\fZ^o(k,N)$. Moreover, $\tgr(\sT) \cong R^0 f_* (\sL(k,N)) \{d_k\}$.
\end{Proposition}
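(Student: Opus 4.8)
The plan is to identify $\tgr(\sT)$ with the coherent-sheaf computation of \cite{C1} and then read off the line bundle. By Proposition \ref{prop:1} the functor $\tgr$ is a $2$-functor $\K_\D\to\K_\O$, so applying it to Rickard's complex exhibits $\tgr(\sT)$ as a right convolution of $\tgr(\Theta^k)\la -k\ra\to\dots\to\tgr(\Theta^0)$. By Proposition \ref{prop:grtheta} the terms of this complex are, up to explicit $\det$-twists and shifts, the normalised structure sheaves of the components $\fZ_s(k,N)$; and by the same proposition (via Corollary \ref{cor:sl2action} and the Remark following it) these kernels, together with the differentials between them, coincide --- after the harmless conjugation by tensoring-by-a-line-bundle autoequivalences recorded there --- with the kernels of Rickard's complex for the coherent categorical $\sl_2$-action of \cite{CKL1}/\cite{CK4}. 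Since conjugation by an autoequivalence commutes with iterated cones, it follows that $\tgr(\sT)$ is (the corresponding conjugate of) the coherent Weyl-group kernel $\sT'$, and the main theorem of \cite{C1} identifies $\sT'$ with the underived pushforward $R^0 f_*(\sL')$ of an explicit line bundle $\sL'$ on $\fZ^o(k,N)$.

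Granting this, I would set $\sL(k,N)$ to be $\sL'$ tensored with $f^*$ of the restriction to $\fZ(k,N)$ of the conjugating line bundle, so that (by the projection formula, once $R^{>0}f_*(\sL')=0$) $\tgr(\sT)\cong R^0 f_*(\sL(k,N))$ up to an overall shift, and then carry out two computations. First, the overall shift: tracing the internal, cohomological, and $\la\cdot\ra$-shifts through Proposition \ref{prop:grtheta}, the definition \eqref{eq:tgr} of $\tgr$, and the normalisation $\sT=\fG(j_*\delta_{Z_0,m})\la d_k\ra$ recovers exactly the factor $\{d_k\}$ (in particular $\tgr(\sT)$ sits in a single cohomological degree). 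Second, the restrictions: combining the description of $\sL'|_{\fZ^o_s(k,N)}$ from \cite{C1} with the $\det$-twists of Corollary \ref{cor:sl2action} one computes $\sL(k,N)|_{\fZ^o_s(k,N)}$ and checks that it equals the displayed formula --- the appearance of $\O_{\fZ^o_s(k,N)}([D^o_{s,+}(k,N)])$ and of the shift $\{-s(s-1)\}$ (rather than $\{-s^2\}$) being precisely what absorbs the twist $\la -s\ra=[-s]\{s\}$ sitting on $\Theta^s$ in Rickard's complex. Finally, for uniqueness: $\fZ^o(k,N)$ is obtained by gluing its irreducible components $\fZ^o_0,\dots,\fZ^o_k$ along the connected Cartier divisors $D^o_{s,+}(k,N)=\fZ^o_s\cap\fZ^o_{s+1}$ in a chain (components with non-adjacent indices are disjoint in $\fZ^o$ because of the defining inequality $\dim\ker X+\dim(V\cap V')\le N+1$), so the Mayer--Vietoris sequence for $\O^\times$ shows that the restriction map $\mathrm{Pic}(\fZ^o(k,N))\to\bigoplus_s\mathrm{Pic}(\fZ^o_s(k,N))$ is injective; hence a line bundle with the prescribed restrictions is unique.

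The main obstacle is the bookkeeping. One has to line up, with no slippage, the three flavours of shift ($\{1\}$, $[1]$, and $\la 1\ra=[1]\{-1\}$) and the $\det(V)$, $\det(V')$, $\det(\C^N)$ twists produced on the $\D_h$-side by $\tgr$ (Proposition \ref{prop:grtheta}) against the corresponding data in \cite{C1} and in the appendix of \cite{CK4}, so that the conjugating twists reassemble into exactly the stated formula for $\sL(k,N)|_{\fZ^o_s(k,N)}$. A second --- but genuinely necessary --- point is to justify that the convolution is computed by the \emph{underived} direct image, i.e.\ that $R^{>0}f_*(\sL(k,N))=0$; as in \cite{C1} this follows because $\fZ(k,N)\setminus\fZ^o(k,N)$ has codimension at least two in every component, which also makes $R^0 f_*(\sL(k,N))$ the natural $S_2$-extension of $\sL(k,N)$ and clarifies its role as a tractable replacement for $\fG(j_*\delta_{Z_0,m})$. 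As an alternative to invoking \cite{C1}, one could argue directly by induction on the number of components, mirroring the proof of Theorem \ref{thm:equivopen}: compute the graded pieces of $R^0 f_*(\sL(k,N))$ for the filtration by $\fZ^o_{\le s}$, deduce the necessary $\Hom$-vanishing among the $\tgr(\Theta^s)$ by applying $\tgr$ to Lemma \ref{lem:j*}, and conclude by Lemma \ref{lem:convolution}.
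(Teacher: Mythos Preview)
Your approach is essentially the same as the paper's, only far more detailed: the paper's entire proof is ``This result follows from Proposition A.8 of \cite{CK4} with a minor amount of work.'' You spell out that ``minor amount of work'' --- matching the $\tgr$ of Rickard's complex against the coherent $\sl_2$ action of \cite{CK4}/\cite{CKL1} via Proposition \ref{prop:grtheta} and Corollary \ref{cor:sl2action}, and then invoking the identification of the coherent Weyl kernel with $R^0 f_*$ of a line bundle from \cite{C1}/\cite{CK4}.

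One small caution: the claim that $R^{>0}f_*(\sL)=0$ does not follow merely from the complement having codimension $\ge 2$ (that gives you the $S_2$-extension property for $R^0f_*$, not vanishing of higher direct images for an open embedding). You correctly defer to \cite{C1} for this, but the parenthetical ``this follows because \dots\ codimension at least two'' is misleading as stated. Also note that the projection formula step you flag does not actually require $R^{>0}f_*=0$: tensoring by the pullback of a line bundle commutes with each $R^if_*$ separately. Your Mayer--Vietoris argument for uniqueness is a nice addition the paper does not spell out; just be aware it needs the divisors $D^o_{s,+}$ to be connected (so that $H^0(D^o_{s,+},\O^\times)=\C^\times$), which you assert without justification.
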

\begin{proof}
This result follows from Proposition A.8 of \cite{CK4} with a minor amount of work. \end{proof}

\begin{Corollary}\label{cor:main}
We have $\gr(\fG(j_* \O_{Z_0,m})) \cong R^0 f_*(\sL'(k,N))$ where $\sL'(k,N)$ is determined by its restrictions
$$\sL'(k,N)|_{\fZ_s^o(k,N)} \cong \O_{\fZ_s^o(k,N)}([D^o_{s,+}(k,N)]) \otimes \det(V)^{-s} \otimes \det(\C^N/V')^{s} \{-s(s-1)\}.$$
\end{Corollary}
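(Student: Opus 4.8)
The plan is to deduce the corollary formally from Theorem~\ref{thm:equivopen} and the Proposition just above it (the one computing $\tgr(\sT)$), by unwinding the definition (\ref{eq:tgr}) of $\tgr$; no new geometric input is needed. By the preceding Proposition, $\tgr(\sT)\cong R^0 f_*(\sL(k,N))\{d_k\}$, and by Theorem~\ref{thm:equivopen}, $\sT\cong\fG(j_*\delta_{Z_0,m})\la d_k\ra$. Since the shift $\la\cdot\ra$ commutes with $\tgr$ (the latter is built from $\gr$, which commutes with $\{1\}$, together with a cohomological shift), these combine to give a formula for $\tgr(\fG(j_*\delta_{Z_0,m}))$.

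Next I would pass from $\tgr$ back to $\gr$. By (\ref{eq:tgr}), for a kernel $\sP$ on $\G(k,N)\times\G(N-k,N)$ one has $\tgr(\sP)=(1\times\iota)^*\gr(\sP)\otimes\omega_{\G(N-k,N)}[-d_k]$; since $(1\times\iota)$ is an involution and $\omega_{\G(N-k,N)}$, being pulled back from $\G(N-k,N)$, is $\iota$-invariant, this inverts to $\gr(\sP)\cong(1\times\iota)^*\tgr(\sP)\otimes\omega_{\G(N-k,N)}^{-1}[d_k]$. Feeding in the previous step and using the projection formula, the net effect relative to the preceding Proposition is that the line bundle $\sL(k,N)$ picks up an extra twist by $\omega_{\G(N-k,N)}^{-1}$. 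On each component $\fZ_s^o(k,N)$ the bundle $\omega_{\G(N-k,N)}$ restricts, independently of $s$, to $\det(V')^{N}\otimes\det(\C^N)^{k-N}$, so using $\det(\C^N/V')=\det(\C^N)\otimes\det(V')^{-1}$ a direct computation gives
$$\sL(k,N)|_{\fZ_s^o(k,N)}\otimes\omega_{\G(N-k,N)}^{-1}\;\cong\;\O_{\fZ_s^o(k,N)}([D^o_{s,+}(k,N)])\otimes\det(V)^{-s}\otimes\det(\C^N/V')^{s}\{-s(s-1)\},$$
i.e.\ exactly $\sL'(k,N)|_{\fZ_s^o(k,N)}$.

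What remains is to dispose of the leftover overall shift and the pullback by $1\times\iota$. The residual grading shift is accounted for by the normalization of $\O_{Z_0,m}$ relative to $\delta_{Z_0,m}$ (recall $\delta_{Z_0,h}=\O_{Z_0,h}\{\dim Z_0\}$ with $\dim Z_0=2d_k$) together with the fact that $j_*$ and $\fG$ commute with $\{1\}$, while the $(1\times\iota)^*$ is absorbed once one matches the model of the common affinization used to form $\fZ(k,N)$ in \cite{C1} --- the same convention adjustment already invoked in the proof of Proposition~\ref{prop:grtheta}. With these identifications the corollary reads off. Since all the substance sits in Theorem~\ref{thm:equivopen} and the preceding Proposition, the only genuine work is this twist-and-shift bookkeeping, and the point requiring the most care is charging the $(1\times\iota)^*$ and the overall shift to the correct sides, so that the conventions of \cite{C1} for $\fZ(k,N)$ and $\sL'(k,N)$ are reproduced exactly.
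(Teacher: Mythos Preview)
Your proposal is correct and follows essentially the same approach as the paper: the paper's proof simply says the result follows from Theorem~\ref{thm:equivopen} and the preceding Proposition by unpacking the definitions of $\tgr$ and $\sT$, with the key identity $\sL'(k,N)=\sL(k,N)\otimes f^*\omega_{\G(N-k,N)}^\vee$. You carry out exactly this unpacking in more detail, including the explicit line-bundle computation and the handling of the residual shift and the $(1\times\iota)^*$, which the paper leaves implicit.
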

\begin{proof}
This follows from Theorem \ref{thm:equivopen} and the last assertion of the previous proposition once we unpack the definitions of $ \tgr $ and $ \sT $. In particular, $\sL'(k,N) = \sL(k,N) \otimes f^* \omega_{\G(N-k,N)}^\vee$ which gives the expression above for $\sL'(k,N)$.
\end{proof}

\section{Generalization to cominuscule flag varieties}\label{sec:cominus}

In this section, we outline a possible generalization of our results to cominuscule flag varieties.  Other than the existence of a categorical $ \sl_2$ action, we expect that the results of this paper generalize nicely to this setting.

\subsection{Geometric setup}
Let $G $ be a simple complex algebraic group.  Recall that a node $ i $ of the Dynkin diagram of $ G $ is called cominuscule if, for every positive root $\alpha $, the coefficient of $ \alpha_i $ in $ \alpha $ is at most $ 1$.  Let $ i $ be a cominuscule node and let $ P $ be the associated maximal parabolic subgroup.  In this case, $G/P$ is called a cominuscule flag variety.  Let $ Q = P^T $ be the opposite parabolic subgroup (i.e. $\g_\beta \subset \mathfrak q $ iff. $ \g_{-\beta} \subset \mathfrak p $).  $ G/Q $ is also a cominuscule flag variety.

Denote by $W$ the Weyl group of $G$ and let
$$ A = \{ w \in W : w < s_j w\text{ and } w < w s_j, \text{ for all } j \ne i \}.$$
Sometimes these are called biGrassmannian elements of $W$. $A$ carries a partial order by restricting the Bruhat order of $ W $.
A standard result tells us that there is an order-reversing bijection between $ A$ and the set of $ G $-orbits on $ G/P \times G/Q $.  Also for any element $ w$ of $ A $ the corresponding orbit has codimension $ \ell(w)$.

We have checked the following lemma in a case-by-case fashion.  We believe that there should be a general proof.

\begin{Lemma}
The set $ A$ is linearly ordered.
\end{Lemma}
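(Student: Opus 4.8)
The claim is that the poset $A$ of biGrassmannian elements of $W$, ordered by Bruhat order, is in fact a chain. My plan is to reduce this to a statement about the geometry of the $G$-orbits on $G/P \times G/Q$ and then verify it using the combinatorics specific to the cominuscule setting. Recall from the order-reversing bijection $A \cong \{G\text{-orbits on } G/P \times G/Q\}$ that proving $A$ is linearly ordered is equivalent to proving that the closure poset of $G$-orbits on $G/P \times G/Q$ is a chain, i.e.\ that the orbits are totally ordered by inclusion of closures. Since there is a unique open orbit (the diagonal-transverse locus $Z_0$) and a unique closed orbit, it suffices to show there are no two incomparable orbits.

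The first step is to set up coordinates. For $G/P$ cominuscule, the unipotent radical $\mathfrak{n}_P$ of $ P$ is abelian, and $G/P$ has a cell decomposition whose open cell is $\mathfrak{n}_P^-$; likewise $G/Q$ has open cell $\mathfrak{n}_Q^- = \mathfrak{n}_P$. A point of the open $G$-orbit can be normalized so that the $G/P$-coordinate is the base point $eP$ and the $G/Q$-coordinate lies in the open cell; the residual stabilizer is $P \cap Q = L$, the Levi. Thus $G$-orbits on $G/P \times G/Q$ are in bijection with $L$-orbits on $\mathfrak{n}_P$ (the open cell of $G/Q$ through the base point of $G/P$). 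The key point is that in the cominuscule case this $L$-action on $\mathfrak n_P$ is a \emph{prehomogeneous vector space of commutative parabolic type} — precisely the class classified by Muller--Rubenthaler--Schiffmann and studied by Richardson--R\"ohrle--Steinberg. For these, the orbit structure is governed by a rank stratification: the $L$-orbits on $\mathfrak n_P$ are indexed by an integer $0 \le r \le \rho$ (the ``rank'' of an element, $\rho$ being the generic rank), and their closures are nested, $\overline{\mathcal O_r} = \mathcal O_0 \cup \cdots \cup \mathcal O_r$. This nesting is exactly the chain condition. (In type $A$ this recovers the familiar fact that the strata $Z_s$ on $\G(k,N)\times\G(N-k,N)$ are indexed by $\dim(V\cap V')$ and totally ordered.)

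Concretely, the steps are: (i) identify $G/P\times G/Q$ orbits with $L$-orbits on $\mathfrak n_P$ as above; (ii) invoke (or reprove in a line) the classification of commutative-parabolic prehomogeneous spaces to get the list of cominuscule pairs $(G,i)$ — type $A_{n}$ (Grassmannians), $B_n/D_n$ (quadrics), $C_n$ (Lagrangian Grassmannian), $D_n$ (spinor varieties), and $E_6, E_7$ (the two exceptional minuscule varieties); (iii) for each, recall that the $L$-orbits on $\mathfrak n_P$ are classified by a single rank invariant with nested closures — this is in each case a short computation or a citation to Richardson--R\"ohrle--Steinberg, and in the $E_6, E_7$ cases one can simply enumerate: there are respectively $3$ and $4$ orbits, visibly a chain. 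Transporting the nesting back through the order-reversing bijection yields that $A$ is linearly ordered.

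The main obstacle is that I do not see a uniform, type-free argument; the cleanest route really does pass through the classification of commutative-parabolic prehomogeneous vector spaces and the known fact that their orbit closures form a chain, which is itself proven case-by-case in the literature. A more self-contained alternative would be to argue directly with biGrassmannian elements: an element $w \in A$ is determined by, and its length computed from, a single ``rank-type'' statistic (the size of the associated order ideal in the minuscule poset of roots $\alpha$ with $\langle \alpha^\vee,\varpi_i\rangle = 1$), and one shows these ideals form a chain under inclusion. This is essentially Proctor's observation that cominuscule posets have the property that the relevant sub-ideals are linearly ordered. I would present the proof via the $L$-orbit/prehomogeneous-space picture since it is the most conceptual, flagging that the verification of the chain property is ultimately case-by-case, consistent with the authors' remark that they checked it in a case-by-case fashion.
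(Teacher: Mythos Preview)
The paper itself gives no proof beyond the sentence ``We have checked the following lemma in a case-by-case fashion.'' Your proposal is in the same spirit and is more informative: routing through the Richardson--R\"ohrle--Steinberg classification of $L$-orbits on the abelian nilradical $\mathfrak{n}_P$ is exactly the right framework, and your alternative via order ideals in the cominuscule root poset is also a valid way to organize the case check.

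There is, however, a genuine gap in your step (i). The normalization argument you give does \emph{not} establish the bijection between $G$-orbits on $G/P\times G/Q$ and $L$-orbits on $\mathfrak n_P$. Once you fix the first coordinate to $eP$, the big Bruhat cell of $G/Q$ based at $eQ$ lies \emph{entirely inside the open orbit} $Z_0$: in the Grassmannian case, every $V'$ in that cell is the graph of a linear map $\C^{N-k}\to\C^k$ and hence satisfies $V_0\cap V'=0$. So the $L$-orbits you see on that open cell all sit over $Z_0$; the non-open strata $Z_1,\dots,Z_k$ never meet it, and you cannot ``normalize the second coordinate into the open cell'' for points of those strata. The stabilizer computation ``$P\cap Q=L$'' is correct for the open orbit but does not by itself produce the desired bijection for the others.

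The bijection you want is nonetheless true, but it goes through the common combinatorial indexing set: both the $P$-orbits on $G/Q$ and the $L$-orbits on $\mathfrak n_P$ are parametrized by $W_L\backslash W/W_L$ (equivalently by $A$), and this identification is order-preserving for the closure/Bruhat orders. With this correction your plan goes through: the Richardson--R\"ohrle--Steinberg chain property for $L$-orbits on $\mathfrak n_P$ transfers to the closure order on $G$-orbits, and hence $A$ is a chain. As you note, the verification is still ultimately case-by-case, which is exactly what the paper asserts.
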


Let $k +1 $ be the cardinality of $ A$ and let  $ Z_0, \dots, Z_k $ be the $ G $ orbits on $ G/P \times G/Q $, in decreasing size, so $ Z_0 $ denotes the open orbit.

\subsection{Some examples}
\subsubsection{Usual Grassmannians}
If we take $ G = SL_N $, then we can choose any node $i $ of the diagram and we get $ G/P = \G(i,N), G/Q = \G(N-i,N) $ and we return to our previous setup (in this case, $ k = \min(i, N - i) $).
\subsubsection{Even dimensional quadric}
If we take $ G = SO_{2n}$ and $ i = 1 $, then $ G/P = \{[x_1, \dots, x_{2n}] : x_1^2 + \cdots + x_{2n}^2 = 0 \} $ is a quadric in $ \P^{2n-1} $.  In this case $ G/Q = G/P $ and $ k = 2$.  Moreover, we have a simple description of the $G$-orbits:
$$ Z_0 = \{([x],[y]) : x \cdot y \ne 0 \}, \ Z_1 = \{([x],[y]) : x \cdot y = 0, x \ne y \}, \ Z_2 = \{([x],[y]) : x = y \}. $$
\subsubsection{Lagrangian Grassmannian}
Assume that $ n $ is odd.  If we take $ G = SO_{2n}$ and $ i = n $, then $ G/P = OG(n,2n)_+ $, one connected component of the variety of Lagrangian subspaces of $ \C^{2n}$ (with respect to the symmetric bilinear form).  In this case, $ G/Q = OG(n,2n)_- $, the other connected component.  In this case $ k = \frac{n-1}{2} $ and
\begin{equation*}
\begin{gathered}
Z_0 = \{(V,W) : \dim (V \cap W) = 0 \}, \ Z_1 = \{(V,W) : \dim (V \cap W) =  2 \},\\
\dots, \ Z_k = \{(V,W) : \dim (V \cap W) = n - 1 \}.
\end{gathered}
\end{equation*}
Note that for $ V \in OG(n,2n)_+, W \in OG(n,2n)_- $, $\dim (V \cap W)$ is always even.

\subsection{The $ \D$-module equivalence}
Let $ j : Z_0 \rightarrow G/P \times G/Q $ be the inclusion.  Note that $ \overline{Z_1} $ is a divisor in $ G/P \times G/Q$, since it is associated to the element $ s_i \in A $ which has length 1.  Thus $ j $ is an affine morphism.  Thus, $ \int_j \O_{Z_0} $ is a $ \D_{G/P \times G/Q} $-module (there is no higher direct image). The following result should not be difficult to establish.

\begin{Conjecture}
The kernel $ \int_j \delta_{Z_0} $ induces an equivalence $ D(\D_{G/P}\mod) \xrightarrow{\sim} D(\D_{G/Q}\mod) $.
\end{Conjecture}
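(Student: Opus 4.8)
The plan is to prove directly that the functor $\Phi := \Phi_{\int_j \delta_{Z_0}}: D(\D_{G/P}\mod) \to D(\D_{G/Q}\mod)$ is an equivalence; since no categorical action is available here, the argument must be more geometric than in Section~\ref{sec:T}. Write $L := P \cap Q$ for the common Levi factor, so the open orbit is $Z_0 \cong G/L$, and let $\alpha: Z_0 \to G/P$, $\beta: Z_0 \to G/Q$ be the two projections; each is a Zariski-locally trivial affine-space bundle, with fibre the unipotent radical of a cominuscule parabolic (which is abelian, hence literally $\cong \mathbb{A}^m$). Because $\overline{Z_1}$ is a divisor, $j$ is affine and $\int_j \delta_{Z_0}$ is a single $\D$-module. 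A standard manipulation with the open embedding (the projection formula, together with $\int_{p_2}\circ \int_j = \int_{p_2 \circ j}$) identifies $\Phi$, up to a cohomological shift, with the ``Radon transform'' $\int_\beta \circ \alpha^\d$ attached to the correspondence $G/P \xleftarrow{\alpha} G/L \xrightarrow{\beta} G/Q$. So the conjecture becomes: this Radon transform is an equivalence.

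For this I would exhibit a quasi-inverse. The candidate is the Radon transform in the opposite direction, $\Psi := \int_\alpha \circ \beta^\d: D(\D_{G/Q}\mod) \to D(\D_{G/P}\mod)$ (which is the functor induced by the same kernel read on $G/Q \times G/P$), or equivalently its $!$-analogue, induced by $j_!\delta_{Z_0}$. To see $\Psi\circ\Phi \cong \id$ one uses base change for $\D$-modules twice: $\beta^\d \circ \int_\beta \cong \int_{\pi_1}\circ \pi_2^\d$ over $Z_0 \times_{G/Q} Z_0$, and then $\int_\alpha$ together with $\alpha^\d$ reduces the statement to a computation of a pushforward of $\O$ over the double fibre product $Z_0 \times_{G/Q} Z_0 \cong G \times_Q (U \times U)$, with $U$ the unipotent radical of $Q$. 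The contribution of the diagonal $G/L \hookrightarrow Z_0 \times_{G/Q} Z_0$ collapses to the diagonal kernel $\delta_\Delta$ on $G/P \times G/P$ by $\mathbb{A}^1$-homotopy invariance of $\D$-modules ($\int_\pi\circ\pi^\d\cong \id$ for $\pi$ an affine-space bundle, which follows from Propositions~\ref{prop:pull} and~\ref{prop:push}), and the task is to show the complementary locus contributes nothing. An alternative, possibly cleaner, route is to factor $\Phi$ into a composite of elementary intertwining (Radon) functors running through a chain of partial flag varieties $G/P = G/P_0, G/P_1, \dots, G/P_r = G/Q$ whose consecutive parabolics differ by a single simple reflection --- along a reduced expression for the relevant Weyl-group element --- each elementary functor being a known equivalence, and to check by a convolution computation over the tower that the composite kernel is $\int_j\delta_{Z_0}$.

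The main obstacle is exactly this invertibility step: in the $\sl_2$ case it was inherited from \cite{CR,CKL1}, whereas here one must control either the vanishing of the ``off-diagonal'' part of $\Psi\circ\Phi$ --- a cancellation in the convolution over $Z_0 \times_{G/Q} Z_0$ governed by the Bruhat-order combinatorics of $A$, closely analogous to why intertwining functors for simple reflections compose to $I_{w_0}$ --- or the compatibility of the elementary Radon transforms along the tower of parabolics, which is routine for classical $G$ but needs case-by-case care for the exceptional cominuscule varieties (the Cayley plane for $E_6$, the Freudenthal variety for $E_7$).

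Finally, once the equivalence is known, the sharper structural statement (the analogue of Theorem~\ref{thm:equivopen}) follows as in Section~\ref{sec:T}: since $A$ is linearly ordered the closures satisfy $\overline{Z_s} = Z_s \sqcup \dots \sqcup Z_k$, and by the stalk/Euler-characteristic computation of Proposition~\ref{prop:grj} and Corollary~\ref{cor:grj} --- using $G$-equivariance and the (case-checkable) simple-connectedness of each orbit $Z_s$ --- Saito's weight filtration on $j_*\delta_{Z_0,m}$ has $\gr^W_s \cong \IC_{\overline{Z_s},m}\{s\}$ up to an overall shift, so that $\int_j\delta_{Z_0}$ is realized as an iterated cone of a cominuscule Rickard complex $\IC_{\overline{Z_k}}\la -k\ra \to \dots \to \IC_{\overline{Z_0}}$, with uniqueness of the convolution supplied by Lemma~\ref{lem:convolution} (the required Hom-vanishing now coming from purity and dimension counts rather than from $\sl_2$ relations).
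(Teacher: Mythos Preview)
The statement you are addressing is labeled a \emph{Conjecture} in the paper and is not proved there; the authors only remark that it ``should not be difficult to establish'' and then move on. So there is no proof to compare against --- you are attempting to supply what the paper leaves open.

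Your reduction of $\Phi_{\int_j \delta_{Z_0}}$ to the Radon transform $\int_\beta \circ \alpha^\dagger$ (up to a shift) is correct, and the observation that $\alpha,\beta$ are Zariski-locally trivial $\mathbb{A}^m$-bundles because cominuscule unipotent radicals are abelian is the right structural input. However, neither of your two routes to invertibility is close to complete. In your first route you correctly isolate the crux --- that the off-diagonal part of the convolution over $Z_0 \times_{G/Q} Z_0$ must vanish --- but you give no mechanism for this vanishing; ``Bruhat-order combinatorics of $A$'' is a description of what one would like, not an argument. (Note also that the candidate inverse is not $\int_\alpha \circ \beta^\dagger$ on the nose: the genuine inverse of a $j_*$-type Radon kernel is the $j_!$-type kernel, as you half-acknowledge, and keeping track of which version appears where is exactly what makes the cancellation work.) Your second route, a tower $G/P = G/P_0,\dots,G/P_r = G/Q$ of partial flag varieties ``differing by a single simple reflection'', is not well-posed: $P$ and $Q$ are maximal, so there is no chain of maximal parabolics between them, and elementary Radon/intertwining equivalences are established for $G/B$, not for arbitrary $G/P_i$. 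What you presumably want is to pull back to $G/B$ via $\pi_P^\dagger$, apply the intertwining equivalence $I_w$ on $D(\D_{G/B}\mod)$ for the appropriate $w$, and then check that this descends along $\pi_Q$ --- but that is a different argument from the one you wrote, and the descent step still requires justification.

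Your final paragraph addresses the separate Conjecture~\ref{co:weightgraded} rather than the stated one. The outline there is reasonable but again incomplete: simple-connectedness of each $Z_s$ is not automatic (it fails, for instance, for some orbits on non-cominuscule $G/P$'s, so it genuinely needs checking here), and replacing the $\sl_2$-based Hom-vanishing of Lemma~\ref{lem:j*} by ``purity and dimension counts'' is optimistic --- purity controls weights, not Ext-degrees, and without the representation-theoretic decomposition you have no handle on $\Hom(\IC_{\oZ_j,h}, \IC_{\oZ_i,h}[l])$.
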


We may also consider the Hodge module push-forward $ j_* \delta_{Z_0,m}  $ and its underlying $ \D_h$-module $ \sT = \fG(j_* \delta_{Z_0,m})\la \dim (G/P) \ra$. We can extend the previous conjecture as follows.

\begin{Conjecture}
The kernel $\sT $ induces an equivalence $ D(\D_{G/P,h}\mod) \xrightarrow{\sim} D(\D_{G/Q,h}\mod) $.
\end{Conjecture}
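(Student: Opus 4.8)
The plan is to run the argument of Theorem~\ref{thm:equivopen} and Section~\ref{sec:T}, replacing the input from the categorical $\sl_2$-action by direct geometric facts about the $ G $-orbit closures $ \oZ_s \subset G/P \times G/Q $. The easy half is immediate from the previous conjecture: granting that $ \int_j \O_{Z_0} $ induces an equivalence $ D(\D_{G/P}\mod) \xrightarrow{\sim} D(\D_{G/Q}\mod) $, Proposition~\ref{prop:pushHodgeD} gives $ \sT \otimes_{\C[h]} \C_1 \cong \int_j \O_{Z_0}[\dim(G/P)] $, so $ \Phi_{\sT} $ is an equivalence after specializing $ h $ to any nonzero value. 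Moreover, since $ \fG $ is exact and $ \C[h] $-torsion free, $ \sT $ is a kernel flat over $ \C[h] $, and the same holds for its adjoint $ \sT_R = \bD(\sT)\la \dim(G/P) \ra $ because Verdier duality commutes with $ \fG $. The problem is therefore to propagate the generic equivalence across the fibre $ h = 0 $, which (by a Nakayama-type argument, provided the relevant cones stay flat over $ \C[h] $) reduces to showing that $ \tgr(\sT) $ induces an equivalence $ D(\O_{T^* G/P}\mod^{\C^\times}) \xrightarrow{\sim} D(\O_{T^* G/Q}\mod^{\C^\times}) $, i.e. the coherent shadow of $ \sT $.

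The geometric core is the computation of the weight filtration of $ j_* \delta_{Z_0,m} $. Using that the index set $ A $ is linearly ordered and that $ \oZ_1 $ is a Cartier divisor in $ G/P \times G/Q $ (so that each inclusion $ U_{s-1} \hookrightarrow U_s $ of the successive open unions of orbits is affine of codimension one), I would prove by induction on strata — exactly as in Corollary~\ref{cor:grj} and the discussion following Proposition~\ref{prop:ThetaIC} — that $ \gr^W_s(j_* \delta_{Z_0,m}) \cong \IC_{\oZ_s,m}\{s\} $, that the higher weight pieces vanish, and that the differentials in the resulting complex $ \IC_{\oZ_k,h}\la -k \ra \to \dots \to \IC_{\oZ_0,h} $ are unique up to scalar. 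In the Grassmannian case this used Proposition~\ref{pr:smallres} and Lemma~\ref{lem:fibre}; in general one instead needs a case-by-case analysis of the stalks of $ \IC_{\oZ_s,m} $ along the deeper strata. Since the linear-ordering statement for $ A $ was itself verified type-by-type, this is a natural place to proceed type-by-type, using the known small (or at least well-understood) resolutions of the minimal degenerations of $ G/P \times G/Q $ to compute the stalk Poincaré polynomials, and checking in each type that the Euler-characteristic bookkeeping forces all multiplicities to be $ 1 $ and all shifts to be $ \{s\} $.

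The main obstacle is the final step: showing that the complex $ \IC_{\oZ_k,h}\la -k \ra \to \dots \to \IC_{\oZ_0,h} $, equivalently $ \sT $ itself, is the kernel of an equivalence. For Grassmannians this was free, since that complex is Rickard's complex for the $\sl_2$-action and Lemma~\ref{lem:j*} pinned down the relevant $ \Ext $'s; here there is no action to invoke. I would instead attempt to exhibit a two-sided inverse directly, the natural candidate being $ \fG(j_! \delta_{Z_0,m})\la \dim(G/P) \ra $ (compare the corresponding remark in the Grassmannian case). The adjunction triangle $ j_! \to j_* $, Kashiwara's theorem in the form of Theorem~\ref{th:KashforHodge}, and base change (Theorem~\ref{thm:basechangeforHodge}) identify the composite with the identity kernel after $ \otimes_{\C[h]} \C_1 $; to lift this identification one needs the convolution $ \fG(j_! \delta_{Z_0,m}) * \fG(j_* \delta_{Z_0,m}) $ to be flat over $ \C[h] $, i.e. the vanishing of the higher $ \mathrm{Tor} $'s occurring in the convolution. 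This flatness should come from a transversality statement for the conormal varieties of the $ \oZ_s $, in the spirit of Corollary~\ref{cor:Deltas}, but proving it requires genuine control of the singularities of $ \overline{Z}_s $ and of $ \fZ^o $ in each type. This is the crux: the same $ \mathrm{Tor} $-vanishing is precisely what is needed both to conclude invertibility over $ \C[h] $ and to justify passing to the special fibre $ h = 0 $, so in practice the two reductions above merge into the single problem of understanding the convolution $ \fG(j_! \delta_{Z_0,m}) * \fG(j_* \delta_{Z_0,m}) $ and its associated graded.
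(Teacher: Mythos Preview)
The statement is labeled a \emph{Conjecture} in the paper, and the paper gives no proof of it: Section~\ref{sec:cominus} only outlines what one hopes is true in the cominuscule setting, without supplying arguments. So there is no ``paper's own proof'' to compare against.

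That said, your sketch is a sensible extrapolation of the Grassmannian argument, and you have correctly identified where it breaks. In the paper, the logical flow is the reverse of what you propose: the equivalence $\Phi_{\sT}$ is \emph{not} deduced from geometric properties of $j_*\delta_{Z_0,m}$, but is obtained first as the Rickard complex of the categorical $\sl_2$-action (whose convolution is an equivalence by \cite{CR,CKL1}), and only afterward is its kernel identified with $\fG(j_*\delta_{Z_0,m})$. The weight-filtration computation (Corollary~\ref{cor:grj}) and the $\Hom$-vanishing (Lemma~\ref{lem:j*}) serve to make that identification, not to prove invertibility. In the cominuscule case there is no categorical Lie algebra action available, so your plan to replace that input by a direct verification that $\fG(j_!\delta_{Z_0,m})$ is a two-sided inverse is the natural substitute---but, as you say, this requires a flatness/Tor-vanishing statement for the convolution over $\C[h]$ that the paper's methods do not supply. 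Note also that even the weight-filtration step is conjectural here (Conjecture~\ref{co:weightgraded}), and the paper explicitly flags that the stalk computation replacing Lemma~\ref{lem:fibre} would need a case-by-case analysis. So your proposal is an honest outline of what a proof would require, with the crux correctly located, but it is not a proof, and neither is anything in the paper.
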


As in section \ref{se:weightgraded}, we can consider the weight filtration.  We hope that similar techniques as in that section will help to establish the following result.

\begin{Conjecture} \label{co:weightgraded}
For $ s = 0, \dots, k$, we have $gr^W_s(j_* \delta_{Z_0,m}) = \IC_{\oZ_s,m}\{s \} $ and the rest of the subquotients are zero.
\end{Conjecture}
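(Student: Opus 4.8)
The plan is to imitate, step by step, the proof of Corollary~\ref{cor:grj} in Section~\ref{se:weightgraded}, which used only formal properties of mixed Hodge modules together with a few concrete geometric inputs special to type $A$. Since $\overline{Z_1}$ is a divisor, $j$ is affine, so $\int_j \delta_{Z_0} = j_*\delta_{Z_0}$ is concentrated in a single cohomological degree and $j_*\delta_{Z_0,m}$ is an honest mixed Hodge module with weight filtration $W_\bullet$ whose subquotients $\gr^W_s$ are pure and polarizable. As in Proposition~\ref{prop:grj}, each $\fG(\gr^W_s)\otimes_{\C[h]}\C_1$ is then a direct sum of $\IC$ $\D$-modules; by $G$-equivariance these are the $\IC$ sheaves of the orbit closures $\oZ_t$, and, provided the orbits $Z_t$ are simply connected, only the trivial local system occurs. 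Hence $\fG(\bigoplus_s \gr^W_s(j_*\delta_{Z_0,m}))\otimes_{\C[h]}\C_1 \cong \bigoplus_s \IC_{\oZ_s}^{\oplus f_s}$ for some $f_s \in \N$.

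Next I would pin down the multiplicities $f_s$ exactly as in Proposition~\ref{prop:grj}: restrict to a point $x_\ell \in Z_\ell$ via $i_\ell$ and compare Euler characteristics. On one side, base change (as in the proof of Proposition~\ref{prop:grj}, using that $j$ is affine so that $\int_j=j_*$ and the costalk of a $*$-pushforward along the complement vanishes) gives $i_\ell^\d\int_j\delta_{Z_0}\cong 0$ for $\ell>0$ and $\cong\C[-2\dim Z_0]$ for $\ell=0$; this part of the argument is formal and valid in any type. On the other side, additivity of Euler characteristics along $W_\bullet$ reduces everything to the costalks $i_\ell^!\IC_{\oZ_s,m}$, producing a linear system $\sum_s f_s\,\chi(i_\ell^!\IC_{\oZ_s}) = \delta_{\ell 0}$ for the $f_s$. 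The linear order on $A$ makes this system triangular with invertible diagonal, hence uniquely solvable, and one must then check that $f_s=1$ for all $s$ is the solution. Finally, to upgrade the $\D$-module identity to $\gr^W_s(j_*\delta_{Z_0,m})\cong\IC_{\oZ_s,m}\{s\}$ (and the vanishing of $\gr^W_p$ for $p>s$), I would invoke the structure theorem to write $\bigoplus_p\gr^W_p=\bigoplus_t\IC_{\oZ_t,m}\{e_t\}$ for integers $e_t$, and rerun the Euler-characteristic computation inside $\MHM$, tracking the Tate/$q$-grading, to force $e_t=t$; since then each weight occurs at most once, the asserted identification follows.

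The step that genuinely requires new work, and is the main obstacle, is the computation of the costalks $i_\ell^!\IC_{\oZ_s,m}$ (the analogue of Lemma~\ref{lem:fibre}) together with the combinatorial identity, both at the level of Euler characteristics and at the level of $q$-refined Euler characteristics, which forces the unique solution of the above triangular system to be $f_s=1$, equivalently $e_s=s$. In type $A$ this was immediate from the description of the small resolution $\pi_s:P_s\to\oZ_s$ of Proposition~\ref{pr:smallres} as an iterated Grassmannian bundle, which made the fibres over $Z_\ell$ equal to $\G(s,\ell)$ and produced the ordinary and $q$-binomial identities $\sum_s(-1)^s\binom{\ell}{s}=\delta_{\ell 0}$. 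For a general cominuscule $G/P\times G/Q$ no such uniform bundle description is available, so I would instead establish, case by case using the classification of cominuscule nodes, that each $\oZ_s$ admits a small resolution (or, more weakly, that $\IC_{\oZ_s,m}$ is of Tate type with costalks concentrated in even degrees and with computable fibre counts over $Z_\ell$), and then verify the resulting identity, in the same case-by-case spirit already used to prove that $A$ is linearly ordered. A secondary input to be checked in each type is the simple connectivity of the orbits $Z_t$, which, unlike in type $A$, need not be automatic once $G$ has disconnected stabilizers; this is needed to exclude nontrivial local systems in the structure-theorem step. Once these geometric and combinatorial facts are in place, the remainder of the argument is formal and goes through verbatim as in Section~\ref{se:weightgraded}.
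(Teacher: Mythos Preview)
This statement is a \emph{Conjecture} in the paper; there is no proof given. The paper says only, immediately before the statement, that ``we hope that similar techniques as in that section will help to establish the following result,'' referring to Section~\ref{se:weightgraded}. Your proposal follows precisely this suggested route: rerun the argument of Proposition~\ref{prop:grj} and Corollary~\ref{cor:grj}, replacing the type $A$ geometric inputs (simple connectivity of the $Z_t$, the small-resolution costalk computation of Lemma~\ref{lem:fibre}, and the binomial/$q$-binomial identities) by their cominuscule analogues, to be checked case by case. You have correctly isolated these as the non-formal steps and flagged the genuine difficulties (no uniform small resolution, possible failure of simple connectivity outside type $A$). So your proposal is not a proof but a program, and it is exactly the program the paper itself has in mind; nothing more can be compared.
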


\subsection{$\O$-module side}
Now, we consider kernels on the product of the cotangent bundles $ T^* G/P \times T^* G/Q $. For each $ s$, let $ \fZ_s = \overline{T^*_{Z_s}(G/P \times G/Q)} \subset T^* G/P \times T^* G/Q $ be the closure of the conormal bundle of $ Z_s $.  These components $ \fZ_0, \dots, \fZ_k $ are the irreducible components of $ \fZ = T^* G/P \times_{\mathfrak g} T^* G/Q $.

Let $ Q_s = \tgr(\IC_{\oZ_s,h}\la \dim (G/P) \ra\{s\}) $.  If we assume the conjectures of the previous section, then we obtain the following.

\begin{Corollary}
The kernel $\tgr(\sT) $ induces an equivalence $D(\O_{T^* G/P}\mod) \rightarrow D(\O_{T^* G/Q}\mod) $. Moreover, there is a filtration of $ \tgr(\sT)$ whose subquotients are the sheaves $ Q_s $.
\end{Corollary}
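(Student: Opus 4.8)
The plan is to transport both assertions from the $\D_h$-module side to the $\O$-module side along the associated graded 2-functor, using Proposition \ref{prop:1} (compatibility of $\tgr$ with convolution), the conjectures of the previous subsections, and the dictionary between filtrations and iterated cones in \eqref{eq:cone}. Write $d = \dim(G/P) = \dim(G/Q)$.

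For the equivalence, recall that $\Phi_\sT$ is an equivalence by the assumed conjecture. First I would produce an inverse \emph{kernel}: taking $\sT^{-1}$ to be the left-adjoint kernel of Proposition \ref{prop:adjoints}, the functor $\Phi_{\sT^{-1}}$ is the left adjoint of the equivalence $\Phi_\sT$, hence a quasi-inverse, and from the unit and counit of this adjunction one obtains $\sT^{-1} * \sT \cong \delta_{\Delta_{G/P},h}\la d \ra$ and $\sT * \sT^{-1} \cong \delta_{\Delta_{G/Q},h}\la d \ra$, the identity kernels (this parallels the $\sl_2$ case, where $\sT^{-1} = \fG(j_! \delta_{Z_0,m})\la d_k \ra$). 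Applying $\tgr$ and Proposition \ref{prop:1} then gives $\tgr(\sT^{-1}) * \tgr(\sT) \cong \tgr(\delta_{\Delta_{G/P},h}\la d \ra)$ and symmetrically for $G/Q$. Next I would compute this: by Corollary \ref{cor:grofDelta} applied to $\Delta \hookrightarrow G/P \times G/P$, the sheaf $\gr(\delta_{\Delta,h})$ is a line-bundle twist of the structure sheaf of the conormal bundle $T^*_\Delta(G/P \times G/P)$, which inside $T^* G/P \times T^* G/P$ is the graph of the fibrewise sign map $\iota$; the factor $(1 \times \iota)^*$ in the definition \eqref{eq:tgr} of $\tgr$ then straightens this graph into the diagonal $\Delta_{T^* G/P}$. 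Hence $\tgr(\delta_{\Delta,h}\la d \ra)$ is $\O_{\Delta_{T^* G/P}}$ twisted by a line bundle and a grading shift, so its transform is the autoequivalence $M \mapsto M \otimes \sL$. Therefore $\Phi_{\tgr(\sT^{-1})} \circ \Phi_{\tgr(\sT)}$ and $\Phi_{\tgr(\sT)} \circ \Phi_{\tgr(\sT^{-1})}$ are both autoequivalences, which forces $\Phi_{\tgr(\sT)}$ to be an equivalence.

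For the filtration, I would invoke Conjecture \ref{co:weightgraded}: the weight filtration on $j_* \delta_{Z_0,m}$ in $\MHM(G/P \times G/Q)$ has nonzero graded pieces $\IC_{\oZ_s,m}\{s\}$, $s = 0, \dots, k$, ordered by weight. Since $\fG$ is exact it carries this to a filtration of $\fG(j_* \delta_{Z_0,m})$ in $D(\D_{G/P \times G/Q,h}\mod)$ with graded pieces $\IC_{\oZ_s,h}\{s\}$, and twisting by $\la d \ra$ gives a filtration of $\sT$. By \eqref{eq:cone} this reads
$$\sT \cong \Cone\big( \IC_{\oZ_k,h}\la d \ra\{k\}[-k] \to \cdots \to \IC_{\oZ_1,h}\la d \ra\{1\}[-1] \to \IC_{\oZ_0,h}\la d \ra \big).$$
The functor $\tgr$ is triangulated and commutes with $[1]$ and $\{1\}$, hence with $\la 1 \ra = [1]\{-1\}$; applying it term by term, and recalling $Q_s = \tgr(\IC_{\oZ_s,h}\la d \ra\{s\})$, yields
$$\tgr(\sT) \cong \Cone\big( Q_k[-k] \to \cdots \to Q_1[-1] \to Q_0 \big),$$
which is precisely a filtration of $\tgr(\sT)$ with subquotients $Q_0, \dots, Q_k$.

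Setting aside the conjectures, the hard part is the first step of the equivalence argument: recognizing that $\tgr$ sends the identity kernel not to the identity kernel but to the kernel of an autoequivalence, namely tensoring by an explicit line bundle. This is the same line-bundle/conjugation phenomenon already encountered in Section \ref{sec:gradedaction}, and pinning down the line bundle requires carefully tracking the $\omega_Y$-twist and the involution $\iota$ in \eqref{eq:tgr}. A secondary delicacy is the passage from ``$\Phi_\sT$ is an equivalence of functors'' to ``$\sT$ admits an inverse kernel with $\sT^{-1} * \sT$ isomorphic to the identity kernel''; this is why I would obtain $\sT^{-1}$ through the adjoint-kernel formalism of Proposition \ref{prop:adjoints} rather than from an abstract uniqueness statement.
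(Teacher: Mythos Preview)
Your approach is correct and matches the intended argument; the paper itself gives no proof, treating the Corollary as immediate from the conjectures together with Proposition \ref{prop:1} and the filtration/iterated-cone dictionary of \eqref{eq:cone}. Two remarks sharpen your write-up.

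First, your worry about a residual line-bundle twist on $\tgr$ of the identity kernel is unnecessary: the twist cancels exactly. From Corollary \ref{cor:grofDelta} one has $\gr(\delta_{\Delta,h}) \cong \O_{T^*_\Delta(X\times X)} \otimes p^*\omega_X^{-1}\{d\}$; the involution $(1\times\iota)^*$ sends $T^*_\Delta(X\times X)$ to $\Delta_{T^*X}$, and then the factor $\omega_Y$ in \eqref{eq:tgr} (with $Y=X$) kills $p^*\omega_X^{-1}$. Hence $\tgr(\delta_{\Delta,h}\la d\ra) \cong \O_{\Delta_{T^*X}}$ on the nose, so $\tgr(\sT^{-1})*\tgr(\sT)$ and its partner are the identity kernels, not merely kernels of autoequivalences. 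This is consistent with Section \ref{sec:gradedaction}, where the line-bundle discrepancy is between the $\tgr$-action and the action of \cite{CKL1}, not between $\tgr$ and the identity.

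Second, the kernel-versus-functor subtlety you flag is real at the level of ordinary triangulated categories, but the paper's framework sidesteps it: Proposition \ref{prop:adjoints} invokes \cite{G}, where kernels and (continuous) functors are equivalent in the DG/stable $\infty$-categorical sense, so an equivalence of functors forces the unit and counit of the kernel-level adjunction to be isomorphisms of kernels. With that understood, your passage from $\Phi_\sT$ being an equivalence to $\sT_L * \sT \cong \delta_{\Delta,h}\la d\ra$ is legitimate, and applying $\tgr$ (using Proposition \ref{prop:1} and Corollary \ref{cor:grD}) finishes the equivalence claim. Your filtration argument is exactly right.
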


We can try to give a more intrinsic construction of $ \tgr(\sT) $ as follows.  First, we have the following conjectural description of the sheaves $ Q_s$.

\begin{Conjecture}
The sheaf $ Q_s $, supported on $\fZ_s $, is the pushforward of a line bundle on the normalization of $ \fZ_s $.
\end{Conjecture}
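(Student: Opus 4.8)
The plan is to adapt the type~$A$ computation of Proposition~\ref{prop:grtheta}: since a categorical $\sl_2$ action is not available in the cominuscule setting, one must instead extract $\tgr(\IC_{\oZ_s,h})$ directly from a small resolution of the orbit closure $\oZ_s$. Write $M := G/P \times G/Q$. Since $Q_s = \tgr(\IC_{\oZ_s,h}\la\dim(G/P)\ra\{s\})$ by definition, it is enough to compute $\tgr(\IC_{\oZ_s,h})$. For the cominuscule flag varieties at hand one expects --- and for the quadric and the Lagrangian Grassmannian one can write down explicitly, just as $P_s$ is written down in type~$A$ --- a small ``incidence'' resolution $\pi_s\colon \widehat Z_s \to \oZ_s$ parametrising an auxiliary subspace (as $P_s$ parametrises a subspace of $V\cap V'$ in type~$A$), the smallness reducing to the same kind of dimension estimate, $2\dim(\text{fibre}) < \codim$, as in the proof of Proposition~\ref{pr:smallres}. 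Granting such a $\pi_s$, Corollary~\ref{cor:small} gives $\IC_{\oZ_s,m} \cong \pi_{s*}\delta_{\widehat Z_s,m}$, and hence $\IC_{\oZ_s,h} \cong \int_{\pi_s}\delta_{\widehat Z_s,h}$ by Theorem~\ref{thm:pushHodge}.

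Next I would compute the associated graded of this push-forward explicitly. Let $g_s\colon \widehat Z_s \to M$ be $\pi_s$ composed with the inclusion of $\oZ_s$; factoring $g_s$ through its graph and applying Corollary~\ref{cor:grofDelta} to the graph embedding followed by Corollary~\ref{cor:push} to the projection off $\widehat Z_s$, one obtains, up to an overall shift and an explicit twist by line bundles pulled back from $M$,
$$\gr(\IC_{\oZ_s,h}) \cong p_*\big(\O_{\Lambda_s}\otimes \mathcal L_s\big),$$
where $\Lambda_s \subset \widehat Z_s \times_M T^*M$ is the relative conormal variety $\{(x,\xi) : dg_s(x)^*\xi = 0\}$ of $g_s$, the map $p\colon \widehat Z_s\times_M T^*M \to T^*M$ is the projection, and $\mathcal L_s$ is assembled from $\omega_{\widehat Z_s}$ and $\omega_M$. (As in type~$A$, one checks along the way that no higher derived terms appear; this follows from the Cohen--Macaulayness used below.) Let $\Lambda_s^\circ$ denote the closure, inside $\widehat Z_s\times_M T^*M$, of the locus over $\pi_s^{-1}(Z_s)$; since $\pi_s$ is an isomorphism there, $\Lambda_s^\circ$ is a copy of $\overline{T^*_{Z_s}M}$ and $p$ restricts to a proper birational morphism $\Lambda_s^\circ \to \fZ_s$, while the remaining components of $\Lambda_s$ lie over the exceptional locus of $\pi_s$.

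Smallness of $\pi_s$ now forces the characteristic variety of $\IC_{\oZ_s}$ to be exactly $\fZ_s$ --- only the open stratum $Z_s$ is relevant, by the same inequality as in the proof of Proposition~\ref{pr:smallres} --- so $Q_s$ is supported on $\fZ_s$, and it restricts to a line bundle over the dense open conormal $T^*_{Z_s}M$, on which $\IC_{\oZ_s}$ is a trivial rank-$1$ local system. Since $\fZ_s$ is Lagrangian in $T^*M$ it has dimension $\dim M$; and by the Cohen--Macaulayness argument recalled in the Remark after Corollary~\ref{cor:main} (the associated graded of a \emph{pure} Hodge module is Cohen--Macaulay, hence $S_2$) the sheaf $Q_s$ is a reflexive, generically rank-$1$ sheaf on $\fZ_s$, therefore determined by its restriction to any open subset whose complement has codimension $\ge 2$. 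The extra components of $\Lambda_s$ (lying over the exceptional locus of a small map) map into such a codimension-$\ge 2$ subset, so on the complement the displayed formula identifies $Q_s$ with $\nu_*\mathcal M_s$, where $\nu\colon \widetilde{\fZ_s} \to \fZ_s$ is the normalization and $\mathcal M_s$ is the explicit line bundle obtained by restricting $\mathcal L_s$ and the ambient twists, \emph{provided} $\Lambda_s^\circ$ is normal and $p|_{\Lambda_s^\circ}$ is the normalization map of $\fZ_s$; since $\nu_*\mathcal M_s$ is also $S_2$ on $\fZ_s$ and agrees with $Q_s$ outside codimension $2$, the two coincide everywhere.

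The main obstacle is exactly the geometric input that does not reduce to the formalism developed in this paper: the existence of a small resolution of each $\oZ_s$ in general cominuscule type, and the normality of the relative conormal variety $\Lambda_s^\circ$ together with the identification of $p|_{\Lambda_s^\circ}$ with the normalization of $\fZ_s$. In type~$A$ the latter is the substance of the explicit analysis of $\fZ(k,N)$ in \cite{C1,CK4}; in the general cominuscule case one would presumably verify these points case by case, or through a uniform description of the irreducible components $\fZ_s$ of $\fZ = T^*G/P \times_\g T^*G/Q$ and of their conormal resolutions.
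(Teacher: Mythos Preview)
The statement you are attempting to prove is stated in the paper as a \emph{Conjecture}, not a theorem: it appears in the final section on cominuscule generalizations, where the authors list expected analogues of their type-$A$ results without proof. There is therefore no proof in the paper to compare your proposal against. Even the type-$A$ prototype (Proposition~\ref{prop:grtheta}) is established by a completely different route --- via the categorical $\sl_2$ action and the explicit convolution computations of \cite{CK4} --- rather than by a direct analysis of a small resolution as you attempt.

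Your outline is a plausible strategy, and you correctly isolate the two principal missing inputs (existence of a small resolution of each $\oZ_s$ in arbitrary cominuscule type, and normality of the relative conormal $\Lambda_s^\circ$ together with the identification of $p|_{\Lambda_s^\circ}$ with the normalization of $\fZ_s$). One additional gap deserves emphasis: you assert that smallness of $\pi_s$ ``forces the characteristic variety of $\IC_{\oZ_s}$ to be exactly $\fZ_s$''. This is not a formal consequence of smallness. The characteristic variety of an $\IC$ sheaf on a stratified variety is in general a union of closures of conormal bundles to several strata, and smallness of a particular resolution does not by itself exclude components over deeper strata --- the dimension inequality in Proposition~\ref{pr:smallres} controls fibre dimensions of $\pi_s$, not microlocal multiplicities. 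Indeed, even in type $A$ the paper never argues this way: the fact that $\tgr(\Theta^s)$ is supported exactly on $\fZ_s(k,N)$ emerges from the identification with a (normalized) structure sheaf via the $\sl_2$ machinery, not from a characteristic-cycle estimate. So this step would require an independent argument, for instance a direct computation of the characteristic cycle or an appeal to specific results on characteristic cycles of orbit closures in cominuscule $G/P \times G/Q$.
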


As in the $T^* \G(k,N) $ case, we hope that this will lead to a nice description of  $\tgr(\sT) $ .

\begin{Conjecture}
There exists an dense subset $ \fZ^o $ of $ \fZ $ and a line bundle $ \sL $ on $ \fZ^o$, such that $ \tgr(\sT) \cong R^0 f_* \sL $ (where $ f : \fZ^o \rightarrow \fZ $ is the inclusion).
\end{Conjecture}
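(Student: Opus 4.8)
The plan is to imitate the argument of Section~\ref{sec:grT}, which in the Grassmannian case deduces Corollary~\ref{cor:main} from the convolution description of $\sT$ together with the $S_2$ property of $\tgr(\sT)$. Granting the conjectures of the preceding subsections --- in particular that $A$ is linearly ordered, that $\gr^W_s(j_* \delta_{Z_0,m}) = \IC_{\oZ_s,m}\{s\}$ with all other subquotients zero, and that each $Q_s = \tgr(\IC_{\oZ_s,h}\la \dim(G/P) \ra\{s\})$ is the pushforward of a line bundle from the normalization of $\fZ_s$ --- one first applies $\tgr$ to the presentation of $\sT$ as an iterated cone of its weight filtration (as in~\eqref{eq:cone}) and uses Proposition~\ref{prop:1} to obtain a convolution description
$$\tgr(\sT) \cong \Cone\left( Q_k[-k] \rightarrow \cdots \rightarrow Q_1[-1] \rightarrow Q_0 \right)$$
(up to overall grading shifts), with each $Q_s$ supported on the component $\fZ_s$ of $\fZ = T^*G/P \times_{\mathfrak g} T^* G/Q$.

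Next I would carve out a suitable dense open $\fZ^o \subset \fZ$. Define $\fZ^o_s \subset \fZ_s$ to be the union of the smooth locus of $\fZ_s$ with the locus where $\fZ_s$ meets $\fZ_{s-1} \cup \fZ_{s+1}$ transversally along a codimension-one stratum, and set $\fZ^o := \bigcup_s \fZ^o_s$, $f : \fZ^o \hookrightarrow \fZ$. One must verify, using the $G$-equivariant local geometry of the conormal varieties, that: (i) $\fZ \setminus \fZ^o$ has codimension $\geq 2$; (ii) $D^o_{s,+} := \fZ^o_s \cap \fZ_{s+1}$ is a Cartier divisor in $\fZ^o_s$; and (iii) $\fZ^o_s \cap \fZ^o_{s'} = \emptyset$ whenever $|s - s'| \geq 2$. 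Items (ii) and (iii) are the cominuscule analogues of \cite[Lemma 3.4]{C1}, and (iii) is precisely where the linear ordering of $A$ enters. On $\fZ^o$ the sheaf $Q_s|_{\fZ^o}$ is a line bundle supported on $\fZ^o_s$ (by the assumed conjecture), and the only possibly-nonzero maps in the restricted convolution complex are the $Q_{s+1}|_{\fZ^o} \to Q_s|_{\fZ^o}$.

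To identify the convolution I would then pin down these differentials. A Hom-computation between the $Q_s$ --- the coherent analogue of Lemma~\ref{lem:j*} --- should show that $\Hom(Q_{s+1}[-1], Q_s)$ is one-dimensional while $\Hom(Q_{s+j}[-l], Q_s)$ vanishes for $l \neq j-1$, forcing each differential to be (a twist of) the natural inclusion $\O_{\fZ^o_s}(-D^o_{s,+}) \hookrightarrow \O_{\fZ^o_s}$. As in the final Remark of Section~\ref{sec:grT}, the iterated cone of such a glueing complex on the reducible scheme $\fZ^o = \bigcup \fZ^o_s$ is then a genuine coherent sheaf $\sL$, concentrated in degree zero, whose restriction to each $\fZ^o_s$ is a line bundle (explicitly $\O_{\fZ^o_s}(D^o_{s,+}) \otimes \det(V)^{-s} \otimes \cdots$, the extra determinant factors recording the line bundles being glued). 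Finally, granting that $\tgr(\sT)$ is Cohen--Macaulay --- which should follow by the formal argument indicated at the end of Section~\ref{sec:grT}, since $\gr$ (equivalently $\tgr$) of a Hodge module on a smooth variety is Cohen--Macaulay over the structure sheaf of the cotangent bundle --- it is $S_2$ and hence determined by its restriction to any open subset with complement of codimension $\geq 2$; by (i) this restriction is $\sL$, which gives $\tgr(\sT) \cong R^0 f_* \sL$ with no higher direct images.

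The step I expect to be hardest is the construction and control of $\fZ^o$: one needs an explicit enough description of the components $\fZ_s$ and of their pairwise intersections near the non-generic locus to establish (i) and the Cartier-divisor property (ii). In type $A$ this is done in \cite{C1} via the matrix description of $\fZ_s(k,N)$, but in the general cominuscule case it would presumably require a case-by-case analysis (or a uniform combinatorial description via the linearly ordered set $A$). A secondary obstacle is the Hom-vanishing pinning down the differentials: here there is no categorical $\sl_2$-action to reduce the relevant $\Ext$-groups to endomorphisms of identity kernels as in Lemma~\ref{lem:j*}, so a direct cohomological computation on the $\fZ_s$ would be needed instead.
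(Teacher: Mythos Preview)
The statement you are attempting to prove is stated in the paper as a \emph{Conjecture}, not a theorem; the paper gives no proof and in fact explicitly presents it as part of a list of open problems in Section~\ref{sec:cominus}. There is therefore no paper proof to compare your proposal against.

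That said, your outline is exactly the strategy the paper implicitly suggests: it is a direct transposition of the type $A$ argument from Sections~\ref{sec:T}--\ref{sec:grT} to the general cominuscule setting, and you correctly identify that it rests on the chain of conjectures preceding it (Conjecture~\ref{co:weightgraded} on the weight filtration, and the conjecture that each $Q_s$ is a line bundle on the normalization of $\fZ_s$). You have also correctly isolated the genuine obstacles the paper leaves open: the local geometry of the components $\fZ_s$ and their intersections (analogue of \cite[Lemma~3.4]{C1}), and the Hom-vanishing that in type $A$ was obtained from the categorical $\sl_2$ action (Lemma~\ref{lem:j*}) but here has no such source. So your proposal is not a proof but a faithful and well-organized account of what a proof would require --- which is precisely the status the paper itself assigns to the statement.
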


Of course, it would be nice to explicitly describe this open subset and this line bundle.

\end{document}